\documentclass{amsart}

\usepackage{amssymb}

%OUR

\usepackage{amsmath,amscd,graphicx,amsthm}

\newtheorem{theorem}{Theorem}
\newtheorem{lemma}[theorem]{Lemma}
\newtheorem{proposition}[theorem]{Proposition}

\theoremstyle{definition}

\theoremstyle{remark}
\newtheorem{remark}[theorem]{Remark}

%\numberwithin{section}
\numberwithin{equation}{section}
%\numberwithin{figure}{section}
\numberwithin{theorem}{section}

%  Our definitions
\def\A{{\mathcal A}}
\def\AA{{\mathbb A}}
\def\B{{\mathcal B}}
\def\C{{\mathbb C}}
\def\CC{{\mathcal C}}
\def\D{{\mathfrak d}}

\def\FF{{\mathcal F}}
\def\FFF{{\mathcal F}}
\def\G{{\mathcal G}}
\def\GCC{{\G\CC}}

\def\LL{{\Lambda}}

\def\N{\mathcal N}
\def\O{{\mathcal O}}
\def\P{{\mathcal P}}

\def\TE{{\mathcal S}}
\def\T{{\mathbb T}}
\def\TA{{\mathcal T}}

\def\UU{\overline{\A}}

\def\Z{{\mathbb Z}}

\def\b{\mathfrak b}

\def\fy{\varphi}
\def\g{\mathfrak g}

\def\h{\mathfrak h}

\def\n{\mathfrak n}
\def\one{\mathbf 1}

\def\q{{\bf q}}

\def\wB{{\widetilde{B}}}

\def\wx{{\widetilde{\bf x}}}
\def\x{{\bf x}}

\def\Ad{\operatorname{Ad}}

\def\End{\operatorname{End}}

\def\Id{{\operatorname {Id}}}

\def\Mat{\operatorname{Mat}}

\def\Poi{{\{\cdot,\cdot\}}}

\def\Van{\operatorname{Van}}

\def\ad{\operatorname{ad}}

\def\deg{{\operatorname{deg}}}
\def\diag{\operatorname{diag}}

\def\rank{\operatorname{rank}}

\def\dnabla{{\raisebox{2pt}{$\bigtriangledown$}}\negthinspace}

% END OUR

\begin{document}
\title[Drinfeld double of $GL_n$ and generalized cluster structures]{Drinfeld double of $GL_n$ and generalized cluster structures}

\author{M. Gekhtman}

\address{Department of Mathematics, University of Notre Dame, Notre Dame,
IN 46556}
\email{mgekhtma@nd.edu}

\author{M. Shapiro}
\address{Department of Mathematics, Michigan State University, East Lansing,
MI 48823}
\email{mshapiro@math.msu.edu}

\author{A. Vainshtein}
\address{Department of Mathematics \& Department of Computer Science, University of Haifa, Haifa,
Mount Carmel 31905, Israel}
\email{alek@cs.haifa.ac.il}

\begin{abstract}
We construct a generalized cluster structure compatible with  the Poisson bracket on the Drinfeld double of the standard
Poisson-Lie group $GL_n$ and derive from it a generalized cluster structure on $GL_n$ compatible with the push-forward of the Poisson bracket  on the  
dual Poisson--Lie group.
\end{abstract}

\subjclass[2010]{53D17,13F60}

\maketitle

\tableofcontents

\section{Introduction}
\label{intro}

The connection between cluster algebras and Poisson structures is documented in \cite{GSVb}. Among the most important examples in which this connection has been utilized are coordinate rings of double Bruhat cells in semisimple Lie groups equipped with (the restriction of) the standard Poisson-Lie structure. In \cite{GSVb}, we applied our technique of constructing a cluster structure compatible with a given Poisson structure in this situation and recovered the cluster structure built in \cite{CAIII}. The standard Poisson-Lie structure is a particular case of a family of Poisson-Lie structures corresponding to quasi-triangular Lie bialgebras. Such structures are associated with solutions to the classical Yang-Baxter equation. Their complete classification was obtained by Belavin and Drinfeld in \cite{BD} using certain combinatorial data defined in terms of the corresponding root system. In \cite{GSVMMJ} we conjectured that any such solution gives rise to a compatible cluster structure on the Lie group and provided several examples supporting this conjecture. 
Recently \cite{GSVPNAS,GSVMem}, we constructed the cluster structure corresponding to the Cremmer--Gervais Poisson structure in $GL_n$ for any $n$.

As we established in \cite{GSVMem}, the construction of 
cluster structures on a simple Poisson-Lie group $\G$  relies on properties of the Drinfeld double $D(\G)$. Moreover, in the Cremmer-Gervais case generalized determinantal identities on which cluster transformations are modeled can be extended to identities valid in the double. It is not too far-fetched then to suspect that 
there exists a cluster structure on $D(\G)$ compatible with the Poisson-Lie bracket induced by the Poisson-Lie bracket on $\G$.
However, an interesting phenomenon was observed even in the first nontrivial example of $D(GL_2)$: although we were able to construct a log-canonical regular coordinate chart in terms of which all standard coordinate functions are expressed as (subtraction free) Laurent polynomials, it is not possible to define cluster transformations in such a way that all cluster variables which one expects to be mutable are transformed into regular functions. This problem is resolved, however, if one is allowed to use {\em generalized cluster transformations} previously considered in \cite{GSV1,GSVb} and, more recently, axiomatized in  \cite{CheSha}. 

In this paper, we describe
such a generalized cluster structure on the Drinfeld double in the case of the standard Poisson-Lie group $GL_n$.
Using this structure, one can recover the standard cluster structure on $GL_n$.
Furthermore, there is a well-known map from the dual Poisson--Lie group $GL_n^*\subset D(GL_n)$ to an open dense subset $GL_n^\dag$ of $GL_n$ (see \cite{r-sts}). The push-forward of the Poisson structure on $GL_n^*$ extends from $GL_n^\dag$ to the whole $GL_n$ (the resulting Poisson structure is not 
Poisson--Lie). We define a generalized cluster structure on $GL_n$ compatible with the 
latter Poisson structure, using a special seed of the generalized cluster structure on $D(GL_n)$.
 Note that the log-canonical basis suggested in \cite{Bra} is different from the one
constructed here and does not lead to a regular cluster structure. 

Section~\ref{section2} contains the definition of a generalized cluster structure of geometric type, as well as 
 properties of such structures in the rings of regular functions of algebraic varieties. This includes
several basic results on compatibility  with Poisson brackets and toric actions. 
These statements are natural extensions of the corresponding results on ordinary cluster structures, and their proofs are obtained via minor modifications.
 Section~\ref{section2} also contains basic information on Poisson--Lie groups and the corresponding Drinfeld doubles.

Section~\ref{section3} contain the main results of the paper. The initial log-canonical basis is described in Section~\ref{logcan}, the corresponding quiver 
is presented in Section~\ref{init}, and the generalized exchange relation is given in Section~\ref{exrelsec}. The main results are stated in Section~\ref{main}
and include two theorems: Theorem~\ref{structure} treats the generalized cluster structure on the Drinfeld double, while Theorem~\ref{dualstructure} deals
with the generalized cluster structure on $GL_n$ compatible with the push-forward of the bracket on the dual Poisson--Lie group. 
Section~\ref{outline} explains the main steps
in the proof of Theorem~\ref{structure}, and shows how it yields Theorem~\ref{dualstructure}.

Section~\ref{novoeslovo} has an independent value. Recall that originally upper cluster algebras were defined over the ring of Laurent polynomials in stable variables.
In this section we prove that upper cluster algebras over subrings of this ring retain all properties of usual upper cluster algebras, and under certain
coprimality conditions coincide with the intersection of rings of Laurent polynomials in a finite collection of clusters. 

The  log-canonicity of the suggested initial basis is proved in Section~\ref{prlogcan}. The proofs rely on invariance properties of the elements
of the basis.

The first part of Theorem~\ref{structure} is proved in Section~\ref{prgcs}. We start with studying a left--right toric action by diagonal matrices, see Section~\ref{torac}.
In Section~\ref{cmpty} we prove the compatibility of our generalized cluster structure with the standard Poisson--Lie structure on the Drinfeld double. To prove the
regularity of the generalized cluster structure, we verify the coprimality conditions in the initial cluster and its neighbors.

The second part of Theorem~\ref{structure} is proved in Section~\ref{upeqreg}. 

Section~\ref{aux} contains proofs of various auxiliary statements in matrix theory that are used in other sections.

A short preliminary version of this paper was published in~\cite{GSVcr}.

\section{Preliminaries}\label{section2}

\subsection{Generalized cluster structures of geometric type and compatible Poisson brackets}
\label{SecPrel}

Let $\widetilde{B}=(b_{ij})$ be an $N\times(N+M)$ integer matrix
whose principal part $B$ is skew-symmetrizable (recall that the principal part of a rectangular matrix  
is its maximal leading square submatrix). 
Let $\FFF$ be the field of rational functions in $N+M$ independent variables
with rational coefficients. There are $M$  distinguished variables; 
they are denoted
$x_{N+1},\dots,x_{N+M}$ and called {\em stable\/}. A stable variable $x_{j}$ is called {\em isolated\/} if $b_{ij}=0$ 
for $1\le i\le N$. The {\it coefficient group\/} is a free multiplicative abelian group of Laurent monomials in stable variables, 
and its integer group ring is $\bar\AA=\Z[x_{N+1}^{\pm1},\dots,x_{N+M}^{\pm1}]$ (we write
$x^{\pm1}$ instead of $x,x^{-1}$).

For each $i$, $1\le i\le N$, fix a factor $d_i$ of $\gcd\{b_{ij}: 1\le j\le N\}$. 
A {\em seed\/} (of {\em geometric type\/}) in $\FFF$ is a triple
$\Sigma=(\x,\widetilde{B},\P)$,
where $\x=(x_1,\dots,x_N)$ is a transcendence basis of $\FFF$ over the field of
fractions of  $\bar\AA$ and $\P$ is a set of $n$ {\em strings}. The $i$th string is a collection of 
monomials  
$p_{ir}\in\AA=\Z[x_{N+1},\dots,x_{N+M}]$, $0\le r\le d_i$, such that  
$p_{i0}=p_{id_i}=1$; 
it is called {\em trivial\/} if $d_i=1$, and hence both elements of the string are equal to one.

Matrices $B$ and $\wB$ are called the
{\it exchange matrix\/} and the {\it extended exchange matrix}, respectively. The $N$-tuple  $\x$ is called a {\em cluster\/}, and its elements
$x_1,\dots,x_N$ are called {\em cluster variables\/}. The monomials $p_{ir}$ are called {\em exchange coefficients}.
We say that
$\widetilde{\x}=(x_1,\dots,x_{N+M})$ is an {\em extended
cluster\/}, and $\widetilde\Sigma=(\wx,\widetilde{B},\P)$ is an {\em extended seed}.

Given a seed as above, the {\em adjacent cluster\/} in direction $k$, $1\le k\le N$,
is defined by
$\x'=(\x\setminus\{x_k\})\cup\{x'_k\}$,
where the new cluster variable $x'_k$ is given by the {\em generalized exchange relation}
\begin{equation}\label{exchange}
x_kx'_k=\sum_{r=0}^{d_k}p_{kr}u_{k;>}^r v_{k;>}^{[r]}u_{k;<}^{d_k-r}v_{k;<}^{[d_k-r]}
\end{equation}
with {\em cluster $\tau$-monomials\/} $u_{k;>}$ and $u_{k;<}$, $1\le k\le N$, defined by
\begin{equation*}
\begin{aligned}
u_{k;>}&=\prod\{x_i^{b_{ki}/d_k}: 1\le i\le N, b_{ki}>0\}, \\
u_{k;<}&=\prod\{x_i^{-b_{ki}/d_k}: 1\le i\le N, b_{ki}<0\},
\end{aligned}
\end{equation*}
and {\em stable $\tau$-monomials\/}
$v_{k;>}^{[r]}$ and $v_{k;<}^{[r]}$, $1\le k\le N$, $0\le r\le d_k$, defined by
 \begin{equation*}
\begin{aligned}
v_{k;>}^{[r]}&=\prod\{x_i^{\lfloor rb_{ki}/d_k\rfloor}: N+1\le i\le N+M,  b_{ki}>0\},\\ 
v_{k;<}^{[r]}&=\prod\{x_i^{\lfloor -rb_{ki}/d_k\rfloor}: N+1\le i\le N+M,  b_{ki}<0\};
 \end{aligned}
\end{equation*}
here, as usual, the product over the empty set is assumed to be
equal to~$1$. In what follows we write $v_{k;>}$ instead of $v_{k;>}^{[d_k]}$ and $v_{k;<}$ instead of $v_{k;<}^{[d_k]}$.
The right hand side of~\eqref{exchange} is called a {\it generalized exchange polynomial}.

We say that $\wB'$ is
obtained from $\wB$ by a {\em matrix mutation\/} in direction $k$ if
\begin{equation}
\label{MatrixMutation}
b'_{ij}=\begin{cases}
         -b_{ij}, & \text{if $i=k$ or $j=k$;}\\
                 b_{ij}+\displaystyle\frac{|b_{ik}|b_{kj}+b_{ik}|b_{kj}|}2,
                                                  &\text{otherwise.}
        \end{cases}
\end{equation}
Note that $b_{ij}=0$ for $1\le i\le N$ implies
$b'_{ij}=0$ for $1\le i\le N$; in other words, the set of isolated variables does not depend on the cluster. 
Moreover, $\gcd\{b_{ij}:1\le j\le N\}=\gcd\{b'_{ij}:1\le j\le N\}$, and hence the $N$-tuple $(d_1,\dots,d_N)$
retains its property.

The {\em exchange coefficient mutation\/} in direction $k$ is given by
\begin{equation}
\label{CoefMutation}
 p'_{ir}=\begin{cases}
          p_{i,d_i-r}, & \text{if $i=k$;}\\
           p_{ir}, &\text{otherwise.}
        \end{cases}
\end{equation}

\begin{remark}
The definition above is adjusted from an earlier definition of generalized
cluster structures given in~\cite{CheSha}. In that case a somewhat
less involved construction was modeled on examples appearing in a study
of triangulations of surfaces with orbifold points, and coefficients of
exchange polynomials were assumed to be  elements of an arbitrary
tropical semifield. In contrast, our main example forces us to consider a situation in which
the coefficients have to be realized as regular functions on an
underlying variety which results in a complicated definition above.
More exactly, if one defines coefficients $p_{i;l}$ in~\cite{CheSha} as $p_{il}v_{i;>}^{[l​]}v_{i;<}^{[d_i-l]}$, then our 
exchange coefficient mutation rule~\eqref{CoefMutation} becomes a specialization of the general rule (2.5) in~\cite{CheSha}.
As we will explain in future publications, many examples of this sort
arise in an investigation of exotic cluster structures on Poisson--Lie groups.
\end{remark}

Consider Laurent monomials in stable variables
\begin{equation*}
q_{ir}=\frac{v_{i;>}^rv_{i;<}^{d_i-r}}{\left(v_{i;>}^{[r]}v_{i;<}^{[d_i-r]}\right)^{d_i}}, \qquad 0\le r\le d_i, 1\le i\le N. 
\end{equation*}
By~\eqref{MatrixMutation}, $b_{ij}=b'_{ij}\bmod d_i$
for $1\le j\le N+M$. Consequently, the mutation rule for $q_{ir}$ is the same as~\eqref{CoefMutation}. In what follows we will 
use Laurent monomials $\hat p_{ir}=p_{ir}^{d_i}/q_{ir}$. One can rewrite the generalized exchange relation~\eqref{exchange} in
terms of the $\hat p_{ir}$ as follows:
\begin{equation}\label{modexchange}
x_kx'_k=\sum_{r=0}^{d_k}\left(\hat p_{kr}v_{k;>}^rv_{k;<}^{d_k-r}\right)^{1/d_k}  u_{k;>}^r u_{k;<}^{d_k-r};
\end{equation}
note that $\left(\hat p_{kr}v_{k;>}^rv_{k;<}^{d_k-r}\right)^{1/d_k}$ is a monomial in $\AA$ for $0\le r\le d_k$.

In certain cases, it is convenient to represent the data $(\wB, d_1,\dots,d_N)$ by a quiver.
Assume that the {\it modified extended exchange matrix\/} $\widehat B$ obtained from 
$\wB$ by replacing
each $b_{ij}$ by $b_{ij}/d_i$ for $1\le j\le N$ and retaining it for $N+1\le j\le N+M$ has a skew-symmetric 
principal part;
we say that the corresponding quiver $Q$ with vertex multiplicities $d_1,\dots,d_N$
represents $(\wB, d_1,\dots,d_N)$ and write
$\Sigma=(\x,Q,\P)$. Vertices that correspond to cluster variables are called {\it mutable}, 
those that correspond to stable variables are called {\it frozen}.
A mutable vertex with $d_i\ne 1$ is called {\em special\/}, and $d_i$ is said to be its {\em multiplicity}. 
A frozen vertex corresponding to an isolated variable is called {\em isolated}. 

 Given a seed $\Sigma=(\x,\widetilde{B},\P)$, we say that a seed
$\Sigma'=(\x',\widetilde{B}',\P')$ is {\em adjacent\/} to $\Sigma$ (in direction
$k$) if $\x'$, $\widetilde{B}'$ and $\P'$ are as above. 
Two seeds are {\em mutation equivalent\/} if they can
be connected by a sequence of pairwise adjacent seeds. 
The set of all seeds mutation equivalent to $\Sigma$ is called the {\it generalized cluster structure\/} 
(of geometric type) in $\FFF$ associated with $\Sigma$ and denoted by $\GCC(\Sigma)$; in what follows, 
we usually write $\GCC(\wB,\P)$, or even just $\GCC$ instead. Clearly, by taking $d_i=1$ for $1\le i\le N$, and hence making all strings trivial, we get an ordinary cluster structure. Indeed, in this case the right hand side of the generalized exchange relation \eqref{exchange} contains two terms; furthermore, 
$u_{k;>}^0=u_{k;<}^0=v_{k;>}^{[0]}=v_{k;<}^{[0]}=1$ and
\begin{equation*}
\begin{aligned}
u_{k;>}^1 v_{k;>}^{[1]}&=\prod \{x_i^{b_{ki}} : 1\le i\le N+M, b_{ki}>0\},\\
u_{k;<}^1 v_{k;<}^{[1]}&=\prod \{x_i^{-b_{ki}} : 1\le i\le N+M, b_{ki}<0\}.
\end{aligned}
\end{equation*}
Consequently, in this case \eqref{exchange} coincides with the ordinary exchange relation, while the exchange coefficient mutation \eqref{CoefMutation} becomes trivial.
Similarly to the case of
ordinary cluster structures, we will associate to $\GCC(\wB,\P)$ a labeled $N$-regular tree $\T_N$ whose vertices correspond to seeds, and edges 
correspond to the adjacency of seeds.

Fix a ground ring $\widehat{\AA}$ such that $\AA\subseteq\widehat\AA\subseteq\bar\AA$. 
We associate with $\GCC(\wB,\P)$ two algebras of rank $N$ over $\widehat{\AA}$:
the {\em generalized cluster algebra\/} $\A=\A(\GCC)=\A(\wB,\P)$, which 
is the $\widehat{\AA}$-subalgebra of $\FF$ generated by all cluster
variables from all seeds in $\GCC(\wB,\P)$, and the {\it generalized upper cluster algebra\/}
$\UU=\UU(\GCC)=\UU(\wB,\P)$, which is the intersection of the rings of Laurent polynomials over $\widehat{\AA}$ in cluster variables
taken over all seeds in $\GCC(\wB,\P)$. The generalized  {\it Laurent phenomenon\/} \cite{CheSha}
claims the inclusion $\A(\GCC)\subseteq\UU(\GCC)$. 

\begin{remark} Note that our definition of the generalized cluster algebra is slightly more general than the one used in \cite{CheSha}. However, the proof in \cite{CheSha} utilizes the Caterpillar Lemma
of Fomin and Zelevinsky (see \cite{FoZe}) and follows their standard pattern of reasoning; it can be repeated {\it verbatim\/} in our case as well.
\end{remark}

Let $V$ be a quasi-affine variety over $\C$, $\C(V)$ be the field of rational functions on $V$, and
$\O(V)$ be the ring of regular functions on $V$. Let $\GCC$ be a generalized cluster structure in $\FF$ as above.
Assume that $\{f_1,\dots,f_{N+M}\}$ is a transcendence basis of $\C(V)$. Then the map $\theta: x_i\mapsto f_i$,
$1\le i\le N+M$, can be extended to a field isomorphism $\theta: \FF_\C\to \C(V)$,  
where $\FF_\C=\FF\otimes\C$ is obtained from $\FF$ by extension of scalars.
The pair $(\GCC,\theta)$ is called a generalized cluster structure {\it in\/}
$\C(V)$, $\{f_1,\dots,f_{N+M}\}$ is called an extended cluster in
 $(\GCC,\theta)$.
Sometimes we omit direct indication of $\theta$ and say that $\GCC$ is a generalized cluster structure {\em on\/} $V$. 
A generalized cluster structure $(\GCC,\theta)$ is called {\it regular\/}
if $\theta(x)$ is a regular function for any cluster variable $x$. 
The two algebras defined above have their counterparts in $\FF_\C$ obtained by extension of scalars; they are
denoted $\A_\C$ and $\UU_\C$.  As it is explained in~\cite[Section 3.4]{GSVb}, the natural choice of 
the ground ring for $\A_\C$ and $\UU_\C$
is 
\begin{equation}\label{hata}
\widehat\AA=\C[x_{N+1}^{\pm1},\dots,x_{N+M'}^{\pm1}, x_{N+M'+1},\dots,x_{N+M}],
\end{equation} 
where $\theta(x_{N+i})$ does not vanish on $V$ if and only if $1\le i\le M'$.
If, moreover, the field isomorphism $\theta$ can be restricted to an isomorphism of 
$\A_\C$ (or $\UU_\C$) and $\O(V)$, we say that 
$\A_\C$ (or $\UU_\C$) is {\it naturally isomorphic\/} to $\O(V)$.

 The following statement is a direct corollary of the natural extension of the Starfish Lemma (Proposition~3.6 in \cite{FoPy}) 
to the case of generalized cluster structures.
The proof of this extension literally follows the proof of the Starfish Lemma.

\begin{proposition}\label{regcoin}
Let $V$ be a Zariski open subset in $\C^{N+M}$ and $\GCC=(\GCC(\wB,\P),\theta)$ be a generalized cluster structure in $\C(V)$  
with $N$ cluster and $M$ stable variables such that

{\rm(i)} there exists an extended cluster $\wx=(x_1,\dots,x_{N+M})$ in $\GCC$ such that $\theta(x_i)$ is
regular on $V$ for $1\le i\le N+M$, and $\theta(x_i)$ and $\theta(x_j)$ are coprime in $\O(V)$ for $1\le i\ne j\le N+M$;

{\rm(ii)} for any cluster variable $x_k'$, $1\le k\le N$, obtained via the generalized exchange relation~\eqref{exchange} 
applied to $\wx$, $\theta(x_k')$ is regular on $V$ and coprime in $\O(V)$ with $\theta(x_k)$.

\noindent Then $\GCC$ is a regular generalized cluster structure. If additionally

{\rm(iii)} each regular function on $V$ belongs to $\theta(\UU_\C(\GCC))$,

\noindent then $\UU_\C(\GCC)$ is naturally isomorphic to $\O(V)$.
\end{proposition}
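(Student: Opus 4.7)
The plan is to import the argument of the classical Starfish Lemma wholesale into the generalized setting, using the generalized Laurent phenomenon of \cite{CheSha} in place of the ordinary one.

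For the regularity statement, I would fix an arbitrary cluster variable $y$ appearing in some seed of $\GCC$ and show $\theta(y)\in\O(V)$. Since $V$ is open in $\C^{N+M}$ it is smooth and hence normal, so it suffices to rule out codimension-one poles of $\theta(y)$. The Laurent phenomenon applied to the initial seed writes $\theta(y)$ as a Laurent polynomial in $\theta(x_1),\dots,\theta(x_{N+M})$; in view of the shape~\eqref{hata} of $\widehat\AA$, the stable variables $x_{N+i}$ with $i>M'$ appear only with non-negative exponents, so no pole of $\theta(y)$ arises along $\{\theta(x_{N+i})=0\}$ for such $i$. For each $k$ with $1\le k\le N$, I would then apply the Laurent phenomenon a second time in the extended cluster $(\wx\setminus\{x_k\})\cup\{x_k'\}$ produced by the generalized exchange relation~\eqref{exchange}: this second Laurent expansion has no $\theta(x_k)$ in its denominator. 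Comparing the two expansions and using hypotheses (i) and (ii), which guarantee that $\theta(x_k)$ is coprime in $\O(V)$ to each $\theta(x_j)$ with $j\ne k$ and to $\theta(x_k')$, I conclude that $\theta(y)$ cannot have a pole along $\{\theta(x_k)=0\}$. Ranging over $k$ shows that $\theta(y)$ is regular on all of $V$, which proves regularity of $\GCC$. For the second assertion, the inclusion $\theta(\UU_\C(\GCC))\subseteq\O(V)$ follows from the same pole-cancellation argument applied to an arbitrary element of $\UU_\C(\GCC)$, which by definition is a Laurent polynomial in every cluster, while the reverse inclusion $\O(V)\subseteq\theta(\UU_\C(\GCC))$ is exactly hypothesis~(iii); since $\theta$ is a field isomorphism, it then restricts to the desired natural isomorphism.

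The only place where the passage from ordinary to generalized cluster structures is felt is in invoking the Laurent phenomenon: the right-hand side of~\eqref{exchange} is now a polynomial of degree $d_k$ rather than a binomial, and the exchange coefficients $p_{kr}$ together with the $\tau$-monomials $v_{k;>}^{[r]},v_{k;<}^{[r]}$ must all behave correctly under mutation. However, since the generalized Laurent phenomenon has already been established in~\cite{CheSha} (and extended in the remark preceding this proposition to our slightly broader setting), this step is available as a black box. The remaining ingredients used in the classical Starfish Lemma --- normality of $V$, pole cancellation obtained by comparing two Laurent expansions, and the role of coprimality in preventing a common codimension-one divisor --- are unaffected by the passage to generalized exchange relations, so the entire argument transfers verbatim. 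I would expect no genuine obstacle beyond the bookkeeping needed to verify that the role played in the ordinary proof by the binomial $x_kx_k'$ is faithfully replaced by the generalized exchange identity~\eqref{exchange}.
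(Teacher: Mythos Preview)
Your proposal is correct and matches the paper's own approach: the paper simply states that Proposition~\ref{regcoin} is a direct corollary of the natural extension of the Starfish Lemma (Proposition~3.6 in \cite{FoPy}) to generalized cluster structures, remarking that the proof ``literally follows'' the classical one and that the Starfish hypotheses hold because $\O(V)$ is a UFD. Your outline of the pole-cancellation argument via the generalized Laurent phenomenon, normality of $V$, and coprimality is exactly that standard proof.
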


\begin{remark} Conditions of the Starfish Lemma in our case are satisfied since $\O(V)$ is a unique factorization domain.
\end{remark}

Let $\Poi$ be a Poisson bracket on the ambient field $\FFF$, and $\GCC$ be a generalized cluster structure in $\FFF$. 
We say that the bracket and the generalized cluster structure are {\em compatible\/} if any extended
cluster $\widetilde{\x}=(x_1,\dots,x_{N+M})$ is {\em log-canonical\/} with respect to $\Poi$, that is,
$\{x_i,x_j\}=\omega_{ij} x_ix_j$,
where $\omega_{ij}\in\Z$ are constants for all $i,j$, $1\le i,j\le N+M$.

Let $\Omega=(\omega_{ij})_{i,j=1}^{N+M}$.  The following proposition can be considered as a natural extension of Proposition~2.3 in \cite{GSVMem}. 
The proof is similar to the proof of Theorem~4.5 in \cite{GSVb}.

\begin{proposition}\label{compatchar}
Assume that $\wB\Omega=[\Delta\;\; 0]$ for a non-degenerate diagonal matrix $\Delta$, and all Laurent polynomials $\hat p_{ir}$ are Casimirs of the bracket $\Poi$.
Then $\rank\wB=N$, and the bracket $\Poi$ is compatible with $\GCC(\wB,\P)$.
\end{proposition}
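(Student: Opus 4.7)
The rank assertion is immediate: since $\Delta$ is a non-singular $N\times N$ diagonal matrix, $[\Delta\;\;0]$ has rank $N$, and $\wB$, having only $N$ rows, must therefore have rank exactly $N$.

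For compatibility I would induct on the number of mutations from the initial seed, with the stronger inductive hypothesis that the current cluster is log-canonical with bracket matrix $\Omega$ satisfying $\wB\Omega=[\Delta\;\;0]$ and with all $\hat p_{ir}$ Casimirs. The heart of the passage from one seed to a neighbor in direction $k$ is a Poisson-bracket computation. From~\eqref{modexchange}, $x_kx_k'=\sum_{r=0}^{d_k}T_r$ with $T_r=(\hat p_{kr})^{1/d_k}S_r$ and $S_r=(v_{k;>}^rv_{k;<}^{d_k-r})^{1/d_k}u_{k;>}^ru_{k;<}^{d_k-r}$. Direct inspection shows that, uniformly for all $1\le i\le N+M$, the exponent of $x_i$ in the (possibly fractional) monomial $S_r$ equals $\gamma_i^{(r)}=\tfrac{r}{d_k}\max(b_{ki},0)+\tfrac{d_k-r}{d_k}\max(-b_{ki},0)$. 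Because $(\hat p_{kr})^{1/d_k}$ is again a Casimir (Casimirs are closed under rational powers where these are defined), one obtains $\{T_r,x_j\}/(T_rx_j)=\sum_i\gamma_i^{(r)}\omega_{ij}$. The hypothesis $\wB\Omega=[\Delta\;\;0]$ forces $\sum_ib_{ki}\omega_{ij}=0$ for every $j\ne k$, equivalently $\sum_i\max(b_{ki},0)\omega_{ij}=\sum_i\max(-b_{ki},0)\omega_{ij}$, so $\sum_i\gamma_i^{(r)}\omega_{ij}$ is in fact independent of $r$. Summing over $r$ and applying Leibniz to $x_kx_k'$ gives $\{x_k',x_j\}=(A_j-\omega_{kj})\,x_k'x_j$ for a constant $A_j$, which is the log-canonicity condition for the mutated cluster and determines the $k$-th row of the new matrix $\Omega'$.

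To close the induction I would check that the new data $(\wB',\P')$ retains the hypotheses. By~\eqref{CoefMutation} each $\hat p'_{ir}$ equals either $\hat p_{ir}$ or $\hat p_{i,d_i-r}$, hence remains Casimir. The identity $\wB'\Omega'=[\Delta\;\;0]$ is a direct matrix-mutation calculation based on~\eqref{MatrixMutation} that parallels the argument for Theorem~4.5 of~\cite{GSVb}: the changes to row $k$ of $\wB$ and to row/column $k$ of $\Omega$ conspire to preserve the product. The main obstacle I anticipate is the careful treatment of fractional exponents at stable indices $i$ where $d_k\nmid b_{ki}$; the Casimir hypothesis on $\hat p_{kr}$ is precisely what absorbs these fractional pieces into Poisson-trivial factors, and making this manipulation rigorous—so that the formally fractional monomial $S_r$ can legitimately be differentiated by the bracket—is the conceptual crux of the proof.
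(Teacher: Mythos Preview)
Your proposal is correct and follows essentially the approach the paper has in mind: the paper simply remarks that the proof is ``similar to the proof of Theorem~4.5 in \cite{GSVb}'', and what you have written is exactly that adaptation, with the Casimir hypothesis on $\hat p_{kr}$ inserted where the generalized exchange coefficients appear. Your flagged ``obstacle'' about fractional powers is not a genuine difficulty: since $T_r$ is an honest Laurent monomial, the quantity $\{T_r,x_j\}/(T_r x_j)$ depends only linearly on the exponent vector of $T_r$, and that vector decomposes as $\tfrac{1}{d_k}(\text{exponent vector of }\hat p_{kr})+\gamma^{(r)}$; the Casimir condition kills the first summand regardless of whether $(\hat p_{kr})^{1/d_k}$ exists as a function.
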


The notion of compatibility  extends to Poisson brackets on $\FF_\C$ without any changes.

 Fix an arbitrary extended cluster
$\wx=(x_1,\dots,x_{N+M})$ and define a {\it local toric action\/} of rank $s$ as a map 
$\TA^W_{\q}:\FF_\C\to
\FF_\C$ given on the generators of $\FF_\C=\C(x_1,\dots,x_{N+M})$ by the formula 
\begin{equation}
\TA^W_{\q}(\wx)=\left ( x_i \prod_{\alpha=1}^s q_\alpha^{w_{i\alpha}}\right )_{i=1}^{N+M},\qquad
\q=(q_1,\dots,q_s)\in (\C^*)^s,
\label{toricact}
\end{equation}
where $W=(w_{i\alpha})$ is an integer $(N+M)\times s$ {\it weight matrix\/} of full rank, and extended naturally to the whole $\FF_\C$. 

Let $\wx'$ be another extended cluster in $\GCC$, then the corresponding local toric action defined by the weight matrix $W'$
is {\it compatible\/} with the local toric action \eqref{toricact} if it commutes with the sequence of (generalized) cluster transformations that takes $\wx$ to $\wx'$.
 If local toric actions at all clusters are compatible, they define a {\it global toric action\/} $\TA_{\q}$ on $\FF_\C$ called a 
$\GCC$-extension of the local toric action \eqref{toricact}. 
The following proposition can be viewed as a natural extension of Lemma~2.3 in \cite{GSV1} and is proved in a similar way.

\begin{proposition}\label{globact}
The local toric action~\eqref{toricact} is uniquely $\GCC$-extendable
to a global action of $(\C^*)^s$  if  $\wB W = 0$ and all Casimirs $\hat p_{ir}$
are invariant under~\eqref{toricact}.
 \end{proposition}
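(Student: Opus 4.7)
The plan is to follow the strategy of Lemma~2.3 in \cite{GSV1} and propagate the local action~\eqref{toricact} one mutation at a time along the mutation tree $\T_N$, using the form~\eqref{modexchange} of the generalized exchange relation in place of the ordinary two-term exchange.

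I would first analyze a single mutation in direction $k$. Apply $\TA^W_{\q}$ to the $r$-th summand
\[T_r=\bigl(\hat p_{kr}v_{k;>}^rv_{k;<}^{d_k-r}\bigr)^{1/d_k}u_{k;>}^ru_{k;<}^{d_k-r}\]
on the right-hand side of~\eqref{modexchange}. Since $\hat p_{kr}$ is invariant by assumption, only the $u$- and $v$-factors contribute, and direct exponent arithmetic gives the scaling factor $\prod_\alpha q_\alpha^{[rs_\alpha^+-(d_k-r)s_\alpha^-]/d_k}$, where $s_\alpha^{\pm}=\sum_{i:\pm b_{ki}>0}b_{ki}w_{i\alpha}$ runs over \emph{all} indices $i$ (mutable and stable, appropriately weighted). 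The $k$-th row of the hypothesis $\wB W=0$ reads precisely $s_\alpha^++s_\alpha^-=0$, so the bracketed expression collapses to $d_ks_\alpha^+$ and the scaling of $T_r$ becomes $\prod_\alpha q_\alpha^{s_\alpha^+}$, independent of $r$. Consequently $x_kx'_k$ scales by this common factor, and since $x_k$ scales with weight $w_{k\alpha}$, the variable $x'_k$ must scale with
\[w'_{k\alpha}=s_\alpha^+-w_{k\alpha}=\sum_{i:b_{ki}>0}b_{ki}w_{i\alpha}-w_{k\alpha},\]
with $w'_{i\alpha}=w_{i\alpha}$ for $i\ne k$. This both uniquely determines the local action at the adjacent seed and recovers the standard weight-mutation formula.

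Next I would verify that the new pair $(\wB',W')$ continues to satisfy the hypotheses. Row $k$ of $\wB'W'$ vanishes immediately because $\wB'$'s row $k$ is $-\wB$'s row $k$ and $b_{kk}=0$; for rows $j\ne k$, inserting~\eqref{MatrixMutation} and the formula for $w'_{k\alpha}$ into $(\wB'W')_{j\alpha}$ and using rows $j$ and $k$ of $\wB W=0$ shows that the correction $(|b_{jk}|b_{ki}+b_{jk}|b_{ki}|)/2$ summed against $W$ contributes exactly $b_{jk}s_\alpha^+$, cancelling the $-b_{jk}s_\alpha^+$ produced by $w'_{k\alpha}$. For invariance of the mutated Casimirs I would observe that the exponent of a stable $x_j$ in $q_{ir}$ equals $rb_{ij}\bmod d_i$; combining this with the congruence $b'_{ij}\equiv b_{ij}\pmod{d_i}$ noted after~\eqref{MatrixMutation} gives $q'_{ir}=q_{ir}$ (and hence $\hat p'_{ir}=\hat p_{ir}$) for $i\ne k$, while for $i=k$ the sign reversal in row $k$ together with~\eqref{CoefMutation} yields $\hat p'_{kr}=\hat p_{k,d_k-r}$. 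In either case $\hat p'_{ir}$ is one of the original Casimirs, and since the weight row of every stable variable is unchanged, invariance under $\TA^{W'}_{\q}$ persists.

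Induction on the tree distance from the initial seed then patches the compatible local actions into a unique global action on $\FF_\C$; uniqueness at each mutation step automatically propagates, and the tree structure of $\T_N$ eliminates any consistency issue. I expect the main obstacle to be the first step: the $d_k$-th roots and the floor-function exponents in $v_{k;\gtrless}^{[r]}$ make the bookkeeping for the per-term scaling subtle, and this is the only point where the generalized setting genuinely differs from the ordinary one. The subsequent matrix calculation and the Casimir invariance check are then essentially formal.
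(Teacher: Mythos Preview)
Your proposal is correct and follows precisely the route the paper indicates: it extends the argument of Lemma~2.3 in \cite{GSV1} by working with the generalized exchange relation in the form~\eqref{modexchange}, using the invariance of $\hat p_{kr}$ together with $\wB W=0$ to show that every summand scales by the same factor, and then propagating along $\T_N$ via the standard weight-mutation and the observation (already recorded after~\eqref{CoefMutation}) that the $\hat p_{ir}$ permute among themselves under mutation. The paper itself gives no further details beyond the reference, so your write-up is in fact more explicit than what appears there.
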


\subsection{Standard Poisson-Lie group $\G$ and its Drinfeld double}
\label{double}

A reductive complex Lie group $\G$ equipped with a Poisson bracket $\Poi$ is called a {\em Poisson--Lie group\/}
if the multiplication map
$\G\times \G \ni (X,Y) \mapsto XY \in \G$
is Poisson. Denote by 
$\langle \ , \ \rangle$ an invariant nondegenerate form on
$\g$, and by $\nabla^R$, $\nabla^L$ the right and
left gradients of functions on $\G$ with respect to this form defined by
\begin{equation*}
\left\langle \nabla^R f(X),\xi\right\rangle=\left.\frac d{dt}\right|_{t=0}f(Xe^{t\xi}),  \quad
\left\langle \nabla^L f(X),\xi\right\rangle=\left.\frac d{dt}\right|_{t=0}f(e^{t\xi}X)
\end{equation*}
for any $\xi\in\g$, $X\in\G$.

Let $\pi_{>0}, \pi_{<0}$ be projections of  
$\g$ onto subalgebras spanned by positive and negative roots, $\pi_0$ be the projection onto the Cartan 
subalgebra $\h$, and let $R=\pi_{>0} - \pi_{<0}$. 
The {\em standard Poisson-Lie bracket\/} $\Poi_r$ on $\G$  can be written as  
\begin{equation}
\{f_1,f_2\}_r = \frac 1 2  \left( \left\langle R(\nabla^L f_1), \nabla^L f_2 \rangle - \langle R(\nabla^R f_1), \nabla^R f_2 \right\rangle \right).
\label{sklyabra}
\end{equation}

The standard Poisson--Lie structure is a particular case of Poisson--Lie structures corresponding to
quasitriangular Lie bialgebras. For a detailed exposition of these structures see, e.~g., 
\cite[Ch.~1]{CP}, \cite{r-sts} and \cite{Ya}.

Following \cite{r-sts}, let us recall the construction of {\em the Drinfeld double}. The double of $\g$ is 
$D(\g)=\g  \oplus \g$ equipped with an invariant nondegenerate bilinear form
$\langle\langle (\xi,\eta), (\xi',\eta')\rangle\rangle = \langle \xi, \xi'\rangle - \langle \eta, \eta'\rangle$. 
Define subalgebras $\D_\pm$ of $D(\g)$ by
$\D_+=\{( \xi,\xi) : \xi \in\g\}$ and $\D_-=\{ (R_+(\xi),R_-(\xi)) : \xi \in\g\}$,
where $R_\pm\in \End\g$ is given by $R_\pm=\frac{1}{2} ( R \pm \Id)$. 
The operator $R_D= \pi_{\D_+} - \pi_{\D_-}$ can be used to define 
a Poisson--Lie structure on $D(\G)=\G\times \G$, the double of the group $\G$, via
\begin{equation}
\{f_1,f_2\}_D = \frac{1}{2}\left (\left\langle\left\langle R_D(\dnabla^L f_1), \dnabla{^L} f_2 \right\rangle\right\rangle 
- \left\langle\left\langle R_D(\dnabla^R f_1), \dnabla^R f_2 \right\rangle\right\rangle \right),
\label{sklyadouble}
\end{equation}
where $\dnabla^R$ and $\dnabla^L$ are right and left gradients with respect to $\langle\langle \cdot ,\cdot \rangle\rangle$.
The diagonal subgroup $\{ (X,X)\ : \ X\in \G\}$ is a Poisson--Lie subgroup of $D(\G)$ (whose Lie algebra is $\D_+$) naturally isomorphic
to $(\G,\Poi_r)$.

The group $\G^*$  whose Lie algebra is $\D_-$ is a Poisson-Lie subgroup of $D(\G)$ called {\em the dual Poisson-Lie group of $\G$}.
The map $D(\G) \to \G$ given by $(X,Y) \mapsto U=X^{-1} Y$ induces another Poisson bracket on $\G$, see~\cite{r-sts}; we denote this bracket $\Poi_*$. 
The image of the restriction of this map to $\G^*$ is denoted $\G^\dag$. Symplectic leaves on 
$(\G,\Poi_*)$ were studied in \cite{EvLu}.

In this paper we only deal with the case of $\G=GL_n$. In that case $\G^\dag$ is the non-vanishing locus of trailing principal minors $\det U_{[i,n]}^{[i,n]}$
(here and in what follows we write $[a,b]$ to denote the set $\{i\in \Z : a\le i\le b\}$).
The bracket \eqref{sklyadouble} takes the form
\begin{equation}\label{sklyadoubleGL}
\begin{split}
\{f_1,f_2\}_D = &\langle R_+(E_L f_1), E_L f_2\rangle -  \langle R_+(E_R f_1), E_R f_2\rangle\\
&+  \langle X\nabla_X  f_1, Y\nabla_Y f_2\rangle - \langle\nabla_X  f_1\cdot X, \nabla_Y f_2 \cdot Y\rangle,
\end{split}
\end{equation}
where $\nabla_X f=\left(\frac{\partial f}{\partial x_{ji}}\right)_{i,j=1}^n$,
$\nabla_Y f=\left(\frac{\partial f}{\partial y_{ji}}\right)_{i,j=1}^n$,
and
\begin{equation}\label{erel}
E_R = X \nabla_X + Y\nabla_Y, \quad E_L =  \nabla_X X+ \nabla_Y Y.
\end{equation}
So, \eqref{sklyadoubleGL} can be rewritten as
\begin{equation}\label{sklyadoubleGL1}
\begin{split}
\{f_1,f_2\}_D = &\left\langle R_+(E_L f_1), E_L f_2\right\rangle -  \left\langle R_+(E_R f_1), E_R f_2\right\rangle\\
&+  \left\langle E_R  f_1, Y\nabla_Y f_2\right\rangle - \left\langle E_L f_1, \nabla_Y f_2 \cdot Y\right\rangle.
\end{split}
\end{equation}
Further, if $\fy$ is a function of $U=X^{-1}Y$ then  $E_L\fy=[\nabla\fy,U]$ and $E_R\fy=0$, and hence for an arbitrary function $f$ on $D(GL_n)$ one has
\begin{equation}\label{sklyadoubleGL2}
\{\fy,f\}_D = \left\langle R_+([\nabla\fy,U]), E_L f\right\rangle - \left\langle [\nabla\fy,U], \nabla_Y f \cdot Y\right\rangle.
\end{equation}

\section{Main results}\label{section3}

\subsection{Log-canonical basis}
\label{logcan}

Let $(X,Y)$ be a point in the double $D(GL_n)$. For $k,l\ge 1$, $k+l\le n-1$ define a $(k+l)\times(k+l)$ matrix 
$$
F_{kl}=F_{kl}(X,Y)=\left[\begin{array}{cc}X^{[n-k+1,n]} & Y^{[n-l+1,n]}\end{array}\right]_{[n-k-l+1,n]}.
$$ 
 For $1\le j\le i\le n$ define an $(n-i+1)\times (n-i+1)$ matrix
$$
G_{ij}=G_{ij}(X)=X_{[i,n]}^{[j,j+n-i]}.
$$
For $1\le i\le j\le n$ define an $(n-j+1)\times (n-j+1)$ matrix
$$
H_{ij}=H_{ij}(Y)=Y_{[i,i+n-j]}^{[j,n]}.
$$
For  $k,l\ge 1$, $k+l\le n$ define an $n\times n$ matrix
\begin{equation*}\label{phidef}
\Phi_{kl}=\Phi_{kl}(X,Y)=
\left[\begin{array}{ccccc}(U^0)^{[n-k+1,n]}& U^{[n-l+1,n]} & (U^2)^{[n]} & \dots & (U^{n-k-l+1})^{[n]}\end{array}\right] 
\end{equation*}
where $U=X^{-1}Y$. 

\begin{remark}
\label{identify}
Note that the definition of $F_{kl}$ can be extended to the case $k+l=n$ yielding $F_{n-l,l}=X\Phi_{n-l,l}$.
One can also identify $F_{0 l}$ with $H_{n-l+1,n-l+1}$ and $F_{k 0}$ with $G_{n-k+1,n-k+1}$. 
Finally, it will be convenient, for technical reasons, to identify $G_{i,i+1}$ with $F_{n-i,1}$. 
\end{remark}

Denote 
\begin{gather*}
f_{kl}=\det F_{kl},\quad g_{ij}=\det G_{ij},\quad h_{ij}=\det H_{ij},\\  
\fy_{kl}=s_{kl}(\det X)^{n-k-l+1}\det\Phi_{kl}, 
\end{gather*}
$2n^2-n+1$ functions in total. 
 Here $s_{kl}$ is a sign defined as follows: 
\[
s_{kl}=\begin{cases}
                    (-1)^{k(l+1)}\qquad\qquad\qquad\qquad \text{for $n$  even},\\
										(-1)^{(n-1)/2+k(k-1)/2+l(l-1)/2} \quad \text{for $n$ odd}.
\end{cases}
\]										
It is periodic in $k+l$ with period~4 for 
$n$ odd and period~2 for $n$ even; $s_{n-l,l}=1$; $s_{n-l-1,l}=(-1)^l$ for $n$ odd and $s_{n-l-1,l}=(-1)^{l+1}$ for $n$ even;
$s_{n-l-2,l}=-1$ for $n$ odd; $s_{n-l-3,l}=(-1)^{l+1}$ for $n$ odd.
Note that the pre-factor in the definition of $\fy_{kl}$ is needed to obtain a regular function in matrix entries of $X$ and $Y$.

Consider the polynomial 
\[
\det(X+\lambda Y)=\sum_{i=0}^n \lambda^i s_ic_i(X,Y), 
\]
where $s_i=(-1)^{i(n-1)}$. 
Note that $c_0(X,Y)=\det X=g_{11}$ and  $c_n(X,Y)=\det Y=h_{11}$.

\begin{theorem}
\label{basis}
The family of functions $F_n=\{g_{ij}, h_{ij}, f_{kl}, \fy_{kl}, c_1,\ldots, c_{n-1} \}$ forms a log-canonical coordinate system with respect to the Poisson-Lie bracket  \eqref{sklyadoubleGL1} on $D(GL_n)$.
\end{theorem}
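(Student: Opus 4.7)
The plan is to reduce the log-canonicity check to a gradient computation that exploits invariance of each element of $F_n$ under suitable one-parameter subgroups of $D(GL_n)$. The bracket \eqref{sklyadoubleGL1} (and its simplification \eqref{sklyadoubleGL2} for functions of $U=X^{-1}Y$ alone) depends on its arguments only through $E_L$, $E_R$, $\nabla_X$, $\nabla_Y$; once the support of these gradients is pinned down by invariance, the pairings against $R_+$ restrict to block-triangular pieces, and weight considerations force the outcome to be a scalar multiple of the product of the two functions.

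I would first catalog the invariances. The trailing minors $g_{ij}$ are annihilated by left multiplication of $X$ by unipotents supported on the first $i-1$ rows and by a corresponding right action on the columns outside the window $[j,j+n-i]$; the minors $h_{ij}$ admit the symmetric description for $Y$. The determinants $f_{kl}$, viewed as minors of the augmented matrix $[X\;Y]$, inherit analogous invariance under the joint left action $(X,Y)\mapsto(LX,LY)$ by appropriate unipotents. The piece $\det\Phi_{kl}$ of $\fy_{kl}$ depends only on $U=X^{-1}Y$ and is invariant under the diagonal right action $(X,Y)\mapsto(XA,YA)$, so via Leibniz it contributes to brackets through \eqref{sklyadoubleGL2}, while the prefactor $(\det X)^{n-k-l+1}$ contributes a controlled $g_{11}$-like term. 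Finally, the coefficients $c_i$ of $\det(X+\lambda Y)$ are quasi-invariant under the simultaneous left and right diagonal actions, so their gradients lie in the corresponding stabilizer algebras up to an explicit scalar.

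I would then compute brackets block by block. The $\{g,g\}$, $\{h,h\}$, and $\{g,h\}$ blocks collapse to the classical log-canonicity of trailing minors for \eqref{sklyabra}, since the purely $X$- or $Y$-dependent gradients annihilate the mixed terms of \eqref{sklyadoubleGL1}. Blocks involving $f_{kl}$ are treated via the augmented-matrix viewpoint, again reducing to trailing minor formulas. Blocks involving $\fy_{kl}$ are handled through \eqref{sklyadoubleGL2}: writing $\nabla\det\Phi_{kl}$ via the classical adjoint of $\Phi_{kl}$ and using that the columns of $\Phi_{kl}$ are powers of $U$, hence commute with $U$, the $R_+$-pairings collapse to Cartan projections and yield integer scalars. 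Brackets with the $c_i$ follow from quasi-invariance: each is a scalar derivative of a character of a diagonal subgroup, which is automatically log-canonical against a weight-homogeneous function.

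The main obstacle I expect is the $\{\fy_{kl},\fy_{k'l'}\}$ block together with the mixed brackets of $\fy$ against $g$, $h$, and $f$. Here $\Phi_{kl}$ stacks different powers of $U$, so the clean diagonal invariance is obscured; the prefactor $(\det X)^{n-k-l+1}$ and the signs $s_{kl}$ must be tracked carefully so that apparent scalar coefficients come out as the correct integers; and the interplay between $[\nabla\fy,U]$ and $R_+$ produces the right Cartan projection only after a careful identity relating $\nabla_X\fy_{kl}$, $\nabla_Y\fy_{kl}$, and the rows of $\Phi_{kl}^{-1}$. Once this bookkeeping lemma is in place, the log-canonicity exponent of each pair can be read off from the relative positions of the index sets.
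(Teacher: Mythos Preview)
Your invariance-plus-homogeneity framework for the cross-brackets between the four families is exactly what the paper does (its Lemmas~\ref{cross-inv} and~\ref{homogen}), and that part is fine. The treatment of $c_i$ is off, though: these are not ``scalar derivatives of characters'' but rather Casimirs of $\Poi_D$, which is the fact the paper invokes; your description does not establish log-canonicity with anything.

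The real gap is in the within-family brackets, and what you call ``bookkeeping'' is in fact the core of the proof. For $\{\fy,\fy\}$, the invariance $\tilde\fy(AXN_-,AYN_-)=\tilde\fy(X,Y)$ gives $E_L\fy=[\nabla\fy,U]\in\b_-$, so the $R_+$-term in \eqref{PoissonU} does reduce to $\tfrac12\langle\cdot,\cdot\rangle_0$ as you say; but the second term $-\langle[\nabla\fy_1,U],\nabla\fy_2\cdot U\rangle$ has no such reduction, since $\nabla\fy_2\cdot U$ lies in no triangular subalgebra. The paper handles this by an entirely different mechanism: it passes to the normal form $U=N_-B_+CN_-^{-1}$, computes the push-forward of $\Poi_*$ to the submatrix $B_+'$ (Lemma~\ref{PoissonB}, a nontrivial calculation producing $\Poi_r$ plus a $\tau$-correction \eqref{brackB}), and then uses that $\tilde\fy_{kl}$ becomes a product of dense minors of $B_+'$, log-canonical for both pieces. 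No identity of the type you describe (``relating $\nabla_X\fy$, $\nabla_Y\fy$, and rows of $\Phi_{kl}^{-1}$'') appears or would suffice. Similarly, for $\{f,f\}$ the paper does not use an ``augmented-matrix viewpoint'' but constructs a genuine Poisson map $Z:(X,Y)\mapsto(Y_{>0})^{-1}X_{\ge0}$ from $(D(GL_n),\Poi_D)$ to $(GL_n,\Poi_r)$ (Lemma~\ref{Zmap}) and identifies $f_{kl}$ with $h_{n-l+1,n-l+1}(Y)\cdot h_{\alpha\beta}(Z)$, reducing to the known log-canonicity on $GL_n$. Your outline is missing both of these Poisson-map constructions, which are the substantive steps.
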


\subsection{Initial quiver}
\label{init}
The modified extended exchange matrix $\widehat{B}$ has a  skew-symmet\-ric principal part, and hence 
can be represented by a quiver. 
The quiver $Q_n$ contains $2n^2 - n +1$  vertices  labeled by the functions $g_{ij}, h_{ij}, f_{kl}, \fy_{kl}$ in the log-canonical basis $F_n$. 
The functions $c_1, \ldots, c_{n-1}$ correspond to isolated vertices. They are not connected to any of the other vertices, and will be not shown on figures.
The vertex $\fy_{11}$ is the only special vertex, and its multiplicity equals $n$.
The vertices $g_{i1}$, $1\le i\le n$, and $h_{1j}$, $1\le j\le n$, are frozen, so $N=2n^2-3n+1$ and $M=3n-1$. Below we describe $Q_n$ assuming that $n>2$.

Vertex $\fy_{kl}$, $k,l\ne1$, $k+l<n$, has degree~6. The edges pointing from $\fy_{kl}$ are $\fy_{kl}\to\fy_{k+1,l}$,
$\fy_{kl}\to \fy_{k,l-1}$, and $\fy_{kl}\to \fy_{k-1,l+1}$; the edges pointing towards $\fy_{kl}$ are $\fy_{k,l+1}\to\fy_{kl}$,
$\fy_{k+1,l-1}\to\fy_{kl}$, and $\fy_{k-1,l}\to\fy_{kl}$. Vertex $\fy_{kl}$, $k,l\ne1$, $k+l=n$, has degree~4. 
The edges pointing from $\fy_{kl}$ in this case are $\fy_{kl}\to \fy_{k,l-1}$ and $\fy_{kl}\to f_{k-1,l}$; the edges pointing towards $\fy_{kl}$ are 
 $\fy_{k-1,l}\to\fy_{kl}$ and $f_{k,l-1}\to \fy_{kl}$. 

Vertex $\fy_{k1}$, $k\in [2,n-2]$, has degree~4. The edges pointing from $\fy_{k1}$ are $\fy_{k1}\to\fy_{k-1,2}$ and
$\fy_{k1}\to \fy_{1k}$; the edges pointing towards $\fy_{k1}$ are $\fy_{k2}\to\fy_{k1}$ and 
$\fy_{1,k-1}\to\fy_{k1}$. Note that for $k=2$ the vertices $\fy_{k-1,2}$ and $\fy_{1k}$ coincide, hence for $n>3$
there are two edges pointing from $\fy_{21}$ to $\fy_{12}$. Vertex $\fy_{n-1,1}$ has degree 5. The edges pointing from $\fy_{n-1,1}$
are $\fy_{n-1,1}\to \fy_{1,n-1}$, $\fy_{n-1,1}\to f_{n-2,1}$, and $\fy_{n-1,1}\to g_{11}$; the edges pointing towards
$\fy_{n-1,1}$ are $\fy_{1,n-2}\to \fy_{n-1,1}$ and $g_{22}\to \fy_{n-1,1}$.

Vertex $\fy_{1l}$, $l\in [2,n-2]$, has degree~6. The edges pointing from $\fy_{1l}$ are $\fy_{1l}\to\fy_{2l}$,
$\fy_{1l}\to \fy_{1,l-1}$, and $\fy_{1l}\to \fy_{l+1,1}$; the edges pointing towards $\fy_{1l}$ are $\fy_{1,l+1}\to\fy_{1l}$,
$\fy_{2,l-1}\to\fy_{1l}$, and $\fy_{l1}\to\fy_{1l}$. Vertex $\fy_{1,n-1}$ has degree 5. The edges pointing from $\fy_{1,n-1}$
are $\fy_{1,n-1}\to \fy_{1,n-2}$ and $\fy_{1,n-1}\to h_{22}$; the edges pointing towards
$\fy_{1,n-1}$ are $\fy_{n-1,1}\to \fy_{1,n-1}$, $f_{1,n-2}\to \fy_{1,n-1}$, and $h_{11}\to \fy_{1,n-1}$.

Finally, $\fy_{11}$ is the special vertex. It has degree~4, and the corresponding edges are $\fy_{12}\to\fy_{11}$, $g_{11}\to\fy_{11}$ 
and $\fy_{11}\to\fy_{21}$, $\fy_{11}\to h_{11}$.

Vertex $f_{kl}$, $k+l<n$, has degree~6. The edges pointing from $f_{kl}$ are $f_{kl}\to f_{k+1,l-1}$,
$f_{kl}\to f_{k,l+1}$, and $f_{kl}\to f_{k-1,l}$; the edges pointing towards $f_{kl}$ are $f_{k-1,l+1}\to f_{kl}$,
$f_{k,l-1}\to f_{kl}$, and $f_{k+1,l}\to f_{kl}$. To justify this description in the extreme cases (such that $k+l=n-1$, $k=1$,
and $l=1$), we use the identification of Remark~\ref{identify}.

\begin{figure}[ht]
\begin{center}
\includegraphics[width=12cm]{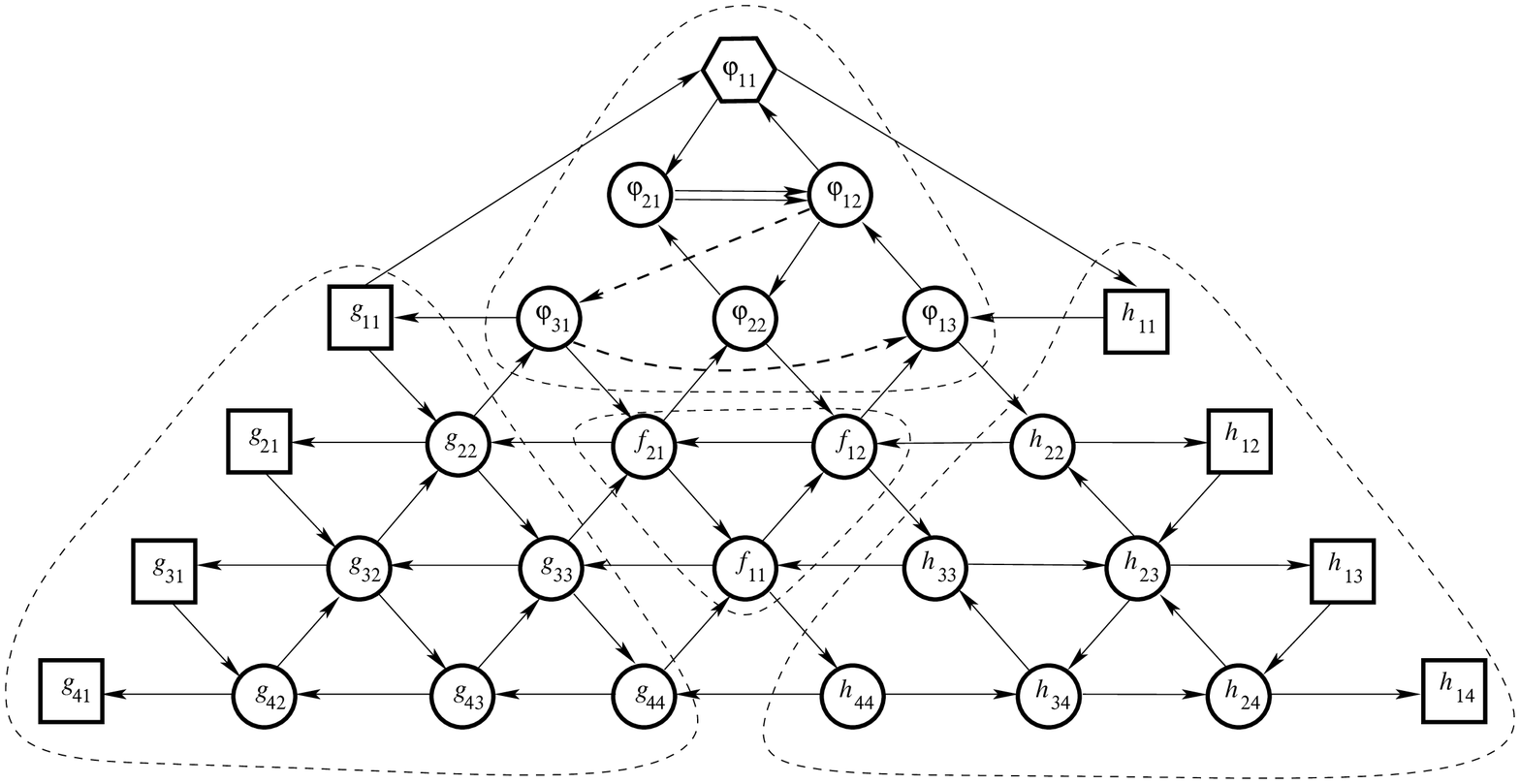}
\caption{Quiver $Q_4$}
\label{D4}
\end{center}
\end{figure}

Vertex $g_{ij}$, $i\ne n$, $j\ne 1$, has degree~6. The edges pointing from $g_{ij}$ are $g_{ij}\to g_{i+1,j+1}$,
$g_{ij}\to g_{i,j-1}$, and $g_{ij}\to g_{i-1,j}$; the edges pointing towards $g_{ij}$ are $g_{i,j+1}\to g_{ij}$,
$g_{i-1,j-1}\to g_{ij}$, and $g_{i+1,j}\to g_{ij}$ (for $i=j$ we use the identification of Remark~\ref{identify}).
Vertex $g_{nj}$, $j\ne 1$, has degree~4. The edges pointing from $g_{nj}$ 
are $g_{nj}\to g_{n-1,j}$ and $g_{nj}\to g_{n,j-1}$; the edges pointing towards $g_{nj}$ are 
$g_{n-1,j-1}\to g_{nj}$ and $g_{n,j+1}\to g_{nj}$  (for $j=n$ we use the identification of Remark~\ref{identify}).
Vertex $g_{i1}$, $i\ne 1$, $i\ne n$, has degree~2, and the corresponding edges 
are $g_{i1}\to g_{i+1,2}$ and $g_{i2}\to g_{i1}$.  
Vertex $g_{11}$ has degree~3, and the corresponding edges are $\fy_{n-1,1}\to g_{11}$ and $g_{11}\to g_{21}$, $g_{11}\to\fy_{11}$.
Finally, $g_{n1}$ has degree~1, and the only edge is $g_{n2}\to g_{n1}$.

Vertex $h_{ij}$, $i\ne 1$, $j\ne n$, $i\ne j$, has degree~6. The edges pointing from $h_{ij}$ are 
$h_{ij} \to h_{i,j-1}$, $h_{ij} \to h_{i-1,j}$, and $h_{ij} \to h_{i+1,j+1}$;
 the edges pointing towards $h_{ij}$ are $h_{i+1,j}\to h_{ij}$,
$h_{i,j+1}\to h_{ij}$, and $h_{i-1,j-1}\to h_{ij} $. 
Vertex $h_{ii}$, $i\ne 1$, $i\ne n$, has degree~4. The edges pointing from $h_{ii}$ are $h_{ii}\to f_{1,n-i}$ and $h_{ii}\to h_{i-1,i}$; the edges pointing to $h_{ii}$ are $f_{1,n-i+1}\to h_{ii}$ and $h_{i,i+1}\to h_{ii}$.
Vertex $h_{in}$, $i\ne 1$, $i\ne n$, has degree~4. The edges pointing from $h_{in}$ 
are $h_{in}\to h_{i,n-1}$ and $h_{in}\to h_{i-1,n}$; the edges pointing towards $h_{in}$ are 
$h_{i+1,n}\to h_{in}$ and $h_{i-1,n-1}\to h_{in}$. 
Vertex $h_{1j}$, $j\ne 1$, $j\ne n$, has degree~2, and the corresponding edges 
are $h_{1j}\to h_{2,j+1}$ and $h_{2j}\to h_{1j}$. 
Vertex $h_{nn}$ has degree~3. The edges pointing from $h_{nn}$ are $h_{nn}\to g_{nn}$ and $h_{nn}\to h_{n-1,n}$; the edge pointing
to $h_{nn}$ is $f_{11}\to h_{nn}$. Vertex $h_{11}$ has degree~2, and the corresponding edges are $h_{11}\to\fy_{1,n-1}$ and $\fy_{11}\to h_{11}$.
Finally, $h_{1n}$ has degree~1, and the only edge is $h_{2n}\to h_{1n}$.

The quiver $Q_4$ is shown in Fig.~\ref{D4}. The frozen vertices are shown as squares, the special vertex is shown as a hexagon, isolated vertices are not shown. 
 Certain arrows are dashed; this does not have any particular meaning, and just makes the picture more comprehensible.
One can identify in $Q_n$ four 
``triangular regions" associated with four families
$\{ g_{ij}\}$, $\{h_{ij}\}$, $\{f_{kl}\}$, $\{\fy_{kl}\}$. We will call vertices in these regions $g$-, $h$-, $f$- and $\fy$-vertices, respectively. It is easy to see that $Q_4$, as well as $Q_n$ for any $n$, can be embedded into a torus. 

The case $n=2$ is special. In this case there are only three types of vertices: $g$, $h$, and $\fy$. The quiver $Q_2$ is shown in Fig.~\ref{D2}.

\begin{figure}[ht]
\begin{center}
\includegraphics[width=6cm]{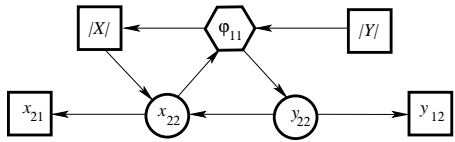}
\caption{Quiver $Q_2$}
\label{D2}
\end{center}
\end{figure}

\begin{remark}
\label{diagonal} On the diagonal subgroup $\{ (X,X) : X\in GL_n\}$ of $D(GL_n)$, $g_{ii} = h_{ii}$ for $1\le i\le n$, and functions $f_{kl}$ and $\fy_{kl}$ vanish identically.
Accordingly, vertices in $Q_n$ that correspond to $f_{kl}$ and $\fy_{kl}$ are erased and, for $1\le i\le n$, vertices corresponding to $g_{ii}$ and  $h_{ii}$ are identified.
As a result, one recovers a seed of the cluster structure compatible with the standard Poisson-Lie structure on $GL_n$,  see~\cite[Chap.~4.3]{GSVb}.
\end{remark}
 
\begin{remark}
\label{tribfz} 
At this point, we should emphasize a connection between the data $(F_n, Q_n)$ and particular seeds that give rise to the standard cluster structures on double Bruhat 
cells $\G^{e,w_0}$, $\G^{w_0,e}$ for $\G=GL_n$ and $w_0$ the longest element in the corresponding Weyl group. 
We will frequently explore this connection in what follows. Consider, in particular, the subquiver $Q_n^h$ of $Q_n$ associated with functions $h_{ij}$ in which, 
in addition to vertices $h_{1i}$, we also
view vertices $h_{ii}$ as frozen. Restricted to upper triangular matrices, the family $\{h_{ij}\}$ together with the quiver $Q_n^h$ defines an initial seed for the 
cluster structure on $\G^{e,w_0}$. This can be seen, for example, by applying the construction of Section~2.4 in \cite{CAIII} using  a reduced word 
$1 2 1 3 2 1\ldots (n-1) (n-2) \ldots 2 1$ for $w_0$. This leads to the cluster formed by
all dense minors that involve the first row. The seed we are interested in is then obtained via the transformation $B \mapsto W_0 B^T W_0$ 
applied to upper triangular matrices. Similarly,
the family $\{g_{ij}\}$ restricted to lower-triangular matrices together with the quiver $Q_n^g$ obtained from $Q_n$ in the same way as $Q_n^h$ (and isomorphic to it) defines an initial seed for $\G^{w_0,e}$. 

As explained in Remark~2.20 in \cite{CAIII}, in the case of  the standard cluster structures on $\G^{e,w_0}$ or $\G^{w_0,e}$, the cluster algebra and the upper cluster algebra coincide. This implies, in particular, that in every cluster for this cluster structure, each matrix entry of an upper/lower triangular matrix is expressed as a Laurent polynomial in cluster variables which is polynomial in stable variables. Furthermore, using similar considerations and the invariance under right multiplication by unipotent lower triangular matrices of column-dense minors that involve the first column, it is easy to conclude that each such minor has a Laurent polynomial expression  in terms of dense minors involving the first column and, moreover, each leading dense principal minor enters this expression polynomially. Both  these properties will be utilized below.

\end{remark}

\subsection{Exchange relations}\label{exrelsec} 
We define the set 
 $\P_n$ of strings for $Q_n$ that contains only one nontrivial string $\{p_{1r}\}$,
$0\le r\le n$. It corresponds to the vertex $\fy_{11}$ of multiplicity $n$, and $p_{1r}=c_r$,  $1\le r\le n-1$. 
The strings corresponding to all other vertices are trivial. Consequently, the generalized exchange relation at the vertex $\fy_{11}$ 
for $n>2$ is expected to look as follows:
\begin{equation*}
\fy_{11}\fy'_{11}=\sum_{r=0}^nc_r\fy_{21}^r\fy_{12}^{n-r}.
\end{equation*}
Indeed, such a relation exists in the ring of regular functions on $D(GL_n)$, and is given by the following proposition.

\begin{proposition}\label{polyrel}
For any $n>2$,
\begin{equation}\label{general}
 \det ((-1)^{n-1}\fy_{12}X+\fy_{21}Y)=\fy_{11}P^*_n,
\end{equation}
where $P^*_n$ is a polynomial in the entries of $X$ and $Y$.
\end{proposition}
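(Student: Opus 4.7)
\emph{Plan.} The proof splits into an algebraic reformulation of the left-hand side, followed by a divisibility argument.

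\emph{Step 1: Reformulating the determinant.} I would start from the defining expansion $\det(X+\lambda Y)=\sum_{i=0}^n\lambda^i s_i c_i$ with $s_i=(-1)^{i(n-1)}$ given in the paper. Substituting $\lambda\mapsto b/a$ and multiplying by $a^n$ yields the polynomial identity
\[
\det(aX+bY)=\sum_{i=0}^n s_i c_i\,a^{n-i}b^i.
\]
Setting $a=(-1)^{n-1}\fy_{12}$ and $b=\fy_{21}$, the combined sign collapses: $s_i\cdot((-1)^{n-1})^{n-i}=(-1)^{(n-1)(i+n-i)}=(-1)^{n(n-1)}=+1$, since $n(n-1)$ is always even. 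Hence
\[
\det\bigl((-1)^{n-1}\fy_{12}X+\fy_{21}Y\bigr)=\sum_{k=0}^n c_k\,\fy_{12}^{n-k}\fy_{21}^k,
\]
which is precisely the generalized exchange polynomial at the special vertex $\fy_{11}$ dictated by the string $\{p_{1r}\}=\{c_r\}$ and the quiver $Q_n$.

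\emph{Step 2: Divisibility by $\fy_{11}$.} It remains to show this sum is divisible by $\fy_{11}$ in $\C[X,Y]$; then $P^*_n$ is automatically polynomial. Here I would exploit the Krylov structure shared by the three functions. With $U=X^{-1}Y$ and $K=[e_n\mid Ue_n\mid\cdots\mid U^{n-1}e_n]$, one has $\fy_{11}=s_{11}(\det X)^{n-1}\det K$, while $\fy_{12}$ and $\fy_{21}$ are (up to sign) determinants of matrices obtained from $K$ by removing $U^{n-1}e_n$ and inserting $Ue_{n-1}$ or $e_{n-1}$. Factoring $M=X\bigl((-1)^{n-1}\fy_{12}I+\fy_{21}U\bigr)$ and invoking Cayley--Hamilton in the form $U^n e_n=\sum_{k=1}^n(-1)^{k-1}(s_k c_k/\det X)\,U^{n-k}e_n$, one obtains the companion-type identity $\bigl((-1)^{n-1}\fy_{12}I+\fy_{21}U\bigr)K=K\,T$ for an explicit $n\times n$ matrix $T$. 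Taking determinants gives the rational factorization $\det M=(\det X)\det T$, which is consistent with Step 1 but does not by itself display the factor $\fy_{11}$.

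\emph{Main obstacle.} The hard part is promoting this rational identity to a polynomial factorization $\det M=\fy_{11}P^*_n$ in $\C[X,Y]$. My approach would be to construct $P^*_n$ directly as the determinant of an auxiliary $n\times n$ polynomial matrix $\Psi$ whose columns combine $X$- and $Y$-columns together with the vectors $e_{n-1}$ and $Ue_{n-1}$, mirroring the column structures of $\Phi_{12}$ and $\Phi_{21}$ so that the resulting determinant absorbs the denominators appearing in $\det T$. The desired identity $\fy_{11}\det\Psi=\det M$ then reduces to a Pl\"ucker-type relation among minors sharing most of their columns, which can be verified by column operations and (possibly) induction on $n$. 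Setting up $\Psi$ and carrying out this combinatorial verification is the technical core of the proof; once it is in place, the polynomial divisibility, and hence the existence of the polynomial $P^*_n$, is immediate.
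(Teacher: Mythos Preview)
Your Step~1 is correct and indeed yields the exchange-polynomial form $\sum_{k=0}^n c_k\,\fy_{12}^{n-k}\fy_{21}^k$; the paper uses exactly this interpretation. The gap is entirely in Step~2. Your companion identity $(\alpha I+\beta U)K=K(\alpha I+\beta C)$ is true but tautological: taking determinants just re-derives Step~1 and, as you note, produces no factor of $\det K$. Everything therefore rests on the construction of the auxiliary matrix $\Psi$ and the verification of $\fy_{11}\det\Psi=\det M$, which you describe only in outline. In particular, your suggestion that $\Psi$ should be assembled from \emph{columns} mixing $X,Y,e_{n-1},Ue_{n-1}$ and that the identity should reduce to a Pl\"ucker relation is not how the argument actually goes, and I do not see how to make a column construction of that type work: the left-hand side is degree~$n$ in the entries of $A=X^{-1}Y$ while each column of $\Phi_{12},\Phi_{21}$ contributes at most degree~$1$, so a single $n\times n$ determinant in such columns cannot match degrees.

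The paper closes the gap by proving a more general identity (its Proposition~\ref{long_identity}): for any $A$ and vectors $u,v$, with $K=K(u)$, $K_1=[v\,|\,u\,|\,Au\,|\cdots|\,A^{n-2}u]$, $K_2=[Av\,|\,u\,|\,Au\,|\cdots|\,A^{n-2}u]$, one has
\[
\det\bigl(\det K_1\cdot A-\det K_2\cdot\one\bigr)=(-1)^{\binom{n}{2}}\det K\cdot\det K^*,
\]
where $K^*$ is the \emph{row} Krylov matrix built from the last row $w$ of the classical adjoint of $K_1$ (rows $w,wA,\dots,wA^{n-1}$). This gives the other factor explicitly and manifestly polynomially. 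The proof is not via Pl\"ucker relations or column operations but by a genericity reduction: assume $A$ diagonal with distinct eigenvalues, compute both sides as products over eigenvalues using Vandermonde determinants, and match. Specializing $A=X^{-1}Y$, $u=e_n$, $v=e_{n-1}$ then yields the proposition. If you want to salvage your approach, the key missing ingredient is precisely this explicit description of $P^*_n$ as $\det K^*$; once you have it, the diagonalization argument is short and avoids the combinatorial induction you anticipated.
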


For $n=2$, relation~\eqref{general} is replaced by
$$
\det(-y_{22}X+x_{22}Y)=\fy_{11}
\left|\begin{matrix}
y_{21} & y_{22}\cr
x_{21} & x_{22}
\end{matrix}\right|.
$$

 \subsection{Statement of main results}
\label{main}
Let $n\ge 2$.

\begin{theorem}
\label{structure}
{\rm (i)}  
The extended seed $\widetilde\Sigma_n=(F_n,Q_n,\P_n)$ defines 
a generalized cluster structure  $\GCC_n^D$
in the ring of regular functions on $D(GL_n)$ compatible with the standard Poisson--Lie structure on $D(GL_n)$. 

{\rm (ii)} The corresponding generalized upper cluster algebra 
 $\UU(\GCC_n^D)$ 
over 
\[
\widehat{\AA}=\C[g_{11}^{\pm1},g_{21},\dots, g_{n1},h_{11}^{\pm1}, h_{12},\dots,h_{1n}, c_1,\dots, c_{n-1}]
\] 
is naturally isomorphic to
the ring of regular functions on  $D(GL_n)$.
\end{theorem}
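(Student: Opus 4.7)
The plan is to handle the two parts of the theorem separately, but both via the general machinery set up in Section~\ref{section2}. For part (i), compatibility with the Poisson bracket $\Poi_D$ reduces to checking the two hypotheses of Proposition~\ref{compatchar} for the initial seed $\widetilde\Sigma_n$. The matrix $\Omega$ of log-canonical coefficients of the basis $F_n$ exists by Theorem~\ref{basis}, while the extended exchange matrix $\wB$ is read off from the quiver $Q_n$ described in Section~\ref{init}. A direct block computation, organized according to the four triangular regions of $g$-, $h$-, $f$-, and $\fy$-vertices, should yield $\wB\Omega=[\Delta\;\;0]$ with invertible diagonal $\Delta$; the zero block corresponds to the stable variables $g_{i1}$, $h_{1j}$, and $c_1,\dots,c_{n-1}$. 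To see that the nontrivial $\hat p_{kr}$ (supported at the special vertex $\fy_{11}$, with $p_{1r}=c_r$) are Casimirs, it is enough to observe that the $c_i(X,Y)$ are coefficients of $\det(X+\lambda Y)$ and therefore arise from $\Ad$-invariants of the double, hence are Casimirs of $\Poi_D$ by a standard quasi-triangular Lie bialgebra argument.

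Regularity of the generalized cluster structure will follow from Proposition~\ref{regcoin}. The functions in $F_n$ are manifestly regular on $D(GL_n)$ as (signed) determinants of explicit matrices built from $X$, $Y$ and $U=X^{-1}Y$ (with the pre-factor $(\det X)^{n-k-l+1}$ clearing the pole in $\fy_{kl}$). Pairwise coprimality in $\O(D(GL_n))$ follows because each element of $F_n$ is an irreducible polynomial in the matrix entries with a distinct vanishing locus. The bulk of the work is then verifying condition (ii) of Proposition~\ref{regcoin}: for every mutable vertex $k$, the generalized exchange relation \eqref{exchange} expresses $x_k x'_k$ as a regular function, and moreover $x'_k$ is regular and coprime with $x_k$. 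At the special vertex $\fy_{11}$, this is exactly the content of Proposition~\ref{polyrel}: $\fy'_{11}=P_n^*$ is regular, and coprimality with $\fy_{11}$ is read from the identity. At $g$- and $h$-vertices, the local exchange relations reduce to classical Pl\"ucker-type identities, as these subquivers coincide (by Remark~\ref{tribfz}) with initial seeds for the double Bruhat cluster structures on $GL_n^{e,w_0}$ and $GL_n^{w_0,e}$. At $f$- and $\fy$-vertices the required identities are genuinely new determinantal identities in the entries of $(X,Y)$; these will be established by block-matrix manipulations, with many of the technical ingredients collected in Section~\ref{aux}.

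For part (ii), I invoke condition (iii) of Proposition~\ref{regcoin}, that each regular function on $D(GL_n)$ lies in $\theta(\UU_\C(\GCC_n^D))$. Using the description of upper cluster algebras over subrings developed in Section~\ref{novoeslovo}, together with the coprimality verifications carried out for part (i), $\UU_\C(\GCC_n^D)$ over $\widehat\AA$ coincides with the intersection of the Laurent rings in a finite collection of clusters. Since $\O(D(GL_n))=\C[X,Y,(\det X)^{-1},(\det Y)^{-1}]$ with $\det X=g_{11}$ and $\det Y=h_{11}$ inverted in $\widehat\AA$, it suffices to show that every matrix entry of $X$ and of $Y$ lies in $\UU_\C(\GCC_n^D)$. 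Using the subquivers $Q_n^g$ and $Q_n^h$ and the fact (recalled in Remark~\ref{tribfz}) that the standard cluster algebra and upper cluster algebra on the double Bruhat cells coincide, the entries of $X$ (resp.\ $Y$) are already Laurent polynomials in the $g$-cluster (resp.\ $h$-cluster), with polynomial dependence on stable variables; taking intersections with other clusters then confines them to $\UU_\C(\GCC_n^D)$. The reverse inclusion is automatic once regularity from part (i) is in hand.

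The principal obstacle will be the verification of the generalized exchange relations at the $f$- and $\fy$-vertices, and especially at the boundary vertices $\fy_{k1}$, $\fy_{1l}$, $\fy_{n-1,1}$, $\fy_{1,n-1}$, where the quiver geometry transitions between regions and uses the identifications of Remark~\ref{identify}. Proposition~\ref{polyrel} handles the single most delicate case (the special vertex), but the companion identities at neighbouring $\fy$-vertices — which must reproduce, after cancellation of Casimirs, honest determinantal formulas in $X$ and $Y$ — are not available in the classical literature and must be proved from scratch. A secondary, more bookkeeping difficulty is the compatibility computation: the double bracket \eqref{sklyadoubleGL1} is less symmetric than the Sklyanin bracket on $GL_n$, so the log-canonical coefficients mix $g$-, $h$-, $f$- and $\fy$-families in ways that must be tracked carefully to confirm the block form of $\wB\Omega$.
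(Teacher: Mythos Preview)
Your plan for part~(i) is essentially the paper's: compatibility via Proposition~\ref{compatchar}, irreducibility for coprimality, and a vertex-by-vertex verification of the exchange relations (with Proposition~\ref{polyrel} at the special vertex and Pl\"ucker-type identities elsewhere). The paper organizes the compatibility check somewhat differently---rather than assembling $\wB\Omega$ directly, it verifies $\{\log x_u,\log y_v\}_D=\lambda\delta_{uv}$ by reducing, region by region, to the known compatibility on $GL_n$ via the Poisson maps $Z$ of Lemma~\ref{Zmap} and $B_+'$ of Lemma~\ref{PoissonB}---but this is a packaging difference, not a mathematical one.

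For part~(ii), however, there is a genuine gap. Your key step---that the entries of $X$ (resp.\ $Y$) are Laurent polynomials in the $g$-variables (resp.\ $h$-variables) by appeal to Remark~\ref{tribfz}---fails. The functions $g_{ij}(X)$, $1\le j\le i\le n$, are all invariant under $X\mapsto N_+X$ for unipotent upper triangular $N_+$ (see~\eqref{ourinv_prop}), so any Laurent expression in them shares this invariance; an entry such as $x_{1n}$ does not, and hence cannot be so expressed. Remark~\ref{tribfz} concerns the double Bruhat cell $\G^{w_0,e}$, which is (an open piece of) the \emph{lower triangular} matrices, not all of $GL_n$; there are only $n(n+1)/2$ functions $g_{ij}$, too few to coordinatize an $n^2$-dimensional group. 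The same objection applies verbatim to $Y$ and the $h$-variables.

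The paper's route to part~(ii) is considerably more intricate and is not a corollary of anything done for part~(i). For $X$, one shows (Theorem~\ref{allxcluster}) that every matrix entry $x_{ij}$ is in fact a \emph{cluster variable}, obtained by an explicit inductive sequence of mutations on auxiliary quivers $\Gamma_m^n$; this uses the full initial seed, not just the $g$-subquiver. For $Y$, one does not work with $Y$ directly at all: one first applies a long sequence $\TE$ of mutations to reach a seed $\widetilde\Sigma_n'$ containing the functions $(\det X)\,h_{ij}(U)$ for $U=X^{-1}Y$ (Theorem~\ref{clusterU}), builds a specific nerve $\N_0$ around it, and then proves (Theorem~\ref{Urestore}, carried out in six separate components in Section~\ref{upeqreg}) that every entry of $U$ times a power of $\det X$ lies in the upper bound $\UU(\N_0)$. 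Only then does $Y=XU$ give the entries of $Y$. Theorem~\ref{nervub} is what lets one pass from $\UU(\N_0)$ to the full upper cluster algebra. None of this machinery is accounted for in your proposal.
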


\begin{remark}\label{lowdim}
1. Since the only stable variables that do not vanish on $D(GL_n)$ are $g_{11}=\det X$ and $h_{11}=\det Y$, 
the ground ring in (ii) above is a particular case of~\eqref{hata}.
In fact, it follows from the proof that a stronger statement holds:

(i) $\GCC_n^D$ extends to a regular generalized cluster structure on $\Mat_n\times \Mat_n$; 

(ii) the generalized upper cluster algebra over 
\[
\widehat{\AA}=\C[g_{11}^{\pm1},g_{21},\dots, g_{n1},h_{11}, h_{12},\dots,h_{1n}, c_1,\dots, c_{n-1}]
\]
 is naturally isomorphic to
the ring of regular function on $GL_n\times \Mat_n$.

2. For $n=2$ the obtained generalized cluster structure has a finite type.
Indeed, the principal part of the exchange matrix for the cluster shown
in Fig.~\ref{D2} has a form
\[
\left(\begin{array}{rrr}
0 & 2  & -2 \\
-1 & 0 & 1 \\
1 & -1 & 0\\
\end{array}\right).
\]
The mutation of this matrix in direction $2$ transforms it into
\[
\left(\begin{array}{rrr}
0 & -2  & 0 \\
1 & 0 &-1 \\
0 & 1 & 0\\
\end{array}\right), 
\]
and its Cartan companion is a Cartan matrix of type $B_3$. Therefore, by~\cite[Thm.~2.7]{CheSha},  the generalized cluster structure has type $B_3$. This
implies, in particular,  that its exchange graph is the 1-skeleton of
the 3-dimensional cyclohedron (also known as the Bott--Taubes polytope), and its cluster complex is the 3-dimensional
polytope polar dual to the cyclohedron (see~\cite[Sec.~5.2]{FoRe} for further details).

3. It follows immediately from Theorem~\ref{structure}(i) that the extended seed obtained from $\widetilde\Sigma_n$ by deleting functions $\det X$
and $\det Y$ from $F_n$, deleting the corresponding vertices from $Q_n$
and restricting relation~\eqref{general} to $\det X=\det Y=1$ defines a generalized cluster structure in the ring of regular functions on $D(SL_n)$
compatible with the standard Poisson--Lie structure on $D(SL_n)$. Moreover,
by Theorem~\ref{structure}(ii), 
the corresponding generalized upper cluster algebra is naturally isomorphic to the ring of regular functions on $D(SL_n)$.
\end{remark}

Using  Theorem \ref{structure}, we can construct a  generalized cluster 
structure 
on $GL_n^\dag$. For $U\in GL_n^\dag$, denote
$\psi_{kl}(U) = s_{kl}\det \Phi_{kl}(U)$, where $s_{kl}$ are the signs defined in Section \ref{logcan}.
The initial extended cluster $F^\dag_n$  for $GL_n^\dag$ consists of
functions  $\psi_{kl}(U)$, $k,l\ge 1$, $k+l\le n$, $(-1)^{(n-i)(j-i)}h_{ij}(U)$, $2\le i\le j\le n$, $h_{11}(U)=\det U$, and $c_i(\one, U)$, 
$1\le i\le n-1$.
To obtain the initial seed for $GL_n^\dag$, we apply a certain
sequence $\TE$ of cluster transformations to the initial seed
for $D(GL_n)$. This sequence does not involve vertices associated with
functions $\psi_{kl}$. The resulting cluster $\TE(F_n)$ contains a
subset $\{ \left (\det X\right )^{\nu(f)}f  : f \in F^\dag_n\}$ with 
$\nu(\psi_{kl})=n-k-l+1$ and $\nu(h_{ij})=1$.  
These functions are attached to a subquiver $Q^\dag_n$ in the resulting quiver $\TE(Q_n)$, which 
is isomorphic to the subquiver of $Q_n$ formed by vertices
associated with functions $\fy_{kl}, f_{ij}$ and $h_{ii}$, see Fig.~\ref{D*4}.
Functions $h_{ii}(U)$ are declared stable variables, $c_i(\one,U)$ remain isolated.  
See Theorem~\ref{clusterU} below for more details.

\begin{figure}[ht]
\begin{center}
\includegraphics[width=6cm]{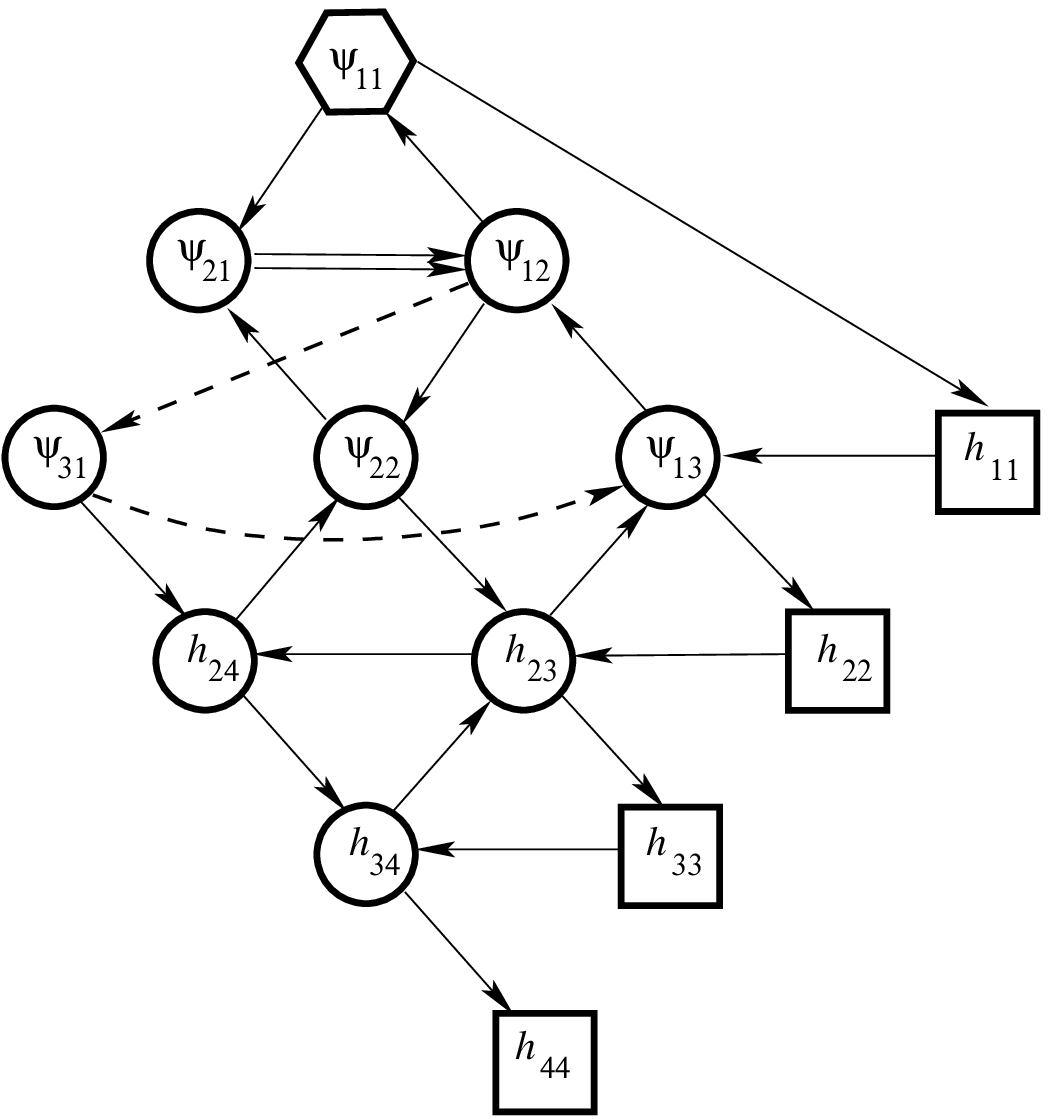}
\caption{Quiver $Q^\dag_4$}
\label{D*4}
\end{center}
\end{figure}

All exchange relations  defined by mutable
vertices of $Q^\dag_n$ are homogeneous in $\det X$. This allows us to use $(F^\dag_n, Q_n^\dag,\P_n)$ as an initial seed for $GL_n^\dag$.
The generalized exchange relation associated with the cluster variable
$\psi_{11}$ now takes the form
$\det ((-1)^{n-1}\psi_{12}\one+\psi_{21}U)=\psi_{11}\Pi^*_n$,
where $\Pi^*_n$ is a polynomial in the entries of $U$.

\begin{theorem}
\label{dualstructure}
{\rm (i)} The extended seed $(F_n^\dag,Q_n^\dag,\P_n)$ defines a generalized cluster structure 
$\GCC_n^{\dag}$
in the ring of regular functions on $GL_n^\dag$ compatible with $\Poi_*$.
 
{\rm (ii)} The corresponding generalized  upper cluster algebra over 
\[
\widehat{\AA}=\C[h_{11}(U)^{\pm1}, \dots, h_{nn}(U)^{\pm1}, c_1(\one,U),\dots, c_{n-1}(\one,U)]
\]
is naturally isomorphic to the ring of regular functions on $GL_n^\dag$.
\end{theorem}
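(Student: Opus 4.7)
The plan is to deduce Theorem~\ref{dualstructure} from Theorem~\ref{structure} via the mutation sequence $\TE$ indicated in the preamble to the theorem, whose explicit form will be the content of the forthcoming Theorem~\ref{clusterU}. The strategy rests on the fact that the map $\pi\colon (X,Y)\mapsto X^{-1}Y$, when restricted to $GL_n^*\subset D(GL_n)$, is a Poisson projection onto $(GL_n^\dag,\Poi_*)$, so one needs only to exhibit a seed of $\GCC_n^D$ whose variables, on a distinguished subquiver, factor as powers of $\det X$ times functions of $U$, and then divide out those powers of $\det X$ to land inside $\O(GL_n^\dag)$.

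The first and heaviest step is to verify that applying $\TE$ to $\widetilde\Sigma_n$ yields a seed whose extended cluster contains the collection $\{(\det X)^{\nu(f)}f(U):f\in F_n^\dag\}$, with the stated exponents $\nu(\psi_{kl})=n-k-l+1$, $\nu(h_{ij})=1$, and $\nu=0$ on the $c_i$, and that the restriction of $\TE(Q_n)$ to the corresponding vertices (after re-freezing the $h_{ii}$'s) realizes the quiver $Q_n^\dag$ depicted in Figure~\ref{D*4}. This is an inductive determinantal computation: each mutation in $\TE$ replaces one explicit minor by another, and the target factorization is tracked using the definitions of $F_{kl}$, $G_{ij}$, $H_{ij}$, $\Phi_{kl}$ together with the Desnanot--Jacobi identity and its block analogues from Section~\ref{aux}. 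Once this factorization is in place, every mutable exchange inside the embedded $Q_n^\dag$ is homogeneous in $\det X$, because the exponents on both sides are dictated by $\nu$ and by the frozen $h_{ii}$'s. Dividing by the common $\det X$-power and pulling back along the section $\sigma\colon U\mapsto(\one,U)$ yields a well-defined relation purely among the $f\in F_n^\dag$; in particular the generalized exchange at $\psi_{11}$ is the specialization of Proposition~\ref{polyrel} at $X=\one$, $Y=U$.

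For compatibility with $\Poi_*$ (assertion~(i)) I would combine log-canonicity of $\TE(F_n)$ on $D(GL_n)$ with the observation that $\det X$ is log-canonical with every variable in $\TE(F_n)$ (the relevant exponents are integer entries of the extended exchange matrix). A direct computation using \eqref{sklyadoubleGL2} then shows that pushing a log-canonical pair $(\det X)^{\nu_1}f_1$, $(\det X)^{\nu_2}f_2$ along $\pi$ produces a log-canonical pair $f_1$, $f_2$ on $GL_n^\dag$ with the same bracket coefficient, so (i) follows from Proposition~\ref{compatchar} applied to the descended data. For assertion~(ii) I would invoke Proposition~\ref{regcoin} on $V=GL_n^\dag$. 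The coprimality hypotheses~(i)--(ii) of that proposition descend from the corresponding facts for $\TE(\widetilde\Sigma_n)$ on $D(GL_n)$ established in the proof of Theorem~\ref{structure}(ii), since a common divisor on $GL_n^\dag$ lifts to a common divisor on an open subset of $D(GL_n)$ and extends by the UFD property. Hypothesis~(iii) is then checked by observing that, for any regular $\phi$ on $GL_n^\dag$, some multiple $(\det X)^N\pi^*\phi$ extends regularly to $D(GL_n)$; by Theorem~\ref{structure}(ii) it is a Laurent polynomial in $\TE(F_n)$, and homogeneity in $\det X$ converts this into a Laurent expression for $\phi$ in $F_n^\dag$.

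The main obstacle is unquestionably the first step: the explicit determinantal bookkeeping needed to verify that every intermediate cluster variable produced by $\TE$ factors as $(\det X)^{\nu}\cdot(\text{function of }U)$ with the predicted $\nu$, and that the resulting induced quiver on the relevant vertices is exactly $Q_n^\dag$. Everything else---descent, Poisson compatibility, and the natural isomorphism with $\O(GL_n^\dag)$---is a formal consequence of the $D(GL_n)$ picture once this factorization has been secured.
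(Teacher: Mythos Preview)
For part~(i) your strategy matches the paper's: deduce it from Theorem~\ref{clusterU} together with the fact that $\det X$ is a Casimir for $\Poi_D$, so that log-canonicity and the compatibility relation of Proposition~\ref{compatchar} descend along $(X,Y)\mapsto U$.

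For part~(ii) there is a genuine gap. You write that once $(\det X)^N\pi^*\phi$ is Laurent in $\TE(F_n)$, ``homogeneity in $\det X$ converts this into a Laurent expression for $\phi$ in $F_n^\dag$.'' But $\TE(F_n)$ has $2n^2$ variables, only $n^2$ of which lie on $Q_n'$ and factor as $(\det X)^\nu\cdot(\text{function of }U)$; the other $n^2$---the frozen $g_{i1}(X)$, the $h_{1j}(Y)$ for $j\ge2$, and the intermediate $\chi^k_{ij}$ of~\eqref{chistark}---do not, and homogeneity alone does not prevent them from appearing in the Laurent expression. (What does is that $(\det X)^N\pi^*\phi$ depends only on $(U,\det X)$ while the non-$Q_n'$ variables are algebraically independent over $\C(U,\det X)$, but you do not invoke this.) More seriously, you only treat the single cluster $F_n^\dag$, whereas condition~(iii) of Proposition~\ref{regcoin} demands $\phi\in\UU(\GCC_n^\dag)$, i.e.\ Laurent in \emph{every} cluster; closing that would require a correspondence between all clusters of $\GCC_n^\dag$ and those of $\GCC_n^D$ reached from $\TE(F_n)$ by $Q_n'$-only mutations, with the independence argument repeated at each---a nontrivial use of Theorem~\ref{clusterU}(iv). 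The paper takes a different route that sidesteps all of this: the explicit restoration of the entries of $U$ in the proof of Theorem~\ref{Urestore} uses \emph{only} the variables $\psi_{kl}$, $h_{ij}(U)$, $c_i(\one,U)$ and their $Q_n'$-mutations across the nerve $\N_0$, so that single computation is already a nerve-level statement for $\GCC_n^\dag$, and Theorem~\ref{nervub} then yields Theorem~\ref{structure}(ii) and Theorem~\ref{dualstructure}(ii) simultaneously rather than deducing the latter from the former.
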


\begin{remark}
1. It follows from Remark~\ref{ultinohii} that a stronger statement holds, similarly to Remark~\ref{lowdim}.1:
 
 (i) $\GCC_n^\dag$ extends to a regular generalized cluster structure on $\Mat_n$;

(ii)  the generalized upper cluster algebra over 
\[
\widehat{\AA}=\C[h_{11}(U),\dots, h_{nn}(U), c_1(\one,U),\dots, c_{n-1}(\one,U) ]
\]
 is naturally isomorphic to
the ring of regular function on $\Mat_n$.

2. Let $\mathcal V_n$ be the intersection of $SL_n^\dag$ with a generic conjugation orbit in $SL_n$. This variety plays a role in a rigorous mathematical
description of Coulomb branches in  4D gauge theories. The generalized cluster structure $\GCC_n^{\dag}$ descends 
 to $\mathcal V_n$ if one fixes the values of $c_1(\one,U),\dots,c_{n-1}(\one,U)$. 
The existence of a cluster structure on $\mathcal V_n$ was suggested by D.~Gaiotto (A.~Braverman, private communication).
\end{remark}

\subsection{The outline of the proof}\label{outline} 
We start with defining a local toric action of right and left multiplication by diagonal 
matrices, and use Proposition~\ref{globact} to check that this action can be extended to a global one. This fact is then used in the proof of the compatibility
assertion in Theorem~\ref{structure}(i), which is based on Proposition~\ref{compatchar}. As a byproduct, we get that the extended exchange matrix
of $\GCC_n^D$ is of full rank.

Next, we have to check conditions (i)--(iii) of Proposition~\ref{regcoin}. The regularity condition in (i) follows from Theorem~\ref{basis} and the explicit
description of the basis. The coprimality condition in (i) is a corollary of the following stronger statement.

\begin{theorem}\label{irredinbas}
All functions in $F_n$ are irreducible polynomials in matrix entries.
\end{theorem}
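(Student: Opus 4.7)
The plan is to split $F_n$ into four families and leverage, as the workhorse, the classical fact that every minor of a matrix of algebraically independent entries is an irreducible polynomial.

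First, $g_{ij}=\det G_{ij}$ and $h_{ij}=\det H_{ij}$ are minors of the generic matrices $X$ and $Y$ respectively, while $f_{kl}=\det F_{kl}$ is a minor of the $n\times 2n$ matrix $[X\mid Y]$, so each is irreducible. By the identity $F_{n-l,l}=X\Phi_{n-l,l}$ from Remark~\ref{identify}, the boundary functions $\fy_{n-l,l}=s_{n-l,l}\det F_{n-l,l}$ are also minors of $[X\mid Y]$, hence irreducible.

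For the coefficients $c_i$ with $1\le i\le n-1$, I would specialize $X=I$: from $\det(I+\lambda Y)=\sum_i\lambda^i\sum_{|S|=i}\det Y_S^S$ one obtains $c_i(I,Y)=\pm e_i(Y)$, the $i$-th coefficient of the characteristic polynomial of $Y$, which is classically irreducible. Since $c_i$ is bihomogeneous of bidegree $(n-i,i)$ in $(X,Y)$, any factorization $c_i=pq$ consists of bihomogeneous factors; the specialization forces one factor, say $p$, to have $p(I,Y)$ constant, and bihomogeneity in $Y$ then gives $p\in\C[X]$. Thus $p(X)$ divides every $Y$-coefficient of $c_i$. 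Extracting the coefficient of $y_{11}\cdots y_{ii}$, which equals $\pm g_{i+1,i+1}$, and of $y_{22}y_{33}\cdots y_{i+1,i+1}$, which equals $\pm\det X_{\{1,i+2,\ldots,n\}}^{\{1,i+2,\ldots,n\}}$, produces two coprime irreducible principal minors of $X$ both divisible by $p(X)$, forcing $p$ to be a unit.

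The remaining case $\fy_{kl}$ with $k+l<n$ is the main obstacle. I would apply the same two-step template: specialize $X=I$ to obtain $\fy_{kl}(I,Y)=s_{kl}\det\Phi_{kl}(Y)$, whose columns are $e_{n-k+1},\ldots,e_n$, then $Y^{[n-l+1,n]}$, then $(Y^2)^{[n]},\ldots,(Y^{n-k-l+1})^{[n]}$; expansion along the first $k$ standard-basis columns reduces this to a Krylov--Hankel-type determinant in a truncated block of $Y$, whose irreducibility I would establish via a leading-monomial computation under a carefully chosen lexicographic order on the $y_{ij}$, together with examination of a second monomial to exclude any factorization compatible with that leading term. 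Once $\fy_{kl}(I,Y)$ is irreducible, the bihomogeneity-plus-coprime-coefficients argument from the $c_i$ case lifts irreducibility to $\fy_{kl}\in\C[X,Y]$; a separate check that $\det X$ does not divide $\fy_{kl}$, carried out by exhibiting a singular $X$ with $\fy_{kl}(X,Y)\ne 0$, rules out the only other candidate for an $X$-only factor. The Krylov-block leading-monomial analysis is where the substantive work lies.
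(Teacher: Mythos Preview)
Your treatment of $g_{ij}$, $h_{ij}$, $f_{kl}$, and the boundary $\fy_{n-l,l}$ matches the paper's. Your $c_i$ argument is correct and genuinely different: the paper argues directly from the fact that $c_i$ is linear in every single matrix entry (so no nontrivial factor can be compatible with this multilinearity), whereas you specialize $X=I$ and then identify two coprime principal minors of $X$ among the $Y$-monomial coefficients of $c_i$. Both routes work.

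The $\fy_{kl}$ case with $k+l<n$ has two real gaps. First, the irreducibility of $\psi_{kl}(Y)=\fy_{kl}(I,Y)$ is the crux, and ``leading-monomial computation under a carefully chosen lexicographic order, together with examination of a second monomial'' is a hope, not a proof: the columns $(Y^j)^{[n]}$ for $j\ge 2$ involve all entries of $Y$, and you name no order and give no reason such an argument would succeed. The paper's proof (Proposition~\ref{irredfyu}) is concrete and quite different: it specializes $Y$ to explicit sparse matrices $M_{kl}$ carrying two free parameters $z,t$ (Lemma~\ref{expform}), computes $\psi_{kl}(M_{kl})=\pm P_{n-k-l+1}(z,t)$ for an explicit two-variable family, proves each $P_m$ irreducible by a Newton-polygon argument (Lemma~\ref{polytwo}), and then uses a structural observation on how each $u_{jn}$ enters $\psi_{kl}$ (Lemma~\ref{coeff}) to lift irreducibility from the specialization back to $\psi_{kl}$. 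Second, even granting $\fy_{kl}(I,Y)$ irreducible, your lifting to $\C[X,Y]$ is incomplete: bihomogeneity does force a factor $p$ with $p(I,Y)\in\C^\times$ into $\C[X]$, but your assertion that $\det X$ is ``the only other candidate'' for such a factor is unjustified---nothing you have said rules out an arbitrary irreducible $p\in\C[X]$, and you do not actually carry out a coprime-coefficients computation for $\fy_{kl}$. The paper sidesteps this by \emph{also} proving $\fy_{kl}(X,I)$ irreducible (Proposition~\ref{irredfyuinv}, via the same specialization technique applied to $\bar\psi_{kl}$). With both one-variable specializations irreducible, bihomogeneity forces any nontrivial factorization into $P_1(X)P_2(Y)$ with $P_1(I),P_2(I)$ nonzero, hence $\fy_{kl}(I,I)\ne 0$; but $\fy_{kl}(I,I)=0$ since $\Phi_{kl}$ at $U=I$ has repeated columns.
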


We then establish the regularity and coprimality conditions in (ii), which completes the proof of Theorem~\ref{structure}(i).

To prove Theorem~\ref{structure}(ii), it is left  to check condition~(iii) of Proposition~\ref{regcoin}. 
The usual way to do that consists in applying Theorem~3.21 from~\cite{GSVb} which claims that for cluster structures of
geometric type with an exchange matrix of full rank, the upper cluster algebra coincides with the upper bound at any cluster. It remains to choose an
appropriate set of generators in $\O(V)$ and to check that each element of this set can be represented as a Laurent polynomial in some fixed cluster and in all its neighbors.
We will have to extend the above result in three directions:

1) to upper cluster algebras over $\widehat\AA$, 
as opposed to upper cluster algebras over $\bar\AA$;

2) to more general neighborhoods of a vertex in $\T_N$, as opposed to the stars of vertices;

3) to generalized cluster structures of geometric type, as opposed to ordinary cluster structures.

Let $\GCC=\GCC(\wB,\P)$ be a generalized cluster structure as defined in Section~\ref{SecPrel}, and let $L$ be the number of isolated variables in $\GCC$. For the $i$th
nontrivial string in $\P$, define a $(d_i-1)\times L$ integer matrix $\wB(i)$: the $r$th row of $\wB(i)$ contains the exponents of isolated variables in the exchange coefficient 
$p_{ir}$ (recall that $p_{ir}$ are monomials). 

Following~\cite{Fra}, we call a {\it nerve\/} an arbitrary subtree of $\T_N$ on $N+1$ vertices such that all its edges have different labels. A star
of a vertex in $\T_N$ is an example of a nerve. Given a nerve $\N$, we define an {\it upper bound\/} $\UU(\N)$ as the intersection of the rings of Laurent polynomials
$\widehat\AA[\x^{\pm 1}]$ taken over all seeds in $\N$. We prove the following theorem that seems to be interesting in its own right.

\begin{theorem}\label{nervub}
Let $\wB$ be a skew-symmetrizable matrix of full rank, and let 
\[
\rank \wB(i)=d_i-1 
\]
for any nontrivial string in $\P$.
Then the upper bounds $\UU(\N)$ do not depend on the choice of $\N$, and hence coincide with the
generalized upper cluster algebra $\UU(\wB,\P)$ over $\widehat\AA$.
\end{theorem}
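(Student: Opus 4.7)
The plan is to adapt the standard argument for the upper bound theorem (as in Proposition~3.6 of~\cite{FoPy}, Theorem~3.21 of~\cite{GSVb}, and the nerve version in~\cite{Fra}) while accommodating three simultaneous generalizations: the coefficient ring $\widehat\AA$ (which lies strictly between $\AA$ and $\bar\AA$), arbitrary nerves, and generalized exchange relations with $d_k+1$ terms. The backbone is an invariance statement under a single ``nerve swap'' (replacing one vertex of $\N$ by another vertex of $\T_N$ attached by an edge of the same label), combined with the combinatorial fact that any two nerves are connected by such swaps. Once nerve-independence of $\UU(\N)$ is established, the identification $\UU(\N)=\UU(\wB,\P)$ is immediate: every cluster of $\GCC$ sits in some nerve through a fixed base seed, so $\UU(\N)$ is contained in every Laurent ring over clusters and hence in $\UU(\wB,\P)$, while the reverse inclusion is tautological.

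The central local computation is the two-cluster identity
\begin{equation*}
\widehat\AA[\x^{\pm 1}]\cap \widehat\AA[\x'^{\pm 1}]
=\widehat\AA\bigl[x_i^{\pm 1}: i\neq k\bigr][x_k,x_k']
\end{equation*}
for a seed $(\x,\wB,\P)$ and its adjacent seed $(\x',\wB',\P')$ in direction $k$. I would work inside the UFD $R=\widehat\AA[x_i^{\pm 1}: i\neq k]$, expand a test element of the left-hand side as a Laurent polynomial in $x_k$ with coefficients in $R$, substitute the generalized exchange relation~\eqref{exchange}, and track when the result remains Laurent in $x_k'$. The standard caterpillar-style reduction then boils down to the claim that, in $R$, the variables $x_k$ and $x_k'$ are coprime, and the exchange polynomial $x_kx_k'$ has no common factor with any $x_j$ for $j\neq k$.

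The main obstacle is this coprimality, and it is where the hypothesis $\rank\wB(i)=d_i-1$ enters. The $d_k+1$ cluster $\tau$-monomials $u_{k;>}^r u_{k;<}^{d_k-r}$ are pairwise distinct with disjoint support patterns of their fractional exponents, so no cluster variable $x_j$, $j\neq k$, divides all terms simultaneously. For non-isolated stable variables, the full-rank assumption on $\wB$ forces variation of the stable $\tau$-monomials across the $d_k+1$ terms and rules out a common factor. The delicate case is an isolated stable variable $x_j$: its total exponent in the $r$-th term of the right-hand side of~\eqref{exchange} is carried entirely by $p_{kr}$, and equals the corresponding entry of the $r$-th row of $\wB(k)$. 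A common factor of $x_j$ across all terms would therefore produce a nonzero vector in the left kernel of $\wB(k)$; the hypothesis $\rank\wB(k)=d_k-1$ rules this out precisely, and thereby delivers the coprimality of $x_k$ and $x_k'$.

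Finally, if nerves $\N,\N'\subset\T_N$ differ by an elementary swap, then they share all Laurent rings but one, and the local identity applied at the unique seed of $\N\cap\N'$ adjacent to both swapped vertices gives $\UU(\N)=\UU(\N')$, since the other shared rings already encode the $x_k^{\pm1}$-direction being traded. Connectivity of the nerve graph under such swaps is a straightforward induction on the symmetric difference $|\N\triangle\N'|$. Combined with the first paragraph, this proves Theorem~\ref{nervub}.
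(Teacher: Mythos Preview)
Your outline has the right skeleton (two-cluster identity, nerve swaps, connectivity), and the two-cluster identity
\[
\widehat\AA[\x^{\pm 1}]\cap \widehat\AA[\x'^{\pm 1}]
=\widehat\AA\bigl[x_i^{\pm 1}: i\neq k\bigr][x_k,x_k']
\]
is correct---it is relation~\eqref{twolaur} in the paper. The gap is in the nerve-swap step and in \emph{which} coprimality is actually required.

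The swap does not follow from the two-cluster identity alone. Already for $N=2$, the nerves $\x_1\text{---}\x\text{---}\x_2$ and $\x\text{---}\x_1\text{---}\x_3$ share $\x$ and $\x_1$; applying~\eqref{twolaur} to this shared pair gives the same ring $\widehat\AA[x_1,x_1',x_2^{\pm1}]$ on both sides, but one must still intersect with $\widehat\AA[x_1^{\pm1},(x_2')^{\pm1}]$ in one case and with $\widehat\AA[(x_1')^{\pm1},(x_2'')^{\pm1}]$ in the other, and nothing you have written explains why these intersections agree. This is precisely the content of Lemma~\ref{2pstar}, whose hypothesis is that the exchange polynomials $P_1,P_2$ (and $P_1,\bar P_2$) are \emph{coprime as polynomials in} $\AA[x_1,x_2]$. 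Your coprimality statements---$x_k$ coprime to $x_k'$, and $P_k$ not divisible by any $x_j$---suffice only for~\eqref{twolaur}, not for the swap. The paper also notes that the proof of Lemma~\ref{2pstar} over $\widehat\AA$ differs substantially from the $\bar\AA$ case because monomials cannot be inverted freely; your sketch does not engage with this.

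Your invocation of the hypothesis $\rank\wB(k)=d_k-1$ is also off. Since $p_{k0}=p_{kd_k}=1$, an isolated variable never appears in the two extreme terms of $P_k$ and hence can never divide all of them; no rank assumption is needed for that, and the ``left kernel'' formulation does not match what a common monomial factor would mean. In the paper the hypothesis enters Lemma~\ref{rankcond} for a different purpose: it ensures that the equations $w(\hat p_{kr})=0$ determine weights on the isolated variables making each $P_k$ quasihomogeneous; combined with the full-rank assumption on $\wB$, this yields that a nontrivial common factor of $P_i$ and $P_j$ would force rows $i$ and $j$ of $\wB$ to be proportional. It is this coprimality of \emph{pairs of exchange polynomials} that feeds Lemma~\ref{2pstar} and hence the nerve swap.
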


We then proceed as follows. First, we choose the  $2n^2$ matrix entries of $X$ and $Y$ as the generating set of the ring of regular functions on $D(GL_n)$. Then we
prove the following result.

\begin{theorem}\label{allxcluster}
Each matrix entry of $X$ is either a stable variable or a cluster variable in $\GCC_n^D$.
\end{theorem}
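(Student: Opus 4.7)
The proof exhibits, for each matrix entry $x_{ij}$ of $X$, an explicit sequence of mutations in $\GCC_n^D$ that produces $x_{ij}$ as a cluster variable (or identifies it as a stable variable).

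First I would observe that the entries of the last row of $X$ already appear in the initial cluster: $g_{n,1}=x_{n,1}$ is a stable variable, and for $j\ge 2$, $g_{n,j}=x_{n,j}$ is a mutable cluster variable. This is the base of an induction on the row index.

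The main tool is the subquiver $Q_n^g$ of $Q_n$ spanned by the $g$-vertices. By inspection of the explicit description of $Q_n$ in Section~\ref{init}, every interior vertex $g_{ij}$ with $1<j<i$ has all of its neighbors in $Q_n$ among the $g$-vertices. Consequently, any mutation sequence supported on such interior vertices is carried out identically in $\GCC_n^D$ and in the sub-structure obtained from $(\{g_{ij}\},Q_n^g)$ by treating $g_{ii}$ and $g_{i1}$ as frozen. By Remark~\ref{tribfz}, this sub-structure, when restricted to lower-triangular matrices, is the standard cluster structure on the double Bruhat cell $\G^{w_0,e}$, and in this cluster structure each matrix entry of a lower-triangular matrix has a Laurent polynomial expression in the cluster variables that is polynomial in the stable variables. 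The Berenstein--Fomin--Zelevinsky construction of \cite{CAIII}, applied to a reduced word $s_1s_2s_1s_3s_2s_1\cdots s_{n-1}\cdots s_2s_1$ for $w_0$, provides an explicit mutation sequence realizing each such matrix entry as a cluster variable on $\G^{w_0,e}$.

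The subtle point, which is the main obstacle, is that the lifted cluster variables in $\GCC_n^D$ are Laurent polynomials in the $g$-family that, viewed as rational functions on $D(GL_n)$, agree with $x_{ij}$ only on the lower-triangular locus; since that locus is not Zariski-dense in $GL_n$, equality as regular functions does not follow automatically (for instance, already for $n=3$ the expression $g_{22}/g_{33}$ equals $x_{22}$ only on lower-triangular $X$). To bridge this gap, I would supplement the interior $g$-mutations with mutations at boundary vertices $g_{ii}$ and $g_{nn}$, whose neighborhoods in $Q_n$ extend into the $f$-, $h$-, and $\fy$-regions via the identifications of Remark~\ref{identify}. The resulting exchange relations, combined with Desnanot--Jacobi identities for minors of $X$ and the weight constraints imposed by the left--right toric action of Section~\ref{torac}, cancel the extraneous non-lower-triangular contributions and produce the exact matrix entry. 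A parallel argument crossing into the $f$- and $\fy$-regions covers the upper-triangular entries $x_{ij}$ with $i<j$, which vanish on the lower-triangular locus and are therefore not witnessed by the $g$-sub-structure alone. Throughout, irreducibility of matrix entries together with Theorem~\ref{irredinbas} ensures that the cluster variable so produced coincides with $x_{ij}$ rather than a proper multiple.
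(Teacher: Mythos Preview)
Your proposal identifies the correct base case and, crucially, the correct obstacle: mutations supported on the interior of the $g$-region produce only further minors of the form $\det X_{[i,n]}^J$, and these agree with individual entries of $X$ only on the lower-triangular locus. But the step you call ``supplementing with mutations at boundary vertices $g_{ii}$ and $g_{nn}$'' is where the entire argument must live, and it is left as a gesture. A single mutation at $g_{ii}$ brings in one $f$-variable and one $\fy$-variable; no mechanism is offered by which this, combined with ``weight constraints'' and unspecified Desnanot--Jacobi identities, converts a Laurent polynomial in trailing minors into the bare entry $x_{ij}$. The situation for upper-triangular entries is worse: these vanish on the lower-triangular locus, so the $g$-subquiver sees nothing of them, and ``a parallel argument crossing into the $f$- and $\fy$-regions'' is not an argument. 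There is also a counting obstruction you do not address: the $n(n+1)/2$ functions $g_{ij}$ cannot determine the $n^2$ entries of a generic $X$ even as rational functions, so no amount of mutating within or near the $g$-region can succeed.

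The paper's proof does not attempt to stay near the $g$-region. It works with a larger subquiver $\Gamma_m^n$ containing $g$-, $f$-, and $h$-vertices, and runs an induction on a size parameter $m$ from $n$ down to $2$. At stage $m$ the variables are minors of the truncated matrices $X_m=X_{[1,m]}$ and $Y_m=Y_{[1,m]}^{[n-m+1,n]}$. One performs $n$ sweeps of $m-1$ commuting mutations each; each sweep is a batch of short Pl\"ucker relations $a_ia_i^*=b_ic_i+b_{i-1}c_{i+1}$ applied along a copy of a small gluing quiver $\Gamma_m$ that slides one step leftward through the $f$-region, with the underlying matrix $A$ built from columns of both $X$ and $Y$. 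Each sweep produces exactly one entry $x_{m-1,n-j}$ and simultaneously converts the remaining variables into the stage-$(m-1)$ family. The essential point is that the $Y$-dependent $f$- and $h$-variables are what make individual entries of a general $X$ accessible.
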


To treat the remaining part of the generating set we consider a special nerve $\N_0$ in the tree $\T_{(n-1)(2n-1)}$. First of all, we design a sequence $\TE$ of 
cluster transformations that takes the initial extended seed $\widetilde\Sigma_n$ to a new extended seed $\widetilde\Sigma'_n=\TE(\widetilde\Sigma_n)=
(\TE(F_n),\TE(Q_n),\TE(\P_n))$ having the following properties. 
 Let $Q_n^\dag$ and $F_n^\dag$ be as defined in Section~\ref{main}, and $U=X^{-1}Y$.

\begin{theorem}\label{clusterU}
There exists a sequence $\TE$ of cluster transformations such that 

{\rm (i)} $\TE(\P_n)=\P_n$; 

{\rm (ii)} $\TE(Q_n)$ contains a subquiver $Q_n'$ isomorphic to $Q_n^\dag$;  

{\rm (iii)} the functions in $\TE(F_n)$ assigned to the vertices of $Q_n'$ constitute the set  
$\left\{\left(\det X^{n-k-l+1}\psi_{kl}(U)\right)_{k,l\ge 1, k+l\le n}, \left(\det X\cdot h_{ij}(U)\right)_{2\le i\le j\le n}, \det X\cdot h_{11}(U)\right\}$;

{\rm (iv)} the only vertices in $Q'_n$ connected with the rest of vertices in $\TE(Q_n)$ are those associated with 
$\det X\cdot h_{ii}$, $2\le i\le n$,  and $\fy_{11}$.
\end{theorem}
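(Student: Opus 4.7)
The plan is to construct $\TE$ as a finite sequence of cluster mutations that never touches the $\fy$-block of $Q_n$ (in particular, leaves the special vertex $\fy_{11}$ untouched). Since $\P_n$ has only one non-trivial string, attached to $\fy_{11}$, and the exchange coefficient mutation rule~\eqref{CoefMutation} alters strings only at the mutated vertex, condition~(i) follows immediately.

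The crucial initial observation for~(iii) is the tautological identity $\fy_{kl}(X,Y)=(\det X)^{n-k-l+1}\psi_{kl}(U)$ in $\O(D(GL_n))$, which holds because $\Phi_{kl}$ depends on $(X,Y)$ only via $U=X^{-1}Y$; thus the $\fy$-vertices already carry the required target functions and need not be mutated. For the remaining target functions $\det X\cdot h_{ij}(U)$ and $\det X\cdot h_{11}(U)$, I would build $\TE$ as a composition $\TE=\TE_h\circ\TE_f\circ\TE_g$, in which $\TE_g$ peels the mutable $g$-vertices in anti-diagonal order starting from $g_{nn}$, $\TE_f$ then mutates the $f$-vertices from the outer boundary inward, and $\TE_h$ finally mutates the mutable $h$-vertices in an analogous order. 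Each individual mutation step should correspond to a short Pl\"ucker or Desnanot--Jacobi identity for the augmented matrix $[X,Y]$, read through the relation $Y=XU$, so that the new cluster variable it produces is a single monomial of the form $(\det X)^{\alpha}\cdot(\text{minor of }U)$ with the expected exponent and minor. The starting point for the induction is the factorization $F_{n-l,l}=X\Phi_{n-l,l}$ of Remark~\ref{identify}, which bridges the $f$- and $\fy$-blocks at their common boundary $k+l=n$.

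Conditions~(ii) and~(iv) would be proved in parallel by tracking the matrix mutations~\eqref{MatrixMutation} throughout $\TE$. The key simplification is that, as noted in Remark~\ref{tribfz}, the $g$-block and the $h$-block of $Q_n$ each coincide up to relabeling with a standard double-Bruhat seed quiver, so the internal combinatorial effect of $\TE_g$ and $\TE_h$ on their own blocks is already understood. The inter-block edges of $Q_n$ described in Section~\ref{init} are sparse: they occur only at $\fy_{11}$ (via $g_{11}\to\fy_{11}$ and $\fy_{11}\to h_{11}$) and at the diagonal $h_{ii}$-vertices (via $f_{1,n-i+1}\to h_{ii}$ and $h_{ii}\to f_{1,n-i}$). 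Propagating these sparse connections through the mutation sequence yields both the isomorphism $Q'_n\simeq Q_n^\dag$ of~(ii) and the confinement of external edges of $Q_n'$ to the claimed vertices in~(iv).

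The main obstacle is the induction underlying~(iii): showing that each freshly produced cluster variable genuinely collapses to a single monomial rather than a proper binomial. Generically, the exchange polynomial~\eqref{exchange} is a sum of two terms, and the shelling orders on the $g$-, $f$-, and $h$-triangles must be designed so that at every step one of the two terms telescopes against previously computed cluster variables and accumulated factors of $\det X$. The required ``miracle cancellations'' parallel those in the standard proof that dense minors of a triangular matrix form an initial seed for a double Bruhat cell (cf.\ Remark~\ref{tribfz} and~\cite{CAIII}), but the simultaneous interaction of the three blocks makes the bookkeeping substantially more intricate. Engineering the shelling order and verifying that no intermediate mutation disturbs the already-matched $\fy$-values is, in my view, the central technical effort.
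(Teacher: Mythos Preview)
Your proposal has a genuine structural gap in the organization of $\TE$. In the paper, the $h$-block of $Q_n$ (the vertices carrying $h_{ij}(Y)$ with $i<j$) is \emph{never mutated}; those vertices remain in $\TE(Q_n)$ but lie \emph{outside} the subquiver $Q_n'$. The target functions $\det X\cdot h_{ij}(U)$ do not arise on the original $h$-vertices at all: they appear on what were originally $g$- and $f$-vertices, after $n-1$ rounds of diagonal sweeps $\TE=\TE_2\circ\cdots\circ\TE_n$ through the combined $g$-$f$-region (together with the boundary $\fy_{k,n-k}$ and the frozen $h_{ii}(Y)$). Concretely, round $\TE_{n-k+1}$ mutates every non-frozen vertex of a shrinking parallelogram $\widehat\Gamma_{n-k+1}^{n-k+1}$ once along its diagonals; after it, the top row of the surviving region carries $\det X\cdot h_{k+1,\,\bullet}(U)$, and one freezes that row before the next round. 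Your decomposition $\TE=\TE_h\circ\TE_f\circ\TE_g$ attempts something different and, as stated, cannot work: in $Q_n$ the mutable $h$-vertices with $i<j$ are connected only to other $h$-vertices, so a sweep $\TE_h$ would produce rational functions in minors of $Y$ alone, with no mechanism for $X$-dependence (hence no factor $\det X$ or minors of $U=X^{-1}Y$) to propagate from the boundary $h_{ii}$ into the interior of the $h$-triangle.

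Secondly, your description of the ``main obstacle'' misidentifies the mechanism. There are no cancellations or telescoping in which one summand of the exchange binomial vanishes. Each mutation in $\TE$ is a genuine short Pl\"ucker relation for an augmented matrix of the form $\bigl[\,I^{[i]}\;\; X^{[\cdots]}\;\; Y^{[\cdots]}\,\bigr]$, with both terms nonzero; the new variable equals a single minor because Pl\"ucker says so, not because a term drops. The actual work is to name that minor explicitly at every step and to check it matches the next round's input. The paper does this via the closed formula
\[
\chi^k_{ij}=\begin{cases}\det X_{[i-k,n]}^{[1,k]\cup[j+k,\,n+j-i+k]} & i-k+1>j,\\[2pt]
\det\bigl[\,X^{[1,k]\cup[j+k,n]}\;\;Y^{[n+i-j+1-k,n]}\,\bigr]_{[i-k,n]} & i-k+1\le j,\end{cases}
\]
verified inductively by matching each mutation to a specific four-column Pl\"ucker relation, and then observes that the top-row values $\chi^k_{k+1,j}$ factor as $\det X$ times $h_{k+1,n-j+2}(U)$ via a block column operation. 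That identification, not any cancellation, is what produces the minors of $U$.
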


 As an immediate corollary we get Theorem~\ref{dualstructure}(i).

The nerve $\N_0$ contains the seed $\widetilde\Sigma'_n$, a seed
$\widetilde\Sigma''_n$ adjacent to $\widetilde\Sigma'_n$, and a seed $\widetilde\Sigma'''_n$ adjacent to $\widetilde\Sigma''_n$. 
Besides, it contains $2(n-1)^2$ seeds adjacent to $\widetilde\Sigma'_n$ and distinct from $\widetilde\Sigma''_n$,
and $n-3$ seeds adjacent to $\widetilde\Sigma'''_n$ and distinct from $\widetilde\Sigma''_n$. A more detailed description of $\N_0$ is given in 
Section~\ref{thenerve} below. We then prove

\begin{theorem}\label{Urestore}
Each matrix entry of $U=X^{-1}Y$ multiplied by an appropriate power of $\det X$ belongs to the upper bound $\UU(\N_0)$.
\end{theorem}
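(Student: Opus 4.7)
The plan is to bootstrap Laurent-polynomial expansions of the matrix entries $u_{ij}$ of $U=X^{-1}Y$ from the dual generalized cluster structure $\GCC_n^\dag$, where they are provided by Theorem~\ref{dualstructure}(i), into Laurent expansions at each seed of the nerve $\N_0$ inside $\GCC_n^D$. The scaffolding is Theorem~\ref{clusterU}: the subquiver $Q_n'$ of $\TE(Q_n)$ is isomorphic to $Q_n^\dag$, and its cluster variables are those of the initial seed of $\GCC_n^\dag$ scaled by explicit powers $(\det X)^{\nu(f)}$. By Theorem~\ref{clusterU}(iv), an internal mutation of $Q_n'$ (at a non-boundary vertex) decouples from the rest of $\TE(Q_n)$, and a mutation outside $Q_n'$ leaves the $Q_n'$-variables intact. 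Combined with the fact that each exchange relation driven by a non-boundary vertex of $Q_n'$ is homogeneous in $\det X$ (which follows from the explicit choice of exponents $\nu(f)$ in part (iii)), this shows that to each seed $\sigma\in\N_0$ there is a canonically associated seed $\sigma^\dag$ of $\GCC_n^\dag$, whose cluster lifts via $(\det X)^{\nu(\cdot)}$-scaling to the $Q_n'$-portion of $\sigma$.

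For each entry $u_{ij}$, Theorem~\ref{dualstructure}(i) supplies a Laurent expansion in the cluster of $\sigma^\dag$ with coefficients in $\widehat{\AA}^\dag=\C[h_{11}(U)^{\pm1},\dots,h_{nn}(U)^{\pm1},c_1(\one,U),\dots,c_{n-1}(\one,U)]$. The identities
\[
h_{11}(U)=h_{11}/g_{11},\qquad c_i(\one,U)=c_i/g_{11},\qquad h_{ii}(U)=(\det X\cdot h_{ii}(U))/g_{11}\ \ (i\neq 1)
\]
rewrite each generator of $\widehat\AA^\dag$ either as an element of $\widehat\AA$ times a power of $g_{11}$ or as a cluster variable of $\sigma$ divided by $g_{11}$. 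Substituting $f\mapsto (\det X)^{-\nu(f)}\cdot\big((\det X)^{\nu(f)}f\big)$ for each cluster variable $f$ of $\sigma^\dag$ then converts the expansion into a Laurent polynomial in the cluster of $\sigma$ with coefficients in $\widehat\AA$; multiplying by a sufficiently large power of $\det X$ (dictated by the most negative total $\det X$-exponent arising across the monomials) absorbs all remaining negative occurrences, producing $(\det X)^\nu u_{ij}\in\widehat\AA[\sigma^{\pm1}]$ for every $\sigma\in\N_0$. Taking the intersection over $\sigma\in\N_0$ yields $(\det X)^\nu u_{ij}\in\UU(\N_0)$, as required.

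The main obstacle I anticipate lies with the seeds $\widetilde\Sigma''_n$ and $\widetilde\Sigma'''_n$ (and the $n-3$ seeds adjacent to $\widetilde\Sigma'''_n$), all reached by mutating at boundary vertices of $Q_n'$---most notably the special vertex $\fy_{11}$ governed by the generalized exchange relation of Proposition~\ref{polyrel}. At the special mutation, the simple $(\det X)^{\nu(\cdot)}$-shift dictionary breaks down, since the generalized exchange polynomial couples $Q_n'$ to the stable variables $c_r$ in a $\det X$-inhomogeneous fashion; hence the mutated cluster variable must be related to a function on $D(GL_n)$ directly via the factorization in Proposition~\ref{polyrel}. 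Similarly, a mutation at a boundary vertex $\det X\cdot h_{ii}(U)$ invalidates the third identity above, and the replacement cluster variable must be expanded explicitly. Verifying that the excess $\det X$-powers accumulated through these boundary mutations remain uniformly bounded across $\N_0$---and that the resulting coefficients still lie in $\widehat\AA$---will be the key technical step, most naturally carried out through degree bookkeeping in the polynomial identity of Proposition~\ref{polyrel}.
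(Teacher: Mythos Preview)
Your proposal has a genuine circularity gap. You invoke Theorem~\ref{dualstructure}(i) to obtain, for each entry $u_{ij}$, a Laurent expansion in the cluster of $\sigma^\dag$. But part~(i) of that theorem asserts only that the seed $(F_n^\dag,Q_n^\dag,\P_n)$ defines a \emph{regular} generalized cluster structure compatible with $\Poi_*$; regularity means every cluster variable is a regular function, not the converse. The statement that every regular function on $GL_n^\dag$ (in particular each $u_{ij}$) lies in the upper cluster algebra is Theorem~\ref{dualstructure}(ii), and in the logical flow of the paper (see the outline in Section~\ref{outline}) part~(ii) is deduced \emph{after} Theorem~\ref{Urestore}, via Theorem~\ref{nervub}. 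So you are assuming what remains to be proved.

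There is a second structural obstacle even if the circularity were somehow resolved. The variables $h_{ii}(U)$, $2\le i\le n$, are frozen in $\GCC_n^\dag$ (and invertible in its ground ring $\widehat\AA^\dag$), whereas the corresponding variables $\det X\cdot h_{ii}(U)$ are \emph{mutable} in $\TE(Q_n)$. Component~V of the nerve $\N_0$ contains the seeds obtained by mutating $\widetilde\Sigma_n'$ precisely at these vertices. For such a seed $\sigma$ there is no seed $\sigma^\dag$ in $\GCC_n^\dag$ to which your dictionary applies, and the substitution $h_{ii}(U)^{-1}=g_{11}/(\det X\cdot h_{ii}(U))$ no longer produces a Laurent polynomial in the cluster of $\sigma$, since $\det X\cdot h_{ii}(U)$ has been exchanged. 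The same issue arises at $\fy_{11}$, which you already flagged, but it is not confined there.

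The paper's proof is of a different nature: it is a direct, constructive restoration argument. The nerve $\N_0$ is subdivided into six components, and in each component one writes $U$ in one or more normal forms ($U=B_+N_-$, $U=\bar N_-\bar B_+C\bar N_-^{-1}$, $U=\check N_-\check B_+W_0\check N_-^{-1}$, etc.), then recovers the entries of the normal-form factors as explicit Laurent polynomials in the relevant cluster by repeated use of Pl\"ucker relations, Cauchy--Binet, and the characteristic-polynomial identity of Lemma~\ref{row1viac}. The entries of $U$ are then reassembled from the factors via Gauss factorization, whose denominators are shown to be Laurent monomials in the cluster. No appeal to $\GCC_n^\dag$ is made; rather, the restoration of $U$ in $\N_0$ is what \emph{produces} both Theorem~\ref{structure}(ii) and Theorem~\ref{dualstructure}(ii) simultaneously.
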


Consequently each matrix entry of $Y=XU$ belongs to $\UU(\N_0)$. 

It remains to note that $\wB(1)$ is the $(n-1)\times (n-1)$ identity matrix.
Therefore, all conditions in Theorem~\ref{nervub} are satisfied, and we get the proofs of Theorems~\ref{structure}(ii) 
 and~\ref{dualstructure}(ii).

\section{Generalized upper cluster algebras of geometric type over $\hat\AA$}\label{novoeslovo}

Let $\GCC=\GCC(\wB,\P)$ be a generalized cluster structure as defined in Section~\ref{SecPrel}, and let $\AA\subseteq\widehat\AA\subseteq\bar\AA$ be the corresponding rings.
The goal of this section is to prove Theorem~\ref{nervub}. We start with the following statement, which is an extension of the standard result on
the coincidence of upper bounds (see e.g. \cite[Corollary~3.22]{GSVb}).

\begin{theorem} \label{nervubcop}
If the generalized exchange polynomials are coprime in $\AA[x_1,\dots,x_{N}]$ for any seed in 
$\GCC$, then the upper 
bounds $\UU(\N)$ do not depend on the choice of the nerve $\N$, and hence coincide with the upper cluster algebra $\UU(\wB,\P)$ over $\widehat\AA$.
\end{theorem}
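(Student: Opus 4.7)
The plan is to adapt the proof of Corollary~3.22 in \cite{GSVb}, which establishes the coincidence of upper bounds at adjacent seeds for ordinary cluster algebras of geometric type, to the generalized setting, and then combine it with a nerve-rotation argument to remove the restriction to stars of vertices. The first ingredient is a two-cluster description: for any seed $\Sigma$ with cluster $\x=(x_1,\dots,x_N)$ and generalized exchange polynomial $P_k=x_kx'_k$ in direction $k$, a partial-fraction computation in the variable $x_k$ yields
\[
\widehat\AA[\x^{\pm 1}]\cap\widehat\AA[\mu_k(\x)^{\pm 1}]
=\widehat\AA_k[x_k,x'_k]/(x_kx'_k-P_k),\qquad
\widehat\AA_k=\widehat\AA[x_j^{\pm 1}:j\ne k].
\]
The fact that $P_k$ has $d_k+1$ terms rather than two plays no role here.

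The key technical step is a swapping lemma. Let $k\ne l$ and consider the three clusters $\x$, $\x'=\mu_k(\x)$, $\x''=\mu_l(\x')$, which differ from $\x$ only in the two entries of directions $k$ and $l$. Applying the two-cluster description twice, any element $f$ of $\widehat\AA[\x^{\pm 1}]\cap\widehat\AA[{\x'}^{\pm 1}]\cap\widehat\AA[{\x''}^{\pm 1}]$ is polynomial in $x_k,x'_k,x_l,x''_l$ after clearing denominators in the remaining $N-2$ cluster variables. Using the hypothesis that the generalized exchange polynomials $P_k$ and $P_l^{(k)}$ at $\mu_k(\Sigma)$ are coprime in $\AA[x_1,\dots,x_N]$, the denominator analysis of \cite{GSVb} shows that $f$ is automatically polynomial in $\mu_l(x_l)$ as well, whence
\[
\widehat\AA[\x^{\pm 1}]\cap\widehat\AA[{\x'}^{\pm 1}]\cap\widehat\AA[{\x''}^{\pm 1}]
=\widehat\AA[\x^{\pm 1}]\cap\widehat\AA[{\x'}^{\pm 1}]\cap\widehat\AA[\mu_l(\x)^{\pm 1}].
\]

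Given an arbitrary nerve $\N$ rooted at a vertex $v_0$, every edge of $\N$ with label $k$ lies on a unique path from $v_0$; if that edge is not incident to $v_0$, the swapping lemma applied iteratively along the path replaces it by an edge with label $k$ issuing from $v_0$, without changing the upper bound. Iterating over all non-root edges reduces $\N$ to the star nerve at $v_0$, so $\UU(\N)=\UU(\N_\star(v_0))$. The standard adjacent-seed argument in \cite{GSVb} (a single application of the swapping lemma in each direction) then shows $\UU(\N_\star(\Sigma))=\UU(\N_\star(\mu_k\Sigma))$ for any $\Sigma$ and $k$; since $\T_N$ is connected, all star upper bounds coincide, and combined with the nerve rotation above, $\UU(\N)$ is independent of $\N$. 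As every seed of $\GCC$ lies in some nerve, this common value equals $\UU(\wB,\P)$.

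The main obstacle is the swapping lemma, for which the coprimality hypothesis is essential: the denominator analysis at the junction between $\x'$ and $\x''$ must rule out cancellations coming from common factors of $P_k$ and $P_l^{(k)}$. Because nothing in the argument uses the structure of the generalized exchange polynomials beyond their pairwise coprimality, the classical proof carries over verbatim once this hypothesis is in force.
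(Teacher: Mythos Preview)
Your overall architecture is fine and matches the paper: establish a two-cluster intersection formula, prove a swapping lemma for three-seed configurations, and then use it to reshape any nerve into a star. The paper proceeds in exactly this way.

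The gap is in your treatment of the swapping lemma. You write that ``the denominator analysis of \cite{GSVb} shows that $f$ is automatically polynomial in $\mu_l(x_l)$ as well'' and that ``the classical proof carries over verbatim.'' This is precisely the point at which the paper says the opposite: the proof of the analogue of Lemma~3.19 of \cite{GSVb} ``differs substantially,'' because the ground ring here is $\widehat\AA$ rather than $\bar\AA$, so one \emph{cannot} invert arbitrary monomials in the stable variables. The classical argument in \cite{GSVb} clears denominators by dividing by monomials in frozen variables; over $\widehat\AA$ that move is illegal, and a naive transport of the argument breaks down. The paper replaces it with a direct analysis: it expands $y\in\widehat\AA[x_1,x_1',x_2^{\pm1}]$ in negative powers of $x_2$, substitutes $x_1=P_1/x_1'$ or $x_1'=P_1/x_1$, and then, splitting into the cases $\hat b_{12}=\hat b_{21}=0$ and $\hat b_{12}>0$, uses Taylor expansion in $x_2$ together with the coprimality of $P_1$ and $\bar P_2$ (and the fact that the constant-in-$x_2$ term $P_{10}$ of $P_1$ is a monomial coprime to $\bar P_2$) to show that $\bar P_2^k$ divides the relevant numerator in $\widehat\AA[x_1']$. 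None of this is needed over $\bar\AA$, and none of it is supplied by your appeal to \cite{GSVb}.

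So the missing idea is exactly the handling of the restricted ground ring: you must either reprove the swapping lemma over $\widehat\AA$ by an argument that never inverts a non-invertible stable variable, or explain why the particular $\widehat\AA$ at hand makes the classical argument go through. Simply asserting that the generalized exchange polynomials' extra terms ``play no role'' and that coprimality suffices does not address this obstruction.
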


\begin{proof} Let us consider first the case $N=1$. In this case everything is exactly the same as in the standard situation. Namely, we consider two adjacent
clusters $\x=\{x_1\}$ and $\x_1=\{x_1'\}$ and the exchange relation $x_1x_1'=P_1$, where $P_1\in \AA$. The same reasoning as in 
Lemma~3.15 from \cite{GSVb} yields
\[
\widehat\AA[x_1^{\pm1}]\cap\widehat\AA[(x_1')^{\pm1}]=\widehat\AA[x_1,x_1'].
\]
As a corollary, for general $N$ one gets
\begin{equation}\label{twolaur}
\widehat\AA[x_1^{\pm1},x_2^{\pm1},\dots,x_N^{\pm1}]\cap\widehat\AA[(x_1')^{\pm1},x_2^{\pm1},\dots,x_N^{\pm1}]=\widehat\AA[x_1,x_1',x_2^{\pm1},\dots,x_N^{\pm1}].
\end{equation}
The latter relation is obtained from the one for $N=1$ via replacing $\AA$ with $\AA[x_2,\dots,x_N]$ and 
the ground ring $\widehat\AA$ with $\widehat\AA[x_2^{\pm1},\dots,x_N^{\pm1}]$.

Let now $N=2$. Note that $\T_2$ is an infinite path, and hence all nerves are just two-pointed stars. Let $\x=\{x_1,x_2\}$ be an arbitrary cluster, 
$\x_1=\{x_1',x_2\}$ and $\x_2=\{x_1,x_2'\}$ be the two adjacent clusters obtained via generalized exchange relations $x_1x_1'=P_1$ and $x_2x_2'=P_2$
with $P_1\in\AA[x_2]$ and $P_2\in\AA[x_1]$. Besides, let $\x_3=\{x_1',x_2''\}$ be the cluster obtained from $\x_1$ via the generalized
exchange relation $x_2x_2''=\bar P_2$ with $\bar P_2\in\AA[x_1']$. Let $\N$ be the nerve 
$\x_1$\textemdash$\x$\textemdash$\x_2$, and $\N_1$ be the nerve 
consisting of the clusters 
$\x_1$\textemdash$\x$\textemdash$\x_3$. The following statement is an analog of Lemma~3.19 in \cite{GSVb}.

\begin{lemma}\label{2pstar}
Assume that $P_1$ and $P_2$ are coprime in $\AA[x_1,x_2]$ and $P_1$ and $\bar P_2$ are coprime in $\AA[x_1',x_2]$. Then $\UU(\N)=\UU(\N_1)$.
\end{lemma}

\begin{proof} The proof differs substantially from the proof of Lemma 3.19, since we are not allowed to invert monomials in $\widehat\AA$.

It is enough to prove the inclusion $\UU(\N)\subseteq\UU(\N_1)$, since the opposite inclusion is obtained by switching roles between $\x$ and $\x_1$.
By~\eqref{twolaur}, we have
\[
\UU(\N)=\widehat\AA[x_1,x_1',x_2^{\pm1}]\cap\widehat\AA[x_1^{\pm1},(x_2')^{\pm1}],\quad
\UU(\N_1)=\widehat\AA[x_1,x_1',x_2^{\pm1}]\cap\widehat\AA[(x_1')^{\pm1},(x_2'')^{\pm1}].
\]

Let $y\in \widehat\AA[x_1,x_1',x_2^{\pm1}]$; expand $y$ as a Laurent polynomial in $x_2$. Each term of this expansion containing a non-negative power of $x_2$ belongs
to $\widehat\AA[(x_1')^{\pm1},(x_2'')^{\pm1}]$, so we have to consider only $y$ of the form
\[
y=\sum_{k=1}^a \frac{Q'_k+Q_k}{x_2^k}, \qquad a\ge 1,
\]
with $Q_k\in\widehat\AA[x_1]$, $Q'_k\in\widehat\AA[x_1']$.

We can treat $y$ as above in two different ways. On the one hand, by substituting $x_1'=P_1/x_1$, 
it can be considered as an element in $\widehat\AA[x_1^{\pm1},x_2^{\pm1}]$ and written as
\begin{equation}\label{firsty}
y=\sum_{k\le a}\frac{R_k}{x_1^{\delta_k}x_2^k}
\end{equation}
with $R_k\in\widehat\AA[x_1]$ and $\delta_k\ge 0$. Imposing the condition $y\in\widehat\AA[x_1^{\pm1},(x_2')^{\pm1}]$,
we get $R_k=P_2^kS_k$  for $k> 0$ and some $S_k\in\widehat\AA[x_1]$. Note that each summand in~\eqref{firsty} with $k\le 0$
belongs to $\widehat\AA[x_1^{\pm1},(x_2')^{\pm1}]$ automatically.

On the other hand, by substituting $x_1=P_1/x_1'$, 
$y$ can be considered as an element in $\widehat\AA[(x_1')^{\pm1},x_2^{\pm1}]$ and written as
\begin{equation}\label{secy}
y=\sum_{k\le a}\frac{R_k'}{(x_1')^{\delta_k'}x_2^k}
\end{equation}
with $R_k'\in\widehat\AA[x_1']$ and $\delta_k'\ge 0$. Note that $R_k'$ can be restored via $R_l$ and $\delta_l$, $k\le l\le a$.
We will prove that $\bar P_2^k$ divides $R_k'$ in $\widehat\AA[x_1']$ for any $k>0$. This would mean that each summand in~\eqref{secy}
belongs to $\widehat\AA[(x_1')^{\pm1},(x_2'')^{\pm1}]$, and hence $\UU(\N)\subseteq\UU(\N_1)$ as claimed above.

Assume first that $\hat b_{12}=\hat b_{21}=0$ in the modified exchange matrix $\hat B$, which means that $P_1, P_2\in\AA$ and $\bar P_2=P_2$. 
Rewrite an arbitrary term $T_k=R_k/(x_1^{\delta_k}x_2^k)$, 
$k> 0$, in~\eqref{firsty} as an element in $\widehat\AA[(x_1')^{\pm1},x_2^{\pm1}]$ via substituting $x_1=P_1/x_1'$. Recall that $R_k$ is divisible by $P_2^k$, 
hence  
\[
T_k=\frac{(x_1')^{\delta_k}P_2^k S_k\rvert_{x_1\leftarrow P_1/x_1'}}{P_1^{\delta_k}x_2^k}=\frac{\bar P_2^k \bar S_k}{(x_1')^{\gamma_k}{P_1^{\delta_k}x_2^k}}
\]
for some $\gamma_k\ge 0$ and $\bar S_k\in \widehat\AA[x_1']$. Comparing the latter expression with~\eqref{secy}, we see that $\gamma_k=\delta_k'$ and
$R_k'=\bar P_2^k \bar S_k/P_1^{\delta_k}$. Since $\bar P_2$ and $P_1$ are coprime, this means  that $\bar P_2^k$ divides 
$R_k'$ in $\widehat\AA[x_1']$.

Assume now that $\hat b_{12}=b> 0$ (otherwise $\hat b_{21}>0$, and we proceed in the same way with $P_2$ instead of $P_1$). 
Then one can rewrite $P_1$ as $P_1=P_{10}+x_2^bP_{11}$ with $P_{10}$ is a monomial in $\AA$ and 
$P_{11}\in\AA[x_2]$ is not divisible by $x_2$.
Consider an arbitrary term $T_k=R_k/(x_1^{\delta_k}x_2^k)$, $k> 0$, in~\eqref{firsty} as an element in $\widehat\AA[(x_1')^{\pm1}]((x_2))$ via substituting 
$x_1=(P_{10}+x_2^bP_{11})/x_1'$ and expanding the result in the Taylor series in $x_2$. Similarly to the previous case, we get
\[
T_k=\sum_{j=0}^k\frac{P_2^{k-j}\rvert_{x_1\leftarrow P_{10}/x_1'}\hat S_j}{j!P_{10}^{\delta_k+j}x_2^{k-j}}+
\sum_{j>k}\frac{\hat S_jx_2^{j-k}}{j!P_{10}^{\delta_k+j}}
\]
for some $\hat S_j\in\widehat\AA[x_1']$. 
Since $y\in\hat\A[(x_1')^{\pm 1},x_2^{\pm 1}]$, we conclude
that the infinite sums above contribute only finitely many terms to
$y=\sum_{k\le a} T_k$. By~\eqref{secy}, any term of these finitely many automatically
belongs to $\hat\A[(x'_1)^{\pm 1},(x''_2)^{\pm 1}]$. To treat the finite
sum in $T_k$ we note that 
\[
\frac{(x_1')^{\deg_{x_1}P_2}P_2\rvert_{x_1\leftarrow P_{10}/x_1'}}{\bar P_2}
\]
is a monomial in $\AA$. So, the finite sum can be rewritten as
\[
\sum_{j=0}^k\frac{\bar P_2^{k-j}\bar S_j}{j!(x_1')^{\gamma_j}P_{10}^{\delta_k+j}x_2^{k-j}}
\]
for some $\gamma_j\ge 0$ and $\bar S_j\in \widehat\AA[x_1']$. Comparing the latter expression with~\eqref{secy}, we get
\[
\frac{R_k'}{(x_1')^{\delta'_k}}=\bar P_2^k\sum_{j=0}^{a-k}\frac{\bar S_j}{j!(x_1')^{\gamma_{j+k}}P_{10}^{\delta_{j+k}+j}}.
\]
Note that $\bar P_2$ and $P_{10}$ are coprime, since  $P_{10}$ is a monomial and $\bar P_2$ does not have monomial factors,
which means that $\bar P_2^k$ divides $R_k'$ in $\widehat\AA[x_1']$.
\end{proof}

In the case of  an arbitrary $N$ one can use Lemma~\ref{2pstar} to reshape nerves while preserving  the upper bounds. 
Namely, let $\N$ be a nerve, and $v_1,v_2,v_3\in\N$ be three vertices such that $v_1$ is adjacent to $v_2$ and $v_2$ is the unique vertex adjacent 
to $v_3$. Consider the nerve $\N'$ that does not contain $v_3$,  contains a new
vertex $v_3'$ adjacent only to $v_1$, and otherwise is identical to $\N$; the edge between $v_1$ and $v_3'$ in $\N'$ bears the same label as the edge
between $v_2$ and $v_3$ in $\N$. A single application of Lemma~\ref{2pstar} with $\widehat\AA$ replaced by  $\widehat\AA[x_3^{\pm1},\dots,x_N^{\pm1}]$
shows that $\UU(\N)=\UU(\N')$. Clearly, any two nerves can be connected via a sequence of such transformations, and the result follows.
\end{proof}
 
To complete the proof of Theorem~\ref{nervub}, it remains to establish the following result.

\begin{lemma}\label{rankcond}
Let $\wB$ be a skew-symmetrizable matrix of full rank, and let $\rank \wB(i)=d_i-1$ for any nontrivial string in $\P$.
Then the generalized exchange polynomials are coprime in $\AA[x_1,\dots,x_{N}]$ for any seed in $\GCC$.
\end{lemma}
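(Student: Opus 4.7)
The plan is to establish three things for every seed of $\GCC$ and every $k\in[1,N]$: (1) the primitivity of the generalized exchange polynomial $P_k$ (the right-hand side of~\eqref{exchange}) in $\AA[x_1,\dots,x_N]$; (2) the absence of any variable divisor of $P_k$; and (3) the nonexistence of a nontrivial common factor of $P_k$ and $P_{k'}$ for $k\ne k'$. Items (1) and (2) are immediate from the observation that the terms at $r=0$ and $r=d_k$ in~\eqref{exchange} reduce, respectively, to the pure stable monomials $v_{k;<}$ and $v_{k;>}$: these involve no cluster variable, no isolated variable (since $b_{kj}=0$ for each isolated $x_j$), and they are supported on disjoint subsets of stable variables (those with $b_{kj}<0$ and those with $b_{kj}>0$). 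Hence the $\AA$-content of $P_k$ is $1$, and the same disjointness forbids any variable from dividing $P_k$.

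For item (3), I analyze the Newton polytope $\tilde P_k$ of $P_k$ with respect to the isolated variables $y_1,\dots,y_L$ alone. Because $p_{k0}=p_{k,d_k}=1$, while the hypothesis $\rank \wB(k)=d_k-1$ ensures the linear independence of the integer vectors $e(p_{k1}),\dots,e(p_{k,d_k-1})$, the polytope $\tilde P_k$ is either a single point (if $d_k=1$) or a $(d_k-1)$-simplex with vertices $0,e(p_{k1}),\dots,e(p_{k,d_k-1})$. The $y^0$-coefficient of $P_k$ is the binomial $S_k:=v_{k;<}u_{k;<}^{d_k}+v_{k;>}u_{k;>}^{d_k}$ whose two monomial summands are coprime, while the $y^{e(p_{kr})}$-coefficient for $r\in[1,d_k-1]$ is the single monomial $M_{k,r}:=u_{k;>}^r u_{k;<}^{d_k-r}v_{k;>}^{[r]}v_{k;<}^{[d_k-r]}$. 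If $D$ is an irreducible common factor of $P_k$ and $P_{k'}$ and $P_k=DQ_k$, the Newton-polytope identity $\tilde D+\tilde Q_k=\tilde P_k$ together with the simplex structure forces $\tilde D=\lambda \tilde P_k+c$ for some $\lambda\in[0,1]$ and $c\in\R^L$, and the nonnegativity of the exponents in $\tilde D$ and $\tilde Q_k$ forces $c=0$.

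It remains to show that only the trivial values $\lambda=0$ and $\lambda=1$ are compatible with a genuine polynomial factorization. Intermediate $\lambda\in(0,1)$ will be excluded by matching coefficients of $DQ_k$ at the off-vertex exponents $\lambda v_i+(1-\lambda)v_j$ with $i\ne j$, which have coefficient zero in $P_k$; this forces the corresponding $D_i$ and $Q_{k,j}$ to vanish, contradicting $\lambda>0$. In the case $\lambda=0$, the factor $D$ contains no isolated variable, and since it divides each of the monomials $M_{k,r}$, it is itself a monomial in non-isolated variables; items (1) and (2) then force $D$ to be a unit. In the case $\lambda=1$, combining $\tilde D=\tilde P_k$ with the analogous statement for $P_{k'}$ shows that $\tilde P_k=\tilde P_{k'}$ (as subsets of $\Z^L$, using that both contain the vertex $0$), and that $Q_k$ and $Q_{k'}$ are both free of isolated variables. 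Expanding $P_k Q_{k'}=P_{k'}Q_k$ in $y$ and matching coefficients at each vertex then forces $\mu:=Q_{k'}/Q_k$ to be a single monomial in non-isolated variables with $\mu S_k=S_{k'}$; matching the two coprime monomial terms of these binomials (in either order) yields $\beta_k=\pm\beta_{k'}$, where $\beta_k$ denotes the $k$-th row of $\wB$. This contradicts the full-rank hypothesis on $\wB$, which guarantees linear independence of these rows.

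The main obstacle I expect is the careful exclusion of the intermediate Minkowski decompositions $\tilde D=\lambda \tilde P_k$ with $\lambda\in(0,1)$, since the simplex $\tilde P_k$ need not be integer-indecomposable when the vectors $e(p_{kr})$ share a nontrivial common integer factor. The polynomial-level matching of coefficients at the off-vertex exponents $\lambda v_i+(1-\lambda)v_j$ is what rules out these spurious decompositions, and it makes essential use of the linear independence of $e(p_{k1}),\dots,e(p_{k,d_k-1})$ guaranteed by the rank hypothesis.
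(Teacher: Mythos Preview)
Your Newton-polytope approach in the isolated variables differs from the paper's, which instead assigns rational weights to all variables---the rank hypothesis on $\wB(i)$ uniquely determining the isolated weights---so that $P_i$ becomes quasihomogeneous of a fixed weight; from any nontrivial factor one then reads off the $i$-th row of $\wB$ up to a scalar, and a common factor of $P_i$ and $P_j$ forces proportional rows.

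There is a genuine gap in your exclusion of $\lambda\in(0,1)$. The vanishing of the $P_k$-coefficient at the off-vertex exponent $\lambda v_i+(1-\lambda)v_j$ yields only $\sum_{\alpha+\beta=\lambda v_i+(1-\lambda)v_j}D_\alpha Q_{k,\beta}=0$, and this sum typically has more than the single term $D_{\lambda v_i}Q_{k,(1-\lambda)v_j}$: every lattice point $\alpha$ on the segment $\lambda[v_i,v_j]$ contributes, and there can be several of these when $v_i-v_j$ is not primitive. So cancellation is possible and you cannot conclude that any vertex coefficient vanishes. The approach can be repaired by restricting $P_k=DQ_k$ to the edge $[0,v_j]$ of $\tilde P_k$: writing $v_j=gw$ with $w$ primitive and $z=y^{w}$, this gives a factorization of $S_k+M_{k,j}z^{g}$ into two factors of positive $z$-degree over $R=\Z[\text{non-isolated variables}]$; since $S_k$, a sum of two coprime monomials, is squarefree and divisible by no variable, $-S_k/M_{k,j}$ is not a proper power in $\operatorname{Frac}(R)$, and the irreducibility criterion for $z^g-a$ rules this out. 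A small slip elsewhere: the $r=0$ and $r=d_k$ terms of~\eqref{exchange} are $u_{k;<}^{d_k}v_{k;<}$ and $u_{k;>}^{d_k}v_{k;>}$, not pure stable monomials, though your later formula for $S_k$ is correct.
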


\begin{proof} We follow the proof of Lemma~3.24  from \cite{GSVb} with minor modifications. Fix an arbitrary seed $\Sigma=(\wx,\wB,\P)$, and let 
$P_i$ be the generalized exchange polynomial corresponding to the $i$th cluster variable. 

Assume first that there exist $j$ and $j'$ such that $b_{ij}>0$ and $b_{ij'}<0$. We want to define the weights of the variables that make $P_i$ into 
a quasihomogeneous polynomial. Put $w(x_j)=1/b_{ij}$, $w(x_{j'})=-1/b_{ij'}$. 
If $j,j'\le N$, put $w(x_k)=0$ for $k\ne j,j'$. Otherwise, put $w(x_k)=0$ for all remaining cluster variables and all remaining stable non-isolated
variables. Finally, define the weights of isolated variables from the equations $w(\hat p_{ir})=0$, $1\le r\le d_i-1$. The condition on the rank of
$\wB(i)$ guarantees that these equations possess a unique solution. Now~\eqref{modexchange} shows that this weight assignment turns $P_i$ into a quasihomogeneous
polynomial of weight one.

Let $P_i=P'P''$ for some nontrivial polynomials $P'$ and $P''$, then they both are quasihomogeneous with respect to the weights defined above, and
each one of them contains exactly one monomial
in variables entering $u_{i;>}, v_{i;>}$, and exactly one monomial in variables entering $u_{i;<}, v_{i;<}$. Consider these two monomials in $P'$.
Let $\delta_j$ and $\delta_{j'}$ be the degrees of $x_j$ and $x_{j'}$ in these two monomials, respectively. Then the quasihomogeneity condition implies
$\delta_j/b_{ij}=-\delta_{j'}/b_{ij'}$. Moreover, for any $j''\ne j, j'$ such that $b_{ij''}>0$ (or $b_{ij''}<0$) a similar procedure gives
$\delta_{j''}/b_{ij''}=-\delta_{j'}/b_{ij'}$ (or $\delta_j/b_{ij}=-\delta_{j''}/b_{ij''}$, respectively. This means that the $i$th row of $\wB$ can be restored
from the exponents of variables entering the above two monomials by dividing them by a constant. Consequently, if $P_i$ and $P_j$ possess a nontrivial
common factor, the corresponding rows of $\wB$ are proportional, which contradicts the full rank assumption.

If all nonzero entries in the $i$th row have the same sign, we proceed in a similar way. Namely, if there exist $j,j'$ such that $b_{ij}, b_{ij'}\ne0$, we
put $w(x_j)=1/|b_{ij}|$, $w(x_{j'})=-1/|b_{ij'}|$. The weights of other variables are defined in the same way as above. This makes $P_i$ into a quasihomogeneous
polynomial of weight zero, and the result follows. The case when there exists a unique $j$ such that $b_{ij}\ne 0$ is trivial.
\end{proof}

\section{Proof of Theorem~\ref{basis}}\label{prlogcan}

The proof exploits various invariance properties of functions in $F_n$. First, we need some preliminary lemmas. 

Let a bilinear form $\langle\cdot , \cdot\rangle_0$ on $\mathfrak{gl}_n$ be defined 
as $\langle A ,B \rangle_0 = \langle \pi_0(A) , \pi_0(B) \rangle$.

\begin{lemma}
\label{cross-inv}
Let $g(X), h(Y), f(X,Y), \fy(X,Y)$ be functions with the following invariance properties: 
\begin{equation}\label{inv_prop}
\begin{aligned}
 g(X)&=g(N_+X), \quad   h(Y)=h(YN_-), \\  
f(X,Y) &= f(N_+X N_- ,N_+Y N'_-), \quad
\fy(X,Y)= \fy(A X N_-, A Y N_-),
\end{aligned}
\end{equation}
where $A$ is an arbitrary element of $GL_n$, $N_+$ is an arbitrary  unipotent upper-triangular element and $N_-, N'_-$ are arbitrary  unipotent lower-triangular elements. Then
\begin{align*}
&\{g, h\}_D = \frac{1}{2}  \langle X\nabla_X  g, Y\nabla_Y h\rangle_0 - \frac{1}{2} \langle\nabla_X  g\cdot X, \nabla_Y h \cdot Y\rangle_0, \\
&\{f , g\}_D =  \frac{1}{2}  \langle E_L f  , \nabla_X g\cdot X\rangle_0 - \frac{1}{2} \langle E_R f  , X\nabla_X g\rangle_0, \\
&\{h, f\}_D =  \frac{1}{2} \langle \nabla_Y h \cdot Y  , E_L f\rangle_0 - \frac{1}{2} \langle Y\nabla_Y h, E_R f \rangle_0, \\
&\{\fy, f\}_D =  \frac{1}{2} \langle E_L \fy  , \nabla_X f\cdot X\rangle_0 -  \frac12\langle E_L \fy, \nabla_Y f\cdot Y \rangle_0, \\
&\{\fy, g\}_D =  \frac{1}{2} \langle E_L \fy  , \nabla_X g\cdot X \rangle_0, \\
&\{\fy, h\}_D =  -\frac{1}{2} \langle E_L \fy  , \nabla_Y h\cdot Y \rangle_0, 
\end{align*}
where $E_L$ and $E_R$ are given by~\eqref{erel}.
\end{lemma}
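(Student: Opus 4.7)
The plan is to use the stated invariances to pin down in which triangular subalgebras of $\mathfrak{gl}_n$ the various matrix gradients lie, and then to show that almost all terms in~\eqref{sklyadoubleGL1} are forced to vanish by trace identities, leaving only the Cartan components displayed in the claim. First, I would differentiate each invariance at the identity. From $g(N_+X)=g(X)$, differentiating along an arbitrary $\xi\in\mathfrak{n}_+$ yields $\operatorname{tr}(\xi\cdot X\nabla_Xg)=0$, so $X\nabla_Xg\in\mathfrak{b}_+$. Analogously, $h(YN_-)=h(Y)$ forces $\nabla_Yh\cdot Y\in\mathfrak{b}_-$. The identity $f(N_+XN_-,N_+YN'_-)=f(X,Y)$ with $N_-$ and $N'_-$ independent yields $E_Rf\in\mathfrak{b}_+$ (from the simultaneous left $N_+$-invariance) together with $\nabla_Xf\cdot X\in\mathfrak{b}_-$ and $\nabla_Yf\cdot Y\in\mathfrak{b}_-$ (from the two independent right invariances), and hence $E_Lf\in\mathfrak{b}_-$. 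Finally, from $\fy(AXN_-,AYN_-)=\fy(X,Y)$ with $A$ ranging over all of $GL_n$ one extracts $E_R\fy=0$ together with $E_L\fy\in\mathfrak{b}_-$.

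The next step is to decompose $R_+=\pi_{>0}+\tfrac12\pi_0$ (which follows from $R=\pi_{>0}-\pi_{<0}$ and $R_+=\tfrac12(R+\operatorname{Id})$) and to invoke three elementary trace identities: (a) $\langle\pi_0A,B\rangle=\langle A,B\rangle_0$; (b) if $A$ is strictly upper (respectively, strictly lower) triangular and $B$ is upper (respectively, lower) triangular, then their product is strictly upper (respectively, strictly lower) triangular, so $\langle A,B\rangle=0$; (c) if both $A$ and $B$ are upper triangular or both are lower triangular, then $\langle A,B\rangle=\langle A,B\rangle_0$.

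I would then substitute into~\eqref{sklyadoubleGL1} case by case. For $\{g,h\}_D$, the last two terms of~\eqref{sklyadoubleGL1} can be rewritten as $\langle E_Rg,E_Rh\rangle-\langle E_Lg,E_Lh\rangle$, so rewriting $R_+=R_-+\operatorname{Id}$ collapses the whole expression into $\langle R_-(\nabla_Xg\cdot X),\nabla_Yh\cdot Y\rangle-\langle R_-(X\nabla_Xg),Y\nabla_Yh\rangle$; then $R_-=-\pi_{<0}-\tfrac12\pi_0$ together with (a)--(b) eliminates the $\pi_{<0}$-contributions and leaves exactly the stated Cartan-pair expression. For $\{f,g\}_D$ the $\nabla_Yg$-terms of~\eqref{sklyadoubleGL1} vanish outright, and (b) applied with $E_Rf,X\nabla_Xg\in\mathfrak{b}_+$ and $E_Lf,\nabla_Xg\cdot X\in\mathfrak{b}_-$ reduces each $R_+$-pairing to one half of its Cartan part. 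The case $\{h,f\}_D$ is parallel but also requires the cyclic trace identity
\[
\langle Y\nabla_Yh,Y\nabla_Yf\rangle=\langle\nabla_Yh\cdot Y,\nabla_Yf\cdot Y\rangle
\]
to cancel the last two terms of~\eqref{sklyadoubleGL1}. For the three $\fy$-identities, $E_R\fy=0$ immediately kills two of the four terms in~\eqref{sklyadoubleGL1}; in the remaining two, $E_L\fy\in\mathfrak{b}_-$ combined with (c) converts the pairing $\langle E_L\fy,\nabla_Y(\cdot)\cdot Y\rangle$ into its Cartan version, and the arithmetic $\tfrac12\langle\cdot,\cdot\rangle_0-\langle\cdot,\cdot\rangle_0=-\tfrac12\langle\cdot,\cdot\rangle_0$ supplies the correct signs.

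The main technical obstacle is purely bookkeeping: the asymmetry between left-invariance (which controls $X\nabla_X$ and $Y\nabla_Y$) and right-invariance (which controls $\nabla_X\cdot X$ and $\nabla_Y\cdot Y$) means that the Cartan projection is extracted from the "left" or the "right" side depending on the case, so each of the six identities has a slightly different look. No idea beyond (a)--(c), the identity $R_+-\operatorname{Id}=R_-$, and cyclicity of the trace is needed.
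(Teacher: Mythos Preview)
Your proposal is correct and follows essentially the same approach as the paper's proof: both extract the inclusions $X\nabla_Xg,\,E_Rf\in\mathfrak{b}_+$, $\nabla_Yh\cdot Y,\,\nabla_Xf\cdot X,\,\nabla_Yf\cdot Y,\,E_L\fy\in\mathfrak{b}_-$, $E_R\fy=0$ from the invariances, and then reduce~\eqref{sklyadoubleGL1} using $R_+|_{\mathfrak{b}_-}=\tfrac12\pi_0$ and $\mathfrak{b}_\pm\perp\mathfrak{n}_\pm$. Your write-up is simply more explicit, spelling out the $R_+=R_-+\operatorname{Id}$ trick for $\{g,h\}_D$ and the cyclic-trace cancellation for $\{h,f\}_D$ that the paper leaves to the reader.
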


\begin{proof} From \eqref{inv_prop}, we obtain 
$ X\nabla_X g, E_R f \in \b_+$, $ \nabla_Y h\cdot Y, \nabla_X f\cdot X, \nabla_Y f\cdot Y, E_L \fy  \in \b_-$,
$E_R \fy=0$. Taking into account that $R_+(\xi)=\frac{1}{2}\pi_0(\xi)$ for $\xi\in\b_-$ and that
$\b_\pm \perp \n_\pm$ with respect to $\langle\cdot ,\cdot \rangle$, the result follows from \eqref{sklyadoubleGL},\eqref{sklyadoubleGL1}.
\end{proof}

\begin{lemma}
\label{homogen}
Let $g(X), h(Y), f(X,Y), \fy(X,Y)$ be functions as in Lemma \ref{cross-inv}. Assume, in addition,
that $g$ and $h$ are homogeneous with respect to right and left multiplication of their arguments by arbitrary diagonal matrices and that $f$ and $\fy$ are homogeneous with respect to right and left multiplication of  $X, Y$ by the same pair of diagonal matrices.
Then all Poisson brackets $\Poi_D$ among functions $\log g$, $\log h$, $\log f$, $\log \fy$ are constant.
\end{lemma}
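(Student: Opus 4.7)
The plan is to reduce each of the cross-type Poisson brackets $\{\alpha,\beta\}_D$ (for $\alpha,\beta\in\{g,h,f,\fy\}$ of differing type) to the form $c_{\alpha\beta}\,\alpha\beta$, so that $\{\log\alpha,\log\beta\}_D=c_{\alpha\beta}\in\C$. Since every right-hand side in the six formulas of Lemma~\ref{cross-inv} is a linear combination of terms of the form $\langle A,B\rangle_0$, and $\langle\cdot,\cdot\rangle_0$ depends only on the Cartan projections $\pi_0(A),\pi_0(B)$, it suffices to show that each quantity $A$ appearing there (such as $X\nabla_X g$, $\nabla_X g\cdot X$, $Y\nabla_Y h$, $\nabla_Y h\cdot Y$, $E_R f$, $E_L f$, $E_L\fy$) has $\pi_0(A)$ equal to a constant diagonal matrix times the associated function.

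The key device is the translation of a homogeneity statement into a Cartan-level identity. If, for instance, $g(e^{tH}X)=e^{t\chi'_L(H)}g(X)$ for every $H\in\h$, then differentiation at $t=0$ gives $\tr\bigl((X\nabla_X g)H\bigr)=\chi'_L(H)\,g$, and letting $H$ range over $\h$ identifies $\pi_0(X\nabla_X g)$ with a fixed diagonal matrix times $g$. Applying this to left and right diagonal multiplications produces four such constant diagonal matrices for $g,h$ (governing the Cartan parts of $X\nabla_X g$, $\nabla_X g\cdot X$, $Y\nabla_Y h$, $\nabla_Y h\cdot Y$). The simultaneous diagonal homogeneity of $f,\fy$ similarly produces constants for $\pi_0(E_R f)$ and $\pi_0(E_L f)$, while the full $GL_n$-invariance of $\fy$ under left multiplication forces $E_R\fy=0$ outright; right diagonal homogeneity then handles $\pi_0(E_L\fy)$. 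Substituting these expressions into the six formulas of Lemma~\ref{cross-inv} makes each $\{\alpha,\beta\}_D$ a pairing of constant diagonal matrices scaled by $\alpha\beta$, so that $\{\log\alpha,\log\beta\}_D$ is a constant.

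The main obstacle lies in the bracket $\{\fy,f\}_D=\tfrac12\langle E_L\fy,\nabla_X f\cdot X-\nabla_Y f\cdot Y\rangle_0$: simultaneous right-diagonal homogeneity of $f$ determines only $\pi_0(E_L f)=\pi_0(\nabla_X f\cdot X+\nabla_Y f\cdot Y)$, i.e.\ the Cartan part of the \emph{sum} rather than of the difference. I would overcome this by reading the homogeneity hypothesis as allowing independent right-diagonal scalings on $X$ and $Y$ separately; this is indeed the case in the intended applications, where, for example, $f_{kl}$ is homogeneous under separate rescalings of the last $k$ columns of $X$ and the last $l$ columns of $Y$. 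Under this strengthened reading, $\pi_0(\nabla_X f\cdot X)$ and $\pi_0(\nabla_Y f\cdot Y)$ are each constant multiples of $f$, their difference is as well, and the $\{\fy,f\}_D$ case falls into line with the others, completing the verification that all six brackets $\{\log\alpha,\log\beta\}_D$ are constant.
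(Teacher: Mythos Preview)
Your argument is essentially the same as the paper's: both proofs observe that homogeneity under diagonal multiplication forces the Cartan projections of the relevant gradients of $\log g$, $\log h$, $\log f$, $\log\fy$ to be constant diagonal matrices, after which the formulas of Lemma~\ref{cross-inv} reduce to pairings of constants. The paper carries this out in one sentence (``A similar argument shows that diagonal projections of all elements needed \dots\ are constant diagonal matrices''), whereas you spell out which gradient is controlled by which homogeneity.

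You are also right to flag the $\{\fy,f\}_D$ case. Under the hypothesis literally as stated (simultaneous right-diagonal multiplication of $X$ and $Y$ by the \emph{same} matrix), only $\pi_0(E_L\log f)=\pi_0(\nabla_X\log f\cdot X+\nabla_Y\log f\cdot Y)$ is determined, not the two summands separately; yet the formula for $\{\fy,f\}_D$ pairs $E_L\fy$ against their \emph{difference}. The paper's proof does not isolate this point. Your fix---observing that in the intended application the invariance $f(N_+XN_-,N_+YN'_-)=f$ already allows independent $N_-,N'_-$, and that the concrete functions $f_{kl}$ are in fact separately homogeneous under $X\mapsto XT$ and $Y\mapsto YT'$---is exactly what is needed, and is implicitly what the paper relies on when it asserts that ``all elements needed'' have constant diagonal projections. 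So your proof is correct and slightly more careful than the original on this point.
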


\begin{proof} The homogeneity of $g(X)$ with respect to the left multiplication by diagonal matrices 
implies that there exists a diagonal element $\xi$ such that for any diagonal $h$ and any $X$,
$g(\exp(h) X) = \exp \langle h, \xi\rangle g(X)$. The infinitesimal version of this property reads
$\pi_0(X  \nabla \log g(X)) = \xi$. A similar argument shows that diagonal projections of all 
elements needed to compute Poisson brackets between $\log g$, $\log h$, $\log f$, $\log \fy$  using
formulas of Lemma~\ref{cross-inv} are constant diagonal matrices, and the claim follows.
\end{proof}

Lemmas \ref{cross-inv}, \ref{homogen} show that any four functions $f_{ij}, g_{kl}, h_{\alpha\beta}, \fy_{\mu\nu}$ 
are log-canonical. Indeed, it is clear from definitions in Section~\ref{logcan} that
\begin{equation}\label{ourinv_prop}
\begin{aligned}
 g_{ij}(X)&=g_{ij}(N_+X), \quad g_{ii}(X)=g_{ii}(N_+XN_-), \\  
h_{ij}(Y)&=h_{ij}(YN_-), \quad  h_{ii}(Y)=h_{ii}(N_+YN_-),\\  
f_{kl}(X,Y) &= f_{kl}(N_+X N_- ,N_+Y N'_-),\\
\tilde \fy_{kl}(X,Y) &= \tilde\fy_{kl}(A X N_-, A Y N_-),
\end{aligned}
\end{equation}
where $\tilde \fy_{kl} = \det\Phi_{kl}$, and so the corresponding invariance 
properties in~\eqref{inv_prop} are satisfied for any function
taken in any of these four families. Besides, all these 
functions possess the homogeneity property as in Lemma~\ref{homogen} as well.

For a generic element $X \in GL_n$, consider its Gauss factorization 
\begin{equation}
\label{gauss}
X=X_{>0} X_0 X_{<0}
\end{equation}
with $X_{<0}$ unipotent lower-triangular, $X_0$ diagonal and $X_{>0}$ unipotent upper-trian\-gular elements. Sometimes it will be convenient to use notations $X_{\leq 0} = X_0 X_{<0}$ and $X_{\geq 0} = X_{>0} X_0$.
Taking
$N_+=(X_{>0})^{-1}$ in the first relation in~\eqref{ourinv_prop}, $N_-=(Y_{<0})^{-1}$ in the second relation, and
$N_+=(Y_{>0})^{-1}$, $N_-=(X_{<0})^{-1}$, $N'_-=(Y_{<0})^{-1}$ in the third relation, one gets
\begin{equation}\label{canform}
\begin{aligned}
 g_{ij}(X)&=g_{ij}(X_{\leq 0}),\\  
h_{ij}(Y)&=h_{ij}(Y_{\geq 0}), \\  
f_{kl}(X,Y) &=  f_{kl}\left ((Y_{>0})^{-1}X_{\geq 0},Y_0\right )\\
  &= h_{n-l+1,n-l+1}(Y) h_{n-k-l+1,n-k+1}\left ((Y_{>0})^{-1}X_{\geq 0}\right ).
\end{aligned}
\end{equation}

 Next, we need to prove log-canonicity within each of the four families.
The following lemma is motivated by the third formula in~\eqref{canform}.

\begin{lemma}
\label{Zmap}
The almost everywhere defined map 
\[
Z : (D(GL_n), \Poi_D)\to (GL_n, \Poi_r)
\]
 given by
$(X,Y) \mapsto Z= Z(X,Y)=(Y_{>0})^{-1}X_{\geq 0}$ is Poisson. 
%w.r.t. $\Poi_D$ and $\Poi_r$.
\end{lemma}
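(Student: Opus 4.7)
The plan is to verify directly that for any functions $f_1, f_2$ on $GL_n$, the pullbacks $\tilde f_i = f_i \circ Z$ satisfy $\{\tilde f_1, \tilde f_2\}_D = \{f_1,f_2\}_r \circ Z$. The starting observation is the invariance $Z(N_+ X N_-, N_+ Y N_-') = Z(X,Y)$ for arbitrary upper unipotent $N_+$ and lower unipotent $N_-, N_-'$, which follows from the Gauss factorization: right multiplication of $X$ (resp.\ $Y$) by a lower unipotent only alters $X_{<0}$ (resp.\ $Y_{<0}$), and neither appears in $Z$, while a common left factor $N_+$ cancels between $(Y_{>0})^{-1}$ and $X_{\geq 0}$. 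Differentiating these invariances at the identity produces gradient constraints on $\tilde f = f\circ Z$: $\nabla_X\tilde f \cdot X \in \b_+$ and $\nabla_Y \tilde f \cdot Y \in \b_+$ (from right $N_-$-invariance in each factor), and $E_R\tilde f = X\nabla_X \tilde f + Y\nabla_Y \tilde f \in \b_-$ (from joint left $N_+$-invariance). In particular $E_L\tilde f \in \b_+$ as well.

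Combining these with the formulas $R_+|_{\b_-} = \tfrac12\pi_0$ and $R_+|_{\b_+} = \pi_{>0} + \tfrac12\pi_0$ allows the four terms of \eqref{sklyadoubleGL1} applied to $\tilde f_1, \tilde f_2$ to be rewritten in terms of only the triangular projections $\pi_{>0},\pi_0,\pi_{<0}$ of the four quantities $E_L\tilde f_i$, $E_R\tilde f_i$, $\nabla_Y\tilde f_i\cdot Y$, and $Y\nabla_Y\tilde f_i$. The remaining task is a chain-rule calculation: by differentiating $Z(e^{t\xi}X, e^{t\xi}Y)$ and $Z(Xe^{t\xi}, Ye^{t\eta})$ and using the known behavior of the Gauss decomposition under left/right multiplication by triangular matrices, one expresses these four quantities in terms of the single-variable gradients $\nabla^L f(Z)$ and $\nabla^R f(Z)$ at $Z\in GL_n$. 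Substituting the result back in, the desired identity reduces to the standard $R$-matrix formula \eqref{sklyabra} evaluated at $Z$.

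The main obstacle is the chain-rule step, since Gauss factorization is delicate under perturbations that are not themselves triangular. Perturbations lying in the invariance directions ($\n_-$ on the right of $X$ and $Y$ and $\n_+$ on the joint left) contribute nothing by construction, but the complementary directions require explicit differentiation of the factors $(Y_{>0})^{-1}$ and $X_{\geq 0}$, during which the non-commutative interaction between the perturbation and the triangular pieces produces various adjoint terms that must be tracked carefully. Once the chain rule is in hand, matching the resulting expression with $\{f_1,f_2\}_r\circ Z$ is essentially bookkeeping, greatly aided by the homogeneity properties already exploited in Lemma~\ref{homogen} and by the fact that both sides are quadratic in the gradients.
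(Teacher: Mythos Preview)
Your overall strategy matches the paper's: compute the gradients of $\tilde f_i=f_i\circ Z$ by differentiating the Gauss factors, use the resulting triangularity to simplify $R_+$ in \eqref{sklyadoubleGL1}, and match with \eqref{sklyabra}. The paper carries out the chain-rule step explicitly (computing $\delta Z$ and reading off $X\nabla_X\tilde f$, $Y\nabla_Y\tilde f$, $\nabla_X\tilde f\cdot X$, $\nabla_Y\tilde f\cdot Y$ in closed form), whereas you only sketch it; but that is a matter of completeness, not of method.

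There is, however, a concrete error in your triangularity claims. Right $N_-$-invariance of $\tilde f$ in $X$ gives $\langle\nabla_X\tilde f\cdot X,\xi\rangle=0$ for all $\xi\in\n_-$, hence $\nabla_X\tilde f\cdot X\in\n_-^\perp=\b_-$, not $\b_+$; likewise $\nabla_Y\tilde f\cdot Y\in\b_-$ (in fact $\n_-$, since $Z$ is also invariant under $Y\mapsto YT$ for diagonal $T$). Dually, joint left $N_+$-invariance gives $E_R\tilde f\in\n_+^\perp=\b_+$, not $\b_-$. You have the two Borels systematically swapped, presumably from misremembering which nilpotent is orthogonal to which Borel under the trace form. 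With the correct inclusions one gets $E_L\tilde f\in\b_-$ and $E_R\tilde f\in\b_+$, so $R_+(E_L\tilde f)=\tfrac12\pi_0(E_L\tilde f)$ while $R_+(E_R\tilde f)$ retains a $\pi_{>0}$ piece; this is exactly what the paper uses to collapse \eqref{sklyadoubleGL1} to the three-term expression that then matches \eqref{sklyabra}. Your version, with the roles reversed, would not lead to the same cancellations. Once this sign/side error is fixed, your outline becomes the paper's proof.
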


\begin{proof} Denote $\pi_{\geq0}=\pi_{>0}+\pi_0$ and $\pi_{\leq0}=\pi_{<0}+\pi_0$. We start by computing the variation 
\begin{equation*}
\begin{aligned}
\delta Z &= (Y_{>0})^{-1}\left (\delta X_{\geq 0} - \delta Y_{>0} (Y_{>0})^{-1}X_{\geq 0}\right )=
Z (X_{\geq 0})^{-1} \delta X_{\geq 0} - (Y_{>0})^{-1}  \delta Y_{>0} Z\\
&=Z \pi_{\geq 0}\left ( (X_{\geq 0})^{-1} \delta X (X_{<0})^{-1}\right ) -
\pi_{>0}\left ( (Y_{>0})^{-1} \delta Y (Y_{\leq 0})^{-1}\right ) Z.
\end{aligned}
\end{equation*} 
Then for a smooth function  $f$ on $GL_n$ we have 
\begin{multline*}
\delta f(Z(X,Y))= \left\langle\nabla f,\delta Z\right\rangle\\
=\left\langle  (X_{<0})^{-1} \pi_{\leq 0} ( \nabla f \cdot Z) (X_{\geq 0})^{-1}, \delta X \right\rangle 
- \left\langle  (Y_{\leq 0})^{-1} \pi_{<0} ( Z \nabla f ) (Y_{>0})^{-1}, \delta Y \right\rangle.
\end{multline*}

Therefore, if we denote $\tilde f(X,Y) = f\circ Z(X,Y)$ then
\begin{align*}
&X \nabla_X \tilde f = \Ad_{X_{\geq 0}} \pi_{\leq 0} (\nabla f \cdot Z),\\
&Y \nabla_Y \tilde f = - \Ad_{Y_{>0}} \pi_{<0} (Z \nabla f ),\\
&\nabla_X \tilde f\cdot X = \Ad_{(X_{<0})^{-1}} \pi_{\leq 0} (\nabla f \cdot Z) \in \b_-,\\
&\nabla_Y \tilde f \cdot Y= - \Ad_{(Y_{\leq 0})^{-1}} \pi_{<0} (Z \nabla f ) \in \n_-,
\end{align*}
and
\begin{align*}
E_R \tilde f &= \Ad_{Y_{>0}} \left ( \Ad_Z \pi_{\leq 0} (\nabla f \cdot Z) - \pi_{<0} (Z \nabla f )\right )\\
&= \Ad_{Y_{>0}} \left ( Z \nabla f - \Ad_Z \pi_{>0} (\nabla f \cdot Z) - \pi_{<0} (Z \nabla f )\right )\\
&=\Ad_{Y_{>0}} \left (\pi_{\geq 0} (Z \nabla f ) - \Ad_Z \pi_{>0} (\nabla f \cdot Z) \right )\in \b_+.
\end{align*}
Plugging into \eqref{sklyadouble} we obtain
\begin{align*}
\{ f_1\circ Z, f_1\circ Z\}_D&= \frac{1}{2} \langle\nabla  f_1\cdot Z, \nabla  f_2\cdot Z\rangle_0
- \frac{1}{2} \langle Z\nabla  f_1, Z\nabla  f_2\rangle_0 + \langle X\nabla_X  \tilde f_1, Y\nabla_Y \tilde f_2\rangle \\
&= \frac{1}{2} \langle\nabla  f_1\cdot Z, \nabla  f_2\cdot Z\rangle_0
- \frac{1}{2} \langle Z\nabla  f_1, Z\nabla  f_2\rangle_0\\
&\qquad - \langle \Ad_Z\pi_{\leq 0}( \nabla f_1 \cdot Z), \pi_{<0}(Z \nabla f_2)\rangle.
\end{align*}
The last term can be rewritten as
$
\langle Z \nabla f_1 , \pi_{<0}(Z \nabla f_2)\rangle - \langle \pi_{>0} ( \nabla f_1 \cdot Z) , Z \nabla f_2)\rangle 
$. 
Comparing with 
\eqref{sklyabra},
we obtain
$\{ f_1\circ Z, f_1\circ Z\}_D = \{f_1, f_2\}_r \circ Z$.
\end{proof}

We are now ready to deal with the three families out of four.

\begin{lemma}
\label{fgh}
Families of functions $\{ f_{ij}\}, \{ g_{ij}\}, \{ h_{ij}\}$ are log-canonical with respect to $\Poi_D$.
\end{lemma}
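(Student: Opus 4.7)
The plan is to split the three families and reduce each case to tools already in hand: the standard Poisson--Lie bracket on $GL_n$ for one-sided families, Lemma~\ref{Zmap} for the mixed case, and the invariance/homogeneity machinery of Lemmas~\ref{cross-inv} and~\ref{homogen} for the unavoidable cross-terms.

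For $\{g_{ij}\}$ and $\{h_{ij}\}$ the argument is short. Each $g_{ij}$ depends only on $X$ and each $h_{ij}$ only on $Y$. Inspecting \eqref{sklyadoubleGL1}, when both arguments depend only on $X$ (resp.\ only on $Y$), the last two terms vanish and the remaining pairing collapses to the standard Poisson--Lie bracket $\Poi_r$ of \eqref{sklyabra} on $GL_n$. As recalled in Remark~\ref{tribfz}, $\{g_{ij}\}$ and $\{h_{ij}\}$ are precisely the dense column/row minors defining initial seeds for the standard cluster structures on $\G^{w_0,e}$ and $\G^{e,w_0}$, and log-canonicity of those initial seeds with respect to $\Poi_r$ is classical (see \cite[Chap.~4.3]{GSVb}).

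For $\{f_{kl}\}$ I would use the third line of \eqref{canform} to factor
\[
f_{kl} = \alpha_l(Y)\,\beta_{kl}(Z), \qquad \alpha_l(Y) := h_{n-l+1,n-l+1}(Y),\quad \beta_{kl}(Z) := h_{n-k-l+1,n-k+1}(Z),
\]
with $Z=(Y_{>0})^{-1}X_{\geq 0}$, and expand $\{f_{kl},f_{k'l'}\}_D$ by the Leibniz rule into four pieces. Two of them, $\{\beta_{kl},\beta_{k'l'}\}_D$ and $\{\alpha_l,\alpha_{l'}\}_D$, reduce to brackets of $h$-minors under $\Poi_r$ on $GL_n$: the former by pulling back via the Poisson map $Z$ of Lemma~\ref{Zmap}, the latter because $\alpha_l,\alpha_{l'}$ depend only on $Y$. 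Both are therefore log-canonical by the argument already used for the $h$-family.

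The main obstacle is the pair of cross-brackets $\{\alpha_l,\beta_{k'l'}\}_D$ and symmetrically $\{\beta_{kl},\alpha_{l'}\}_D$. To dispose of them I would apply the $\{h,f\}$ formula of Lemma~\ref{cross-inv}. This requires checking that $\alpha_l$ satisfies the $h$-invariance of \eqref{inv_prop} (trivial) and that $\beta_{kl}(Z)$ satisfies the $f$-invariance, which in turn reduces to verifying $Z(N_+XN_-,N_+YN_-')=Z(X,Y)$. The latter follows from the Gauss-factorization identities $(N_+YN_-')_{>0}=N_+Y_{>0}$ and $(N_+XN_-)_{\geq 0}=N_+X_{\geq 0}$. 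Once the invariances are in place, a direct Gauss check that $Z(D_1XD_2,D_1YD_2)=D_1ZD_2$ for diagonal $D_1,D_2$ shows that $\beta_{kl}(Z)$, together with $\alpha_l$ and hence $f_{kl}$, is bi-diagonal-homogeneous. Lemma~\ref{homogen} then forces every $\pi_0$-projection appearing in the $\{h,f\}$ formula to be a constant multiple of the corresponding function, so the cross-bracket is log-canonical. Assembling the four pieces of the Leibniz expansion completes the $f$-family case, and hence the lemma.
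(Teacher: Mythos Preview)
Your proof is correct and follows the same route as the paper: reduce the $g$- and $h$-families to the standard bracket $\Poi_r$ on $GL_n$, and handle the $f$-family via the factorization in \eqref{canform} together with the Poisson map of Lemma~\ref{Zmap}. The one place where you work harder than necessary is the cross term $\{\log\alpha_l,\log\beta_{k'l'}\}_D$: instead of re-verifying the $f$-type invariance of $\beta_{kl}(Z)$ to invoke Lemma~\ref{cross-inv}, you can simply write $\{\log\alpha_l,\log\beta_{k'l'}\}_D=\{\log\alpha_l,\log f_{k'l'}\}_D-\{\log\alpha_l,\log\alpha_{l'}\}_D$, and both terms on the right are already known to be constant (the first by the $h$-$f$ cross-family argument preceding the lemma, the second by the $h$-family result).
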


\begin{proof} If $\fy_1(X,Y) = g_{ij}(X)$ and $\fy_2(X,Y) = g_{\alpha\beta}(X)$ (or
$\fy_1(X,Y) = h_{ij}(Y)$ and $\fy_2(X,Y) = h_{\alpha\beta}(Y)$) then
$\{ \fy_1,\fy_2\}_D = \{ \fy_1,\fy_2\}_r$. Furthermore, Proposition~4.19 in \cite{GSVb} specialized to the $GL_n$ case implies that in both cases
\begin{equation}
\label{aux_psi}
 \{ \log\fy_1,\log\fy_2\}_r = \frac{1}{2} \langle \xi_{1,L} , \xi_{2,L}\rangle_0 - \frac{1}{2} 
 \langle \xi_{1,R} , \xi_{2,R}\rangle_0,
 \end{equation}
provided $i-j\geq \alpha-\beta$ ($i-j\leq \alpha-\beta$, respectively),
 where $\xi_{\bullet,L}$, $\xi_{\bullet,R}$ are projections of the left and right gradients
 of $\log\fy_\bullet$
to the diagonal subalgebra. These projections are constant due to the homogeneity of all functions involved with respect to both left and right multiplication by diagonal matrices. Thus, families $\{ g_{ij}\}$, $\{ h_{ij}\}$ are log-canonical. The claim about the family $\{ f_{ij}\}$ now follows from Lemma~\ref{Zmap} and the third
 equation  in~\eqref{canform}. 
\end{proof}

The remaining family $\{\fy_{ij}\}$ is treated separately.

\begin{lemma}
\label{fylc}
The family  $\{ \fy_{kl}\}$ is log-canonical with respect to $\Poi_D$.
\end{lemma}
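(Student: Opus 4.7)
My plan is to emulate the strategy of Lemma~\ref{fgh} while accounting for the fact that the $\fy$-family possesses only the combined invariance $\tilde\fy_{kl}(AXN_-,AYN_-)=\tilde\fy_{kl}(X,Y)$, where $\tilde\fy_{kl}=\det\Phi_{kl}(U)$ with $U=X^{-1}Y$, rather than the separate unipotent invariances enjoyed by the other three families. First I would decompose $\fy_{kl}=s_{kl}(\det X)^{n-k-l+1}\tilde\fy_{kl}$. Since $\det X=g_{11}$ lies in the $g$-family, all cross-brackets $\{\det X,\tilde\fy_{mr}\}_D$ are log-canonical by Lemmas~\ref{cross-inv}--\ref{homogen}, so by the Leibniz rule it suffices to show $\{\tilde\fy_{kl},\tilde\fy_{mr}\}_D$ is a constant multiple of $\tilde\fy_{kl}\tilde\fy_{mr}$.

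Because $\tilde\fy_{kl}$ depends on $(X,Y)$ only through $U$, the Poissonness of the map $(X,Y)\mapsto U$ rewrites this bracket as $\{\det\Phi_{kl},\det\Phi_{mr}\}_*$ evaluated at $U$. Applying \eqref{sklyadoubleGL2} together with $E_R\tilde\fy=0$ and $E_L\tilde\fy_{kl}=[\nabla\det\Phi_{kl},U]\in\b_-$, the identity $R_+|_{\b_-}=\tfrac12\pi_0$ collapses the first piece of the bracket to $\tfrac12\langle E_L\tilde\fy_{kl},E_L\tilde\fy_{mr}\rangle_0$. This term is a constant multiple of $\tilde\fy_{kl}\tilde\fy_{mr}$ by homogeneity: a direct column-by-column analysis of $\Phi_{kl}$ under $U\to d^{-1}Ud$, in which the first $k$ columns $e_{n-k+1},\dots,e_n$ are fixed while the remaining $n-k$ columns acquire prefactor $d^{-1}$ together with column scalings, yields $\pi_0([\nabla\det\Phi_{kl},U])=c_{kl}\det\Phi_{kl}$ for some constant $c_{kl}\in\h$, and likewise for $(m,r)$.

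The main obstacle is the remaining term $\langle[\nabla\det\Phi_{kl},U],\nabla_Y\tilde\fy_{mr}\cdot Y\rangle$, which does not automatically simplify to a diagonal pairing because $\nabla_Y\tilde\fy_{mr}\cdot Y\in\b_-$ cannot be deduced from the $\fy$-invariance alone, in contrast with the situation in Lemma~\ref{cross-inv}, where the separate $N'_-$-invariance of $f$ forced $\nabla_Y f\cdot Y\in\b_-$. Splitting this pairing as $\langle\cdot,\cdot\rangle_0$ plus a residual term $\langle\pi_{<0}(E_L\tilde\fy_{kl}),\pi_{>0}(\nabla_Y\tilde\fy_{mr}\cdot Y)\rangle$, the diagonal part is handled by the homogeneity argument above, but the residual requires extra work. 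To control it I would differentiate the conjugation invariance $\det\Phi_{kl}(N_-^{-1}UN_-)=\det\Phi_{kl}(U)$ (a consequence of the $\fy$-invariance in~\eqref{ourinv_prop}) in directions $\eta\in\n_-$ to obtain identities relating $\pi_{<0}([\nabla\det\Phi_{kl},U])$ to the off-diagonal parts of $\nabla\det\Phi_{kl}\cdot U$ and $U\nabla\det\Phi_{kl}$; combined with the analogous identities for $\tilde\fy_{mr}$ and the column structure of $\Phi_{kl}$ (standard basis vectors $e_{n-k+1},\dots,e_n$, then $Ue_{n-l+j}$, then $U^pe_n$), these should pin the residual pairing down as a constant multiple of $\tilde\fy_{kl}\tilde\fy_{mr}$. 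As a fallback, one could instead construct a Poisson map in the spirit of Lemma~\ref{Zmap}, sending each $\tilde\fy_{kl}$ to a dense minor on the double Bruhat cell $GL_n^{w_0,e}$, thereby transporting the problem into the well-studied log-canonical setting of $\Poi_r$.
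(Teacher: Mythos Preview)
Your reduction to the bracket $\{\tilde\fy_{kl},\tilde\fy_{mr}\}_*$ on $U=X^{-1}Y$ is fine, and you have correctly isolated the difficulty: in the pushed-forward bracket~\eqref{PoissonU} the term $\langle[\nabla\tilde\fy_{kl},U],\nabla\tilde\fy_{mr}\cdot U\rangle$ is not a diagonal pairing, because $\nabla\tilde\fy_{mr}\cdot U$ has no reason to lie in $\b_-$. But your proposed resolution is not a proof. Differentiating the conjugation invariance only tells you $[\nabla\tilde\fy_{kl},U]\in\b_-$, which you already used; it gives no control over $\pi_{>0}(\nabla\tilde\fy_{mr}\cdot U)$ separately. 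The phrase ``these should pin the residual pairing down'' is exactly where the argument stops being an argument. Similarly, the fallback Poisson map to $GL_n^{w_0,e}$ is only gestured at; no candidate map is written down, and there is no obvious analogue of the map $Z$ of Lemma~\ref{Zmap} that sends $\tilde\fy_{kl}$ to dense minors for the standard structure.

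The paper attacks precisely this residual term, and its method is substantially different from both of your sketches. It introduces the normal form $U=N_-B_+CN_-^{-1}$ with $C$ the cyclic shift (Lemma~\ref{BC}), so that $\tilde\fy_{kl}(U)=\tilde\fy_{kl}(B_+C)$ reduces to an explicit monomial times a dense minor $h_{n-k-l+2,n-l+2}(B_+)$ of the upper-triangular factor. The heart of the proof is Lemma~\ref{PoissonB}: a direct and rather lengthy computation of the push-forward of $\Poi_*$ under $U\mapsto B_+'=(B_+)_{[2,n]}^{[2,n]}$, which turns out to equal $\Poi_r$ plus a correction $\tfrac12(\langle A\nabla f_1,\tau(\nabla f_2\,A)\rangle_0-\langle\tau(\nabla f_1\,A),A\nabla f_2\rangle_0)$ built from the shift $\tau(A)=SAS^T$. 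Log-canonicity then follows because the minors $h_{ij}(B_+')$ are log-canonical for $\Poi_r$ by the standard double-Bruhat-cell argument, while the correction is a diagonal pairing and hence constant by homogeneity. The nontrivial content you are missing is this explicit identification of the pushed-forward bracket; without it, the residual term cannot be handled by invariance alone.
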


\begin{proof} Since $\det X$ is a Casimir function for $\Poi_D$, we only need to show that functions $\tilde \fy_{kl}=\det \Phi_{kl}$ are log-canonical with respect to the Poisson bracket
\begin{equation}
\label{PoissonU}
\{\fy_1,\fy_2\}_* = \left\langle R_+([\nabla\fy_1,U]), [\nabla\fy_2,U]\right\rangle - \left\langle [\nabla\fy_1,U], \nabla \fy_2 \cdot U\right\rangle,
\end{equation}
which one obtains from \eqref{sklyadoubleGL2} by assuming that $f(X,Y)=\fy(X^{-1}Y)$. In other words, $\Poi_*$ is the push-forward of $\Poi_D$ under the map
$(X,Y) \mapsto U=X^{-1}Y$.

Let $C=e_{21} + \cdots + e_{n, n-1} + e_{1n}$ be the cyclic permutation matrix. By Lemma~\ref{BC}, we write $U$ as 
\begin{equation}
\label{normalU}
U = N_- B_+ C N_-^{-1},
\end{equation}
where $N_-$ is unipotent lower triangular and $B_+$ is upper triangular.
Since functions $\tilde \fy_{kl}$ are invariant under the conjugation by unipotent lower triangular matrices, we have 
 $\tilde \fy_{kl}(U)=\tilde\fy_{kl}(B_+C)$.
Furthermore, 
\[
\left((B_+C)^i\right)^{[n]} = b_{ii} \cdots b_{11} e_i + \sum_{s < i} \alpha_{is} e_s
\]
 for $i\le n$, where $b_{ij}$, $1\le i\le j\le n$, are the entries
of $B_+$. It follows that 
\begin{equation}
\begin{split}
\label{tildefy}
\tilde \fy_{kl}(U)&= \pm \left (\prod_{s=1}^{n-k-l+1}b_{ss}^{n-k-l-s+2}\right )\det (B_+)_{[n-k-l+2, n-k]}^{[n-l+2, n]}\\
&= \pm \det U^{n-k-l+1}\frac{h_{n-k-l+2,n-l+2}(B_+)}{\prod_{s=2}^{n-k-l+2}h_{ss}(B_+)}.
\end{split}
\end{equation}

\begin{remark}\label{tildefysign} It is easy to check that the sign in the first line of~\eqref{tildefy} equals $(-1)^{k(n-k)+(l-1)(n-k-l+1)}s_{kl}$. 
We will use this fact below 
in the proof of Theorem~\ref{structure}(ii).
\end{remark}

Note that $\det U=\det B_+C= \pm\prod_{s=1}^{n}b_{ss} $ is a Casimir function for~\eqref{PoissonU}. Therefore to prove Lemma~\ref{fylc} it suffices to show that
functions $\det (B_+)_{[i, i+ n-j ]}^{[j, n]}$, $2\leq i \leq j \leq n$, are log-canonical with respect to $\Poi_*$ as functions of $U$. To this end, we first will compute the push-forward of $\Poi_*$ under the 
map $U \mapsto B_+'=B_+'(U)=(B_+)_{[2,n]}^{[2,n]}$ of $GL_n$ to the space $\B_{n-1}$ of $(n-1)\times (n-1)$ invertible upper triangular matrices. 

Let $S= e_{12} + \cdots + e_{m-1, m}$ be the $m\times m$ upper shift matrix. For an $m\times m$ matrix $A$, define 
$$
\tau(A)= S A S^T.
$$

\begin{lemma}
\label{PoissonB}

Let $f_1, f_2$ be two differentiable functions on $\B_{n-1}$. Then
\[
\{f_1\circ B_+', f_2\circ B_+'\}_* = \{f_1, f_2\}_b\circ B_+',
\]
where $\Poi_b$ is defined by
\begin{equation}
\label{brackB}
\{f_1, f_2\}_b(A) = \{f_1, f_2\}_r(A) + \frac{1}{2} \left (  \left\langle A \nabla f_1, \tau( (\nabla f_2) A)\right\rangle_0 -  
\left\langle \tau( (\nabla f_1) A), A \nabla f_2 \right\rangle_0   \right )
\end{equation}
for any $A\in \B_{n-1}$.
\end{lemma}

\begin{proof} We start by computing an infinitesimal variation of $B_+$ as a function of $U$. From~\eqref{normalU}, we obtain 
$\left (\Ad_{N_-^{-1}}\delta U\right )C^{-1} = 
[ N_-^{-1}\delta N_-, B_+C] C^{-1} + \delta B_+$.
Then 
\begin{equation}
\label{aux1}
\pi_{< 0}\left( \left (\Ad_{N_-^{-1}}\delta U\right )C^{-1} \right ) = \pi_{< 0}\left( [ N_-^{-1}\delta N_-, B_+C] C^{-1}\right ).
\end{equation}
If we define $\LL : \n_- \to \n_-$  via $\LL(\nu_-) = - \pi_{< 0}\left( \ad_{B_+C}(\nu_-) C^{-1}\right)$ for $\nu_-\in\n_-$ then~\eqref{aux1} above implies
that $N_-^{-1}\delta N_- = \LL^{-1} \left( \pi_{< 0}\left( \left (\Ad_{N_-^{-1}}\delta U\right )C^{-1} \right ) \right)$. 
Invertibility of $\LL$ is easy to establish
by observing that~\eqref{aux1} can be written as a triangular linear system for matrix entries of $N_-^{-1}\delta N_-$.
The operator $\LL^* : \n_+ \to \n_+$ dual to $\LL$ with respect to 
$\langle\cdot,\cdot \rangle$ acts on $\nu_+\in \n_+$ as
\begin{equation}
\label{Lstar}
\begin{split}
\LL^*(\nu_+) &= \pi_{>0} \left ( \ad_{B_+C} (C^{-1} \nu_+)  \right )
=\pi_{>0} \left ( B_+ \nu_+ - C^{-1} \nu_+ B_+ C  \right )\\
& = B_+ \nu_+ - S \nu_+ B_+ S^T 
= \left (\one - \tau\circ \Ad_{B_+^{-1}}\right ) (B_+ \nu_+).
\end{split}
\end{equation}
Note that $\LL^*$ extends to an operator on $\mathfrak{gl}_n$ given by the same formula and invertible due to the fact that
$\tau\circ \Ad_{B_+^{-1}}$ is nilpotent.

Let $f$ be a differentiable function on $\B_n$. Denote $\tilde f (U)=(f\circ B_+)(U)$. 
Then
\begin{equation}
\label{nablafu}
\begin{split}
\langle \nabla\tilde f, & \delta U\rangle = \langle \nabla f, \delta B_+\rangle = 
\langle C^{-1}\nabla f, \left (\Ad_{N_-^{-1}}\delta U\right ) - [ N_-^{-1}\delta N_-, B_+C]  \rangle\\
& =\langle \Ad_{N_-}(C^{-1} \nabla f ), \delta U\rangle + \left\langle [C^{-1}\nabla f, B_+C ], \LL^{-1} \left(\pi_{< 0}\left(\left (\Ad_{N_-^{-1}}\delta U\right )C^{-1}\right)\right)\right\rangle\\
&=\langle \Ad_{N_-}\left (C^{-1} \left ( \nabla f + (\LL^*)^{-1}\left(\pi_{> 0} (   [  C^{-1}\nabla f, B_+C ]   )\right)\right ) \right ), \delta U\rangle;
\end{split}
\end{equation}
in the last equality we have used the identities $\LL^{-1}=\pi_{<0}\circ\LL^{-1}$
and $(\LL^*)^{-1}=\pi_{>0}\circ(\LL^*)^{-1}$.

From now on we assume that $f$ depends only on $B_+'$, that is, does not depend on the first row of 
$B_+$. Thus, the first column of $\nabla f$ is zero.
Define $\zeta^f=\pi_{> 0} (   [  C^{-1}\nabla f, B_+C ]   )$ and $\xi^f = \nabla f+ (\LL^*)^{-1}(\zeta^f)$. Clearly,
$$
\zeta^f= \pi_{> 0} ( C^{-1}\nabla f \cdot B_+C -  B_+ \nabla f )= \pi_{> 0}( \tau (\nabla f\cdot B_+)) - \pi_{> 0} ( B_+\nabla f )
$$
and
\begin{align*}
[C^{-1}\xi^f, B_+C] &= C^{-1}\xi^f B_+C - B_+ \xi^f = C^{-1}\xi^f B_+ S^T - B_+ \xi^f \\
& = S\xi^f B_+ S^T - B_+ \xi^f + e_{n1} \xi^f B_+C \\
&= -  \left (\one - \tau\circ \Ad_{B_+^{-1}}\right ) (B_+ \xi^f) + e_{n1} \xi^f B_+C,
\end{align*}
which is equivalent to
\begin{equation}\label{aux2}
[C^{-1}\xi^f, B_+C]= -\LL^*( \xi^f)  + e_{n1} \xi^f B_+C 
\end{equation}
by~\eqref{Lstar}.  Furthermore,
$$
\LL^*( \xi^f) = \LL^*(\nabla f) + \zeta^f 
= \pi_{\leq 0} \left ( B_+ \nabla f- \tau (\nabla f \cdot B_+)\right ).
$$
Consequently, $[C^{-1}\xi^f, B_+C]\in \b_-$. 
Using this fact and the invariance of the trace form, for any $f_1, f_2$ that depend only on $B_+'$ we can now compute $\{ f_1\circ B_+, f_2\circ B_+\}_*$ from~\eqref{PoissonU} and~\eqref{nablafu} as 
\begin{equation}
\label{PoissonBraw}
\begin{split}
\{ f_1\circ B_+, f_2\circ B_+\}_* (U) &= \langle R_+ \left ([C^{-1}\xi^1, B_+C] \right ), [C^{-1}\xi^2, B_+C]  \rangle\\
& - \langle [C^{-1}\xi^1, B_+C] , C^{-1}\xi^2 B_+C\rangle,
\end{split}
\end{equation}
where $\xi^i = \xi^{f_i}$, $i=1,2$.
Thus, $\nabla^{i}=\nabla f_i$ are lower triangular matrices with zero first column, and so $\nabla^i B_+$, $B_+ \nabla^i$, $\xi^i$, $\xi^i B_+$, $B_+ \xi^i$ have zero first column as well, and $C^{-1}\xi^2 B_+C$ has zero last column. 
 We conclude that the second term in~\eqref{aux2} does not affect either term in the right hand side of~\eqref{PoissonBraw}.
In particular,  the first term in~\eqref{PoissonBraw} becomes
$$
\frac{1}{2} \left\langle B_+ \nabla^1- \tau (\nabla^1 B_+), B_+ \nabla^2- \tau (\nabla^2 B_+)\right\rangle_0,
$$
while the second can be re-written as 
\begin{equation}
\nonumber
\begin{split}
\big\langle \pi_{\leq 0} \left ( \tau (\nabla^1 B_+) - B_+ \nabla^1\right ),& C^{-1}\xi^2 B_+C\big\rangle\ = 
\left\langle S^T\pi_{\leq 0} \left ( \tau (\nabla^1 B_+) - B_+ \nabla^1 \right )S, \xi^2 B_+\right\rangle \\
&= \left\langle \pi_{\leq 0} \left ( \nabla^1 B_+\right ) - S^T\pi_{\leq 0} \left ( B_+ \nabla^1 \right ) S, \xi^2 B_+\right\rangle\\ 
&= \left\langle \pi_{\leq 0} \left ( \nabla^1 B_+\right ) - S^T\pi_{\leq 0} \left ( B_+ \nabla^1 \right ) S, \nabla^2 B_+\right\rangle\\
 & +  \left\langle  \pi_{\leq 0} \left ( \nabla^1 B_+\right ) - S^T\pi_{\leq 0} \left ( B_+ \nabla^1 \right ) S, (\LL^*)^{-1} (\zeta^2)B_+\right\rangle,
\end{split}
\end{equation}
where $\zeta^2=\zeta^{f_2}$.

The last term can be transformed as
\begin{equation}
\nonumber
\begin{split}
\langle  \Ad_{B_+}\left(\pi_{\leq 0}( \nabla^1 B_+) \right.&\left.- S^T\pi_{\leq 0} ( B_+ \nabla^1) S\right), B_+(\LL^*)^{-1} (\zeta^2)\rangle\\
& = \langle \left (\one - \Ad_{B_+}\circ \tau^* \right ) \pi_{\leq 0} ( B_+ \nabla^1 ), B_+(\LL^*)^{-1} (\zeta^2)\rangle\\
& = \langle  \pi_{\leq 0} \left ( B_+ \nabla^1 \right ), \left (\one - \tau\circ \Ad_{B_+^{-1}}\right ) B_+ (\LL^*)^{-1} (\zeta^2)\rangle\\ 
&= \langle  \pi_{\leq 0} \left ( B_+ \nabla^1 \right ), \LL^* ((\LL^*)^{-1} (\zeta^2))\rangle\\
& = \langle  \pi_{\leq 0} \left ( B_+ \nabla^1 \right ), \pi_{> 0}( \tau (\nabla^2 B_+)) - \pi_{> 0} ( B_+\nabla^2  )
\rangle.
\end{split}
\end{equation}
Here, in the first equality we used the fact that $(\LL^*)^{-1} (\zeta^2) \in \n_+$, and that
$$
\langle  \Ad_{B_+}\pi_{\leq 0}( \nabla^1 B_+),A\rangle=\langle \pi_{\le0}\left( \Ad_{B_+}\pi_{\leq 0}( \nabla^1 B_+)\right),A\rangle
=\langle \pi_{\le0}\left( \Ad_{B_+} (\nabla^1 B_+)\right),A\rangle
$$
for $A\in \b_+$.

Combining our simplified expressions for two terms in the right hand side of \eqref{PoissonBraw} and taking into account that
$\langle\tau(\nabla^1B_+),\tau(\nabla^2B_+)\rangle_0=\langle\nabla^1B_+,\nabla^2B_+\rangle_0$
we obtain
\begin{equation*}
\begin{split}
\{f_1\circ B_+, f_2\circ &B_+\}_*(U) \\
&= \frac{1}{2} \left (  \left\langle B_+ \nabla^1, \tau( \nabla^2 B_+)\right\rangle_0 -  \left\langle \tau( \nabla^1 B_+), B_+ \nabla^2 \right\rangle_0   
 \right)  + \{f_1, f_2\}_r(B_+)
\end{split}
\end{equation*}
for functions $f_1, f_2$ on $\B_n$ that depend only on $B_+'$. To complete the proof of Lemma \ref{PoissonB}, it remains to observe that for such functions, the right hand side does not depend on the first row of $B_+$ and is equal to a similar expression in which $B_+$ is replaced with $B_+'$ and the bracket $\Poi_r$ and the forms 
$\langle\cdot ,\cdot \rangle$, $\langle\cdot , \cdot\rangle_0$ are replaced with their counterparts
for $GL_{n-1}$.
\end{proof}

Now we can finish the proof of Lemma~\ref{fylc}. Let functions $f_1, f_2$ belong to the family 
$\{ \log\det (B_+)_{[i, i+ n-j ]}^{[j, n]}= \log\det (B_+')_{[i-1, i+ n-j ]}^{[j-1, n]},\ 2\leq i \leq j \leq n\}$. 
Then the second term in our expression~\eqref{brackB} for $\{f_1, f_2\}_b(B_+')$ is constant
because of the homogeneity of minors of $B_+'$ under right and left diagonal multiplication, and the first term is constant because, as we discussed earlier,  
functions $\det (B_+')_{[i-1, i+ n-j ]}^{[j-1, n]}$ are log-canonical with respect to $\Poi_r$ (see, e.g.,~\eqref{aux_psi}).
\end{proof}

This ends the proof of Theorem~\ref{basis}.

\begin{remark} 
\label{MMJcompat} 
The bracket \eqref{brackB}  can be extended to the entire $GL_{n-1}$. In fact, the right hand side makes sense for $GL_m$ for any  $m\in \mathbb{N}$. 
It can be induced via the map 
$\T_{m}\times \T_{m}\times GL_{m} \ni (H=\diag (h_1,\ldots, h_{m}), \tilde H=\diag (\tilde h_1,\ldots, \tilde h_{m}), X) \mapsto H X\tilde H\in GL_{m}$ if one equips $\T_{m}\times \T_{m}$ with a Poisson bracket $\{h_i, h_j\} = \{\tilde h_i, \tilde h_j\} =0, \{h_i, \tilde h_j\}=\delta_{i,j-1}$. It follows from~\cite[Prop.~2.2]{LMP} that right and left diagonal multiplication generates a global toric action for the
standard cluster structure on $GL_m$
(and on double Bruhat cells in $GL_m$), for which $\Poi_r$ is a compatible Poisson structure. 
Therefore, the above extension of~\eqref{brackB} to the entire group is compatible with this cluster structure as well.
\end{remark}

\section{Proof of Theorem~\ref{structure}(i)}\label{prgcs} 

\subsection{Toric action}\label{torac}
Let us start from the following important statement.

\begin{theorem}\label{gtaind}
The action 
\begin{equation}\label{lraction}
(X,Y)\mapsto (T_1 X T_2, T_1 Y T_2)
\end{equation}
 of right and left multiplication by diagonal matrices 
is $\GCC_n^D$-extendable to a global toric action on $\FF_{\C}$.
\end{theorem}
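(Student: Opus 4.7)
The plan is to apply Proposition~\ref{globact}: we exhibit an integer weight matrix $W$ of full rank for the action~\eqref{lraction}, and then verify the two hypotheses $\wB W=0$ and invariance of every nontrivial Casimir $\hat p_{1r}$.

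First I would read off the weights. Parameterizing $(\C^*)^{2n}$ by the diagonal entries of $T_1$ and $T_2$, direct inspection of row/column indices gives, under~\eqref{lraction},
\[
g_{ij}\mapsto \prod_{s=i}^n(T_1)_{ss}\prod_{s=j}^{j+n-i}(T_2)_{ss}\cdot g_{ij},\qquad h_{ij}\mapsto \prod_{s=i}^{i+n-j}(T_1)_{ss}\prod_{s=j}^n(T_2)_{ss}\cdot h_{ij},
\]
\[
f_{kl}\mapsto \prod_{s=n-k-l+1}^n(T_1)_{ss}\prod_{s=n-k+1}^n(T_2)_{ss}\prod_{s=n-l+1}^n(T_2)_{ss}\cdot f_{kl},\qquad c_r\mapsto \det T_1\det T_2\cdot c_r.
\]
For $\fy_{kl}=s_{kl}(\det X)^{n-k-l+1}\det\Phi_{kl}$ one has $U=X^{-1}Y\mapsto T_2^{-1}UT_2$, and a short inspection of the columns of $\Phi_{kl}$ ($k$ standard basis columns, $l$ columns of $U$, and $n-k-l$ last columns of higher powers of $U$) yields $\Phi_{kl}\mapsto T_2^{-1}\Phi_{kl}D_2$ for an explicit diagonal $D_2$ depending only on $T_2$. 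Assembling these exponents column by column produces the required matrix $W$; its full rank can be read off the subfamily $g_{n1},\dots,g_{nn},h_{1n},\dots,h_{n-1,n},c_1$.

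Invariance of Casimirs is immediate. Reading the edges at $\fy_{11}$ in $Q_n$ gives $u_{1;>}=\fy_{21}$, $u_{1;<}=\fy_{12}$, $v_{1;>}=h_{11}$, $v_{1;<}=g_{11}$, and $v_{1;>}^{[r]}=v_{1;<}^{[r]}=1$ for $0\le r<n$. Hence $q_{1r}=h_{11}^rg_{11}^{n-r}$ and
\[
\hat p_{1r}=\frac{c_r^n}{g_{11}^{n-r}h_{11}^r}\quad (1\le r\le n-1),\qquad \hat p_{10}=\hat p_{1n}=1.
\]
Since $c_r$, $g_{11}=\det X$, and $h_{11}=\det Y$ all scale by the single factor $\det T_1\det T_2$, every $\hat p_{1r}$ is invariant under~\eqref{lraction}.

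The main work is the identity $\wB W=0$, which amounts to the homogeneity of every generalized exchange polynomial under~\eqref{lraction}. At mutable $g$- and $h$-vertices the exchange polynomials are the classical short Pl\"ucker identities among dense minors of $X$ (resp.~$Y$), and the required homogeneity is exactly the one which powers the global toric action on the standard cluster structures on the double Bruhat cells $GL_n^{w_0,e}$ and $GL_n^{e,w_0}$ referenced in Remark~\ref{tribfz}. At $f$-vertices the exchange polynomials are Desnanot--Jacobi-type identities on the mixed matrices $F_{kl}$, whose homogeneity is transparent from the block decomposition used to compute the weights above. At $\fy$-vertices with $k+l<n$ homogeneity follows directly from $\Phi_{kl}\mapsto T_2^{-1}\Phi_{kl}D_2$; the boundary cases $k+l=n$, $k=1$, $l=1$ reduce to $f$- or $g$/$h$-identities via Remark~\ref{identify}, which is precisely what makes the arrows linking neighbouring regions carry matching weights. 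At the special vertex $\fy_{11}$ the generalized exchange polynomial is exactly $\fy_{11}P_n^*=\det((-1)^{n-1}\fy_{12}X+\fy_{21}Y)$ by Proposition~\ref{polyrel}, and its homogeneity in $T_1,T_2$ is nothing but $\det(T_1(\cdot)T_2)=\det T_1\det T_2\cdot\det(\cdot)$ together with the (easy) equality of weights of $\fy_{12}$ and $\fy_{21}$. Once these verifications are assembled, Proposition~\ref{globact} delivers the global action. The expected main obstacle is the uniform bookkeeping at the boundary arrows of $Q_n$ where the four regions meet: one must confirm that the weights inherited from the column-scaling pattern of $\Phi_{kl}$ (through $D_2$) coincide with those from the block $T_2$-scaling of $F_{kl}$ and the dense minors $h_{ii}$, $g_{ii}$ --- but no new technique beyond the identifications of Remark~\ref{identify} is required.
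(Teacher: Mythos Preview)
Your overall strategy --- apply Proposition~\ref{globact}, compute the weight matrix, verify invariance of $\hat p_{1r}$, and check $\wB W=0$ --- is the paper's strategy, and your treatment of the Casimirs is correct and essentially identical to the paper's.

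Where you diverge is in the verification of $\wB W=0$. You reinterpret this as ``homogeneity of the generalized exchange polynomials'' and then appeal to the specific matrix identities (short Pl\"ucker, Desnanot--Jacobi, Proposition~\ref{polyrel}) that realize the mutations. This conflates two different objects: the abstract exchange polynomial at a vertex $v$ is the sum of two monomials read off the quiver, and its homogeneity is literally the condition that the $y$-variable $y_v=\prod_{v\to u}x_u\big/\prod_{w\to v}x_w$ has weight zero; it has nothing to do with whatever determinantal identity later shows that $x_v'$ is regular. Those identities are proven in the Regularity subsection, \emph{after} the present theorem, so invoking them here is logically premature. More concretely, your reduction to the double-Bruhat-cell toric action via Remark~\ref{tribfz} breaks at the diagonal vertices $g_{ii}$ and $h_{ii}$: in $Q_n^g$ and $Q_n^h$ these are frozen, but in $Q_n$ they are mutable and their exchange polynomials mix $f$-variables with $g$- (resp.\ $h$-)variables, so they are not Pl\"ucker identities among dense minors of $X$ or $Y$ alone. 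Similarly, ``$\Phi_{kl}\mapsto T_2^{-1}\Phi_{kl}D_2$'' gives the weight of $\fy_{kl}$, but does not by itself establish that the weights of the neighbours in $Q_n$ cancel.

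The paper avoids all of this by computing the left and right weights $\xi_L(\psi)=\pi_0(E_L\log\psi)$, $\xi_R(\psi)=\pi_0(E_R\log\psi)$ once and for all for each of the four families (explicit diagonal matrices $\Delta(a,b)$), and then verifying $\xi_L(y_v)=\xi_R(y_v)=0$ vertex-type by vertex-type directly from the quiver description in Section~\ref{init}. This is the same amount of bookkeeping you anticipate at the end, but it treats interior and boundary vertices uniformly and requires no forward reference to the regularity proofs. The example the paper works out is precisely one of the cases your argument misses, the vertex $h_{ii}$.
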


\begin{proof}
For an arbitrary vertex $v$ in $Q_n$
denote by $x_v$ the cluster variable attached to $v$. If $v$ is a mutable vertex, then the {\em $y$-variable} ($\tau$-variable in the terminology of \cite{GSV1}) corresponding to $v$ is defined as 
$$
y_v = \frac{\prod_{(v\to u) \in Q} x_u}{\prod_{(w\to v) \in Q} x_w}.
$$
Note that the product in the above formula is taken over all arrows,
so, for example, $\fy_{21}^2$ enters the numerator of the $y$-variable
corresponding to $\fy_{12}$. 
By Proposition~\ref{globact}, to prove the theorem it suffices to check that $y_v$ is a homogeneous function of degree zero with respect to the
action~\eqref{lraction} (see~\cite[Remark 3.3]{GSVMMJ} for details), and that the Casimirs $\hat p_{1r}$ are invariant under~\eqref{lraction}.

Let us start with verifying the latter condition. According to Section~\ref{logcan}, $\hat p_{1r}=c_r^ng_{11}^{r-n}h_{11}^{-r}$, $1\le r\le n-1$.
It is well-known that functions $c_i$ are Casimirs for the Poisson-Lie bracket~\eqref{sklyadoubleGL1} on $D(GL_n)$, as well as $g_{11}$ and $h_{11}$.
Therefore, $\hat p_{1r}$ are indeed Casimirs. Their invariance under~\eqref{lraction} is an easy calculation.

Next, for a function $\psi(X,Y)$ on $D(GL_n)$ homogeneous with respect 
to~\eqref{lraction}, define
%$(X,Y)\to (D_1 X D_2, D_1 X D_2)$, where $D_1, D_2$  are diagonal matrices, let 
the {\em left\/} and {\em right weights} $\xi_L(\psi)$, $\xi_R(\psi)$ 
of $\psi$ as the {\em constant\/} diagonal matrices 
$\pi_0(E_L \log\psi)$ and $\pi_0(E_R \log\psi)$. 
Recall that all functions $g_{ij}$, $h_{ij}$, $f_{kl}$, $\fy_{kl}$
possess this homogeneity property.

For $1\leq i \leq j \leq n$, let $\Delta(i,j)$ denote a diagonal matrix with $1$'s in the entries $(i,i), \ldots, (j,j)$ and $0$'s everywhere else. 
It follows directly from the definitions in Section~\ref{logcan} that
\begin{equation}\label{weights}
\begin{aligned}
& \xi_L(g_{ij})= \Delta(j, n+j -i),\quad\xi_R(g_{ij})= \Delta(i,n),  \\
& \xi_L(h_{ij})= \Delta(j, n),\quad   \xi_R(h_{ij})= \Delta(i,n+i-j),\\
& \xi_L(f_{kl})= \Delta(n-k+1, n)+ \Delta(n-l+1, n),\quad 
\xi_R(f_{kl})= \Delta(n-k-l+1,n), \\
& \xi_L(\fy_{kl})= (n-k-l)\left(\one + \Delta(n,n)\right ) + \Delta(n-k+1,n)+ \Delta(n-l+1,n),\\ 
& \xi_R(\fy_{kl})= (n-k-l+1)\one.
\end{aligned}
\end{equation}
Now the verification of the claim above 
becomes a straightforward calculation 
based on the description of $Q_n$ in Section~\ref{init} 
and the fact that for a monomial in 
homogeneous functions $M=\psi_1^{\alpha_1}\psi_2^{\alpha_2}\cdots$ the 
right and left weights are  $\xi_{R,L}(M) = \alpha_1 \xi_{R,L}(\psi_1) + \alpha_2 \xi_{R,L}(\psi_2) + \cdots$. 

For example, if $v$ is the vertex associated with the function $h_{ii}$,
$i\ne 1$, $j\ne n$, $i\ne j$, then 
\begin{align*}
\xi_R(y_v) &= \xi_R(h_{i-1,i}) + \xi_R(f_{1,n-i}) -\xi_R(h_{i,i+1}) - \xi_R(f_{1,n-i+1})\\ 
&= \Delta(i-1, n-1)+ \Delta(i,n)- \Delta(i, n-1)  - \Delta(i-1,n)= 0
\end{align*}
and
\begin{align*}
\xi_L(y_v) &= \xi_L(h_{i-1,i}) + \xi_L(f_{1,n-i}) -\xi_L(h_{i,i+1}) - \xi_L(f_{1,n-i+1})\\
& = \Delta(i, n) + \Delta(i+1,n) + \Delta(n,n)
- \Delta(i+1, n) -\Delta(n,n) - \Delta(i,n)= 0.
\end{align*}
Other vertices are treated in a similar way.
\end{proof}

\subsection{Compatibility}\label{cmpty}
Let us proceed to the proof of the compatibility statement of 
Theorem~\ref{structure}(i). We have already seen above that $\hat p_{1r}$ are Casimirs of the bracket $\Poi_D$.
Therefore, by Proposition~\ref{compatchar}, it suffices to 
show that for every mutable vertex $v\in Q_n$
\begin{equation}\label{compcond}
\{\log x_u, \log y_v\}_D = \lambda \delta_{u,v}\quad\text{for any  $u\in Q_n$}, 
\end{equation}
where $\lambda$ is some nonzero rational number not depending on $v$.

Let $v$ be a mutable $g$-vertex in $Q_n$ and  
$u$ be a vertex in one of the other three regions of $Q_n$. Then to show that $\{\log x_u, \log y_v\}_D=0$ one can use~\eqref{ourinv_prop}, 
Lemma~\ref{cross-inv} and the proof of Theorem~\ref{gtaind}, which implies that 
\begin{align*}
\pi_0\left (X\nabla_X \log y_v\right )& = \pi_0\left (E_R \log y_v\right ) = \xi_R(y_v)=0,\\ 
\pi_0\left (\nabla_X \log y_v X\right )& = \pi_0\left (E_L \log y_v\right ) = \xi_L(y_v)=0.
\end{align*} 
The same argument works if $u$ and $v$ belong to any two different regions of the quiver $Q_n$.

Thus, to complete the proof it remains to verify~\eqref{compcond} for vertices $u, v$ in the same region of $Q_n$.
In view of~\eqref{canform}, for $g$- and $h$-vertices other than vertices corresponding to $g_{ii}$ and $h_{ii}$, this becomes a particular case of Theorem~4.18
in~\cite{GSVb} which establishes the compatibility of the standard 
Poisson--Lie structure on a simple Lie group $\G$ with the cluster 
structure on double Bruhat cells in $\G$. We just need to set $\G=GL_n$ (a 
transition to $GL_n$ from a simple group $SL_n$ is straightforward), set 
$\lambda$ in~\eqref{compcond} to be equal to $-1$  and apply the theorem to $\G^{id,w_0}, \G^{w_0,id}$ in the case of $g$- and $h$-regions, respectively (here $w_0$ is the longest permutation of length $n-1$). 

Vertices corresponding to $g_{ii}$ and $h_{ii}$ are treated separately, because in quivers for $\G^{id,w_0}, \G^{w_0,id}$ they would have been frozen. For any such vertex $v$ we only need to check that $\{\log x_v, \log y_v\}_D = -1$. Using the description of $Q_n$ in Section~\ref{init},
%the second and 
the third equation in Lemma~\ref{cross-inv}, the second and the third lines in~\eqref{weights}, and equation~\eqref{aux_psi}, we compute
\begin{equation*}
\begin{aligned}
\left \{ \log h_{ii},\log\frac{f_{1, n-i}   h_{i-1,i}} {f_{1, n-i+1}   h_{i,i+1}}\right \}_D 
&= \frac 1 2  \langle  \Delta(i,n) ,2\Delta(i+1,n) - 3\Delta(i,n) +\Delta(i-1,n)\\
&\qquad + \Delta(i-1,n-1) - \Delta(i,n-1)\rangle_0\\
& = \frac 1 2 \left \langle \Delta(i,n) , 2(\Delta(i-1,i-1) -\Delta(i,i)) \right \rangle_0 = -1
\end{aligned}
\end{equation*}
for $1<i<n$. Using in addition the first equation in Lemma~\ref{cross-inv}
and the first line in~\eqref{weights} we get
\begin{equation*}
\begin{aligned}
\left \{ \log h_{nn},\log\frac{ h_{n-1,n}g_{nn}} {f_{11}}\right \}_D 
&= \frac 1 2  \langle  \Delta(n,n) ,\Delta(n-1,n)- 3\Delta(n,n) + \Delta(n-1,n-1)\rangle_0\\
& = \frac 1 2 \left \langle \Delta(n,n) , 2(\Delta(n-1,n-1) -\Delta(n,n)) \right \rangle_0 = -1.
\end{aligned}
\end{equation*}
Vertices corresponding to $g_{ii}$ are treated in a similar way.

Now, let us turn to the $f$-region. Let $v$ be a vertex that corresponds to $f_{kl}$, $k,l\ge 1$, $k+l< n$, then by the last equality in~\eqref{canform},
\begin{equation}\label{yftoyh}
y_v = \frac{f_{k+1,l-1} f_{k,l+1}f_{k-1,l}} {f_{k+1,l} f_{k-1,l+1}f_{k,l-1}}(X,Y) = \frac{h_{\alpha,\beta-1} h_{\alpha-1,\beta}h_{\alpha+1,\beta+1}} 
{h_{\alpha,\beta+1}h_{\alpha+1,\beta}h_{\alpha-1,\beta-1}}(Z),
\end{equation}
where $\alpha=n-k-l+1$, $\beta=n-k+1$, and $Z=(Y_{>0})^{-1}X_{\geq 0}$. Consequently, if $u$ is a vertex that corresponds to $f_{k'l'}$, 
and $\alpha'=n-k'-l'+1$, $\beta'=n-k'+1$, then
$$
\{ \log x_u, \log y_v\}_D = \{\log h_{n-l'+1,n-l'+1}(Y), \log y_v\}_D +  \{\log h_{\alpha'\beta'}, \log y_v\}_r(Z)
$$ 
%&=\{\log h_{i'j'}, \log y_v\}_r(Z). 
by Lemma~\ref{Zmap}. The first term in the right hand side vanishes,
as it was already shown above (this corresponds to the case when one vertex belongs to the $h$-region and the other to the $f$-region), and we are left
with
\begin{equation}\label{Dtor}
\{ \log x_u, \log y_v\}_D = \{\log h_{\alpha'\beta'}, \log y_v\}_r(Z).
\end{equation}

Consider the subquiver of $Q_n$ formed by all $f$-vertices, 
as well as vertices (viewed as frozen) that correspond to 
functions $g_{ii}(X)$ and $\fy_{n-i, i}(X,Y)$. It is isomorphic, up to edges 
between the frozen vertices, 
to the $h$-part of $Q_n$ in which vertices corresponding to $h_{ii}(Y)$ are 
viewed as frozen. The isomorphism consists in sending the vertex occupied by 
$f_{kl}(X,Y)$ to the vertex occupied by $h_{\alpha\beta}(Z)$, including the 
values of $k,l$ subject to identifications of Remark~\ref{identify}.
The latter is possible since $g_{ii}(X)=h_{ii}(Z)$ by the second equation in~\eqref{canform},  
and since the third equation in~\eqref{canform} can be extended to the cases $k=0$ and $l=0$ by setting $h_{i,n+1}\equiv1$ for any $i$. It now follows
from~\eqref{yftoyh}, \eqref{Dtor} that this isomorphism takes~\eqref{compcond} for 
$f$-vertices to the same statement for $h$-vertices, which has
been already proved. 

 We are left with the $\fy$-region.
If $v$ is a vertex that corresponds to 
$\fy_{kl}$, $k > 1$, $l>1$, $k+l < n$, then by~\eqref{tildefy}
\begin{equation}\label{fyviah}
y_v = \frac{\fy_{k,l-1} \fy_{k-1,l+1}\fy_{k+1,l}} {\fy_{k+1,l-1}
\fy_{k,l+1}\fy_{k-1,l}}(X,Y) = \frac{h_{\alpha-1,\gamma} h_{\alpha+1,\gamma+1}h_{\alpha,\gamma-1}}
{h_{\alpha,\gamma+1} h_{\alpha+1,\gamma}h_{\alpha-1,\gamma-1}}(B_+'),
\end{equation}
where $\alpha=n-k-l+2$, $\gamma=n-l+2$, $B_+'=(B_+)_{[2,n]}^{[2,n]}$
(here we use the identity $h_{ij}(B_+)=h_{i-1,j-1}(B_+')$ for $i,j>1$). 
In view of Lemma~\ref{PoissonB} and Remark~\ref{MMJcompat}, we can
establish~\eqref{compcond} for $v$ by applying the same reasoning
as in the case of $f$-vertices.  To include the case $k=1$ it suffices 
to use the same convention $h_{i,n+1}\equiv1$ as above. 

 Therefore, the only vertices left to
consider are the ones corresponding to $\fy_{k,n-k}$, $1\le k\le n-1$, and
$\fy_{k1}$, $1\le k\le n-2$. They are treated by straightforward, albeit lengthy,  
calculations based on Lemma~\ref{PoissonB}, equations~\eqref{tildefy}, \eqref{aux_psi}, and the fourth equation in Lemma~\ref{cross-inv}. 
Note that in all the cases equation~\eqref{compcond} is satisfied with $\lambda=-1$.

\begin{remark}\label{Brank}
It follows from~\eqref{compcond} and Proposition~\ref{compatchar} that the exchange matrix corresponding to the extended seed
$\widetilde{\Sigma}_n=(F_n, Q_n, \P_n)$ 
is of full rank. 
\end{remark}

\subsection{Irreducibility: the proof of Theorem~\ref{irredinbas}}
The claim is clear for the functions $g_{ij}$, $h_{ij}$ and $f_{kl}$, since each one of them is a minor of
a matrix whose entries are independent variables (see e.g.~\cite[Ch.~XIV, Th.~1]{Boch}). 
Functions $c_k$, $1\le k\le n-1$, are sums of such minors. Consequently, each $c_k$
is linear in any of the variables $x_{ij}$, $y_{ij}$. Assume that $c_k=P_1P_2$, where $P_1$ and $P_2$ are nonconstant polynomials, 
and that $P_1$ is linear in $y_{11}$, hence $P_2$ does not depend
on $y_{11}$. If $P_2$ is linear in any of $y_{1j}$, $2\le j\le n$, then $c_k$ includes a multiple of the product $y_{11}y_{1j}$, a contradiction. 
Therefore, $P_1$ is linear in any one of $y_{1j}$, $1\le j\le n$. If $P_2$ is linear in $z_{ij}$ for some values of $i$ and $j$ (here $z$ is either $x$ or $y$) 
then $c_k$ includes a multiple of the product $y_{1j}z_{ij}$, a contradiction. Hence, $P_2$ is trivial, which means that $c_k$ is irreducible.

Our goal is to prove  
\begin{proposition}\label{irredfy}
$\fy_{kl}(X,Y)$ is an irreducible polynomial in the entries of $X$ and $Y$.
\end{proposition}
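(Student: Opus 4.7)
\textbf{Plan}. The strategy is to exploit the invariance of $\det\Phi_{kl}$ under the action $(X,Y) \mapsto (AXN_-, AYN_-)$ for $A \in GL_n$ and $N_-$ unipotent lower triangular, recorded in~\eqref{ourinv_prop}, which forces $\fy_{kl}$ to transform as $(\det A)^{n-k-l+1}\fy_{kl}$. Working over the fraction field $K = \C(X_{ij})$, we may invert $X$, substitute $U = X^{-1}Y$, and factor out the unit $s_{kl}(\det X)^{n-k-l+1}$; since $Y \mapsto U$ is a $K$-linear automorphism of $K[Y_{ij}] = K[U_{ij}]$, irreducibility of $\fy_{kl}$ in $K[Y_{ij}]$ reduces to irreducibility of $\tilde\fy_{kl}(U) = \det\Phi_{kl}(U)$ as a polynomial in entries of a generic matrix $U$; since $\tilde\fy_{kl}$ has integer coefficients, a standard Gauss-lemma argument further reduces this to irreducibility in $\C[U_{ij}]$.

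The main step is to prove $\tilde\fy_{kl}$ is irreducible in $\C[U_{ij}]$. Laplace expansion of $\det\Phi_{kl}$ along the first $k$ identity columns yields $\tilde\fy_{kl} = \pm\det M_{kl}$, where $M_{kl}$ is the $(n-k) \times (n-k)$ matrix whose columns are the projections to rows $[1,n-k]$ of $Ue_{n-l+1}, \ldots, Ue_n, U^2 e_n, \ldots, U^{n-k-l+1}e_n$. For $k + l = n$, $M_{kl}$ is a submatrix of $U$ and $\tilde\fy_{kl}$ is a minor, hence irreducible. For $k + l < n$, each of the final $n-k-l+1$ columns of $M_{kl}$ is linear in the entries $u_{1n}, \ldots, u_{nn}$ of the last column of $U$, so $\det M_{kl}$ has degree exactly $n-k-l+1$ in these variables. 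I would prove irreducibility by specialization: choose a sparse value for the remaining entries of $U$ (for instance, only the subdiagonal entries nonzero, yielding a companion-matrix-like form) so that $\det M_{kl}$ simplifies to a manifestly irreducible polynomial of the same total degree in $u_{\cdot n}$, then lift via a degree-tracking argument. The main obstacle will be identifying a specialization that preserves the full $u_{\cdot n}$-degree while rendering $\det M_{kl}$ manifestly irreducible, which I expect to require a combinatorial analysis of the paths appearing in $(U^m e_n)_i = \sum u_{i j_1} u_{j_1 j_2} \cdots u_{j_{m-1} n}$.

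Once $\tilde\fy_{kl}$ is known to be irreducible, Gauss's lemma in the UFD $\C[X_{ij},Y_{ij}] = \C[X][Y]$ implies that $\fy_{kl}$ is irreducible iff its $\C[X]$-content as a polynomial in $Y$ is a unit. Since $\fy_{kl}(X_0,Y) = s_{kl}(\det X_0)^{n-k-l+1}\tilde\fy_{kl}(X_0^{-1}Y)$ is a nonzero polynomial in $Y$ for every $X_0$ with $\det X_0 \neq 0$, this content can vanish only where $\det X_0 = 0$, and hence must be a scalar multiple of $(\det X)^a$ for some $a \geq 0$. It therefore suffices to show $a=0$, i.e., $\det X \nmid \fy_{kl}$; this is equivalent to the minimality of the prefactor, namely that the $\det X$-adic valuation of the rational function $\tilde\fy_{kl}(X^{-1}Y)$ is exactly $-(n-k-l+1)$. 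I would verify this last point by exhibiting a single monomial of $\tilde\fy_{kl}(U)$ whose image under $U = X^{-1}Y$ has $\det X$-denominator of exact power $n-k-l+1$ with no further cancellation, which is a local computation at a convenient point $(X_0,Y_0)$ on the hypersurface $\{\det X = 0\}$.
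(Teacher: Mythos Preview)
Your plan is sound and takes a genuinely different route from the paper. Both approaches share the hard core: irreducibility of $\tilde\fy_{kl}(U)=\det\Phi_{kl}(U)$ in $\C[U_{ij}]$, which is exactly the paper's Proposition~\ref{irredfyu} (there written as irreducibility of $\fy_{kl}(I,Y)=\psi_{kl}(Y)$). The paper establishes this by an explicit specialisation of $U$ to a two-parameter matrix $M_{kl}(z,t)$ for which $\psi_{kl}=\pm P_{n-k-l+1}(z,t)$, and then proves $P_m$ irreducible via its Newton polygon; some care is needed to lift back from the specialisation, and the paper uses the observation that the $u_{jn}$-leading coefficient of $\psi_{kl}$ is independent of the remaining entries in row $j$ and column $n$ (Lemma~\ref{coeff}). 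Your ``companion-matrix specialisation'' idea points in the same direction, but you should expect that finding a specialisation of the right degree and proving it irreducible is the bulk of the work.

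Where the arguments diverge is in the passage from one-variable irreducibility to the full claim. The paper proves a \emph{second} irreducibility result, Proposition~\ref{irredfyuinv}, for $\fy_{kl}(X,I)$ (via the identity $\fy_{kl}(X,I)=\pm\bar\psi_{kl}(X)$ with $\bar\psi_{kl}=\det\bigl[(X^{m})^{[n-k+1,n]}\;(X^{m-1})^{[n-l+1,n]}\;\cdots\;I^{[n]}\bigr]$, $m=n-k-l+1$), and then uses bihomogeneity in $(X,Y)$ together with $\fy_{kl}(I,I)=0$ to rule out any nontrivial factorisation. Your Gauss-lemma/content approach replaces this second irreducibility by the weaker statement $\det X\nmid\fy_{kl}$, which is a real saving. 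Two remarks on the execution: (i)~the reduction ``irreducible in $K[U]$ $\Leftarrow$ irreducible in $\C[U]$'' is not Gauss but absolute irreducibility over the algebraically closed field $\C$ (equivalently: $\tilde\fy_{kl}\in\C[U]\subset\C[X][U]$ has $X$-degree $0$ and unit content, so irreducibility lifts to $\C[X,U]$ and hence to $K[U]$); (ii)~for the minimality step, the cleanest route is \emph{not} a local computation on $\{\det X=0\}$ but rather to specialise $Y=I$, use $\fy_{kl}(X,I)=\pm\bar\psi_{kl}(X)$, and exhibit a single singular $X_0$ with $\bar\psi_{kl}(X_0)\ne0$. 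For instance, the paper's own matrix $\bar M_{kl}$ at $t=0$ has $\det\bar M_{kl}=0$ while $\bar\psi_{kl}(\bar M_{kl})=\pm P_{m}(z,0)=\pm(-1)^{m-1}\ne0$; so once you have constructed such a specialisation for the main step, this final point comes essentially for free.
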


\begin{proof} We first aim at a weaker statement: 

\begin{proposition}\label{irredfyu}
$\fy_{kl}(I,Y)$ is irreducible in the entries of $Y$. 
\end{proposition}

\begin{proof}
In this case $U=X^{-1}Y=Y$,
so we write $\psi_{kl}(U)$ instead of $\fy_{kl}(I,Y)$. It will be convenient to indicate explicitly the size of $U$ and 
to write $\psi_{kl}^{[n]}(U)$ for the function $\psi_{kl}$ evaluated on an $n \times n$ matrix
$$
U=\begin{pmatrix} u_{11} & u_{12}& \dots & u_{1n}\cr
                 u_{21} & u_{22} & \dots & u_{2n}\cr
								 \vdots &  &  & \vdots \cr
								 u_{n1} & u_{n2} & \dots & u_{nn}
 \end{pmatrix}.
$$								

We start with the following observations.

\begin{lemma}\label{coeff}
{\rm (i)} Consider $\psi_{kl}^{[n]}(U)$ as a polynomial in $u_{jn}$, then its leading coefficient does not depend on the entries
$u_{ji}$, $1\le i\le n-1$, nor on $u_{in}$, $1\le i\le n$, $i\ne j$.

{\rm (ii)} The degree of  $\psi_{kl}^{[n]}(U)$ as a polynomial in $u_{1n}$ equals $n-k-l+1$.
\end{lemma}

\begin{proof} Explicit computation.
\end{proof}

Next, let us find a specialization of variables $u_{ij}$ such that the corresponding $\psi_{kl}(U)$ is irreducible. 

Define a polynomial in two variables $z, t$ by
\[
P_m(z,t)=t^{m-1}z^m+\sum_{i=0}^{m-2}(-1)^{m-i-1}\binom mi \frac{t^{m-1}-t^i}{t-1}z^i.
\]
This is an explicit expression for the determinant of the $m\times m$ matrix
\[
\begin{pmatrix} z & 1 & 1 & 1 &\dots & 1\cr
                1 & tz & t & t &\dots & t \cr
								1 & 1  & tz & t &\dots & t \cr
                \ddots & & &  & \ddots  & \cr
								1 & 1  & 1 & 1 & \dots & tz
\end{pmatrix}.
\]
								
\begin{lemma}\label{polytwo} For any $m\ge 2$,
$P_m(z,t)$ is an irreducible polynomial.
\end{lemma} 

\begin{proof}
It is easy to see that a polynomial in two variables is irreducible if its Newton polygon can not be represented as a Minkowski
sum of two polygons. 
The Newton polygon of $P_m(z,t)$ in the $(z,t)$-plane has the following vertices: $(m,m-1), (0,m-2), (0,m-3),\dots, (0,1), (0,0)$. 
It contains two non-vertical sides
connecting $(m,m-1)$ with $(0,m-2)$ and $(0,0)$, respectively; all the other sides are vertical. Assume that two polygons as above exist.
Consequently, both non-vertical sides should belong to the same
one out of the two. However, it is not possible to complete this polygon without  using all the remaining vertical sides, a contradiction.
\end{proof}

\begin{lemma}\label{expform}
Define 
an $n\times n$ matrix $M_{kl}$ via
\begin{gather*}
M_{kl}=\begin{pmatrix}
0 & 0 & 1 & z \cr
t & 0 & 0 & 1 \cr
0 &  \one_{n-3} & 0 & 1 \cr
0 & 0 & 0 & 1 \end{pmatrix} \qquad \text{if  $l=1$},\\
M_{kl}=\begin{pmatrix}
0 & 0 & 1 & 0 & 0 & 0 & z \cr
t & 0 & 0 & 0 & 0 &  0 & 1 \cr
0 & 0 & 0 & 0 & 0  & \one_{l-1} & 1 \cr
0 & 1 & 0 & 0 & 0  & 0 & 1 \cr
0 & 0 & 0 & 0 & \one_{m-1} & 0 & 1 \cr
0 & 0 & 0 & \one_{l-2} &  0 & 0 & 1 \cr
0 & 0 & 0 & 0 & 0 &  0 & 1 \end{pmatrix}\qquad \text{if $k\ge l$, $l>1$},\\
M_{kl}=\begin{pmatrix}
0 & 0 & 0 & 0 & 1 & 0 & z \cr
t & 0 & 0 & 0 & 0 & 0 & 1 \cr
0 & 0 & 0 & 0 & 0 & \one_{k-1} & 1 \cr
0 & 1 & 0 & 0 & 0 & 0 & 1 \cr
0 & 0 & 0 & \one_{m-2} & 0 & 0 & 1 \cr
0 & 0 & \one_{k-1} & 0 & 0 & 0 & 1 \cr
0 & 0 & 0 & 0 & 0 & 0 & 1 \end{pmatrix}\qquad \text{if $k\le l$, $l>1$}.
\end{gather*} 
where $m=\max\{n-2l, n-2k\}$. 
Then $\det M_{kl}=\pm t$, and
\[
\psi^{[n]}_{kl}(M_{kl})=\pm P_{n-k-l+1}(z,t).
\]
\end{lemma}

\begin{proof}
Explicit computation.
\end{proof}

Let us proceed with the proof of Proposition~\ref{irredfyu}. Assume to the contrary that $\psi_{kl}^{[n]}(U)=P_1P_2$ for some
values of $n$, $k$ and $l$, where both $P_1$ and $P_2$ are nontrivial. It follows from Lemmas~\ref{expform},~\ref{polytwo} and~\ref{coeff}(ii) that only
one of $P_1$ and $P_2$ depends on $u_{1n}$, say, $P_1$. 
Therefore, $P_2(M_{kl})$ is a nonzero constant. Consequently, $P_2$ contains a monomial 
$\prod_{j=3}^{n-1}u_{j\sigma_{kl}(j)}^{r_j}$,
where $r_j\ge 0$ and $\sigma_{kl}\in S_{n-1}$ is the permutation defined by the first $n-1$ rows and columns of $M_{kl}$. 
Assume there exists $j$ such that $r_j>0$, and consider $u_{jn}$. If $P_2$ depends on $u_{jn}$ then the leading coefficient of $\psi_{kl}^{[n]}(U)$ at
$u_{1n}$ depends on $u_{jn}$, in a contradiction to Lemma~\ref{coeff}(i). Therefore, $P_2$ does not depend on $u_{jn}$, and hence the leading coefficient of $\psi_{kl}^{[n]}(U)$ at
$u_{jn}$ depends on $u_{j\sigma_{kl}(j)}$, which again contradicts Lemma~\ref{coeff}(i). We thus obtain that $r_j=0$ for $3\le j\le n-1$, which means that
$P_2$ is a constant, and
Proposition~\ref{irredfyu} holds true.
\end{proof}

The next step of the proof is the following statement: 
\begin{proposition}\label{irredfyuinv}
$\fy_{kl}(X,I)$ is irreducible in the entries of $X$. 
\end{proposition}

\begin{proof} 
Note that
\begin{multline*}
\left[I^{[n-k+1,n]}\;(X^{-1})^{[n-l+1,n]}\;(X^{-2})^{[n]}\dots(X^{k+l-n-1})^{[n]}\right]=\hfill\\
X^{k+l-n-1}\left[(X^{n-k-l+1})^{[n-k+1,n]}\;(X^{n-k-l})^{[n-l+1,n]}\;(X^{n-k-l-1})^{[n]}\;\dots I^{[n]}\right],
\end{multline*}
hence it suffices to prove that the functions
$$
\bar\psi_{kl}=\det\left[(X^{n-k-l+1})^{[n-k+1,n]}\;(X^{n-k-l})^{[n-l+1,n]}\;(X^{n-k-l-1})^{[n]}\;\dots I^{[n]}\right]
$$ 
are irreducible. As in the proof of Proposition~\ref{irredfyu}, we write 
$\bar\psi_{kl}^{[n]}(X)$ for the function $\psi_{kl}$ evaluated on an $n \times n$ matrix
$$
X=\begin{pmatrix} x_{11} & x_{12}& \dots & x_{1n}\cr
                 x_{21} & x_{22} & \dots & x_{2n}\cr
								 \vdots &  &  & \vdots \cr
								 x_{n1} & x_{n2} & \dots & x_{nn}
 \end{pmatrix}.
$$

Similarly to the case of $\psi_{kl}^{[n]}(U)$, we have the following observations:

\begin{lemma}\label{coefpsi}
 {\rm (i)} Consider $\bar\psi_{kl}^{[n]}(X)$ as a polynomial in $x_{jn}$, then its leading coefficient does not depend on the entries
$x_{ji}$, $1\le i\le n-1$, and $x_{in}$, $1\le i\le n$, $i\ne j$.

{\rm (ii)} The degree of  $\bar\psi_{kl}^{[n]}(X)$ as a polynomial in $x_{1n}$ equals $n-k-l+1$.
\end{lemma}

\begin{proof} Explicit computation.
\end{proof}

As in the previous case, we find a specialization of variables $x_{ij}$, such that the corresponding $\bar\psi_{kl}(X)$
is irreducible.

\begin{lemma}\label{expformpsi}
Define 
an $n\times n$ matrix $\bar M_{kl}$ via
\begin{equation*}
\bar M_{kl}=\begin{pmatrix}
0 & 0 & 1 &  0 & 0 & 0 & 0 & z \cr
t & 0 & 0 &  0 & 0 & 0 & 0 & 1 \cr
0 & \one_{m-1} &  0 & 0 & 0 & 0 & 0 & 1 \cr
0 & 0 & 0 &  0 & 0 & 1 & 0 & 1 \cr
0 & 0 & 0 &  \one_{l-2} & 0 & 0 & 0 & 1 \cr
0 & 0 & 0 &  0 & 0 & 0 & \one_{k-2} & 1 \cr
0 & 0 & 0 &  0 & 1 & 0 & 0 & 1 \cr
0 & 0 & 0 &  0 & 0 & 0 & 0 & 1 \end{pmatrix},
\end{equation*}
where $m=n-k-l$.
Then $\det \bar M_{kl}=\pm t$, and 
\[
\bar\psi^{[n]}_{kl}(\bar M_{kl})=\pm P_{n-k-l+1}(z,t).
\]
\end{lemma}

\begin{proof}
Similar to the proof of Lemma~\ref{expform}.
\end{proof}

The rest of the proof of Proposition~\ref{irredfyuinv} is parallel to the proof of Proposition~\ref{irredfyu}.
\end{proof}

Assume now that $\fy_{kl}(X,Y)=P_1P_2$. It follows from Propositions~\ref{irredfyu} and~\ref{irredfyuinv} 
 and the quasihomogeneity of $\fy_{kl}(X,Y)$
 that one
of $P_1, P_2$ depends only on $X$, and the other only on $Y$; moreover, both $P_1(I)$ and $P_2(I)$ are nonzero complex numbers.
Consequently, $\fy_{kl}(I,I)$ is a nonzero complex number, which contradicts the explicit expression for $\fy_{kl}(X,Y)$.
\end{proof}

The proof of Theorem~\ref{irredinbas} is complete.

\subsection{Regularity}
We now proceed with condition (ii) of Proposition~\ref{regcoin}.
By Theorem~\ref{irredinbas}, we have to check that for any mutable vertex of $Q_n$ the new variable in the
adjacent cluster is regular and not divisible by the corresponding variable in the initial cluster. 
Let us consider the exchange relations according to the type of the vertices. 

Regularity of the function $\fy_{11}'$ that replaces $\fy_{11}$ follows from Proposition~\ref{polyrel}. 
 Let us prove that this function is not divisible by $\fy_{11}$. 
Indeed, assume the contrary, and define an $n \times n$ matrix $\Sigma_{11}$ via
\[
\Sigma_{11}=\begin{pmatrix}
0 &  0 &  0 & 1 \cr
\one_{n-3} & 0 & 0 & 0 \cr
0 &   t & 0 & 0  \cr 
0 & 0 & 1 & 0  \end{pmatrix}.
\]
An explicit computation shows that $\det\Sigma_{11}=\pm t$ and 
\[
\fy_{11}(I,\Sigma_{11})=\pm t,\qquad \fy_{21}(I,\Sigma_{11})=\pm 1,\qquad \fy_{12}(I,\Sigma_{11})=0.
\]
Therefore,~\eqref{general} yields 
$$
\fy_{11}(I,\Sigma_{11})\fy'_{11}(I,\Sigma_{11})=\fy_{21}(I,\Sigma_{11})\det\Sigma_{11},
$$ 
and hence
$\fy'_{11}(I,\Sigma_{11})=\pm 1$, which contradicts the divisibility assumption.

Denote by $\Phi_{pl}^*$ the matrix obtained from $\Phi_{pl}$ via replacing the column $U^{[n-l+1]}$ by the column $U^{[n-l]}$,
and put $\tilde\fy^*_{pl}=\det \Phi^*_{pl}$. Clearly, $\fy^*_{pl}=-s_{pl}\tilde\fy^*_{pl}\det X^{t_{p+l}}$ is regular (here and in what follows we set $t_s=n-s+1$).
By the short Pl\"ucker relation for the matrix 
$$
\left[\begin{array}{ccccc}(U^0)^{[n-p+1,n]}& U^{[n-l,n]} & (U^2)^{[n]} & \dots & (U^{n-p-l+2})^{[n]}\end{array}\right]
$$ 
and columns
$I^{[n-p+1]}$, $U^{[n-l]}$, $U^{[n-l+1]}$, $(U^{n-p-l+2})^{[n]}$, one has 
\begin{equation*}
\tilde\fy_{pl}\tilde\fy^*_{p-1,l}=\tilde\fy_{pl}^*\tilde\fy_{p-1,l}+
\tilde\fy_{p,l-1}\tilde\fy_{p-1,l+1},
\end{equation*}
provided $p,l>1$ and $p+l\le n$. Multiplying the above relation by 
$\det X^{t_{p+l}+t_{p+l-1}}$ 
and $s_{p,l-1}s_{p-1,l+1}=-s_{pl}s_{p-1,l}$, one gets
\begin{equation}\label{shpl}
\fy_{pl}\fy^*_{p-1,l}=\fy_{pl}^*\fy_{p-1,l}+\fy_{p,l-1}\fy_{p-1,l+1}.
\end{equation}
A linear combination of~\eqref{shpl}
for $p=k$ and for $p=k+1$ yields
\begin{equation}\label{fyklex}
\fy_{kl} (\fy_{k+1,l}\fy^*_{k-1,l}-\fy_{k-1,l}\fy^*_{k+1,l})
=\fy_{k+1,l} \fy_{k,l-1} \fy_{k-1,l+1}+\fy_{k+1,l-1} \fy_{k,l+1} \psi_{k-1,l},
\end{equation}
which means that the function that replaces $\fy_{kl}$ after the transformation is regular whenever $k,l>1$ and $k+l<n$. 

Let us prove that $\fy_{k+1,l}\fy^*_{k-1,l}-\fy_{k-1,l}\fy^*_{k+1,l}$ is not divisible by $\fy_{kl}$. Indeed, assume the contrary, and define an $n \times n$
matrix $\Sigma_{kl}$ via
\begin{gather*}
\Sigma_{kl}=\begin{pmatrix}
0 &  0 &  0 & 0 & 0 & 0 & 1 \cr
0 &  0 &  0 & 1 & 0 & 0 & 0 \cr
0 &   0 & 0 & 0 & 0 & J_{k-2} & 0 \cr
1 &  0 &  0 & 0 & 0 & 0 & 0 \cr
0 & 0 & \one_{m} & 0 & 0 & 0 & 0 \cr
0 &  0 &  0 & 0 & 1 & 0 & 0 \cr
0 &  J_{k-1} &  0 & 0 & 0 & 0 & 0 \end{pmatrix}, \qquad \text{if  $k=l$},\\
\Sigma_{kl}=\begin{pmatrix}
0 & 0 & 0 & 0 & 0 & 0 & 0 & 0 & 1 \cr
0 & 0 & 0 & 1 & 0 & 0 & 0 &  0 & 0 \cr
0 & 0 & 0 & 0 & 0 & 0 & 0  & J_{l-2} & 0 \cr
1 & 0 & 0 & 0 & 0 & 0 & 0 &  0 & 0 \cr
0 & 0 & \one_{m} & 0 & 0 & 0 & 0  & 0 & 0 \cr
0 & 0 & 0 & 0 & 0 & 0 & 1 & 0 & 0 \cr
0 & 0 & 0 & 0 & 0 & \one_{k-l-1} &  0 & 0 & 0\cr
0 & 0 & 0 & 0 & 1 & 0 & 0 &  0 & 0 \cr
0& J_{l-1} & 0 & 0 & 0 & 0 & 0 &  0 & 0 \end{pmatrix}+e_{2,n-l},\qquad \text{if $k\ge l+1$},\\
\Sigma_{kl}=\begin{pmatrix}
0 & 0 & 0 & 0 &  0 & 1 \cr
\one_{m+1} & 0 & 0 & 0 & 0 & 0  \cr
 0 & 0 & 0 & 0 & \one_{l-2} & 0 \cr
0 & 1 & 0 & 0 & 0 & 0  \cr
0 & 0 & 0 & 1 &  0 & 0  \cr
0 & 0 & \one_{k-1} & 0 & 0 & 0 \end{pmatrix}+e_{m+2,n-l},\qquad \text{if $k< l$},
\end{gather*} 
where $m=n-k-l-1$ and $J_r$ is the $r\times r$ unitary antidiagonal matrix. An explicit computation shows that $\det\Sigma_{kl}=\pm1$, and
\begin{gather*}
\fy_{k-1,l}(I,\Sigma_{kl})=\pm1,\qquad \fy_{k+1,l}^*(I,\Sigma_{kl})=\pm1,\\
\fy_{k+1,l}(I,\Sigma_{kl})=\fy_{k-1,l}^*(I,\Sigma_{kl})=\fy_{kl}(I,\Sigma_{kl})=0,
\end{gather*}
which contradicts the divisibility assumption.

Extend the definition of $\Phi_{pq}$ and $s_{pq}$ to the case $p=0$, denote by $\Phi^{**}_{pq}$ the matrix obtained from $\Phi_{pq}$ by
deleting the last column and inserting $(U^2)^{[n-1]}$ as the $(p+q+1)$-st column, and put $\tilde\fy^{**}_{pq}=\det\Phi^{**}_{pq}$. 
Clearly, $\fy^{**}_{pq}=s_{q,p+2}\tilde\fy^{**}_{pq}\det X^{t_{p+q}-1}$ is regular.
It is easy to see that $U\Phi_{p1}=\Phi_{0p}$ and $U\Phi_{p2}=\Phi^{**}_{0p}$. Therefore, by the short Pl\"ucker relation
for the matrix 
$$
\left[\begin{array}{cccccc} I^{[n]} & U^{[n-k+1,n]} & (U^2)^{[n-1,n]} & (U^3)^{[n]} \dots & (U^{n-k+1})^{[n]}\end{array}\right]
$$ 
and columns
$I^{[n]}$, $U^{[n-k+1]}$, $(U^2)^{[n-1]}$, $(U^{n-k+1})^{[n]}$, one has 
\begin{equation*}
\tilde\fy_{0k}\tilde\fy^{**}_{1,k-1}=\tilde\fy^{**}_{0,k-1}\tilde\fy_{1k}+\tilde\fy^{**}_{0k}\tilde\fy_{1,k-1}.
\end{equation*}
Multiplying the above relation by $\det X^{2t_{k}-1}=\det X^{t_{k-1}+t_{k+1}-1}$ and $s_{k2}s_{1,k-1}=s_{k-1,2}s_{1k}=s_{0k}s_{k-1,3}$,
taking into account that $\fy_{0p}=h_{11}\fy_{p1}$, $\fy^{**}_{0p}=h_{11}\fy_{p2}$, $s_{k1}=(-1)^ns_{0k}$ and dividing the above relation by $h_{11}$ we arrive at
\begin{equation}\label{k1neigh}
(-1)^n\fy_{k1}\fy^{**}_{1,k-1}=\fy_{k-1,2}\fy_{1k}+\fy_{k2}\fy_{1,k-1},
\end{equation}
which means that the function that replaces $\fy_{k1}$ after the transformation is regular whenever $1<k<n-1$.

To prove that $\fy^{**}_{1,k-1}$ is not divisible by $\fy_{k1}$, we assume the contrary and define an $n \times n$ matrix $\Sigma_{k1}$ via
\begin{gather*}
\Sigma_{k1}=\begin{pmatrix}
0 &  0 &  0 & 1 \cr
\one_{n-4} & 0 & 0 & 0 \cr
0 &   0 & 1 & 0  \cr 
0 & \one_{2} & 0 & 0  \end{pmatrix}+e_{n-1,n-l}, \qquad \text{if  $k=2$},\\
\Sigma_{k1}=\begin{pmatrix}
 0 &  0 & 0 & 0 & 0 & 1 \cr
\one_{m}&  0 & 0 & 0 &  0 & 0 \cr
 0 &  0 & 1 & 0 &  0 & 0 \cr
 0 &  0 & 0 & 0 & 1 & 0 \cr
0& 0 & 0 & \one_{k-3} &  0 & 0 \cr
 0 & \one_2 &  0 & 0 &  0 & 0  \end{pmatrix},\qquad \text{if $k\ge 3$},
\end{gather*} 
where $m=n-k-2$. An explicit computation shows that $\det\Sigma_{k1}=\pm1$, and
\[
\fy^{**}_{1,k-1}(I,\Sigma_{k1})=\pm1,\qquad \fy_{kl}(I,\Sigma_{k1})=0,
\]
which contradicts the divisibility assumption.

With the extended definition of $\Phi_{pq}$, relation~\eqref{shpl} becomes valid for $p=1$. Taking a linear combination of~\eqref{shpl} for $p=1$ and $p=2$
 one gets
\begin{equation*}
\fy_{1l}(\fy_{2l}\fy^*_{0l}-\fy_{0l}\fy^*_{2l})=\fy_{2l}\fy_{1,l-1}\fy_{0,l+1}+
\fy_{2,l-1}\fy_{1,l+1}\fy_{0l}.
\end{equation*}
Recall that $\fy_{0p}=h_{11}\fy_{p1}$. Besides, $\Phi^*_{0l}=U\Phi^\circ_{l1}$, where $\Phi_{pq}^\circ$ is the matrix obtained from $\Phi_{pq}$ 
via replacing the column $I^{[n-p+1]}$ by the column $I^{[n-p]}$. Denote $\tilde\fy^\circ_{pq}=\det\Phi^\circ_{pq}$;
clearly, $\fy^\circ_{pq}=-s_{q-1,p}\tilde\fy^\circ_{pq}\det X^{t_{p+q}}$ is regular. We thus have $\tilde\fy^*_{0l}=\det U\tilde\fy^\circ_{l1}$, and hence 
$\fy^*_{0l}=h_{11}\fy^\circ_{l1}$.  Consequently,
\begin{equation}\label{fy1lex}
\fy_{1l}(\fy_{2l}\fy^\circ_{l1}-\fy_{l1}\fy^*_{2l})=\fy_{2l}\fy_{1,l-1}\fy_{l+1,1}+
\fy_{2,l-1}\fy_{1,l+1}\fy_{l1},
\end{equation}
which means that the function that replaces $\fy_{1l}$ after the transformation is regular whenever $1<l<n-1$.

To prove that $\fy_{2l}\fy^\circ_{l1}-\fy_{l1}\fy^*_{2l}$ is not divisible by $\fy_{1l}$, we assume the contrary and define an $n \times n$ matrix $\Sigma_{1l}$ via
\[
\Sigma_{1l}=\begin{pmatrix}
0 &  0 &  0 & 1 \cr
\one_{m} & 0 & 0 & 0 \cr
0 &   0 & \one_{l-2} & 0  \cr 
0 & \one_{2} & 0 & 0  \end{pmatrix}+e_{n-l,n-l},
\]
where $m=n-l-1$.  An explicit computation shows that $\det\Sigma_{1l}=\pm1$, and
\begin{gather*}
\fy_{l1}(I,\Sigma_{1l})=\pm1,\qquad \fy_{2l}^*(I,\Sigma_{1l})=\pm1,\\
\fy_{2l}(I,\Sigma_{1l})=\fy_{l1}^\circ(I,\Sigma_{1l})=\fy_{1l}(I,\Sigma_{1l})=0,
\end{gather*}
which contradicts the divisibility assumption.

Define an $n\times n$ matrix $F^\lozenge_{k,n-k-1}$, $0\le k\le n-2$, by
$$
F^\lozenge_{k,n-k-1}=\left[\begin{array}{ccc}I^{[1]} & X^{[n-k+1,n]} & Y^{[k+2,n]}\end{array}\right].
$$ 
Clearly, $f_{k,n-k-1}=\det F^\lozenge_{k,n-k-1}$. Besides, denote by $\Phi^\lozenge_{k,n-k-1}$ the matrix obtained from $X\Phi_{k,n-k-1}$ via replacing
the column $X^{[n-k+1]}$ by the column $I^{[1]}$. Denote $\tilde\fy^\lozenge_{k,n-k-1}=\det\Phi^\lozenge_{k,n-k-1}$. Clearly, 
$\fy^\lozenge_{k,n-k-1}=s_{k,n-k-1}\tilde\fy^\lozenge_{k,n-k-1}\det X$ is regular.
Therefore, the short Pl\"ucker relation for the matrix 
$$
\left[\begin{array}{cccc}I^{[1]}& X^{[n-k+1,n]} & Y^{[k+1,n]} &  (YX^{-1}Y)^{[n]}\end{array}\right]
$$ 
and columns
$I^{[1]}$, $X^{[n-k+1]}$, $Y^{[k+1]}$, $(YX^{-1}Y)^{[n]}$ involves submatrices 
$X\Phi_{k,n-k}$, $X\Phi_{k,n-k-1}$, $X\Phi_{k-1,n-k}$, $\Phi^\lozenge_{k,n-k-1}$, $F^\lozenge_{k,n-k-1}$, and $F^\lozenge_{k-1,n-k}$ and  gives
\begin{equation*}
\det X\tilde\fy_{k,n-k}\tilde\fy^\lozenge_{k,n-k-1}=-\det X\tilde\fy_{k-1,n-k}f_{k,n-k-1}+\det X\tilde\fy_{k,n-k-1}f_{k-1,n-k}.
\end{equation*}
Multiplying by $\det X$ and $s_{k,n-k-1}=-s_{k-1,n-k}$ and taking into account that $s_{k,n-k}=1$, we arrive at
\begin{equation*}
\fy_{k,n-k}\fy^\lozenge_{k,n-k-1}=\fy_{k-1,n-k}f_{k,n-k-1}+\fy_{k,n-k-1}f_{k-1,n-k},
\end{equation*}
which together with the description of the quiver $Q_n$ means that the function that replaces $\fy_{k,n-k}$ after the transformation is regular whenever $1<k<n-1$. 
For $k=1$ the latter relation is modified taking into account that $f_{0,n-1}=h_{22}$ and $\fy_{0,n-1}=s_{0,n-1}s_{n-1,1}h_{11}\fy_{n-1,1}$,
which together with $s_{0,n-1}=s_{n-1,1}=1$ gives
\begin{equation*}
\fy_{1,n-1}\fy^\lozenge_{1,n-2}=h_{11}\fy_{n-1,1}f_{1,n-2}+\fy_{1,n-2}h_{22}.
\end{equation*}
This means that the function that replaces $\fy_{1,n-1}$ after the transformation is regular.

To prove that $\fy^{\lozenge}_{k,n-k-1}$ is not divisible by $\fy_{k,n-k}$ is enough to note
that $\fy^{\lozenge}_{k,n-k-1}$ is irreducible as a minor of a matrix whose entries are independent variables.

Define two $n \times n$ matrices $\Phi^\square_{2,n-2}$ and $F^\square_{n-2,1}$ via replacing the first column of $\Phi_{2,n-2}$ by $U^{[1]}$ and the first
column of $F^\lozenge_{n-2,1}$ by $X^{[1]}$. Clearly, $f^\square_{n-2,1}=\det F^\square_{n-2,1}$ is regular; besides, denote
$\tilde\fy^\square_{2,n-2}=\det\Phi^\square_{2,n-2}$, then
$\fy^\square_{2,n-2}=-s_{2,n-2}\tilde\fy^\square_{2,n-2}\det X$ is regular. The short Pl\"ucker relation for the matrix 
$$
\left[\begin{array}{cccc}U^{[1]}& I^{[n]} & U^{[2,n]} &  (U^2)^{[n]}\end{array}\right]
$$ 
and columns
$U^{[1]}$, $I^{[n]}$, $U^{[2]}$, $(U^2)^{[n]}$ involves submatrices 
$U\Phi_{n-1,1}$, $\Phi_{1,n-2}$,
$\Phi_{1,n-1}$, $UX^{-1}F^\square_{n-2,1}$, $\Phi^\square_{2,n-2}$, and $U$ and gives
\begin{equation*}
\det U\tilde\fy_{n-1,1}\tilde\fy^\square_{2,n-2}=\det U\tilde\fy_{1,n-2}-\det U\tilde\fy_{1,n-1}f^\square_{n-2,1}.
\end{equation*}
Multiplying the above relation by $\det X^2$ and $s_{1,n-2}=-s_{1,n-1}$, dividing by $\det U$ and taking into account that $s_{k,n-k}=1$ for $0\le k \le n$ one gets
\begin{equation}\label{reg6}
\fy_{n-1,1}\fy^\square_{2,n-2}=\fy_{1,n-2}+\fy_{1,n-1}f^\square_{n-2,1},
\end{equation}
which means that the function that replaces $\fy_{n-1,1}$ after the transformation is regular.

To prove that $\fy^{\square}_{2,n-2}$ is not divisible by $\fy_{n-1,1}$ is enough to note that $\fy_{n-1,1}(I,Y)=y_{1n}$ and
$\fy^{\square}_{2,n-2}(I,Y)$ is a $(n-1)\times(n-1)$ minor of $Y$.

Define a $(p+q+1)\times (p+q+1)$ matrix $F^*_{pq}$   by
$$
F^*_{pq}=\left[\begin{array}{ccc}I^{[n-p-q+1]} & X^{[n-p+1,n]} & Y^{[n-q+1,n]}\end{array}\right]_{[n-p-q,n]},
$$ 
and put $f^*_{pq}=\det F^*_{pq}$.
 The short Pl\"ucker relation for the matrix 
$$
\left[\begin{array}{ccc}I^{[n-p-q,n-p-q+1]}& X^{[n-p+1,n]} & Y^{[n-q,n]}\end{array}\right]_{[n-p-q,n]}
$$ 
and columns
$I^{[n-p-q]}$, $I^{[n-p-q+1]}$, $X^{[n-p+1]}$, $Y^{[n-q]}$
gives
\begin{equation*}
f_{pq}f^*_{p-1,q+1}=f_{p-1,q+1}f^*_{pq}+f_{p-1,q}f_{p,q+1}
\end{equation*}
for $p+q<n-1$, $p>0$, $q>0$.
Applying this relation for $(p,q)=(i,j)$ and $(p,q)=(i+1,j-1)$ we get
\begin{equation*}
f_{ij}(f_{i+1,j-1}f^*_{i-1,j+1}-f_{i-1,j+1}f^*_{i+1,j-1})=f_{i+1,j-1}f_{i-1,j}f_{i,j+1}+f_{i,j-1}f_{i+1,j}f_{i-1,j+1},
\end{equation*}
which 
means that the function that replaces $f_{ij}$ after the transformation is regular whenever $i+j<n-1$, $i>0$, $j>0$.
To extend the above relation to the case $i+j=n-1$ is enough to recall that  
$f_{k,n-k}=\fy_{k,n-k}$ by the identification of Remark~\ref{identify}.

To prove that $f'_{ij}=f_{i+1,j-1}f^*_{i-1,j+1}-f_{i-1,j+1}f^*_{i+1,j-1}$ is not divisible by $f_{ij}$ is enough to show
that $f'_{ij}$ is irreducible. Indeed, $f'_{ij}$ can be written as
\[
f'_{ij}=y_{n-i-j,n-j}f_{i+1,j-1}f_{i-1,j}-x_{n-i-j,n-i}f_{i-1,j+1}f_{i,j-1}+R(X,Y),
\]
where $R$ does not depend on $y_{n-i-j,n-j}$ and $x_{n-i-j,n-i}$. Moreover, $f_{i+1,j-1}$, $f_{i-1,j}$, $f_{i-1,j+1}$, and $f_{i,j-1}$
do not depend on these two variables as well. Consequently, reducibility of $f'_{ij}$ would imply
\[
f'_{ij}=\left(y_{n-i-j,n-j}P(X,Y)+x_{n-i-j,n-i}Q(X,Y)+R'(X,Y)\right)R''(X,Y),
\]
which contradicts the irreducibility of $f_{i+1,j-1}$, $f_{i-1,j}$, $f_{i-1,j+1}$, $f_{i,j-1}$.

Define an $(n-i+2)\times (n-i+2)$ matrix $H^\star_{ii}$ by
$$
H^\star_{ii}=\left[\begin{array}{ccc} X^{[n]} & Y^{[i+1,n]} & I^{[n]}\end{array}\right]_{[i-1,n]}
$$ 
and denote $h^\star_{ii}=\det H^\star_{ii}$. Clearly,
\begin{equation}\label{hii}
h_{ii}^\star = \det \left [\begin{array}{cc} X^{[n]} & Y^{[i+1,n]}\end{array} \right ]_{[i-1,n-1]};
\end{equation}
in particular, $h_{nn}^\star=x_{n-1,n}$. The short Pl\"ucker relation for the matrix
$$
\left[\begin{array}{cccc} I^{[i-1]} & X^{[n]} & Y^{[i,n]} & I^{[n]}\end{array}\right]_{[i-1,n]}
$$
and columns $I^{[i-1]}$, $X^{[n]}$, $Y^{[i]}$ and $I^{[n]}$ gives
\begin{equation*}
h_{ii}h^\star_{ii}=f_{1,n-i}h_{i-1,i}+f_{1,n-i+1}h_{i,i+1},
\end{equation*}
which means that the function that replaces $h_{ii}$ after the transformation is regular whenever $2\le i\le n$. Note that for $i=n$ we use the convention $h_{n,n+1}=1$, which was already used above.

To prove that $h^\star_{ii}$ is not divisible by $h_{ii}$ is enough to note
that $h^\star_{ii}$ is irreducible as a minor of a matrix whose entries are independent variables.

Define an $(n-i+2)\times (n-i+2)$ matrix $F^\circ_{n-i+1,1}$ and an $(n-i+1)\times (n-i+1)$ matrix $G^\circ_{ii}$
via replacing the column $X^{[i]}$ with the column $X^{[i-1]}$ in $F_{n-i+1,1}$ and $G_{ii}$, respectively, and denote
$f^\circ_{n-i+1,1}=\det F^\circ_{n-i+1,1}$, $g^\circ_{ii}=\det G^\circ_{ii}$. The short Pl\"ucker relation for the matrix
$$
\left[\begin{array}{ccc} I^{[i-1]} & X^{[i-1,n]} & Y^{[n]}\end{array}\right]_{[i-1,n]}
$$
and columns $I^{[i-1]}$, $X^{[i-1]}$, $X^{[i]}$, and $Y^{[n]}$ gives
\begin{equation}\label{pluone}
g_{ii}f^\circ_{n-i+1,1}=f_{n-i,1}g_{i-1,i-1}+f_{n-i+1,1}g^\circ_{ii}
\end{equation}
for $2\le i\le n$. Taking into account that $g^\circ_{nn}=g_{n,n-1}$ and the description of the quiver $Q_n$, we see that 
the function that replaces $g_{nn}$ after the transformation is regular.

To prove that $f^\circ_{n-i+1,1}$ is not divisible by $g_{ii}$ is enough to note
that $f^\circ_{n-i+1,1}$ is irreducible as a minor of a matrix whose entries are independent variables.

Next, define an $(n-i)\times (n-i)$ matrix $G^\circ_{i+1,i}$ via replacing the column $X^{[i]}$ with the column $X^{[i-1]}$ in
$G_{i+1,i}$, and denote $g^\circ_{i+1,i}=\det G^\circ_{i+1,i}$. The short Pl\"ucker relation for the matrix
$$
\left[\begin{array}{cc} I^{[i-1,i]} & X^{[i-1,n]}\end{array}\right]_{[i-1,n]}
$$
and columns $I^{[i]}$, $X^{[i-1]}$, $X^{[i]}$, and $X^{[n]}$ gives
\begin{equation}\label{plutwo}
g_{i+1,i}g^\circ_{ii}=g_{i,i-1}g_{i+1,i+1}+g_{ii}g^\circ_{i+1,i}
\end{equation}
for $2\le i\le n-1$.
Combining relations~\eqref{pluone} and~\eqref{plutwo} one gets
\begin{equation*}
g_{ii}(g_{i+1,i}f^\circ_{n-i+1,1}-g^\circ_{i+1,i}f_{n-i+1,1})=f_{n-i,1}g_{i-1,i-1}g_{i+1,i}+f_{n-i+1,1}g_{i,i-1}g_{i+1,i+1},
\end{equation*}
which 
means that the function that replaces $g_{ii}$ after the transformation is regular whenever $2\le i\le n-1$.

To prove that $g_{ii}'=g_{i+1,i}f^\circ_{n-i+1,1}-g^\circ_{i+1,i}f_{n-i+1,1}$ is not divisible by $g_{ii}$, define $X^\circ$ to be the lower bidiagonal matrix with
$x^\circ_{ii}=t$ and all other entries in the two diagonals equal to one. Then  $g_{i+1,i}(X^\circ)=1$, $f^\circ_{n-i+1,1}(X^\circ,Y)=\pm(y_{in}-y_{i-1,n})$,
$g^\circ_{i+1,i}(X^\circ)=0$, and hence $g_{ii}'(X^\circ,Y)=\pm(y_{in}-y_{i-1,n})$ is not divisible by $g_{ii}(X^\circ)=t$.

The rest of the vertices $g_{ij}$ and $h_{ij}$ do not need a separate treatment since the corresponding relations coincide with those for the standard cluster 
structure in $GL_n$.

\section{Proof of Theorem~\ref{structure}(ii)}\label{upeqreg}

\subsection{Proof of Theorem~\ref{allxcluster}}
Functions $x_{ni}$, $1\le i\le n$, are in the initial cluster. Our goal is to explicitly construct a sequence of cluster
transformations that will allow us to recover all $x_{ij}$ as cluster variables.
For this, we will only need to work with a subquiver $\Gamma_n^n$ of $Q_n$ whose vertices belong to lower $n$ levels of 
$Q_n$ and in which we view the vertices in the top row
as frozen (see Fig.~\ref{Gamma55} for the quiver $\Gamma_5^5$).

\begin{figure}[ht]
\begin{center}
\includegraphics[width=12cm]{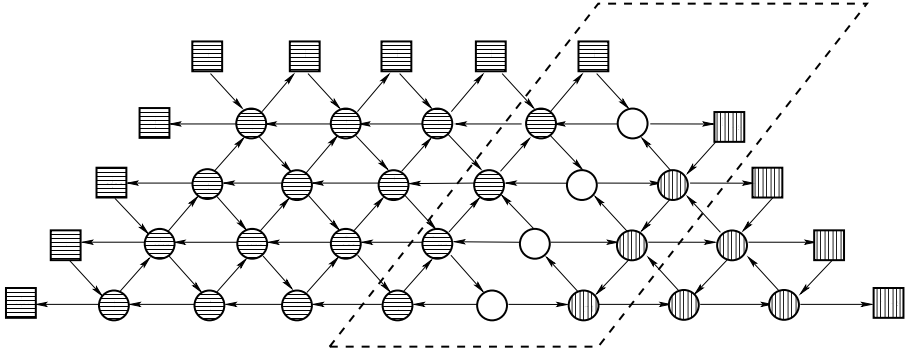}
\caption{Quiver $\Gamma_5^5$}
\label{Gamma55}
\end{center}
\end{figure}

One can distinguish two oriented triangular grid subquivers of $\Gamma_n^n$: a ``square'' one on $n^2$ vertices with the clockwise 
orientation of triangles in the lowest row (dashed horizontally on Fig.~\ref{Gamma55}), and 
a ``triangular'' one  on $n(n-1)/2$ vertices with the counterclockwise orientation of triangles in the lowest row (dashed vertically). 
They are glued together with the help of the quiver $\Gamma_{n}$ on $3n-2$ vertices placed in three columns of size $n$, $n-1$, and $n-1$. 
The left column of $\Gamma_{n}$ is identified with the rightmost side of the square subquiver, and the right column, with the leftmost side of the triangular subquiver, see Fig.~\ref{sub2}a) for the case $n=5$. More generally, we can define a quiver $\Gamma_m^n$, $m\le n$, by using $\Gamma_{m}$
to glue an oriented triangular grid quiver on $m n$ vertices forming a parallelogram with a base of length $n$ and the side of length $m$  with an 
oriented triangular grid quiver on  $m(m-1)/2$ vertices forming a triangle with sides $m-1$ (the orientations of the triangles in the lowest rows of both
grids obeys the same rule as above).

\begin{figure}[ht]
\begin{center}
\includegraphics[height=5cm]{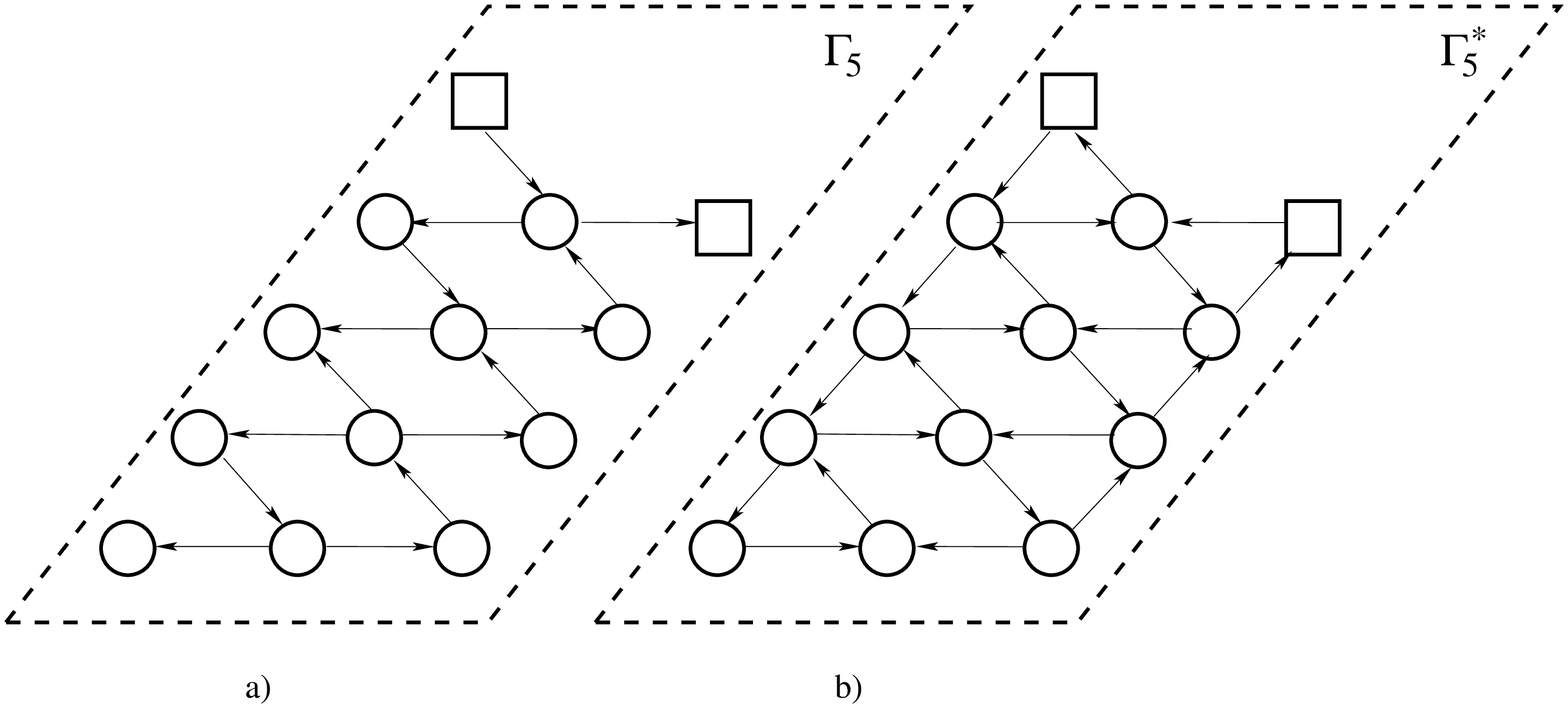}
\caption{Quivers $\Gamma_5$ and $\Gamma^*_5$}
\label{sub2}
\end{center}
\end{figure}

Let $A$ be an $m\times m$ matrix; for $1\le i\le m-1$, denote
$$
a_{m-i}=\det A_{[i+1,m]}^{[i+1,m]}, \qquad b_{m-i}=\det A_{[i,m-1]}^{[i+1,m]}, \qquad c_{m-i}=\det A_{[i+1,m]}^{[1]\cup [i+2,m]}.
$$
It is easy to check that $a_{m-i}$, $b_{m-i}$ and $c_{m-i}$ are maximal minors in the $(m-i+1)\times (m-i+3)$ matrix 
$$
\left[\begin{array}{ccc} I^{[i]} & A^{[1]\cup [i+1,m]} & I^{[m]} \end{array}\right ]_{[i,m]},
$$
obtained by removing columns $A^{[1]}$ and $I^{[m]}$, $I^{[i]}$ and $A^{[1]}$, $A^{[i+1]}$ and $I^{[m]}$, respectively. Moreover, the
maximal minor obtained by removing columns $A^{[1]}$ and $A^{[i+1]}$ equals $b_{m-i-1}$ (assuming $b_0=1$), and the one obtained
by removing columns $I^{[i]}$ and $I^{[m]}$ equals $c_{m-i+1}$ (assuming $c_m=\det A$).
Consequently, the short Pl\"ucker relation for the matrix above and columns $ I^{[i]}$, $A^{[1]}$, $A^{[i+1]}$, $I^{[m]}$ reads
$$
a_i a^*_i = b_i c_i + b_{i-1} c_{i+1},
$$
where $a^*_{m-i} =\det A^{[1]\cup [i+2,m]}_{[i,m-1]}$. Let us assign variables $a_i$ to the vertices in the central column of $\Gamma_m$ bottom up, 
variables $b_i$ to the vertices in the right column, and variables $c_i$ to the vertices in the left column. It follows from the above discussion that
applying commuting cluster transformations and the corresponding quiver mutations to vertices in the central column of $\Gamma_m$  
results in the quiver $\Gamma^*_m$ shown in Fig.~\ref{sub2}b) for $m=5$. The variables attached to the vertices of the central column bottom up are
$a_i^*$ defined above.

Denote  $X_m=X_{[1,m]}$ and $Y_m=Y_{[1,m]}^{[n-m+1,n]}$. Extending the definition of functions
$g_{ij}(X)$ in Section~\ref{logcan} to rectangular matrices, we write
$$
g_{ij}^m=g_{ij}(X_m) = \det X_{[i,m]}^{[j,j+m-i]}
$$
for $1\leq j \leq i + n -m$. Functions $f_{kl}^m=f_{kl}(X_m,Y_m)$ and $h_{ij}^m=h_{ij}(Y_m)$ are defined exactly 
as in Section~\ref{logcan}. Consequently,
\begin{equation}\label{hshift}
h_{ij}^m=h_{i+1,j}^{m-1} \qquad \text{for $i<j$}.
\end{equation}

Going back to the quiver $\Gamma_m^n$, let us
denote by $\widetilde\Sigma_m^n$ the extended seed that consists of $\Gamma_m^n$ and 
the following family of functions attached to its vertices: the functions attached to the $s$th row listed
left to right are $g_{m-s+1,1}^m,\dots, g_{m-s+1,n-s+1}^m$ followed by $f_{s-1,1}^m,\dots,f_{1,s-1}^m$
followed by $h_{m-s+1,m-s+1}^m,\dots$, $h_{1,m-s+1}^m$ (except for the top row that does not contain $h_{11}^m$).
Now, consider the sequence 
of $m-1$ commuting  mutations of $\Gamma_m^n$ at vertices $h_{ii}^m$, $2\le i\le m$. As  in~\eqref{hii}, one sees that the new variables associated
with these vertices are $h_{ii}^\star = \det \left [ X^{[n]} \;\; Y^{[i+1,m]} \right ]_{[i-1,m-1]}$. In particular, $h^\star_{mm}=x_{m-1,n}$, and we thus have 
obtained $x_{m-1,n}$ as a cluster variable. Moreover, $h^\star_{mm}=g_{m-1,n}^{m-1}$, and $h^\star_{ii}=f_{1,m-i}^{m-1}$ for $2\le i\le m-1$.

The cumulative effect of this sequence of transformations can be summarized as follows: 

(i) detach from the quiver $\Gamma_m^n$ a quiver isomorphic to $\Gamma_{m}$ with $h_{ii}^m$, $i=m, \dots, 2$, playing the role of $a_{i}$, $i=1, \dots, m-1$, 
and note that functions assigned to its vertices are of the form described above if one selects $A= \left [ X_m^{[n]}\;\; Y_m^{[2,m]} \right ]$; 

(ii) apply cluster transformations to vertices $a_{i}$, $1\le i\le m-1$, of $\Gamma_{m}$; 

(iii) glue the resulting quiver $\Gamma^*_{m}$ back into $\Gamma_m^n$ and erase any two-cycles that may have been created in this process;

(iv) note that the new variables attached to the mutated vertices are $g_{m-1,n}^{m-1}$, $f_{1,m-i}^{m-1}$, $i=m-1,\dots, 2$.

The resulting quiver contains another copy of $\Gamma_{m}$, shifted leftwards by~1, with vertices 
$g_{mn}^m$, $f_{11}^m,\ldots, f_{1,m-2}^m$ 
playing the role of $a_{i}$, $i=1, \dots, m-1$, and the matrix $\left [ X_m^{[n-1,n]}\;\; Y_m^{[3,m]} \right ]$ playing the role of $A$. 
Therefore, we can repeat the procedure used on the previous step
to obtain a new quiver in which $g_{mn}^m$, $f_{11}^m,\ldots, f_{1,m-2}^m$ are replaced
by $x_{m-1,n-1}$,  $\det \left [ X^{[n-1,n]}\;\; Y^{[i+2,m]} \right ]_{[i-1,m-1]}$, $i= m-1, \ldots, 2$, respectively. Thus, we have obtained
$x_{m-1,n-1}$ as a cluster variable, and, moreover, the new variables attached to the mutated vertices are $g_{m-1,n-1}^{m-1}$, $g_{m-2,n-1}^{m-1}$,
$f_{2,m-i}^{m-1}$ for $i=m-1,\dots, 3$.

We proceed in the same way $n-2$ more times. At the $j$th stage, $1\le j\le n-1$, the copy of $\Gamma_m$ is shifted leftwards by $j$, 
$A= \left [ X_m^{[n-j,n]}\;\; Y_m^{[j+2,m]} \right ]$, the role of $a_{i}$, $i=1, \dots, m-1$, 
is played by vertices associated with the functions $g_{m,n-j+1}^m,\ldots{}, g_{m-j+1, n-j+1}^m$, $f_{j1}^m,\ldots, f_{j,m-j}^m$, which are being replaced with  
\begin{equation*}
\begin{split}
&\det X_{[m-i,m-1]}^{[n-j,n-j+i-1]},\quad i=1,\dots, j+1,\\  
&\det \left [ X^{[n-j,n]}\;\; Y^{[j+i+1,m]} \right ]_{[i-1,m-1]}, \quad  i= m-j, \ldots, 2. 
\end{split}
\end{equation*}
Note that
the first of the functions listed above is $x_{m-1,n-j}$, so in the end of this process, we have restored all the entries of the $(m-1)$-st row of $X$. Moreover,
the new variables are $g_{m-i,n-j}^{m-1}$, $i=1,\dots, j+1$, $f_{j+1,m-i}^{m-1}$, $i=m-1,\dots, j+2$.

Let us freeze in the resulting  quiver 
$\tilde\Gamma_m^n$ 
all $g$-vertices and $f$-vertices adjacent to the frozen vertices. It is easy to check that the quiver obtained in this way is isomorphic to $\Gamma_{m-1}^n$.
Moreover, the above discussion together with the identity~\eqref{hshift} shows that the functions assigned to its vertices are exactly those stipulated by
the definition of the extended seed $\widetilde\Sigma_{m-1}^n$. Thus we establish the claim of the theorem by applying the procedure described above 
consecutively to  $\widetilde\Sigma_{n}^n$, $\widetilde\Sigma_{n-1}^n, \ldots, \widetilde\Sigma_{2}^n$.

\subsection{Sequence $\TE$: the proof of Theorem~\ref{clusterU}}

Consider the subquiver $\widehat\Gamma_n^n$ of $\Gamma_n^n$ obtained by freezing the vertices corresponding to functions 
$h_{ii}(Y)$,  $2\le i\le n$, and ignoring vertices to the right of them. In other words, $\widehat\Gamma_n^n$ is the subquiver of $Q_n$
induced by all $g$-vertices, all $f$-vertices, $\fy$-vertices with $k+l=n$, and $h$-vertices with $i=j\ge 2$.
The quiver $\widehat\Gamma_5^5$
is shown in Fig.~\ref{sub4}; the vertices that are frozen in $\widehat\Gamma_5^5$, but are mutable in $Q_5$ are shown by rounded squares. 
Note the special edge shown by the dashed line. It does not exist in $\widehat\Gamma_n^n$ (since it connects frozen vertices), 
but it exists in $Q_n$.

Within this proof we label the vertices of $\widehat\Gamma_n^n$ by pair of indices $(i,j)$, $1\le i\le n$, $1\le j\le n+1$, $(i,j)\ne (1, n+1)$, where $i$ 
increases from top to bottom and  $j$ increases from left to right; thus, the special edge is $(1,n)\to (2,n+1)$.
The set of vertices with $j-i=l$ forms the $l$th diagonal in $\widehat\Gamma_n^n$, 
$1-n\le l \le n-1$. The function attached to the vertex $(i,j)$ is
$$
\chi_{ij} = \begin{cases} g_{ij}(X) = \det X_{[i,n]}^{ [j, n+j-i]}& \mbox{if}\quad  i \geq j, \\
f_{n-j+1,j-i}(X,Y)= \det \left [ X^{ [j,n]}\;\; Y^{[n+i-j+1,n]} \right ]_{[i,n]} & \mbox{if} \quad i < j.  \end{cases}
$$
We denote the extended seed thus obtained from $\widetilde{\Sigma}_n^n$ by $\widehat{\Sigma}_n^n$. Note that it is a seed 
of an ordinary cluster structure, since no generalized exchange relations are involved.

\begin{figure}[ht]
\begin{center}
\includegraphics[width=9cm]{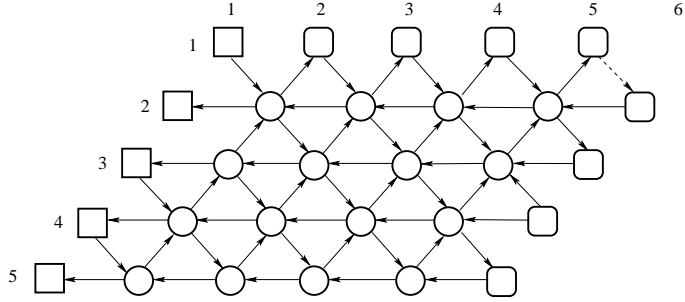}
\caption{Quiver $\widehat\Gamma_5^5$}
\label{sub4}
\end{center}
\end{figure}

Consider a sequence of mutations $\TE_n$ which involves mutating once at every non-frozen vertex of $\widehat\Gamma_n^n$ starting with $(n,2)$ then
using vertices of the $(3-n)$-th, $(4-n)$-th, ..., $(n-3)$-rd, $(n-2)$-nd diagonals. Note that a similar sequence of transformations was used in the proof 
of Proposition 4.15 in \cite{GSVb}
in the study of the natural cluster structure on Grassmannians. The order in which vertices of each diagonal are mutated 
is not important, since at the moment a diagonal is reached in this sequence of transformations, 
there are no edges between its vertices. In fact, functions $\chi_{ij}^1$  obtained as a result of applying $\TE_n$ are subject to relations
$$
\chi_{ij}^1 \chi_{ij} = \chi^1_{i,j-1} \chi_{i,j+1} + \chi^1_{i+1,j} \chi_{i-1,j},\qquad 2\le i,j \le n,
$$
where we adopt a convention $\chi^1_{n+1,j} = x_{n1}$, $\chi^1_{i1} = \chi_{i-1,1}=g_{i-1,1}(X)$. 
These relations imply 
\begin{equation}
\label{chistar}
\chi^1_{ij}= \begin{cases}  \det X_{[i-1,n]}^{[1] \cup [j+1, n+j-i+1]} & \mbox{if}\quad  i > j, \\
\det \left [ X^{[1]\cup [j+1,n]}\;\; Y^{[n+i-j,n]} \right ]_{[i-1,n]} & \mbox{if} \quad i \leq j.  \end{cases}
\end{equation}
To verify~\eqref{chistar} for $i>j$, one has to apply the short Pl\"ucker relation to 
$$
\left[\begin{array}{cc} I^{[i-1]} & X^{[1]\cup [j,n+j-i+1]} \end{array}\right ]_{[i-1,n]}
$$
using columns 
$I^{[i-1]}$, $X^{[1]}$, $X^{[j]}$, $X^{[n+j-i+1]}$. In the case $i\leq j$, the short Pl\"ucker relation is applied to 
$$
\left[\begin{array}{ccc} I^{[i-1]} & X^{[1]\cup [j,n]} & Y^{[n+i-j,n]} \end{array}\right ]_{[i-1,n]}
$$
using columns 
$I^{[i-1]}$, $X^{[1]}$, $X^{[j]}$, $Y^{[n+i-j]} $.
Note that 
\begin{equation*}
\begin{aligned}
\chi^1_{2j} &= \det \left [ X^{[1]\cup [j+1,n]}\;\; Y^{[n-j+2,n]} \right ]_{[1,n]} 
= \det X \cdot(-1)^{(n-j)(j-1)}\det U_{[2,j]}^{[n-j+2,n]}\\ &= \det X\cdot(-1)^{(n-j)(n-2)}h_{2,n-j+2}(U), \qquad 2\le j\le n.
\end{aligned}
\end{equation*}

The subquiver of $\TE_n(\widehat\Gamma_n^n)$ formed by non-frozen vertices is isomorphic to the corresponding 
subquiver of $\widehat\Gamma_n^n$. However, the frozen vertices are connected to non-frozen ones in a different way now:
there are arrows $(i,1)\to(i+2, 2)$ and $(i+1,n+1)\to(i+2, n)$ for $1\le i\le n-2$, $(i,2)\to(i-1, 1)$ and $(i,n)\to(i, n+1)$ 
for $2\le i\le n$, $(1,j)\to(2, j)$ for $2\le j\le n$, $(n,1)\to (n,n)$, and 
  $(2,j)\to(1, j+1)$ for $2\le j\le n-1$. 	After moving frozen vertices we can make 
 $\TE_n(\widehat\Gamma_n^n)$ look as shown in Fig.~\ref{sub5}. 

\begin{figure}[ht]
\begin{center}
\includegraphics[width=8cm]{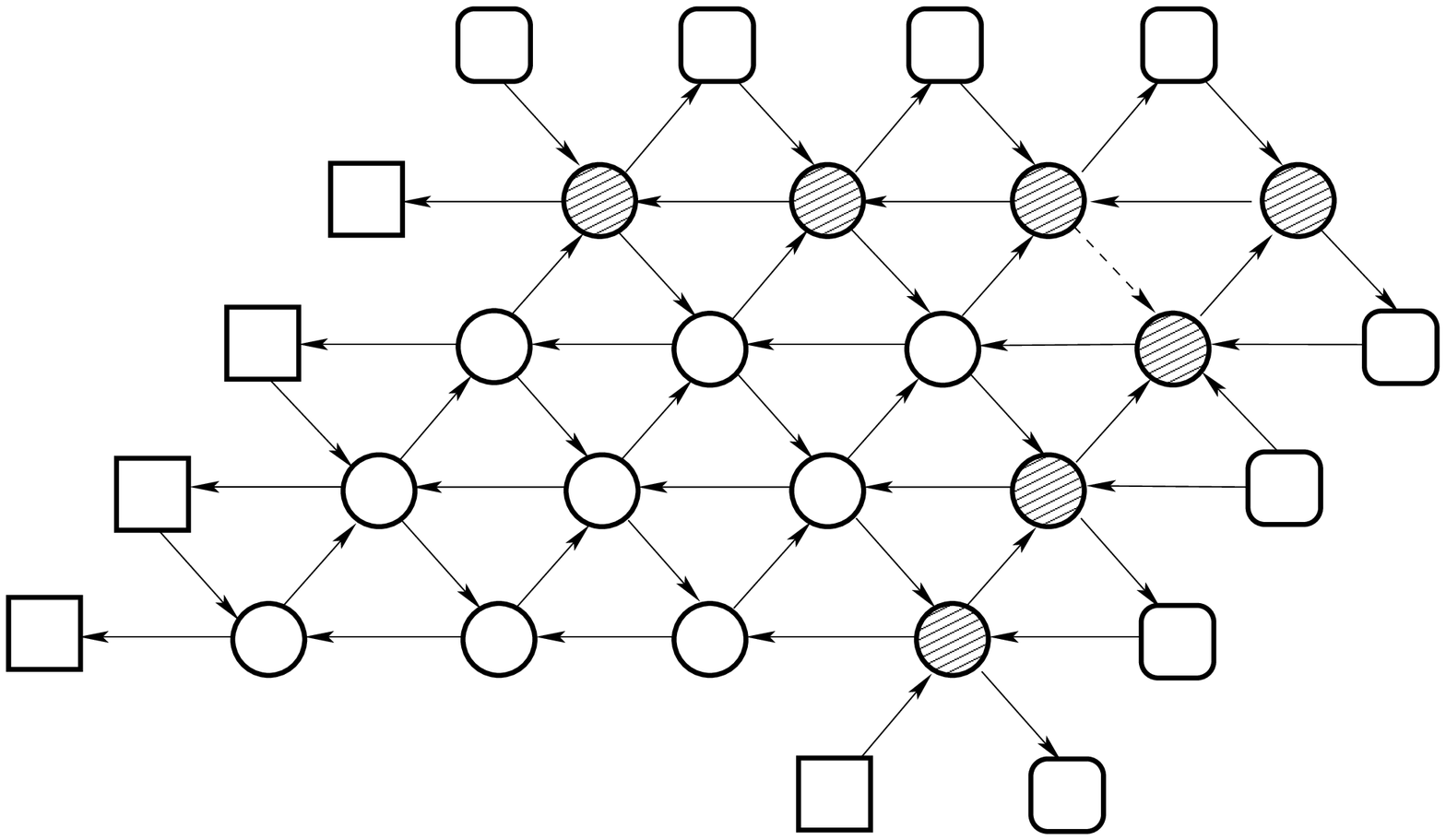}
\caption{Quiver $\TE_n(\widehat\Gamma_5^5)$}
\label{sub5}
\end{center}
\end{figure}

Note that if we freeze the vertices $(2,2), \ldots, (2,n), (3,n), \ldots (n,n)$ in $\TE_n(\widehat\Gamma_n^n)$ 
(marked gray in Fig.~\ref{sub5}) and ignore the isolated frozen vertices thus obtained, we will be left
with a quiver isomorphic to $\widehat\Gamma_{n-1}^{n-1}$ whose vertices are labeled by $(i,j)$, $2\le i\le n$, 
$1\le j\le  n$, $(i,j)\ne (2, n)$, and have functions $\chi^1_{ij}$ attached to them. The new special edge is $(2,n-1)\to (3,n)$.
Let us call the resulting 
extended seed $\widehat{\Sigma}_{n-1}^{n-1}$. 

We can now repeat the procedure described above $n-2$ more times  by applying, on the $k$th step, the sequence of mutations 
$\TE_{n-k+1}$ to the extended seed
$$
\widehat\Sigma_{n-k+1}^{n-k+1}= \left((\chi^{k-1}_{ij})_{k\le i\le n, 1\le j\le n-k+2, 
 (i,j)\ne (k, n-k+2)}, \widehat\Gamma_{n-k+1}^{n-k+1}\right ).
$$
 The functions  $\chi^k_{ij}$ are subject to relations
$$
\chi^{k}_{ij} \chi^{k-1}_{ij} = \chi^{k}_{i,j-1} \chi^{k-1}_{i,j+1} + \chi^{k}_{i+1,j} \chi^{k-1}_{i-1,j},\qquad
 k+1\le i\le n, \quad 2\le j\le n-k+1,
$$
where we adopt the convention $\chi^{k}_{n+1,j} = \chi^{k-1}_{n1}$, $\chi^{k}_{i1} = \chi^{k-1}_{i-1,1}$. 
Arguing as above, we conclude that
\begin{equation}
\label{chistark}
\chi^k_{ij}= \begin{cases}  \det X_{[i-k,n]}^{[1,k] \cup [j+k, n+j-i+k]} & \mbox{if}\quad  i - k+1 > j, \\
\det \left [ X^{[1,k]\cup [j+k,n]}\;\; Y^{[n+i-j+1-k,n]} \right ]_{[i-k,n]} & \mbox{if} \quad i - k+1 \leq j.  \end{cases}
\end{equation}
To verify \eqref{chistark} for $i-k+1>j$, one has to apply the short Pl\"ucker relation to 
$$
\left [ \begin{array}{cc}I^{[i-k]} & X^{[1,k] \cup [j+k-1, n+j-i+k]} \end{array} \right ]_{[i-k,n]}
$$
using columns 
$I^{[i-k]}$, $X^{[k]}$, $X^{[j+k-1]}$, $X^{[n+j-i+k]}$. In the case $i-k+1\leq j$, the short Pl\"ucker relation is applied to 
$$
\left [ \begin{array}{ccc} I^{[i-k]} & X^{[1,k]\cup [j+k-1,n]} & Y^{[n+i-j-k+1,n]}\end{array} \right ]_{[i-k,n]}
$$
using columns 
$I^{[i-k]}$, $X^{[k]}$, $X^{[j+k-1]}$, $Y^{[n+i-j-k+1]}$.
Note that 
\begin{align*}
\chi^k_{k+1,j} &= \det \left [ X^{[1,k]\cup [j+k,n]}\;\; Y^{[n-j+2,n]} \right ]_{[1,n]} \\
&= \det X \cdot(-1)^{(n-j-k+1)(j-1)}\det U_{[k+1,j+k-1]}^{[n-j+2,n]}\\
&= \det X \cdot (-1)^{(n-j-k+1)(n-k-1)}h_{k+1, n-j+2}(U),\qquad 2\le j\le n-k+1.
\end{align*}

Define the sequence of transformations $\TE$ as the composition $\TE= \TE_2 \circ \cdots \circ \TE_n$. Assertion (i) of Theorem~\ref{clusterU} follows from the fact
that $\fy$-vertices of $Q_n$ are not involved in any of $\TE_i$. This fact also implies that the subquiver of $Q_n$ induced by 
$\fy$-vertices remains intact in $\TE(Q_n)$. As it was shown above, the function $\det X\cdot h_{ij}(U)$ 
is attached to the vertex $(i,n-j+2)$. It is easy to prove by induction that the last mutation at $(i,j)$ (which occurs 
at the $(i-1)$-st step) creates edges $(i,j)\to (i,j-1)$, $(i-1,j)\to (i,j)$ and $(i,j-1)\to (i-1,j)$. 
Comparing this with the description of $Q_n$ in Section~\ref{init} and the definition of $Q_n^\dag$ in Section~\ref{main}
yields assertions (ii) and (iii) of Theorem~\ref{clusterU}. Finally, assertion (iv) follows from the fact that the special
edge $(i,n-i+1)\to (i+1,n-i+2)$ disappears after the last mutation at $(i+1,n-i+1)$.

The quiver $\TE(Q_5)$ and the subquiver $Q_5'$ are shown in Fig.~\ref{sub6}. The vertices of $Q_5'$ are shadowed in dark gray. The  area shadowed in light gray
represents the remaining part of $\TE(Q_5)$. The only vertices in this part shown in the figure are those connected to vertices of $Q_5'$.

\begin{figure}[ht]
\begin{center}
\includegraphics[width=8cm]{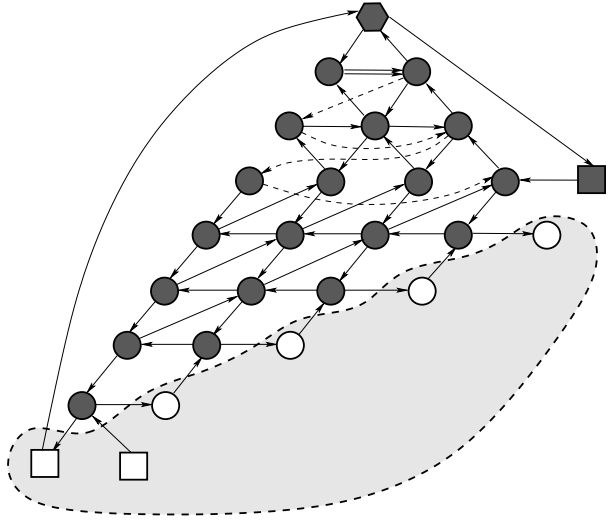}
\caption{Quiver $\TE(Q_5)$ and the subquiver $Q_5'$}
\label{sub6}
\end{center}
\end{figure}

\subsection{Proof of Theorem~\ref{Urestore}}
\subsubsection{The nerve $\N_0$} \label{thenerve} The nerve $\N_0$ is obtained as follows: it contains the seed $\widetilde\Sigma'_n$ built in the proof
of Theorem~\ref{clusterU}, the seed $\widetilde\Sigma''_n$ adjacent to $\widetilde\Sigma'_n$ in direction $\psi_{n-1,1}$, and the seed 
$\widetilde\Sigma'''_n$ adjacent to $\widetilde\Sigma''_n$ in direction $\psi_{1,n-1}$. 
Besides, it contains $n-3$ seeds adjacent to $\widetilde\Sigma'''_n$ in directions $\psi_{n-i,i}$, $2\le i\le n-2$, and $2(n-1)^2$ seeds adjacent to $\widetilde\Sigma'_n$ in all the remaining directions.

We subdivide $\N_0$ into several disjoint components. Component I contains the seed $\widetilde\Sigma'_n$ and its $(n-1)^2$ neighbors in directions that are not in $Q_n'$.
Component II contains $n-3$ neighbors of $\widetilde\Sigma'_n$ in directions
$\psi_{i1}$, $2\le i\le n-2$. 
Component III contains only the neighbor of $\widetilde\Sigma'_n$ in direction $\psi_{11}$. 
Component IV contains $(n-2)(n-3)/2$ seeds adjacent to $\widetilde\Sigma'_n$ in directions $\psi_{kl}$, $k+l<n$, $l>1$.
Component V contains $n(n-1)/2$ seeds adjacent to $\widetilde\Sigma'_n$ in directions
$h_{ij}$, $2\le i\le j\le n$. 
Component VI contains the seeds $\widetilde\Sigma''_n$ and 
$\widetilde\Sigma'''_n$ together with all other seeds adjacent to the latter. 

In each of the components we consider several normal forms for $U$ with respect to actions of different subgroups of $GL_n$. We 
 show how to restore entries of these normal forms and,
consequently, the entries of $U$ as Laurent expressions in corresponding clusters. Recall that $\det X$ and $\det X^{-1}$ belong to the ground ring, 
so it suffices to obtain Laurent expressions in variables $\psi_{kl}$, $h_{ij}$ (and their neighbors), and $c_i$ instead of actual cluster variables.

\subsubsection{Component I} To restore $U$ in component I, we use two normal forms for $U$: one under right multiplication by unipotent lower triangular matrices,
and the other under conjugation by unipotent lower triangular matrices, so 
$U=B_+N_-=\bar N_- \bar B_+ C \bar N_-^{-1}$,  where 
$B_+, \bar B_+$ are upper triangular, $N_-, \bar N_-$ are unipotent lower triangular, and $C=e_{21} + \cdots + e_{n, n-1} + e_{1n}$ is the 
cyclic permutation matrix (cf.~with~\eqref{normalU}). Note that by~\eqref{ourinv_prop}, 
$h_{ij}(U)=h_{ij}(B_+)$ and $\psi_{kl}(U)=\psi_{kl}(\bar B_+C)$.  
Our goal is to restore $B_+$ and $\bar B_+C$.

Once this is done, the matrix $U$ 
itself is restored as follows. We multiply the equality $B_+N_-=\bar N_-\bar B_+ C \bar N_-^{-1}$ by $W_0$ on the left and by $\bar N_-$ on the right,
where $W_0$ is the matrix corresponding to the longest permutation of size $n$, and
consider the Gauss factorizations~\eqref{gauss} for $W_0B_+$ in the left side and for  $W_0\bar B_+C$ in the right hand side. This gives
\[
(W_0B_+)_{>0}(W_0B_+)_{\le 0}N_-\bar N_-=W_0\bar N_-W_0\cdot(W_0\bar B_+C)_{>0}(W_0\bar B_+C)_{\le 0},
\]
where $W_0\bar N_-W_0$ is unipotent upper triangular. Consequently, 
\begin{equation}\label{gluecompI}
\bar N_-=W_0(W_0B_+)_{>0}(W_0\bar B_+C)_{>0}^{-1}W_0. 
\end{equation}
Recall
that matrix entries in the Gauss factorization are given by Laurent expressions in the entries of the initial matrix with denominators equal to its trailing
principal minors (see, e.g., \cite[Ch.~2.4]{Gant}). Clearly, the trailing principal minors of $W_0B_+$ and $W_0 \bar B_+C$ are just $\psi_{n-i,i}$, 
which allows to restore $U$ in any cluster of component I.

Restoration of $B_+=(\beta_{ij})$ is standard: an explicit computation shows that $\beta_{ii}=\pm h_{ii}/h_{i+1,i+1}$ with $h_{n+1,n+1}=1$, 
and $\beta_{ij}$ for $i<j$ is a Laurent
polynomial in $h_{kl}$, $k\le l$,  with denominators in the range $i+1\le k\le n$, $j+1\le l\le n$ (here $h_{1l}$ is identified up to a sign with 
$\psi_{l-1, n-l+1}$ for $l>1$).
Since all $h_{ij}$ are cluster variables in the clusters of component I, we are done.

In order to restore $\bar M=\bar B_+C$ we proceed as follows. Let $\bar B_+=(\bar\beta_{ij})$. Clearly, $\psi_{n-k,1}=\pm \prod_{i=1}^k\bar\beta_{ii}^{k-i+1}$ for
$1\le k\le n-1$, which yields
\begin{equation}\label{barbeta}
\bar\beta_{ii}=\pm\frac{\psi_{n-i,1}\psi_{n-i+2,1}}{\psi_{n-i+1,1}^2}, \qquad 1\le i\le n-1,
\end{equation}
where we assume $\psi_{n1}=\psi_{n+1,1}=1$. The remaining diagonal entry $\bar\beta_{nn}$ is given by 
\[
\bar\beta_{nn}=h_{11}\prod_{i=1}^{n-1}\bar\beta_{ii}^{-1}=\pm\frac{h_{11}\psi_{21}}{\psi_{11}}.
\]

\begin{remark}\label{forfy11}
Note that the only diagonal entries depending on $\psi_{11}$ are $\bar\beta_{nn}$ and $\bar\beta_{n-1,n-1}=\pm\psi_{11}\psi_{31}/\psi_{21}^2$. This fact will be important 
for the restoration process in component III below.
\end{remark}

We proceed with the restoration process and use~\eqref{tildefy} to find
\[
\det (\bar M)_{[n-k-l+2,n-k]}^{[n-l+1,n-1]}=\pm \psi_{kl}\prod_{i=1}^{n-k-l+1}\bar\beta_{ii}^{k+l+i-n-2},
\]
which together with~\eqref{barbeta} gives $\det (\bar M)_{[n-k-l+2,n-k]}^{[n-l+1,n-1]}=\pm\psi_{kl}\psi_{k+l,1}^{k+l-3}/\psi_{k+l-1,1}^{k+l-2}$. By
Remark~\ref{tribfz}, this means that all entries $\bar \beta_{ij}$ with $i>1$ are restored  as Laurent polynomials in any cluster in component I.
Note that non-diagonal entries do not depend on $\psi_{11}$.

\begin{remark}\label{moreforfy11} Using Remark~\ref{tildefysign}, we can find signs in the above relations. Specifically,  
$\bar\beta_{n-1,n}=(-1)^n\psi_{12}/\psi_{21}$. This fact will be used in the restoration
process in component III below.
\end{remark}

To restore the entries in the first row of $\bar M$, we first conjugate it by a diagonal matrix $\Delta=\diag(\delta_1,\dots\delta_{n-1},h_{11}^{-1})$ so that
 $\Delta\bar M\Delta^{-1}$ has ones on the subdiagonal. This condition implies $\delta_i=\pm \psi_{n-i+1,1}/\psi_{n-i,1}$. Consequently, the entries of the rows
$2,\dots,n$ of $\Delta\bar M\Delta^{-1}$ remain Laurent polynomials. Next, we further conjugate
the obtained matrix with a unipotent upper triangular matrix $N_+$ so that $\bar M^*=N_+\Delta\bar M\Delta^{-1}N_+^{-1}$ has the companion form
\begin{equation}\label{compan}
\bar M^*=\left[\begin{array}{c} \gamma\\ \one_{n-1}\;\; 0\end{array}\right]
\end{equation}
with $\gamma=(\gamma_1,\dots,\gamma_n)$.
If we set all non-diagonal entries in the last column of $N_+$ equal to zero, all other entries of
$N_+$ (and hence of $N_+^{-1}$) can be restored uniquely as polynomials in the entries in the rows
$2,\dots,n$ of $\Delta\bar M\Delta^{-1}$. Recall that $\bar M^*$ is obtained from $U$ by conjugations, and hence $U$ and $\bar M^*$ are
isospectral. Therefore, $\gamma_i=(-1)^{i-1} c_{n-i}$ for $1\le i\le n$. This allows to restore the entries in the first row of 
\begin{equation}\label{betaviac}
\bar M=\Delta^{-1}N_+^{-1}\bar M^* N_+\Delta
\end{equation}
as Laurent polynomials in any cluster in component I.

\begin{remark}\label{nohii}
Note that although diagonal entries of $B_+$ are Laurent monomials in stable variables $h_{ii}$, Laurent expressions for entries of $(W_0B_+)_{>0}$ depend
on $h_{ii}$ polynomially. This follows from the fact that these entries are Laurent polynomials in dense minors of $W_0B_+$ containing the last column; recall that
such minors are cluster variables in  any cluster of component I. Moreover,
dense minors containing the upper right corner enter these expressions polynomially, see Remark~\ref{tribfz}. Consequently, stable variables $h_{ii}$ do not enter denominators of Laurent expressions for  entries of $U$ by~\eqref{gluecompI},  since restoration of $\bar B_+C$ does not involve division by $h_{ii}$. 
\end{remark}

\subsubsection{Component II} The two normal forms used in this component are given by  $U=B_+N_-=\check N_-\check B_+ W_0 \check N_-^{-1}$, where $B_+, \check B_+$ are upper
triangular, $N_-, \check N_-$ are unipotent lower triangular, and $W_0$ is the matrix corresponding to the longest permutation, see Lemma~\ref{BW}.  
Note that by~\eqref{ourinv_prop}, 
$h_{ij}(U)=h_{ij}(B_+)$ and $\psi_{kl}(U)=\psi_{kl}(\check B_+W_0)$.  
Our goal is to restore $B_+$ and $\check B_+W_0$.

Once this is done, the matrix $U$ 
itself is restored as follows. We multiply the equality $B_+N_-=\check N_-\check B_+ W_0 N_-^{-1}$ by $W_0$ on the left and by $\check N_-$ on the right and
consider the Gauss factorization~\eqref{gauss} for $W_0B_+$ in the left hand side. This gives
\[
(W_0B_+)_{>0}(W_0B_+)_{\le 0}N_-\check N_-=W_0\check N_-W_0\cdot W_0\check B_+W_0
\]
where $W_0\check N_-W_0$ is unipotent upper triangular and $W_0\check B_+W_0$ is lower triangular. Consequently, $\check N_-=W_0(W_0B_+)_{>0}W_0$.  
Clearly, the trailing principal minors of $W_0B_+$ are just $\psi_{n-i,i}$, which allows to restore $U$
in $\widetilde\Sigma'_n$ and in any cluster of component II.

Restoration of $B_+$ is exactly the same as before. In order to restore $\check M=\check B_+W_0$ we proceed as follows. 
Let $\check B_+=(\check\beta_{ij})_{1\le i\le j\le n}$. 
We start with the diagonal entries. 
An explicit computation immediately gives
\begin{equation}\label{diagrI}
\check\beta_{ii}=\pm\frac{\psi_{n-i,i}}{\psi_{n-i+1,i-1}}, \qquad 1\le i\le n,
\end{equation}
with $\psi_{n0}=1$ and $\psi_{0n}=h_{11}$. Next, we recover the entries in the last column of $\check B_+$. We find
\[
\psi_{n-l,l-1}=\pm\check \beta_{11}\check \beta_{ln}\prod_{i=1}^{l-1}\check \beta_{ii},
\]
which together with~\eqref{diagrI} gives
\begin{equation}\label{lastrI}
\check \beta_{ln}=\pm\frac{\psi_{n-l,l-1}}{\psi_{n-1,1}\psi_{n-l+1,l-1}}, \quad 2\le l\le n-1.
\end{equation}

Note that we have already restored the last two rows of $\check M$. We restore the other rows consecutively, starting from row $n-2$
and moving upwards. To this end,  
define an $n\times n$ matrix $\Psi$ via
\[
\Psi=\left[\begin{array}{ccccc} e_1 & \check Me_1 & \check M^{2}e_1 & \dots & \check M^{n-1}e_1\end{array}\right].
\]
Clearly, for $2\le l\le n-2$, $2\le t\le n-l+1$ one has
\[
\psi_{n-t-l+2,l-1}=\pm \check\beta_{11}^{t-1}\prod_{i=1}^{l-1}\check\beta_{ii}\det \Psi_{[l,t+l-2]}^{[2,t]},
\]
which together with~\eqref{diagrI} yields
\begin{equation}\label{detPsi}
\det \Psi_{[l,t+l-2]}^{[2,t]}=\pm \frac{\psi_{n-t-l+2,l-1}}{\psi_{n-l+1,l-1}\psi_{n-1,1}^{t-1}}.
\end{equation}
Therefore, each $\det \Psi_{[l,t+l-2]}^{[2,t]}$ is a Laurent polynomial in any cluster in component II. Moreover, the minors $\det\Psi_I^{[2,t]}$ 
possess the same property for any index set $I\subset [2,n]$, $|I|=t-1$, since they can be expressed as Laurent polynomials 
in $\det \Psi_{[l,t+l-2]}^{[2,t]}$ for $l>2$  that are polynomials in $\det\Psi_{[2,t]}^{[2,t]}$, see Remark~\ref{tribfz}.

On the other hand, $\Psi_{[l,t+l-2]}^{[2,t]}=\check M_{[l,t+l-2]}\Psi^{[1,t-1]}$, which yields a system of linear 
equations on the entries $\check\beta_{lj}$:
\begin{equation}\label{rowl}
\begin{split}
\sum_{j=2}^{n-l}\check\beta_{lj}&\det \left(\left[\begin{array}{c} e_j^T\\ \check M_{[l+1,t+l-2]}\end{array}\right]\Psi^{[1,t-1]}\right)\\
& =\det \Psi_{[l,t+l-2]}^{[2,t]}-\det \left(\left[\begin{array}{c} \hat \beta_l\\ \check M_{[l+1,t+l-2]}\end{array}\right]\Psi^{[1,t-1]}\right),
\quad 3\le t\le n-l+1,
\end{split}
\end{equation}
where $\hat \beta_{l1}=\check\beta_{1l}$, $\hat \beta_{l,n-l+1}=\check\beta_{ll}$, and $\hat \beta_{lj}=0$ for $j\ne 1, n-l+1$. 
Rewrite the second determinant in the right hand side 
of~\eqref{rowl} via the Binet--Cauchy formula; it involves minors $\det \Psi^{[2,t]}_I$ and minors of $\check M_{[l+1,t+l-2]}$.
Assuming that the entries in rows $l+1,\dots,n$  have been already restored and taking into account~\eqref{detPsi}, we ascertain that the 
right hand side can be expressed as a Laurent polynomial 
in any cluster in component II. Clearly, the same is also true for the coefficients in the left hand side of~\eqref{rowl}.

It remains to calculate the determinant of the linear system~\eqref{rowl}. Denote the coefficient at $\check\beta_{lj}$ in the $t$-th equation by $a_{j,t-2}$.
Then
\[
a_{jk}=\sum_{i=1}^k\left(\check M^i\right)_{l1}\det\Psi_{[l+1,k+l]}^{[2,i+1]\cup[i+3,k+2]}
=\sum_{i=1}^k\left(\check M^i\right)_{l1}\gamma_{ik}.
\]
Let $A=(a_{jk})$, $2\le j\le n-l$, $1\le k\le n-l-1$, be the matrix of the system~\eqref{rowl}. By the above formula we get
\[
A=\left[\begin{array}{ccc} \check Me_1 & \dots &\check M^{n-l-1}e_1\end{array}\right]_{[2,n-l]}\left[\begin{array}{cccc}
\gamma_{11} & \gamma_{12} & \dots &\gamma _{1,n-l-1}\\
0 & \gamma_{22} & \dots & \gamma_{2,n-l-1}\\
\vdots & \vdots & \ddots & \vdots \\
0 & 0 & \dots & \gamma_{n-l-1,n-l-1}\end{array}\right],
\]
and hence 
\[
\begin{split}
\det A=\det \Psi_{[2,n-l]}^{[2,n-l]}\prod_{i=1}^{n-l-1}\gamma_{ii}&=\det \Psi_{[2,n-l]}^{[2,n-l]}\prod_{i=1}^{n-l-1}\det\Psi_{[l+1,l+i]}^{[2,i+1]}\\
&=\pm \frac{\psi_{l1}}{\psi_{n-l,l}^{n-l-1}\psi_{n-1,1}^{(n-l)(n-l+1)/2}}\prod_{i=1}^{n-l-1}\psi_{n-l-i,l}.
\end{split}
\]

We thus restored the entries in the $l$-th row of $\check M$ as Laurent polynomials in all clusters in component II except for the neighbor 
of $\widetilde\Sigma'_n$ in direction $\psi_{l1}$, which we denote $\widetilde\Sigma'_n(l)$. In the latter cluster $\psi_{l1}$, which enters the expression for $\det A$, is no longer available.
It is easy to see that the factor $\psi_{l1}$ in $\det A$ comes from the $(n-l+1)$-st equation in~\eqref{rowl}: its left hand side is defined by the expression
\begin{equation}\label{oldeq}
\det \check M_{[l,n-1]}^{[1,n-l]}\det\Psi_{[2,n-l]}^{[2,n-l]}=\pm\det \check M_{[l,n-1]}^{[1,n-l]}
\frac{\psi_{l1}}{\psi_{n-1,1}^{n-l}}.
\end{equation}
In other words, each coefficient in the left hand side of this equation is proportional to $\psi_{l1}$. To avoid this problem, we replace this 
equation by a different one. Recall that by~\eqref{k1neigh},  in the cluster under consideration 
$\psi_{l1}$ is replaced by $\psi_{1,l-1}^{**}$
given by
\[
\begin{split}
&\psi_{1,l-1}^{**}=\pm \det \left[
e_n \; \check M^{[n-l+2,n]} \; \check M^2e_{n-1} \; \check M^2e_n \; \check M^3e_n \dots  \check M^{n-l}e_n
\right]\\
&\quad=\pm\check\beta_{11}^{n-l-1}\check\beta_{22} \det \left[
\check M^{[n-l+2,n]} \; \check Me_{2} \; \check Me_1 \; \check M^2e_1 \dots  \check M^{n-l-1}e_1
\right]_{[1,n-1]}\\
&\quad=\pm\check\beta_{11}^{n-l}\check\beta_{22}\det \left(\check M_{[2,n-1]}^{[1,n-1]}\left[
\one^{[1,2]\cup[n-l+2,n-1]} \;  \check Me_1 \; \check M^2e_1 \dots  \check M^{n-l-2}e_1
\right]_{[1,n-1]}\right);
\end{split}
\]
here we used relations $\check Me_n=\check\beta_{11}e_1$ and $\check Me_{n-1}=\check\beta_{12}e_1+\check\beta_{22}e_2$. By the Binet--Cauchy formula, the
latter determinant can be written as
\[
\sum_{j=1}^{n-1}\det \check M_{[n-j+1,n-1]}^{[1,j-1]}\prod_{i=2}^{n-j}\check\beta_{ii}\det\left[
\one^{[1,2]\cup[n-l+2,n-1]} \;  \check Me_1 \; \check M^2e_1 \dots  \check M^{n-l-2}e_1
\right]_{[1,n-1]\setminus\{j\}}.
\]
 Clearly, the second factor in each summand vanishes for $j=1,2$ and $n-l+2\le j\le n-1$. For $3\le j\le n-l+1$, the second factor equals
$\det\Psi_{[3,n-l+1]\setminus\{j\}}^{[2,n-l-1]}$. As it was explained above, for $3\le j\le n-l$ these determinants are Laurent polynomials in
all clusters of component II, whereas the corresponding first factors contain only entries in rows $l+1,\dots,n$ of $\check M$, which are already restored
as Laurent polynomials. Consequently, the left hand side of the new equation for the entries in the 
$l$-th row is defined by the $(n-l+1)$-st summand
\[
\det \check M_{[l,n-1]}^{[1,n-l]}\prod_{i=2}^{l-1}\check\beta_{ii}\det\Psi_{[3,n-l]}^{[2,n-l-1]}=\pm\det \check M_{[l,n-1]}^{[1,n-l]}
\frac{\psi_{l-1,n-l+1}\psi_{l2}}{\psi_{n-2,2}\psi_{n-1,1}^{n-l-1}};
\]
note that the inverse of the factor in the right hand side above is a Laurent monomial in the cluster under consideration.
Comparing with~\eqref{oldeq}, we infer that the left hand side of the new equation is proportional to the left hand side of the initial one. Therefore, the determinant 
of the new system is Laurent in the cluster $\widetilde\Sigma'_n(l)$, and the entries of the $l$-th row of $\check M$ are restored as Laurent polynomials.

Therefore, the entries in all rows of $\check M$ except for the first one are restored as Laurent polynomials in component II. The entries of the first row
are restored via Lemma~\ref{row1viac} as polynomials in the entries of other rows and variables $c_i$ divided by the $\det K=\det\Psi=
\pm h_{11}\psi_{11}/\psi_{n-1,1}^n$; the latter equality follows from $\check\beta_{11}e_1=\check M e_n$ and~\eqref{diagrI}.

\subsubsection{Component III} In this component we use all three normal forms that have been used in components I and II. 
Restoration of $B_+$ is exactly the same as before. Restoration of $\check B_+W_0$ goes through for all entries except for the entries in the first row, 
since the determinant $\det K$ involves a factor $\psi_{11}$, which is no longer available. On the other hand,
restoration of $\bar B_+C$ also fails at two instances: firstly, at the entry $\bar\beta_{nn}$, see Remark~\ref{forfy11}, and secondly, at the entry
$\bar\beta_{1n}$, which gets $\psi_{11}$ in the denominator after the conjugation by $\Delta^{-1}$. However, we will be able to use partial results obtained 
during restoration of $\bar B_+C$ in order to restore the first row of $\check B_+W_0$.

Indeed, using Lemma~\ref{BW} we can write
$\check M = W_0\left (W_0 U\right)_{\leq 0} W_0 \left (W_0 \bar M\right)_{>0}W_0$.
Clearly, $W_0\bar M$ is block-triangular
with diagonal blocks $1$ and $W_0'\bar B_+'$, where $\bar B_+'=(\bar B_+)_{[2,n]}^{[2,n]}$ and $W_0'$ is the matrix of the longest permutation on size $n-1$. Therefore,
\[
(\check\beta_{11}, \check\beta_{12}, \dots, \check\beta_{1n})=(\bar\beta_{11},\bar\beta_{1n},\bar\beta_{1,n-1},\dots,\bar\beta_{12})
\left[\begin{array}{cc} 0 & (W_0'\bar B_+')_{>0}W_0'\\
                        1  & 0\end{array}\right],
\]
which can be more conveniently written as
\begin{equation}\label{row1}
\begin{split}
(\check\beta_{11}, \check\beta_{12}, \dots, \check\beta_{nn})&
\left[\begin{array}{cc} 1  & 0\\
0 & W_0'(W_0'\bar B_+')_{\le 0}W_0'
      \end{array}\right]\\
			&=
(\bar\beta_{11},\bar\beta_{1n},\bar\beta_{1,n-1},\dots,\bar\beta_{12})
\left[\begin{array}{cc} 0 & W_0'\bar B_+'W_0'\\
                        1  & 0\end{array}\right].												
\end{split}
\end{equation}

It follows from the description of the restoration process for $\bar M$, that all entries  of the matrix in the left hand side 
of the system~\eqref{row1} are Laurent polynomials in component III, and the determinant of this matrix equals $\pm h_{11}/\psi_{n-1,1}$.
In the right hand side, $W_0'\bar B_+'W_0'$ is a lower triangular matrix with $\bar\beta_{nn}$ in the upper left corner and $\bar\beta_{n-1,n-1}$ next to it along the 
diagonal. Recall that other entries of $W_0'\bar B_+'W_0'$ do not involve $\psi_{11}$; besides, the entries $\bar\beta_{1j}$ for $j\ne n$ may involve $\psi_{11}$ only in the 
numerator, which makes them Laurent polynomials in component III. Therefore, the right hand side of~\eqref{row1} involves two expressions that should be investigated:
 $\bar\beta_{11}\bar\beta_{nn}+\bar\beta_{1n}\bar\beta_{n-1,n}$ and $\bar\beta_{1n}\bar\beta_{n-1,n-1}$. Recall that $\bar\beta_{1n}$ is the product of a Laurent 
polynomial in component III by $\delta_{n-1}/\delta_1=\pm\psi_{21}\psi_{n-1,1}/\psi_{11}$, whereas $\bar\beta_{n-1,n-1}=\pm\psi_{11}\psi_{31}/\psi_{21}^2$ by
Remark~\ref{forfy11}, hence the second expression above is a Laurent polynomial in component III.  

Recall further that $\bar M$ is transformed to the companion form~\eqref{compan} by a conjugation first with $\Delta$ and second with $N_+$. 
The latter can be written in two forms:
\[
N_+=\left[\begin{array}{cc} N_+' & 0\\
                            0 & 1\end{array}\right] \qquad\text{and}\qquad 
N_+=\left[\begin{array}{cc} 1 & \nu\\
                            0 & N_+''\end{array}\right]
\]
where $N_+=(\nu_{ij})$ and  $N_+''$  are $(n-1)\times (n-1)$ unipotent upper triangular matrices, and $\nu=(\nu_{12},\dots,\nu_{1,n-1})$. Consequently,~\eqref{compan}
yields 
\begin{equation}\label{forn}
N_+''(\Delta\bar M\Delta^{-1})_{[2,n]}^{[1,n-1]}=N_+'. 
\end{equation}
Note that~\eqref{betaviac} implies
\[
\left(\frac{\bar\beta_{12}}{\delta_1}, \frac{\bar\beta_{13}}{\delta_2},\dots,\frac{\bar\beta_{1n}}{\delta_{n-1}}\right)=
\frac1{\delta_1}\left(\gamma_1+\bar\nu_{12},\gamma_2+\bar\nu_{13},\dots, \gamma_{n-2}+\bar\nu_{1,n-1},\gamma_{n-1}\right)N_+',
\]
where $\bar\nu_{ij}$ are the entries of $(N_+')^{-1}$. Taking into account that 
 \[
\delta_1=\prod_{i=2}^n\bar\beta_{ii},\qquad \delta_{n-1}=\bar\beta_{nn},\qquad \prod_{i=1}^n\bar\beta_{ii}=(-1)^{n-1}h_{11},
\]
we infer that 
\[
\bar\beta_{1n}=(-1)^{n-1}\frac{\bar\beta_{11}\bar\beta_{nn}}{h_{11}}\left(\sum_{i=1}^{n-1}\gamma_i\nu_{i,n-1}+\sum_{i=2}^{n-1}\bar\nu_{1i}\nu_{i-1,n-1}\right).
\]
Besides, $\bar\beta_{n-1,n}=(-1)^n\psi_{12}/\psi_{21}$ by Remark~\ref{moreforfy11}. Denote $\zeta=(-1)^{n-1}\psi_{12}/\psi_{21}$,
Then one can write
\begin{equation}\label{binom}
\bar\beta_{11}\bar\beta_{nn}+\bar\beta_{1n}\bar\beta_{n-1,n}
=\frac{\bar\beta_{11}\bar\beta_{nn}}{h_{11}}
\left(h_{11}+(-1)^{n}\zeta\left( \sum_{i=1}^{n-1}\gamma_i\nu_{i,n-1}+\sum_{i=2}^{n-1}\bar\nu_{1i}\nu_{i-1,n-1}\right)\right).
\end{equation}

To treat the latter expression, we consider first the last column of $(\Delta\bar M\Delta^{-1})_{[2,n]}^{[1,n-1]}$. It is easy to see that the first $n-3$ entries
in this column equal zero  modulo $\psi_{11}$, whereas the $(n-2)$-nd entry equals $-\zeta$, and the last entry equals $1$. Consequently,~\eqref{forn}
implies that $\nu_{i,n-1}=(-1)^{n-i-1}\zeta^{n-i-1}\mod \psi_{11}$. Therefore, the first sum in the right hand side of~\eqref{binom} equals
\[
\sum_{i=1}^{n-1}\gamma_i(-1)^{n-i-1}\zeta^{n-i-1} \mod \psi_{11}.
 \]

By Lemma~\ref{Hessenberg},
 $-\bar\nu_{12},\dots,-\bar\nu_{1,n-1},0$ form the first row of the companion form for $(\Delta\bar M\Delta^{-1})_{[2,n]}^{[2,n]}$. Consequently,
\[
\begin{split}
\sum_{i=2}^{n-1}\bar\nu_{1i}\nu_{i-1,n-1}&=\sum_{i=2}^{n-1}(-1)^{n-i}\bar\nu_{1i}\zeta^{n-i} \mod \psi_{11}\\
&=(-1)^{n-1}\left(\det\left((\Delta\bar M\Delta^{-1})_{[2,n]}^{[2,n]}+\zeta\one_{n-1}\right)-\zeta^{n-1}\right) \mod \psi_{11}.
\end{split}
\]
Note that $\det((\Delta\bar M\Delta^{-1})_{[2,n]}^{[2,n]}+\zeta\one_{n-1})=0 \mod \psi_{11}$, since the last column of $(\Delta\bar M\Delta^{-1})_{[2,n]}^{[2,n]}$
is zero, and the second to last column equals $e_{n-1}-\zeta e_{n-2} \mod\psi_{11}$; therefore, the second sum in the right hand side of~\eqref{binom} 
equals $(-1)^{n}\zeta^{n-1} \mod \psi_{11}$. Combining this with the previous result and taking into account that $h_{11}=(-1)^{n-1}\gamma_n$,  we get
\[
\begin{split}
\bar\beta_{11}\bar\beta_{nn}&+\bar\beta_{1n}\bar\beta_{n-1,n}\\
&=\frac{\bar\beta_{11}\bar\beta_{nn}}{h_{11}}\left(h_{11}+(-1)^{n}\left(\sum_{i=1}^{n-1}\gamma_i(-1)^{n-i-1}\zeta^{n-i}+(-1)^{n}\zeta^{n}\right)\right)\mod \psi_{11}\\
&=\frac{\bar\beta_{11}\bar\beta_{nn}}{h_{11}}\left(\zeta^n+\sum_{i=1}^n\gamma_i(-1)^{i+1}\zeta^{n-i}\right) \mod \psi_{11}\\
&=\pm\frac{\psi_{n-1,1}}{\psi_{21}^{n-1}\psi_{11}}\det(\psi_{21}\bar M+(-1)^{n-1}\psi_{12}\one_n) \mod \psi_{11}.
\end{split}
\]
Recall that $\det(\psi_{21}\bar M+(-1)^{n-1}\psi_{12}\one_n)/\psi_{11}$ multiplied by an appropriate power of $\det X$ is the new variable that replaces $\psi_{11}$
in component III, 
and hence~\eqref{row1} defines the entries of the first row of $\check M$ as Laurent polynomials.

\subsubsection{Component IV} In this component we use the same two normal forms as in component I. Restoration of $B_+$  and of diagonal entries of $\bar B_+$ 
is exactly the same as before. Next,
we apply the second line of~\eqref{tildefy} and~\eqref{barbeta} to~\eqref{fyklex} (or~\eqref{fy1lex}) and
observe that the right hand side of the exchange relation for $\psi_{kl}$, $k>1$, 
can be written as
\[
\begin{split}
&\psi_{k+l-3,1}\psi_{k+l-2,1}\psi_{k+l-1,1}\\
&\times\left(h_{\alpha-1,\gamma+1}(\bar B_+)h_{\alpha,\gamma-1}(\bar B_+)h_{\alpha+1,\gamma}(\bar B_+)
+h_{\alpha-1,\gamma}(\bar B_+)h_{\alpha,\gamma+1}(\bar B_+)h_{\alpha+1,\gamma-1}(\bar B_+)\right),
\end{split}
\]
where $\alpha=n-k-l+2$, $\gamma=n-l+2$ (cf.~with~\eqref{fyviah}). Note that for $l=2$ we have $h_{\alpha-1,\gamma+1}(\bar B_+)=h_{\alpha,\gamma+1}(\bar B_+)=1$,
and hence the expression above can be rewritten as
\[
{\psi_{k-1,1}\psi_{k1}\psi_{k+1,1}}
\left(h_{\alpha,n-1}(\bar B_+)h_{\alpha+1,n}(\bar B_+)
+h_{\alpha-1,n}(\bar B_+)h_{\alpha+1,n-1}(\bar B_+)\right).
\]
We conclude that the map 
\[
(k,l)\mapsto \begin{cases} (\alpha,\gamma)\quad\text{for $l>1$},\\
                           (\alpha+1,\alpha+1)\quad\text{for $l=1$ }
\end{cases}
\] 
transforms exchange relations
for $\psi_{kl}$, $l>1$, $k+l<n$, to exchange relations for the standard cluster structure on triangular matrices of size $(n-2)\times(n-2)$, 
up to a monomial factor in variables that 
are fixed in component IV, see Fig.~\ref{Q6}. Consequently, the entries $\bar\beta_{ij}$, $2\le i <j\le n$, can be restored as Laurent polynomials in component IV via 
Remark~\ref{tribfz}. 

\begin{figure}[ht]
\begin{center}
\includegraphics[width=10cm]{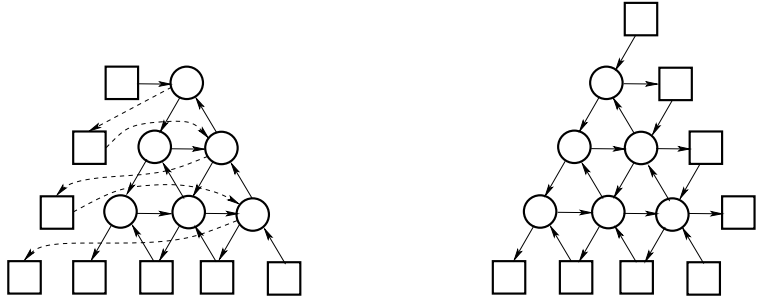}
\caption{Modification of the relevant part of $Q_6^\dag$ in component IV}
\label{Q6}
\end{center}
\end{figure}

The entries in the first row of $\bar B_+$ are restored exactly as in component I.

\subsubsection{Component V} In this component we once again use the same two normal forms as in component I. Restoration of $\bar B_+$ is exactly the same as before. 
To restore $B_+$, we note that the cluster structure in component V coincides with the standard cluster structure on triangular $n\times n$ matrices, and hence
the entries of $B_+$ can be restored as Laurent polynomials via Remark~\ref{tribfz}. 

\subsubsection{Component VI} The two normal forms used in this component are given by $U=B_+N_-=\widehat N_+\widehat N_-\widehat M S_{12}\widehat N_-^{-1}\widehat N_+^{-1}$, 
where $B_+$ is upper triangular, $N_-, \widehat N_-=(\hat\nu_{ij})$ are unipotent lower triangular with $\hat \nu_{j1}=0$ for $2\le j\le n$, 
 $\widehat N_+=\one_n+\hat\nu e_{12}$, and $\widehat M=(\hat \mu_{ij})$ satisfies conditions $\hat \mu_{1n}=0$ and $\hat \mu_{i,n+2-j}=0$ for
$2\le j<i\le n$, see Lemma~\ref{NMN}.  
Note that 
$h_{2j}(U)=h_{2j}(\widehat M)$ and $\psi_{kl}(U)=\psi_{kl}(\widehat M)$.  
Our goal is to restore $B_+$ and $\widehat M$.

Once this is done, the matrix $U$ itself is restored as follows. First, by the proof of Lemma~\ref{NMN}, 
$\hat \nu=\beta_{1n}/\hat \mu_{2n}$, which is a Laurent polynomial in 
component VI, since $\hat \mu_{2n}= h_{2n}$ is a Laurent monomial. Next, we write 
$B_+N_-\widehat N_+\widehat N_-=\widehat N_+\widehat N_-\widehat M$. Taking into account that 
 $\widehat N_+\widehat N_-=\widehat N_-\widehat N_+$, we arrive at
\begin{equation*}
\bar W_0' B_+\cdot N_-\widehat N_-=\bar W_0'\widehat N_- \bar W_0'\cdot\bar W_0' \widehat N_+\widehat M\widehat N_+^{-1}
\end{equation*}
with $\bar W_0'=\begin{pmatrix} 1 & 0\\ 0 & W_0'\end{pmatrix}$. Note that the second factor on the left is unipotent lower triangular, whereas the first factor
on the right is unipotent upper triangular. Taking the Gauss factorizations of the remaining two factors, we restore
$\bar W_0'\widehat N_- \bar W_0'=\left( \bar W_0' B_+\right)_{> 0}\left(\bar W_0' \widehat N_+\widehat M\widehat N_+^{-1}\right)^{-1}_{> 0}$. Note that trailing minors
needed for Gauss factorizations in the right hand side above equal $\det\widehat M_{[2,i]}^{[n-i+2,n]}= h_{2i}$, and hence are Laurent
monomials in component VI.

We describe the restoration process at $\widetilde\Sigma'''_n$, and indicate the changes that occur at its neighbors.
Restoration of $B_+$ is almost the same as before. The difference is that $h_{1n}$ and $h_{12}$, which coincide up to a sign with $\psi_{n-1,1}$ and $\psi_{1,n-1}$,
are no longer available (at $\widetilde\Sigma''_n$ only $h_{1n}$ is not available). However, since they are cluster variables in other clusters, say, in any cluster in component I, they both can be written as Laurent polynomials
at  $\widetilde\Sigma'''_n$. Moreover, they never enter denominators in expressions for $\beta_{ij}$, and hence $B_+$ is restored. At the neighbor of 
$\widetilde\Sigma'''_n$ in direction $\psi_{n-i,i}$ we apply the same reasoning to $h_{1,n-i+1}=\pm \psi_{n-i,i}$; at $\widetilde\Sigma''_n$ we apply it only to $h_{1n}$.

In order to restore $\widehat M$ we proceed as follows. First, we note that 
\[
h_{2j}=\pm\prod_{i=j}^n \hat \mu_{n+2-i,i}, 
\]
and hence $\hat \mu_{n+2-i,i}=\pm h_{2j}/h_{2,j+1}$ for $2\le j\le n$ with $h_{2,n+1}=1$. 
 Next, note that by~\eqref{reg6}, the function $\psi_{n-1,1}$ is replaced in component VI  by
$\psi'_{n-1,1}=\pm\det \widehat M_{[1,n-1]}^{1\cup[3,n]}$. Besides, it is easy to see that in all clusters in component VI except for $\widetilde\Sigma''_n$,
$\psi_{1,n-1}$ is replaced by $\psi_{1,n-1}'=\pm\det \widehat M_{[2,n]}^{1\cup[3,n]}$. 
Consequently, $h_{11}=\pm\left(\hat \mu_{n1}\psi_{1,n-1}-\hat \mu_{n2}\psi'_{n-1,1}\right)$
yields $\hat \mu_{n1}$ as a Laurent monomial at $\widetilde\Sigma''_n$, and $\psi_{1,n-1}'=\pm\hat\mu_{n1}h_{23}$ yields it as a Laurent monomial in all
other clusters in component VI. 

To proceed further, we introduce an $n \times n$ matrix $\widehat\Psi$ similar to the matrix $\Psi$ used in component II:
\[
\widehat\Psi=\left[\begin{array}{ccccc} e_2 & \widehat Me_2 & \widehat M^{2}e_2 & \dots & \widehat M^{n-1}e_2\end{array}\right].
\]

\begin{lemma}\label{psiminors}
The minors $\det\widehat\Psi_I^{[2,t]}$, $2\le t\le n-1$, are Laurent polynomials in component VI for any index set $I$ such that $2\notin I$, $|I|=t-1$.
\end{lemma}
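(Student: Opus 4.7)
The plan is to exploit the structural identity $\widehat M e_n=h_{2n}e_2$, which follows from the vanishing conditions $\hat\mu_{1n}=0$ and $\hat\mu_{in}=0$ for $i\ge 3$ built into the normal form of Lemma~\ref{NMN} (only $\hat\mu_{2n}=h_{2n}$ is nonzero in the last column of $\widehat M$). Iterating, $\widehat M^j e_n=h_{2n}\widehat M^{j-1}e_2$ for $j\ge 1$, so $\widehat\Psi^{[j]}=h_{2n}^{-1}\widehat M^j e_n$ for $j\ge 1$, and hence
\[
\det\widehat\Psi_I^{[2,t]} = h_{2n}^{-(t-1)}\det[\widehat M^2 e_n,\dots,\widehat M^t e_n]_I.
\]
Each target minor is thus, up to a power of the stable variable $h_{2n}$, a minor of the Krylov-tail matrix built from $e_n$---precisely the columns appearing in $\Phi_{k,1}(\widehat M)$.

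For the distinguished choice $I=\{1\}\cup[3,t]$ I would identify the minor with a $\psi$-function by expanding $\det\Phi_{n-t,1}(\widehat M)$ along its $n-t$ identity columns (picking rows $[t+1,n]$) and the sparse column $\widehat M e_n=h_{2n}e_2$ (picking row~2); this yields $\det\widehat\Psi_{\{1\}\cup[3,t]}^{[2,t]} = \pm h_{2n}^{-t}\psi_{n-t,1}(\widehat M)$. For $t\in[2,n-1]$ the index $n-t$ lies in $[1,n-2]$, so $\psi_{n-t,1}$ is not among the mutation directions $\psi_{n-1,1}$, $\psi_{1,n-1}$, $\psi_{n-i,i}$ ($2\le i\le n-2$) that characterise component~VI. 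Hence $\psi_{n-t,1}$ is a cluster variable in every seed of component~VI, making this principal minor a Laurent monomial. For an arbitrary $I$ with $2\notin I$, cofactor expansion along $\widehat\Psi^{[1]}=e_2$ gives $\det\widehat\Psi_I^{[2,t]}=\pm\det\widehat\Psi_{I\cup\{2\}}^{[1,t]}$, so the problem becomes one about column-dense minors of $\widehat\Psi^{[1,n-1]}$ involving the first column. Remark~\ref{tribfz} applied to the standard cluster structure on $GL_n^{w_0,e}$ then reduces it to showing that the dense minors $\det\widehat\Psi_{[s,s+t'-1]}^{[1,t']}$ involving the first column are Laurent in component~VI; since $\widehat\Psi^{[1]}=e_2$, the only nonvanishing ones have $s\in\{1,2\}$, and $s=1$ is handled by the principal identification.

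The main obstacle is the case $s=2$, namely showing that $\det\widehat\Psi_{[2,t'+1]}^{[1,t']}=\pm\det\widehat\Psi_{[3,t'+1]}^{[2,t']}$ is Laurent in component~VI. A direct expansion of $\psi_{n-t'-1,2}(\widehat M)$ along its sparse columns produces a two-term linear combination involving the unknown entries $\hat\mu_{1,n-1}$ and $\hat\mu_{3,n-1}$, since $\widehat M e_{n-1}$ is supported on rows $\{1,2,3\}$ and removing row~2 still leaves two nonzero entries---in contrast to the cleaner situation in component~II where the analogous column had a single nonzero entry. To resolve this, I would combine the identity with further $\psi_{k,l}$ identities obtained by varying $k$ and taking $l\ge 2$, together with the partial restoration of $\widehat M$ already carried out in component~VI---which expresses the anti-diagonal entries $\hat\mu_{n+2-i,i}=\pm h_{2j}/h_{2,j+1}$ and $\hat\mu_{n1}$ as Laurent monomials---and solve the resulting linear system for $\det\widehat\Psi_{[3,t'+1]}^{[2,t']}$ as a Laurent polynomial in the cluster variables.
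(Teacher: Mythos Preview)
Your base-case identification $\det\widehat\Psi_{\{1\}\cup[3,t]}^{[2,t]}=\pm h_{2n}^{-t}\psi_{n-t,1}$ matches the paper's equation~\eqref{solidpsi}, so that step is fine. The gap is in the reduction to dense minors via Remark~\ref{tribfz}. That remark says that a column-dense minor involving the first column is a \emph{Laurent} polynomial in the dense minors $\det X_{[s,s+t'-1]}^{[1,t']}$, with only the leading principal ones ($s=1$) guaranteed to enter polynomially. Two things go wrong when you specialize to $X=\widehat\Psi$. First, the dense minors with $s\ge 3$ vanish identically (since $\widehat\Psi^{[1]}=e_2$), so the generic Laurent expression can acquire $0$ in the denominator; the observation that ``the only nonvanishing ones have $s\in\{1,2\}$'' is precisely the obstruction, not a simplification. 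Second, even if you apply Remark~\ref{tribfz} to the $(n-1)\times(n-1)$ matrix $\widehat\Psi^{[2,n]}_{[1,n]\setminus 2}$ (where nothing vanishes a priori), the non-principal dense minors $\det\widehat\Psi_{[s+1,s+t']}^{[2,t'+1]}$ for $s\ge 2$ may sit in denominators, so you need them to be \emph{units} in component~VI, not merely Laurent polynomials. Your plan only proposes to show the $s=2$ minor is Laurent, and even that is left as a sketch.

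The paper avoids this entirely. It expands $\psi_{kl}(\widehat M)$ along the sparse columns of $\widehat M$ to obtain the identity
\[
\psi_{kl}=\pm\,h_{2n}^{\,n-k-l}\,h_{2,n-l+1}\sum_{j\in[1,l+1]\setminus 2}\eta_j\,\det\widehat\Psi_{\,j\cup[l+2,\,n-k]}^{[2,\,n-k-l+1]},
\qquad \eta_j=(-1)^{(n-j)(j-1)}\frac{\psi_{n-j+1,j-1}}{h_{2,n-j+2}},
\]
valid for all $l\ge 1$ (the case $l=1$ is exactly your base formula). The proof is then an induction on the maximal index $r$ appearing in $I$. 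For the step, one performs Gaussian column elimination on $\widehat\Psi^{[2,t]}_{[1,r]\setminus 2}$ using the already-known rows $[1,r-1]\setminus 2$; this isolates a single new unknown $z$ (the bottom-right corner after elimination) whose coefficient in the identity above, taken with $(k,l)=(n-r,\,r-t+1)$, is $\pm\psi_{n-r+1,\,r-t+1}\,h_{2n}$. Since $k+l=n-t+2\le n-1$ for $t\ge 3$, this $\psi$ is a cluster variable in every seed of component~VI, so $z$ is recovered and with it all $(t-1)$-minors with maximal row index $r$. Your final paragraph gestures toward these $l\ge 2$ identities, but only as a device to handle the single $s=2$ dense minor; the actual argument uses them to drive the full induction, bypassing Remark~\ref{tribfz} altogether.
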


\begin{proof}
An easy computation shows that for $k+l<n$ one has 
\[
\psi_{kl}=\pm h_{2n}^{n-k-l+1}\sum_{j\in [1,l+1]\setminus2}(-1)^{j+\chi_j}\det\widehat M_{[1,l+1]\setminus\{2, j\}}^{[n-l+1,n-1]} 
\det\widehat\Psi_{j\cup[l+2,n-k]}^{[2,n-k-l+1]},
\]
where $\chi_1=1$ and $\chi_j=0$ for $j\ne 1$. It follows from the discussion above that $\det\widehat M_{[3,l+1]}^{[n-l+1,n-1]}=(-1)^{l-1}h_{2,n-l+1}/h_{2n}$.
Besides, 
\begin{equation*}
\begin{split}
\det \widehat M_{[1,l+1]\setminus\{2, j\}}^{[n-l+1,n-1]}&=(-1)^{(j-1)(l-j+1)}\det\widehat M_{[1,j-1]\setminus2}^{[n-j+2,n-1]}\det\widehat M_{[j+1,l+1]}^{[n-l+1,n-j+1]}\\
&=(-1)^{l}\frac{h_{1,n-j+2}}{h_{2n}}\frac{h_{2,n-l+1}}{h_{2,n-j+2}},
\end{split}
\end{equation*}
and hence
\begin{equation}\label{psiviaPsi}
\psi_{kl}=\pm h_{2n}^{n-k-l}h_{2,n-l+1}\sum_{j\in [1,l+1]\setminus2}\eta_j\det\widehat\Psi_{j\cup[l+2,n-k]}^{[2,n-k-l+1]},
\end{equation}
where $\eta_j=(-1)^{(n-j)(j-1)}{\psi_{n-j+1,j-1}}/{h_{2,n-j+2}}$ with $\psi_{n0}=h_{2,n+1}=1$. 

Now we can prove the claim of the lemma by induction on the maximal index in $I$. The minimum value of this index is $t$. In this case we use~\eqref{psiviaPsi} 
to see that
\begin{equation}\label{solidpsi}
\det\widehat\Psi_{[1,t]\setminus2}^{[2,t]}=\pm\psi_{n-t,1}/h_{2n}^{t}
\end{equation}
 is a Laurent monomial in component VI. 
Assume that the value of the maximal index in $I$ equals $r>t$. We multiply the
$(r-1)\times(t-1)$ matrix $\widehat\Psi^{[2,t]}_{[1,r]\setminus2}$  
by a $(t-1)\times (t-1)$ block upper triangular matrix with unimodular blocks of size $t-2$ and $1$, so that
the upper $(t-2)\times(t-1)$ submatrix is diagonalized with $1$'s on the diagonal except for the first row. Clearly, this transformation does not change the 
values of any minors in the first $t-2$ and $t-1$ columns. Consequently, each matrix entry (except for the one in the lower right corner, which we denote $z$) is a Laurent polynomial in component VI. We then consider~\eqref{psiviaPsi} with $k=n-r$ and $l=r-t+1$ and expand each minor in the right hand side by the last row. 
Each entry in the last row
distinct from $z$ enters this expansion with a coefficient that is a Laurent polynomial in component VI. 
The entry $z$ enters the expansion with the coefficient
\[
\pm h_{2n}^{t-1}h_{2,n-r+t}\sum_{j\in [1,r-t+2]\setminus2}\eta_j\det\widehat\Psi^{[2,t-1]}_{j\cup[r-t+3,r-1]}=\pm\psi_{n-r+1,r-t+1}h_{2n}.
\]
Thus, $z$ is a Laurent monomial in component VI, and hence any minor of $\widehat\Psi^{[2,t]}_{[1,r]\setminus2}$ that involves the $r$-th row is a Laurent polynomial.
\end{proof}

We can now proceed with restoring the entries of $\widehat M$. Equation~\eqref{psiviaPsi} for $k=n-l-1$ gives 
\[
\psi_{n-l-1,l}=
\pm h_{2n}h_{2,n-l+1}\sum_{j\in [1,l+1]\setminus2}\eta_j\widehat\Psi_j^2=\pm h_{2n}^2h_{2,n-l+1}\sum_{j\in [1,l+1]\setminus2}\eta_j\hat\mu_{j2},
\]
 and we consecutively
restore $\hat\mu_{j2}$, $1\le j\le n-1$, $j\ne2$, as Laurent polynomials at all clusters in component VI 
except for the neighbor of $\widetilde\Sigma'''_n$ in direction $\psi_{n-j+1,j-1}$. An easy computation shows that in this cluster $\psi_{n-j+1,j-1}$
is replaced by $\psi_{n-j+1,j-1}'=\pm h_{2n}\det\widehat M_{[2,j]}^{2\cup[n-j+3,n]}=\pm h_{2n}h_{2,n-j+3}\hat\mu_{j2}$, and hence $\hat\mu_{j2}$
is restored there as a Laurent monomial. In particular, it follows from above that
\begin{equation}\label{mu12}
\hat\mu_{12}=\pm\frac{\psi_{n-2,1}}{h_{2n}^3}
\end{equation}
in any cluster in component VI.

Recall that we have already restored the last row of $\widehat M$. We restore the other rows consecutively,
starting from row $n-1$ and moving upwards. 
Matrix entries in the $l$-th row, $3\le l\le n-1$, 
are restored in two stages, together with the minors $\det \widehat M^{i\cup[n-l+3,n]}_{[1,l-1]}$ for $1\le i\le n-l+1$, $i\ne 2$. 
First, we recover minors $\det \widehat M^{2\cup i\cup[n-l+3,n]}_{[1,l]}$ for $1\le i\le n-l+1$, $i\ne 2$, as Laurent polynomials in component VI. 
Once they are recovered,
we find $\hat\mu_{li}$ and $\det \widehat M^{i\cup[n-l+3,n]}_{[1,l-1]}$ via expanding $\det \widehat M^{2\cup i\cup[n-l+3,n]}_{[1,l]}$ and 
$\det \widehat M^{i\cup[n-l+2,n]}_{[1,l]}$ by the last row. This gives a system of two linear equations for a fixed $i$, 
and its determinant equals $\pm\psi_{n-l,l-1}$. Note that minors $\det \widehat M^{i\cup[n-l+2,n]}_{[1,l]}$ for $1\le i\le n-l$, $i\ne 2$, 
were recovered together with the entries of the $(l+1)$-st row, and $\det \widehat M^{[n-l+1,n]}_{[1,l]}=\pm\psi_{n-l,l}$ 
(recall that $\psi_{n-l,l}$ are Laurent polynomials in component VI). 
For $l=3$, we have  $\det \widehat M^{i\cup n}_{[1,2]}=\hat\mu_{1i}h_{2n}$, 
and hence the entries of the first row are recovered together with the entries of the third row.

The minors $\det \widehat M^{2\cup i\cup[n-l+3,n]}_{[1,l]}$ for $1\le i\le n-l+1$, $i\ne 2$, are recovered together with all other minors 
$\det \widehat M^{i\cup j\cup[n-l+3,n]}_{[1,l]}$ for $1\le i<j\le n-l+1$, $i,j\ne 2$,  altogether $(n-l)(n-l+1)/2$ minors. 
We first note that the Binet--Cauchy formula gives
\begin{equation}\label{psiviabinet}
\psi_{kl}=\sum_{\substack{J\subseteq [1,n-l]\setminus2\\ |J|=n-k-l-1}}(-1)^{\chi_J}\det\widehat M^{2\cup J\cup[n-l+1,n]}_{[1,n-k]}\det\widehat\Psi^{[2,n-k-l]}_J
\end{equation}
for $k+l<n-1$, where $\chi_J=\sum_{j\in J}\chi_j$. Recall that by Lemma~\ref{psiminors}, $\det\widehat\Psi^{[2,n-k-l]}_J$ are Laurent
polynomials in component VI. Writing down $\psi_{k,l-2}$ for $1\le k\le n-l$ via the above formula, and expanding the minors $\det\widehat M^{2\cup J\cup[n-l+3,n]}_{[1,n-k]}$ for $J\not\ni n-l+2$ by the last $n-k-l$ rows, we get $n-l$ linear equations; note that the corresponding minors for $J\ni n-l+2$ have been already restored as Laurent
polynomials in component VI when we dealt with the previous rows. For $l=n-1$ we get a single equation 
\[
\psi_{1,n-3}=-\det\widehat M_{[1,n-1]}^{[1,n]\setminus3}\hat\mu_{12}+\det\widehat M_{[1,n-1]}^{[2,n]}\hat\mu_{32},
\]
and hence $\det\widehat M_{[1,n-1]}^{[1,n]\setminus3}$ is a Laurent polynomial in component VI, since $\hat\mu_{12}$ is a Laurent monomial by~\eqref{mu12}.

In what follows we assume that $3\le l\le n-2$. 
The remaining $(n-l)(n-l-1)/2$ linear equations are provided by short Pl\"ucker relations
\begin{equation*}
\begin{split}
\det \widehat M^{i\cup j\cup[n-l+3,n]}_{[1,l]}\det \widehat M^{2\cup[n-l+2,n]}_{[1,l]}&=
\det \widehat M^{2\cup j\cup[n-l+3,n]}_{[1,l]}\det \widehat M^{i\cup[n-l+2,n]}_{[1,l]}\\
&+(-1)^{\chi_i+1}\det \widehat M^{2\cup i\cup[n-l+3,n]}_{[1,l]}\det \widehat M^{j\cup[n-l+2,n]}_{[1,l]},
\end{split}
\end{equation*}
where the second factor in each term is a Laurent polynomial in component VI, and, moreover, $\det \widehat M^{2\cup[n-l+2,n]}_{[1,l]}=\pm \psi_{n-l-1,l}/h_{2n}$.
We can arrange the variables in such a way that the matrix of the linear system takes the form $A=\begin{pmatrix} A_1 & A_2\\ A_3 & A_4 \end{pmatrix}$, where
$A_4$ is an $(n-l)(n-l-1)/2\times (n-l)(n-l-1)/2$ diagonal matrix with $\det \widehat M^{2\cup[n-l+2,n]}_{[1,l]}$ on the diagonal. 
The column $(i,j)$ of $A_2$ corresponding
to the variable $\det \widehat M^{i\cup j\cup[n-l+3,n]}_{[1,l]}$, $i,j\ne 2$, contains 
\begin{equation}\label{a2element}
\sum_{\substack{I\subseteq [1,n-l+1]\setminus\{2,i,j\}\\ |I|=t-1}}(-1)^{\theta(i,j,I)}\det\widehat M^{2\cup I}_{[l+1,l+t]}\det\widehat\Psi_{i\cup j\cup I}^{[2, t+2]}
\end{equation}
in row $n-l-t$ for $1\le t\le n-l-1$, where $\theta(i,j,I)=\chi_{i\cup I}+\#\{p\in 2\cup I: i<p<j\}$, and zero in row $n-l$. 
Similarly, the  column $(2,i)$ of $A_1$ corresponding to the variable $\det \widehat M^{2\cup i\cup[n-l+3,n]}_{[1,l]}$, $i\ne 2$, contains 
\[
\sum_{\substack{J\subseteq [1,n-l+1]\setminus\{2,i\}\\ |J|=t}}(-1)^{\theta(i,J)}\det\widehat M^{ J}_{[l+1,l+t]}\det\widehat\Psi_{i\cup J}^{[2, t+2]}
\]
in row $n-l-t$ for $1\le t\le n-l-1$, where 
\[
\theta(i,J)=\begin{cases} 1 \quad\text{for $i=1$},\\ 
                          \chi_J+\#\{p\in J: 2<p<i\} \quad\text{for $i\ne1$}, 
            \end{cases}
\]						
and $(-1)^{\chi_i}\hat\mu_{i2}$ in row $n-l$. 

To find $\det A$, we multiply $A$ by a square lower triangular matrix whose column $(2,i)$ contains $\hat\mu_{l+1,2}$ in row $(2,i)$, $\hat\mu_{l+1,j}$
in row $(j,i)$, $1\le j\le i-1$, $j\ne 2$, and $(-1)^{\chi_i+1}\hat\mu_{l+1,j}$ in row $(i,j)$, $i+1\le j\le n-l+1$, $j\ne 2$; column $(i,j)$ of this matrix
contains a single $1$ on the main diagonal, and all its other entries are equal to zero.
Let $A'=\begin{pmatrix} A'_1 & A_2\\ A'_3 & A_4 \end{pmatrix}$ be the obtained product. Clearly,
$\det A=\hat\mu_{l+1,2}^{l-n}\det A'$. We claim that $(A'_1)_{[1,n-l-1]}$ is a zero matrix, and $(A'_1)_{[n-l]}=\hat\mu_{l+1,2}(A_1)_{[n-l]}$.

The second claim follows immediately from the fact that $(A_2)_{[n-l]}$ is a zero vector. To prove the first one, we fix  arbitrary $i$, $t$, and $J=\{j_1<j_2<\cdots<j_t\}$,
and find the coefficient at $\det\widehat\Psi_{i\cup J}^{[2, t+2]}$ in the entry of $A_1'$ in row $n-l-t$ and column $(2,i)$. For $i=1$ this coefficient equals
\[
(-1)^{\theta(1,J)}\hat\mu_{l+1,2}\det\widehat M^J_{[l+1,l+t]}+\sum_{r=1}^t(-1)^{2+\theta(1,j_r,J\setminus j_r)}
\hat\mu_{l+1,j_r}\det\widehat M^{2\cup J\setminus j_r}_{[l+1,l+t]}.
\]
Taking into account that $\theta(1,J)=1$ and $\theta(1,j_r,J\setminus j_r)=2+\#\{p\in J:2<p<j_r\}=1+r$, we conclude that the signs in the above expression alternate.
For $i\ne 1$, the coefficient in question equals
\begin{equation*}
\begin{split}
 (-1)^{\theta(i,J)}\hat\mu_{l+1,2}\det\widehat M^J_{[l+1,l+t]}
&+\sum_{j_r<i}(-1)^{\theta(j_r,i,J\setminus j_r)}\hat\mu_{l+1,j_r}\det\widehat M^{2\cup J\setminus j_r}_{[l+1,l+t]}\\
&+\sum_{j_r>i}(-1)^{1+\theta(i,j_r,J\setminus j_r)}\hat\mu_{l+1,j_r}\det\widehat M^{2\cup J\setminus j_r}_{[l+1,l+t]}.
\end{split}
\end{equation*}
If $j_1=1$, then $\theta(i,J)=r^*$, where $r^*=r^*(i,J)=\max\{r: j_r<i\}$. Further, $\theta(1,i,J\setminus 1)=1+r^*$, $\theta(j_r,i,J\setminus j_r)=1+r^*-r$ for $1<j_r<i$
and $\theta(i,j_r,J\setminus j_r)=2+r-r^*$ for $i<j_r$, hence the signs  in the above expression taken in the order $1, 2, j_2,\dots, j_t$ alternate once again.
Finally, if $j_1>1$, then $\theta(i,J)=r^*$, $\theta(j_r,i,J\setminus j_r)=r^*-r$ for $j_r<i$ and $\theta(i,j_r,J\setminus j_r)=1+r-r^*$ for $i<j_r$, and again the signs
taken in the corresponding order alternate. Therefore, in all three cases the coefficient at $\det\widehat\Psi_{i\cup J}^{[2, t+2]}$ equals 
$\det\widehat M^{2\cup J}_{{l+1}\cup[l+1,l+t]}$, and hence 
vanishes as the determinant of a matrix with two identical rows.

The entry of $A_3'$ in column $(2,i)$ and row
$(i,j)$  is 
\begin{equation}\label{a3elementi}
\begin{split}
(-1)^{\chi_i+1}\hat\mu_{l+1,j}\det \widehat M^{2\cup[n-l+2,n]}_{[1l]}&+(-1)^{\chi_i}\hat\mu_{l+1,2}\det \widehat M^{j\cup[n-l+2,n]}_{[1l]}\\
&=(-1)^{\chi_i}\widehat M^{2\cup j\cup[n-l+2,n]}_{[1,l+1]},
\end{split}
\end{equation}
and the entry in column $(2,j)$ and row $(i,j)$ is
\begin{equation}\label{a3elementj}
\hat\mu_{l+1,i}\det \widehat M^{2\cup[n-l+2,n]}_{[1l]}-\hat\mu_{l+1,2}\det \widehat M^{i\cup[n-l+2,n]}_{[1l]}=
(-1)^{\chi_i+1}\widehat M^{2\cup i\cup[n-l+2,n]}_{[1,l+1]}.
\end{equation} 

By the Schur determinant lemma,
\[
\det A'=\det A_4\cdot\det\left(A_1'-A_2A_4^{-1}A_3'\right)=\pm \hat\mu_{l+1,2}\left(\frac{\psi_{n-l-1,l}}{h_{2n}}\right)^{(n-l)(n-l-3)/2+1}\det A'',
\]
where $A''$ is the $(n-l)\times (n-l)$ matrix satisfying $A''_{[1,n-l-1]}=(A_2A_3')_{[1,n-l-1]}$ and $A''_{[n-l]}=(A_1')_{[n-l]}$. 
Using~\eqref{a2element},~\eqref{a3elementi}, and~\eqref{a3elementj}, we compute
the entry $A''_{n-l-t,i}$ for $1\le t\le n-l-1$, $1\le i\le n-l+1$, $i\ne 2$, as
\begin{equation*}
\begin{split}
&\sum_{j<i}\sum_{\substack{I\subseteq [1,n-l+1]\setminus\{2,i,j\}\\ |I|=t-1}}(-1)^{\theta(j,i,I)+\chi_j+1}\det\widehat M^{2\cup I}_{[l+1,l+t]}
\det\widehat\Psi_{i\cup j\cup I}^{[2, t+2]}\det\widehat M_{[1,l+1]}^{2\cup j\cup [n-l+2,n]}\\
&+\sum_{j>i}\sum_{\substack{I\subseteq [1,n-l+1]\setminus\{2,i,j\}\\ |I|=t-1}}(-1)^{\theta(i,j,I)+\chi_i}\det\widehat M^{2\cup I}_{[l+1,l+t]}
\det\widehat\Psi_{i\cup j\cup I}^{[2, t+2]}\det\widehat M_{[1,l+1]}^{2\cup j\cup [n-l+2,n]}\\
&=\sum_{\substack{J\subseteq [1,n-l+1]\setminus\{2,i\}\\ |J|=t}}\det\widehat\Psi_{i\cup J}^{[2, t+2]}\\
&\qquad\times\left(\sum_{j_r<i}
(-1)^{\theta(j_r,i,J\setminus j_r)+\chi_{j_r}+1}\det\widehat M^{2\cup J\setminus j_r}_{[l+1,l+t]}\det\widehat M_{[1,l+1]}^{2\cup j_r\cup [n-l+2,n]}\right.\\
&\qquad\qquad+\left.\sum_{j_r>i} (-1)^{\theta(i,j_r,J\setminus j_r)+\chi_i+1}\det\widehat M^{2\cup J\setminus j_r}_{[l+1,l+t]}\det\widehat M_{[1,l+1]}^{2\cup j_r\cup [n-l+2,n]}
\right).
\end{split}
\end{equation*}
Analyzing the same three cases as in the computation of the entries of $A_1'$, we conclude that the second factor in the above expression can be rewritten as
\[
(-1)^{\chi_J+\chi_i+r^*+1}\sum_{r=1}^t(-1)^{r+1}\det\widehat M^{2\cup J\setminus j_r}_{[l+1,l+t]}\det\widehat M_{[1,l+1]}^{2\cup j_r\cup [n-l+2,n]}.
\]
To evaluate the latter sum, we multiply $\widehat M_{[1,l+t]}^{2\cup J\cup [n-l+2,n]}$ by a matrix $\begin{pmatrix} \one_{l+1} & 0\\ 0 & N \end{pmatrix}$, where
$N$ is unipotent lower triangular, so that the entries of column~2 in rows $l+2,\dots,l+t$ vanish. This operation preserves the minors involved in the above sum, and hence
the latter is equal to
\[
\sum_{r=1}^t(-1)^{r+1+\chi_J}\hat\mu_{l+1,2}\det\widehat M^{J\setminus j_r}_{[l+2,l+t]}\det\widehat M_{[1,l+1]}^{2\cup j_r\cup [n-l+2,n]}=
\hat\mu_{l+1,2}\det\widehat M_{[1,l+t]}^{2\cup J\cup [n-l+2,n]}.
\]
Thus, finally, 
\[
A''_{n-l-t,i}=\hat\mu_{l+1,2}\sum_{\substack{J\subseteq [1,n-l+1]\setminus\{2,i\}\\ |J|=t}}(-1)^{\chi_J+\chi_i+r^*+1}
\det\widehat\Psi_{i\cup J}^{[2, t+2]}\det\widehat M_{[1,l+t]}^{2\cup J\cup [n-l+2,n]}.
\]

To evaluate $\det A''$, we multiply $A''$ by the $(n-l)\times (n-l)$ upper triangular matrix having $(-1)^i\det\widehat\Psi_{[1,k]\setminus\{2,i\}}^{[2,k-1]}$ in row $i$ and column $k$, $i,k\in [1,n-l+1]\setminus 2$, $i\le k$; we assume that for $i=k=1$ this expression equals~1. Denote the obtained product by $A'''$ and note
that $\det A'''=\pm\det A''\prod_{k=3}^{n-l+1}\det\widehat\Psi^{[2,k-1]}_{[1,k-1]\setminus 2}$.
The entry $A'''_{n-l-t,k}$ equals
\begin{equation*}
\begin{split}
&\hat\mu_{l+1,2}\sum_{i\in [1,k]\setminus2}(-1)^i\det\widehat\Psi_{[1,k]\setminus\{2,i\}}^{[2,k-1]}\\
&\qquad\qquad\times\sum_{\substack{J\subseteq [1,n-l+1]\setminus\{2,i\}\\ |J|=t}}(-1)^{\chi_J+\chi_i+r^*+1}
\det\widehat\Psi_{i\cup J}^{[2, t+2]}\det\widehat M_{[1,l+t]}^{2\cup J\cup [n-l+2,n]}\\
&=\hat\mu_{l+1,2}\sum_{\substack{J\subseteq [1,n-l+1]\setminus2\\ |J|=t}}\det\widehat M_{[1,l+t]}^{2\cup J\cup [n-l+2,n]}\\
&\qquad\qquad\times\sum_{i\in [1,k]\setminus2}
(-1)^{\chi_J+\chi_i+i+r^*+1}\det\widehat\Psi_{[1,k]\setminus\{2,i\}}^{[2,k-1]}\det\widehat\Psi_{i\cup J}^{[2, t+2]};
\end{split}
\end{equation*}
the equality follows from the fact that $\det\widehat\Psi_{i\cup J}^{[2, t+2]}=0$ for $i\in J$. Expanding the latter determinant by the $i$-th row, we see that the entry 
in question equals
\begin{equation*}
\begin{split}
\hat\mu_{l+1,2}\sum_{\substack{J\subseteq [1,n-l+1]\setminus2\\ |J|=t}}&(-1)^{\chi_J+1}\det\widehat M_{[1,l+t]}^{2\cup J\cup [n-l+2,n]}\\
&\times\sum_{s=2}^{t+2}(-1)^s\det\widehat\Psi_{J}^{[2, t+2]\setminus s}\sum_{i\in [1,k]\setminus2}(-1)^{\chi_i+i}\hat\mu_{is}
\det\widehat\Psi_{[1,k]\setminus\{2,i\}}^{[2,k-1]}.
\end{split}
\end{equation*}
Note that the inner sum above equals 
\[
\det\widehat \Psi_{[1,k]\setminus 2}^{s\cup [2,k-1]}=
\begin{cases} 
0 , \qquad\qquad\quad 2\le s \le k-1,\\
\det\widehat \Psi_{[1,k]\setminus 2}^{[2,k]}, \quad s=k.
\end{cases}
\]
We conclude that $A'''_{n-l-t,k}=0$ if $t+2<k$, and hence $\det A'''$ equals the product of the entries on the main antidiagonal. The latter correspond to
$t+2=k$ and are equal to
\[
 (-1)^{t+1}\hat\mu_{l+1,2}\det\widehat\Psi_{[1,t+2]\setminus2}^{[2,t+2]}\sum_{\substack{J\subseteq [1,n-l+1]\setminus2\\ |J|=t}}(-1)^{\chi_J}
\det\widehat M_{[1,l+t]}^{2\cup J\cup [n-l+2,n]}\det\widehat\Psi_J^{[2,t+1]},
\]
which by~\eqref{psiviabinet} coincides with $\pm\hat\mu_{l+1,2}\det\widehat\Psi_{[1,t+2]\setminus2}^{[2,t+2]}\psi_{n-l-t,l-1}$. Therefore,
\[
\det A'''=\pm\hat\mu_{l+1,2}^{n-l-1}\prod_{t=1}^{n-l-1}\psi_{n-l-t,l-1}\prod_{t=0}^{n-l-1}\det\widehat\Psi_{[1,t+2]\setminus 2}^{[2,t+2]},
\]
and hence, taking into account~\eqref{solidpsi} for $t=n-l+1$, we get
\[
\det A=\pm \frac{\psi_{n-l-1,l}^{(n-l)(n-l-3)/2+1}\psi_{l-1,1}}{h_{2n}^{(n-l)(n-l-1)/2+2}}\prod_{t=1}^{n-l-1}\psi_{n-l-t,l-1}.
\]
So, all minors $\det \widehat M^{2\cup i\cup[n-l+3,n]}_{[1,l]}$ for $1\le i\le n-l+1$, $i\ne 2$ are Laurent polynomials in component VI, and hence so are all 
entries in the rows $1,3,4,\dots,n$ of $\widetilde M$. 
The entries of the second row
are restored via Lemma~\ref{row1viac} as polynomials in the entries of other rows and variables $c_i$ divided by the $\det K=\det\widehat\Psi=
\pm h_{11}\psi_{11}/h_{2n}^n$.

\begin{remark}\label{ultinohii}
Once we have established that stable variables $h_{ii}$ do not enter denominators of Laurent expressions for entries of $U$
at $\widetilde\Sigma'_n$ (see Remark~\ref{nohii}), this remains valid for any other cluster in $\N_0$. This is guaranteed by the fact that  entries of
$U$ are Laurent polynomials in any cluster in $\N_0$ and the 
condition that exchange polynomials are not divisible by any of the stable variables.
\end{remark}

\section{Auxiliary results from matrix theory}\label{aux}

\subsection{Proof of Proposition~\ref{polyrel}}
\label{long}

This proposition is an easy corollary of the following more general result.

\begin{proposition}
\label{long_identity}
Let $A$ be a complex $n\times n$ matrix. For  $u,v\in \C^n$, define matrices
\begin{gather*}\nonumber
 K(u)=\left[ u \; A u \; A^2 u \dots  A^{n-1} u\right],\\ 
 K_1(u,v)=\left[ v \; u  \; A u \dots A^{n-2} u\right],\;\; 
 K_2(u,v)=\left[A v \; u  \;A u  \dots  A^{n-2} u\right].
\end{gather*}
In addition, let $w$ be the last row of the classical adjoint of $ K_1(u,v)$, i.e.
$w  K_1(u,v) = \left (\det  K_1(u,v) \right )e_n^T$. Define
$ K^*(u,v)$ to be the matrix with rows $w, w A, \ldots, w A^{n-1}$. Then
\begin{equation*}
\label{longid}
\det\Big(\det K_1(u,v) A -  \det K_2(u,v) \one \Big ) = (-1)^{\frac{n(n-1)}{2}} \det K(u) \det K^*(u,v).
\end{equation*}
\end{proposition}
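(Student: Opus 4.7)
The plan is to view the asserted equality as a polynomial identity in the entries of $A$, $u$, $v$ and, by Zariski density, to reduce to the open set on which $A$ has $n$ distinct eigenvalues $\lambda_1,\dots,\lambda_n$, $u$ is cyclic for $A$, and $\alpha=\det K_1(u,v)\ne 0$. On this set I would diagonalize $A=V\Lambda V^{-1}$ and pass to eigenbasis coordinates $\hat u=V^{-1}u$, $\hat v=V^{-1}v$, $\check w=wV$, reducing everything to identities among Vandermonde determinants.

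The first step will be to rewrite the right-hand side. The factorizations $K(u)=V\cdot\diag(\hat u)\cdot W$ and $K^*(u,v)=W^T\cdot\diag(\check w)\cdot V^{-1}$, where $W=(\lambda_i^{j-1})_{i,j=1}^n$ is the Vandermonde matrix, will yield
\[
\det K(u)\cdot\det K^*(u,v)=V_d^{\,2}\prod_{i=1}^n\hat u_i\check w_i,\qquad V_d:=\prod_{i<j}(\lambda_j-\lambda_i).
\]
Setting $t_i:=\hat u_i\check w_i$ and invoking the elementary identity $\prod_i P_i'=(-1)^{n(n-1)/2}V_d^{\,2}$ for $P_i':=\prod_{j\ne i}(\lambda_i-\lambda_j)$, the target collapses to the statement
\[
\prod_{i=1}^n(\alpha\lambda_i-\beta)=\Bigl(\prod_i t_i\Bigr)\Bigl(\prod_i P_i'\Bigr).
\]

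Next, I will translate the defining relation $wK_1(u,v)=\alpha e_n^T$ into the eigenbasis as the system
\[
\sum_i\lambda_i^k t_i=\alpha\,\delta_{k,n-2}\ (0\le k\le n-2)\quad\text{together with}\quad \sum_i\check w_i\hat v_i=0.
\]
The classical partial-fraction identities $\sum_i\lambda_i^k/P_i'=\delta_{k,n-1}$ for $0\le k\le n-1$ show that the first $n-1$ equations have general solution $t_i=(\mu+\alpha\lambda_i)/P_i'$ for a single free parameter $\mu$. Substituting into the previous display, $(\prod_i t_i)\cdot(\prod_i P_i')=\prod_i(\mu+\alpha\lambda_i)$, so the whole identity reduces to proving $\mu=-\beta$.

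The main obstacle is precisely this last identification. I will prove it by Laplace-expanding $\alpha=\det K_1$ and $\beta=\det K_2$ along their first columns (which become $\hat v$, respectively $\Lambda\hat v$, in the eigenbasis) and recognizing the resulting $(n-1)\times(n-1)$ minors as Vandermonde determinants on $n-1$ of the $\lambda_j$'s; this produces the parallel formulas
\[
\alpha=(-1)^{n+1}\det V\cdot V_d\cdot\prod_r\hat u_r\sum_i\frac{\hat v_i}{\hat u_i P_i'},\quad
\beta=(-1)^{n+1}\det V\cdot V_d\cdot\prod_r\hat u_r\sum_i\frac{\lambda_i\hat v_i}{\hat u_i P_i'}.
\]
The constraint $\sum_i\check w_i\hat v_i=0$, rewritten using $\check w_i=(\mu+\alpha\lambda_i)/(\hat u_i P_i')$, becomes $\mu\sum_i\hat v_i/(\hat u_i P_i')+\alpha\sum_i\lambda_i\hat v_i/(\hat u_i P_i')=0$; substituting the two displayed formulas and cancelling the common prefactor reduces it to $\alpha(\mu+\beta)=0$, which yields $\mu=-\beta$ under the genericity hypothesis $\alpha\ne 0$ and completes the proof.
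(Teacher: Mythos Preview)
Your proof is correct and follows the same opening move as the paper---reduce by Zariski density to the case of $A$ with distinct eigenvalues and pass to the eigenbasis---but the two arguments diverge in how they handle the core computation.

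The paper works factor by factor on the left-hand side: it observes that
\[
a_j\det K_1-\det K_2=\det\bigl[(a_j\one-A)v\;\; u\;\;Au\;\;\cdots\;\;A^{n-2}u\bigr],
\]
expands this determinant along its first column (where the $j$th entry vanishes), and recognizes the resulting sum of $(n-2)$-minors of the Krylov block as $(-1)^{\cdots}u_j P_j'\cdot w_j$, with $w_j$ identified via the cofactor expansion of $K_1$ along its last row. Multiplying over $j$ gives the identity at once.

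Your route is more indirect but perhaps more structural: rather than compute each factor $\alpha\lambda_j-\beta$ separately, you characterize the whole vector $(t_i)=(\hat u_i\check w_i)$ from the moment conditions $\sum_i\lambda_i^k t_i=\alpha\delta_{k,n-2}$, find the one-parameter family $t_i=(\mu+\alpha\lambda_i)/P_i'$, and then pin down $\mu=-\beta$ using the remaining constraint $wv=0$ together with the Laplace expansions of $\alpha$ and $\beta$. This has the pleasant feature that the identity $\prod_i(\alpha\lambda_i-\beta)=(\prod t_i)(\prod P_i')$ falls out immediately once the parametrization is known, whereas the paper must track signs across two separate cofactor expansions. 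On the other hand, the paper's key trick of combining $a_jK_1$ and $K_2$ into a single determinant with first column $(a_j\one-A)v$ is a shortcut you don't use: it makes the link between the $j$th eigenvalue factor and $w_j$ visible in one line and avoids the detour through the partial-fraction inversion.
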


\begin{proof} It suffices to prove the identity for generic $A, u$ and $v$. In particular, we can assume that $A$ has distinct eigenvalues.
Then, after a change of basis, we can reduce the proof to the case of a diagonal $A=\diag(a_1,\ldots, a_n)$ and vectors $u=(u_i)_{i=1}^n$ and
$v=(v_i)_{i=1}^n$ with all entries non-zero.  In this case,
$$
\det  K(u) = \Van A \prod_{j=1}^n u_j, \qquad \det  K^*(u,v) = \Van A \prod_{j=1}^n w_j,
$$
where $\Van A$ is the $n\times n$ 
Vandermonde determinant based on $a_1,\ldots, a_n$ and $w_j$ is the $j$th component of the row vector $w$.
The the left-hand side of the identity can be rewritten as
$$
\prod_{j=1}^n \left (a_j \det  K_1(u,v) - \det  K_2(u,v) \right ) =
\prod_{j=1}^n \det \left[ (a_j\one - A)v\  u \ Au \ldots\ A^{n-2} u  \right ].
$$

We compute the $j$th factor in the product above as
\begin{equation}\label{Pj}
\begin{split}
P_j&=\sum_{\alpha\ne j} (-1)^{\alpha + 1} (a_j - a_\alpha) {v_\alpha} \Van A_{\{\alpha\}}
\prod_{i\ne\alpha} u_i \\
&=\left(u_j \prod_{\beta\ne j} (a_j -a_\beta)\right) \sum_{\alpha\ne j} (-1)^{\alpha + 1} 
(-1)^{n-j -\theta(\alpha - j)}  {v_\alpha}\Van A_{\{\alpha,j\}}\prod_{i\ne\alpha,j} u_i,
\end{split}
\end{equation}
where  $\Van A_I$ is the $(n-|I|)\times (n-|I|)$ 
Vandermonde determinant based on $a_i$, $i\notin I$, and $\theta(\alpha - j)$ is 1 if $\alpha > j$ and $0$ otherwise.

Note that
$$
\prod_{j=1}^n  u_j \prod_{\beta\ne j} (a_j -a_\beta) = (-1)^{\frac{n(n-1)}{2}}(\Van A)^2\prod_{j=1}^n u_j = (-1)^{\frac{n(n-1)}{2}}\Van A \det  K(u),
$$
and that the minor of $ K(u)$ obtained by deleting the last two columns and rows $\alpha$ and $j$ is
$$
\det  K(u)^{[1,n-2]}_{[1,n]\setminus\{\alpha,j\}} =  \Van A_{\{\alpha,j\}}\prod_{i\ne\alpha,j} u_i.
$$
Therefore
\begin{equation*}
\begin{aligned}
w_j =& (-1)^{n+j} \det  K_1(u,v)^{[1,n-1]}_{[1,n]\setminus\{j\}}\\ 
=&(-1)^{n+j} \sum_{\alpha\ne j} 
(-1)^{ \alpha + 1-\theta(\alpha - j)}v_\alpha \det  K(u)^{[1,n-2]}_{[1,n]\setminus\{\alpha,j\}}.
\end{aligned}
\end{equation*}

Comparing with~\eqref{Pj}, we obtain 
\begin{align*}
P_1&\cdots P_n \\
&= (-1)^{\frac{n(n-1)}{2}} \det  K(u) \Van A \prod_{j=1}^n \sum_{\alpha\ne j} 
(-1)^{n + \alpha - j + 1-\theta(\alpha - j)}v_\alpha \det  K(u)^{[1,n-2]}_{[1,n]\setminus\{\alpha,j\}}\\
& = (-1)^{\frac{n(n-1)}{2}} \det  K(u) \Van A \prod_{j=1}^n w_j = (-1)^{\frac{n(n-1)}{2}} \det  K(u) \det  K^*(u,v),
\end{align*}
as needed.
\end{proof}

Specializing Proposition~\ref{long_identity} to the case $A=X^{-1} Y$, $u=e_n$, $v= e_{n-1}$, one obtains Proposition~\ref{polyrel}.

\subsection{Normal forms}

In this section we derive several normal forms that were used in the main body of the paper.

\begin{lemma} 
\label{BC}
For a generic $n\times n$ matrix $U$ there exist a unique unipotent lower triangular matrix $N_-$ and an upper triangular matrix $B_+$ such that
\begin{equation*}
U = N_- B_+ C N_-^{-1},
\end{equation*}
where $C=e_{21} + \cdots + e_{n, n-1} + e_{1n}$ is the cyclic permutation matrix.
\end{lemma}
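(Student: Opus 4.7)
The plan is to rewrite the conjugation relation $U = N_- B_+ C N_-^{-1}$ as $U N_- C^{-1} = N_- B_+$ and to determine $N_-$ and $B_+$ column by column by a Krylov-type recursion. Denote by $n_j$ the $j$th column of $N_-$; since $N_-$ is unipotent lower triangular, $n_n = e_n$ and, more generally, $(n_j)_i = 0$ for $i < j$ while $(n_j)_j = 1$. A direct computation using $C^{-1} e_1 = e_n$ and $C^{-1} e_k = e_{k-1}$ for $k>1$ shows that $N_- C^{-1}$ has columns $(n_n, n_1, \dots, n_{n-1})$, while the $j$th column of $N_- B_+$ equals $\sum_{i=1}^j b_{ij} n_i$. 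Equating columns on both sides yields the recursive system
\begin{equation*}
U e_n = b_{11} n_1, \qquad U n_{j-1} = \sum_{i=1}^{j} b_{ij}\, n_i, \quad j = 2, \dots, n.
\end{equation*}

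I would solve this system inductively in $j$, simultaneously establishing existence and uniqueness. At $j=1$, matching first components gives $b_{11} = U_{1n}$ and then $n_1 = (Ue_n)/U_{1n}$, whose top entry is automatically $1$ as required. For $2 \le j \le n-1$, comparing the top $j$ components of the $j$th equation yields a unit lower triangular system in the unknowns $b_{1j}, \dots, b_{jj}$ (because $(n_k)_i = 0$ for $i < k$ and $(n_k)_k = 1$) that is uniquely solvable; the equations in positions $i > j$ then determine the remaining entries $(n_j)_i$ uniquely, provided the pivot $b_{jj}$ is nonzero. For $j = n$, the triangular system in $b_{1n}, \dots, b_{nn}$ closes the iteration with no further constraint, since $n_n = e_n$ is already fixed.

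The only step needing a genericity argument is the non-vanishing of the pivots $b_{11}, \dots, b_{n-1,n-1}$. A straightforward induction gives $\Span(n_1, \dots, n_j) = \Span(Ue_n, U^2 e_n, \dots, U^j e_n)$, from which one deduces that $\prod_{i=1}^{j} b_{ii}$ coincides up to sign with the leading principal $j\times j$ minor of the Krylov matrix $[Ue_n, U^2 e_n, \dots, U^n e_n]$. Each such minor is a polynomial in the matrix entries of $U$ that is not identically zero (take $U=C$, for instance), so the locus where all pivots are nonzero is a nonempty Zariski open subset of $GL_n$. The main, essentially clerical, obstacle in the proof is simply the bookkeeping for the recursion---verifying that the computed $n_j$ has the prescribed triangular zero pattern and that each $b_{ij}$ is uniquely determined---both of which follow immediately from the triangular structure of the system above, yielding existence and uniqueness in one stroke.
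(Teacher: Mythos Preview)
Your proof is correct and takes a genuinely different route from the paper. The paper argues existence by a two-step conjugation: first a unipotent lower triangular $N_1$ kills the entries below $u_{1n}$ in the last column of $U$, then a second unipotent lower triangular $N_2$ (with trivial first column) brings the result to upper Hessenberg form $B_+C$; uniqueness is then proved separately by comparing columns in $N B_1 C = B_2 C N$. Your approach instead unrolls $U N_- C^{-1} = N_- B_+$ into a Krylov-type recursion on the columns $n_j$ of $N_-$, obtaining existence and uniqueness in a single pass and making the genericity condition (non-vanishing of the leading principal minors of $[Ue_n,\,U^2 e_n,\,\dots,\,U^n e_n]$) completely explicit. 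Both arguments are short; yours has the advantage of naming the open locus on which the normal form exists, while the paper's is slightly more conceptual in separating ``clear the last column'' from ``Hessenbergize the rest.'' One small correction: the leading principal $j\times j$ minor of the Krylov matrix equals $\prod_{i=1}^{j} b_{ii}^{\,j-i+1}$, not $\prod_{i=1}^{j} b_{ii}$ up to sign; this does not affect your conclusion, since either expression is nonzero precisely when all pivots $b_{11},\dots,b_{jj}$ are.
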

\begin{proof} 
Let $N_1= \one - \sum_{i=2}^n u_{in}/u_{1n}$. Then $U_1= N_1 U N_1^{-1}$ is a matrix whose  last column has the first entry equal to $u_{1n}$ and all other
entries equal to zero. Next, let $N_2$  be  a unipotent lower triangular matrix  such that 

(i) off-diagonal entries in the first column of $N_2$ are zero, and 

(ii) $U_2=N_2 U_1 N_2^{-1}$ is in the upper Hessenberg form, that, is has zeroes below the first subdiagonal. 

\noindent
Then $U_2$ has a required form $B_+C$, where $B_+$ is upper triangular.
To establish uniqueness, it is enough to show that if $B_1, B_2$ are invertible upper triangular matrices and $N$ is lower unipotent matrix, such that
$N B_1 C = B_2 C N$, then $N=\one$. Comparing the last columns on both sides of the equality, it is easy to see that off-diagonal elements in the first column of $N$ are zero. Then, comparison
of first columns implies that the same is true for off-diagonal elements in the second column of $N$, etc.
\end{proof}

\begin{lemma} 
\label{BW}
For a generic $n\times n$ matrix $U$ there exist a unique unipotent lower triangular matrix $N_-$ and an upper triangular matrix $B_+$ such that
\begin{equation*}
U = N_- B_+ W_0 N_-^{-1},
\end{equation*}
where $W_0$ is the matrix of the longest permutation.
\end{lemma}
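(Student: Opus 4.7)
The plan is to reduce existence to the standard Gauss (LDU) factorization of the auxiliary matrix $V := UW_0$. The crucial observation is that for generic $U$, the leading principal minors $\det V^{[1,k]}_{[1,k]}$ coincide up to sign with the upper-right dense minors $\det U^{[n-k+1,n]}_{[1,k]}$, so they are nonzero and $V$ admits a unique Gauss factorization $V = N_-' H N_+'$ with $N_-'$ unipotent lower triangular, $H$ diagonal, and $N_+'$ unipotent upper triangular. I would then set $N_- := N_-'$ and compute the conjugate as $N_-^{-1}UN_- = (HN_+'W_0)N_-'$.

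To conclude existence, it remains to check that this conjugate is anti-upper triangular (i.e.\ vanishes for $i+j > n+1$), because then it automatically equals $B_+W_0$ with $B_+ := (N_-^{-1}UN_-)W_0$ upper triangular. This rests on an elementary lemma: if $T$ is anti-upper triangular and $L$ is lower triangular then $TL$ is again anti-upper triangular, since for $i+j > n+1$ the sum $(TL)_{ij} = \sum_k T_{ik}L_{kj}$ is empty ($T_{ik}\ne 0$ forces $k \le n+1-i < j$, while $L_{kj}\ne 0$ forces $k \ge j$). Applied to $T = HN_+'W_0$ (plainly anti-upper triangular, since $W_0$ reverses the columns of the upper triangular matrix $HN_+'$) and $L = N_-'$, this yields the claim. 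Spotting this Gauss-factorization trick is the main conceptual step; once it is in place, the existence proof is just index bookkeeping.

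Uniqueness I would handle by the same scheme used in Lemma~\ref{BC}. Assuming $N_1B_1W_0N_1^{-1} = N_2B_2W_0N_2^{-1}$, setting $N := N_2^{-1}N_1$ gives $NB_1 = B_2\widetilde N$, where $\widetilde N := W_0NW_0$ is unipotent upper triangular. Since the right-hand side is upper triangular, the strictly lower triangular part of $NB_1$ must vanish; reading off $(NB_1)_{ij}$ with $i>j$ column by column starting from $j=1$ and using the nonvanishing of the diagonal entries of $B_1$, one inductively obtains $N_{ij} = 0$ for all $i>j$, hence $N = \one$, $N_1 = N_2$, and $B_1 = B_2$. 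I do not foresee any substantial obstacle beyond recognizing the Gauss-factorization reduction.
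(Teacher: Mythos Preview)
Your proof is correct and uses essentially the same idea as the paper: reduce to the Gauss factorization of $U$ multiplied by $W_0$. The paper multiplies by $W_0$ on the left and reads off $N_- = W_0(W_0U)_{>0}W_0$ and $B_+$ directly from the uniqueness of the Gauss factorization $W_0U\cdot N_- = (W_0N_-W_0)(W_0B_+W_0)$, thereby obtaining existence and uniqueness in one stroke; you multiply on the right instead and separate the two halves, which costs you the small anti-triangular lemma but is otherwise equivalent. (For your uniqueness step, the diagonal entries of $B_1$ are indeed nonzero since $\det B_1 = \pm\det U \ne 0$ and $B_1$ is upper triangular.)
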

\begin{proof} 
Equality $U N_- = N_- B_+ W_0$ implies $W_0 U\cdot N_- = W_0 N_- W_0\cdot W_0 B_+ W_0$. Using the uniqueness of the Gauss factorization, we obtain
$W_0 N_- W_0 =\left (W_0 U\right )_{> 0} $ and $W_0 B_+ W_0 = \left (W_0 U\right )_{\leq 0} N_-$, and thus recover 
$N_-= W_0 \left (W_0 U\right)_{>0}W_0 $  and $B_+= W_0\left (W_0 U\right)_{\leq 0} W_0 \left (W_0 U\right)_{>0}$.
\end{proof}

\begin{lemma}
\label{NMN}
For a generic $n\times n$ matrix $U$ there exists a unique representation 
\begin{equation}
\label{normalNMN}
U = \left (\one_n + \nu e_{12}\right ) N_- M   N_-^{-1} \left (\one_n - \nu e_{12}\right ),
\end{equation}
where $N_-=(\nu_{ij})$ is a unipotent lower triangular matrix with $\nu_{j1}=0$ for $2\le j\le n$ and $M=(\mu_{ij})$ has $\mu_{1n}=0$
and  $\mu_{i,n+2-j}=0$ for $2\leq j < i\leq n$.
 \end{lemma}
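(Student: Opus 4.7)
The plan is to recover $\nu$, $N_-$, and $M$ successively from $U$. Since $\nu_{j1}=0$ for $j\geq 2$ forces the block form $N_-=\begin{pmatrix}1 & 0\\ 0 & \tilde N_-\end{pmatrix}$ with $\tilde N_-$ unipotent lower triangular of size $n-1$, one has $(N_-MN_-^{-1})_{1n}=M_{1n}=0$. On the other hand, the $(1,n)$-entry of $(\one_n-\nu e_{12})U(\one_n+\nu e_{12})$ equals $u_{1n}-\nu u_{2n}$, since the left factor subtracts $\nu$ times row~2 from row~1 and the right factor only alters column~2. Hence $\nu=u_{1n}/u_{2n}$ is forced and uniquely determined for generic $U$.

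Set $\hat U:=(\one_n+\nu e_{12})^{-1}U(\one_n+\nu e_{12})$ and partition
\[
\hat U=\begin{pmatrix}\hat u_{11} & b^T\\ c & D\end{pmatrix},\qquad M=\begin{pmatrix}\mu_{11} & \beta^T\\ \gamma & \Delta\end{pmatrix}.
\]
The identity $\hat U=N_-MN_-^{-1}$ unpacks into $\mu_{11}=\hat u_{11}$, $\beta^T=b^T\tilde N_-$, $\gamma=\tilde N_-^{-1}c$, and $\Delta=\tilde N_-^{-1}D\tilde N_-$. Since the last column of $\tilde N_-$ is $e_{n-1}$, $\beta_{n-1}=b_{n-1}=\hat u_{1n}=0$ automatically, which matches $\mu_{1n}=0$. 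Under the index shift $(i,j)\mapsto(i-1,n+1-j)$, the remaining conditions $\mu_{i,n+2-j}=0$ for $2\leq j<i\leq n$ become: $\Delta_{i'j'}=0$ for all $(i',j')$ with $i'\geq 2$ and $i'+j'>n$.

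It remains to prove that for a generic $(n-1)\times(n-1)$ matrix $D$ there is a unique unipotent lower triangular $\tilde N_-$ such that $\Delta=\tilde N_-^{-1}D\tilde N_-$ has the above anti-triangular vanishing pattern. The parameter count balances: both the strictly sub-diagonal entries of $\tilde N_-$ and the zero positions in $\Delta$ number $\binom{n-1}{2}$. From $D\tilde N_-=\tilde N_-\Delta$ one derives
\[
\Delta_{ij}=D_{ij}+\sum_{k=j+1}^{n-1}D_{ik}\,n_{kj}-\sum_{k=1}^{i-1}n_{ik}\,\Delta_{kj},
\]
and processing the zero positions in order of increasing $j$ and, within each column, decreasing $i$, one expects each equation to become linear in a single previously undetermined entry of $\tilde N_-$ with a generically nonzero coefficient.

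The main obstacle is the combinatorial bookkeeping in this last step: pairing each zero condition on $\Delta$ with a specific subdiagonal entry of $\tilde N_-$ so that the system becomes triangular, and verifying that the resulting pivots are generically nonzero polynomials in the entries of $D$. A less constructive alternative exploits the fact that the upper-right $k\times k$ minors $\det\Delta^{[n-k,n-1]}_{[1,k]}$ are invariants of unipotent lower triangular conjugation and, for $\Delta$ in the desired form, equal $\pm\prod_{i=1}^{k}\Delta_{i,n-i}$; combining this with a Jacobian non-degeneracy check at a convenient test point (for instance $\tilde N_-=\one_{n-1}$ with $D$ chosen so that the differential of the zeroing map is triangular) yields existence and uniqueness of $\tilde N_-$ for generic $D$, which in turn determines $M$ uniquely and completes the proof.
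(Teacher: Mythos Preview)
Your reduction is correct and matches the paper's proof: you determine $\nu=u_{1n}/u_{2n}$, form $\hat U=(\one_n-\nu e_{12})U(\one_n+\nu e_{12})$, and reduce to finding a unipotent lower triangular $\tilde N_-$ of size $n-1$ such that $\Delta=\tilde N_-^{-1}D\tilde N_-$ has the anti-triangular vanishing pattern. Your parameter count is right, and your observation that $\beta_{n-1}=0$ follows automatically is a nice consistency check.

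The gap is in the final step. You acknowledge the ``combinatorial bookkeeping'' obstacle and then propose a Jacobian non-degeneracy check at a test point, but you do not actually carry it out; as written, this is a plan rather than a proof. What you are missing is that the vanishing pattern $\Delta_{i'j'}=0$ for $i'\ge 2$, $i'+j'>n$ is \emph{exactly} the statement that $\Delta=B_+'W_0'$ with $B_+'$ upper triangular and $W_0'$ the longest permutation matrix of size $n-1$. This is precisely Lemma~\ref{BW} applied to $D$, and that lemma has a one-line proof via Gauss factorization: writing $D\tilde N_-=\tilde N_-B_+'W_0'$ as $W_0'D\cdot\tilde N_-=(W_0'\tilde N_-W_0')\cdot(W_0'B_+'W_0')$ and noting that the right-hand factors are unipotent upper triangular and lower triangular respectively, one reads off $\tilde N_-=W_0'(W_0'D)_{>0}W_0'$ uniquely from the Gauss factorization of $W_0'D$. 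Invoking this replaces your heuristic endgame with an explicit formula.
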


\begin{proof}  First, observe that if~\eqref{normalNMN} is valid, then $\mu_{2n}=u_{2n}$, and $\nu ={u_{1n}} /{u_{2n}}$. Denote 
$U'= \left (\one_n - \nu e_{12}\right ) U\left (\one_n + \nu e_{12}\right )$. Then~\eqref{normalNMN} implies that the first row and column 
of $M$ coincide with those of $U'$, and
that $M_{[2,n]}^{[2,n]}$ and $N$ are uniquely determined by applying Lemma~\ref{BW} to ${U'}_{[2,n]}^{[2,n]}$.
\end{proof}

\subsection{Matrix entries via eigenvalues}

For an $n\times n$ matrix $A$, denote $K= K(e_1)$, where $ K(u)$ is defined in Proposition~\ref{long_identity}.

\begin{lemma} \label{row1viac}
Every matrix entry in the first row of $A$ can be expressed as ${P}/{\det K}$, where $P$ is a polynomial in matrix entries
of the last $n-1$ rows of $A$ and coefficients of the characteristic polynomial of $A$.
\end{lemma}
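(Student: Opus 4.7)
The idea is to exhibit the first row $a = e_1^T A$ as the unique solution of an explicit linear system and then invoke Cramer's rule. From
$(aK)_j = e_1^T A \cdot A^{j-1} e_1 = (A^j e_1)_1$
one obtains the identity $aK = b$ with $b := \bigl((Ae_1)_1, (A^2 e_1)_1, \dots, (A^n e_1)_1\bigr)$. Generically $\det K \ne 0$, in which case Cramer's rule gives $a_{1j} \det K = \det \tilde K_j$, where $\tilde K_j$ is obtained from $K$ by replacing its $j$th row by $b$. It therefore suffices to show that every entry of $K$ and of $b$ is a polynomial in the entries of rows $2, \dots, n$ of $A$ and in the coefficients $c_1, \dots, c_n$ of the characteristic polynomial $\chi(\lambda) = \lambda^n + c_1 \lambda^{n-1} + \dots + c_n$.

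The crux is to prove that each vector $v_p := A^p e_1$, $0 \le p \le n-1$, already lies in that subring. For this I would exploit the adjugate $B(\lambda) = \operatorname{adj}(\lambda I - A)$. On one hand, its first column $B_{\cdot,1}(\lambda)$ is a polynomial in $\lambda$ of degree $n-1$ whose entries are signed $(n-1)\times(n-1)$ minors of $\lambda I - A$ with row~$1$ deleted, so its coefficients $B^{(k)}_{\cdot,1}$ lie in the required subring by inspection. On the other hand, from the identity $(\lambda I - A) B_{\cdot,1}(\lambda) = \chi(\lambda) e_1$ combined with the expansion $(\lambda I - A)^{-1} e_1 = \sum_{k \ge 0} \lambda^{-k-1} v_k$, one computes
$$
B_{\cdot,1}(\lambda) = \sum_{p=0}^{n-1} \lambda^{n-1-p} \sum_{j=0}^{p} c_j v_{p-j}, \qquad c_0 := 1,
$$
the coefficients of $\lambda^m$ with $m<0$ vanishing by Cayley--Hamilton. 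Matching coefficients of $\lambda^{n-1-p}$ yields the triangular relations $v_p + c_1 v_{p-1} + \dots + c_p v_0 = B^{(n-1-p)}_{\cdot,1}$, which solve recursively to express each $v_p$ as a polynomial in $c_1, \dots, c_p$ and the entries of rows $2, \dots, n$ of $A$.

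Granted this, all entries of $K = [v_0, v_1, \dots, v_{n-1}]$ lie in the required subring. So do the entries of $b$: those $(v_k)_1$ with $k<n$ are already entries of $K$, while the last entry $(v_n)_1 = -\sum_{j=1}^{n} c_j (v_{n-j})_1$ follows directly from Cayley--Hamilton. Consequently $\det \tilde K_j$ is a polynomial in the permitted variables, and the lemma holds with $P = \det \tilde K_j$.

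The only genuine difficulty lies in the adjugate argument: although $v_p = A^p e_1$ looks as if it should genuinely depend on the first row of $A$, the identity $(\lambda I - A)\operatorname{adj}(\lambda I - A) = \chi(\lambda) I$ forces all such dependence to be absorbed into $c_1, \dots, c_n$. Everything else — setting up $aK = b$, invoking Cramer's rule, and handling the last entry of $b$ via Cayley--Hamilton — is routine.
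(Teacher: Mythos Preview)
Your proof is correct and follows essentially the same approach as the paper. Both arguments hinge on the observation that the first column of $\operatorname{adj}(\lambda I - A)$ involves only rows $2,\ldots,n$ of $A$, and both extract from the resolvent identity the same unipotent triangular system $v_p + c_1 v_{p-1} + \cdots + c_p v_0 = Q_{p+1}$ (the paper packages this as a matrix equation $KT = Q$ with $T$ the unipotent Toeplitz matrix built from the $c_i$); the only cosmetic difference is that the paper finishes by writing $A_{[1]}KT = Q_{[1]}\,T^{-1}C_A T$ directly, whereas you first invert $T$ to put the entries of $K$ in the subring and then apply Cramer's rule to $A_{[1]}K = b$.
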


\begin{proof} Let $\det (\lambda \one - A) = \lambda^n + c_1 \lambda^{n-1} + \cdots +c_n$. 
Compare two expressions for the first row $R(\lambda)$ of the resolvent $(\lambda \one - A)^{-1}$ of $A$:
$$
R(\lambda)= \sum_{j\geq 0} \frac{1}{\lambda^{j+1}} A^j e_1= \frac{1}{\det (\lambda \one - A) } \left ( \lambda^{n-1} e_1 + \lambda^{n-2} Q_2 +\cdots + Q_{n-1} \right ),
$$
where $Q_i$, $2\le i\le n-1$, are vectors polynomial in matrix entries of the last $n-1$ rows of $A$. Cross-multiplying by $\det (\lambda \one - A)$ and comparing coefficients at positive degrees of $\lambda$ on both sides, we obtain
$$
c_i e_1+ c_{i-1} A e_1 + \cdots + c_{1} A^{i-1} e_1 + A^{i} e_1 = Q_{i}, \qquad  2\le i\le n-1.
$$
Let $T$ be an $n\times n$ upper triangular unipotent Toeplitz matrix with entries of the $i$th superdiagonal equal to $c_i$, and let $Q$ be the matrix
with columns $e_1, Q_2,\ldots, Q_{n-1}$. Then the equations above can be re-written as a single matrix equation $K T = Q$. Note also that $A K = K C_A$, where $C_A$ is a companion matrix of $A$ with $1$s on the first subdiagonal, $-c_n,\ldots, -c_{1}$ in the last column, and $0$s everywhere else.
Therefore,  $A K T = K T\cdot T^{-1} C_A T = Q T^{-1} C_A T$ (note that $C'_A=T^{-1} C_A T= W_0 C_A^T W_0$ is an alternative companion form of $A$). 
Consequently, the first row  $A_{[1]}$ of $A$ satisfies the linear equation $A_{[1]} K T = Q_{[1]} C_A'$, and the claim follows.
\end{proof}

\begin{lemma}
\label{Hessenberg} 
Let $H$ be an $n\times n$ upper Hessenberg matrix, $C^H=\left [ \begin{array}{cc} \star & \star\\ \one_{n-1}& 0\end{array} \right ]$ be its upper Hessenberg companion  and $N=(\nu_{ij})$ be a unipotent upper triangular matrix such that $N^{-1} H N = C^H$. Then the row vector $\nu=(-\nu_{1i})_{i=2}^n$ coincides with the first row of the companion form of $\tilde H=H_{[2,n]}^{[2,n]}$.
\end{lemma}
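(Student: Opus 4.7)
The plan is to perform a $(1, n{-}1)$-block decomposition of the conjugation equation $HN = NC^H$. Write
\[
H = \begin{pmatrix} h_{11} & r \\ e_1 & \tilde H \end{pmatrix}, \qquad
N = \begin{pmatrix} 1 & \nu \\ 0 & \tilde N \end{pmatrix}, \qquad
C^H = \begin{pmatrix} \gamma_1 & \gamma' \\ e_1 & \tilde C \end{pmatrix},
\]
where $\tilde N$ is unipotent upper triangular, $r$ and $\gamma'$ are row vectors of length $n{-}1$, and---crucially---$\tilde C$, the $(n{-}1)\times(n{-}1)$ bottom-right block of $C^H$, is the matrix with $1$'s on the subdiagonal and zeros everywhere else. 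In particular, the first row of $\tilde C$ (in its own indexing) is identically zero. Note also that $H$ (and hence $\tilde H$) inherits $1$'s on the subdiagonal from $C^H$, because unipotent upper triangular conjugation preserves subdiagonal entries.

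Next I would extract the $(2,2)$-block of the equation $HN = NC^H$, which gives
\[
\tilde H \tilde N + e_1\, \nu = \tilde N \tilde C.
\]
Since $\tilde N^{-1}$ is itself unipotent upper triangular we have $\tilde N^{-1} e_1 = e_1$, so this rearranges to
\[
\tilde N^{-1} \tilde H \tilde N = \tilde C - e_1\, \nu.
\]
The matrix on the right-hand side inherits from $\tilde C$ its subdiagonal of $1$'s and its zeros below the subdiagonal; its first row is precisely $-\nu = (-\nu_{12}, \ldots, -\nu_{1n})$ (because the first row of $\tilde C$ was zero), and its other entries coincide with those of $\tilde C$. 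Hence $\tilde C - e_1\, \nu$ is in upper Hessenberg companion form.

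Finally, I would invoke uniqueness: because $\tilde H$ is upper Hessenberg with $1$'s on the subdiagonal, the unipotent upper triangular conjugation bringing $\tilde H$ to companion form exists and is unique (the resulting first row is determined by the characteristic polynomial of $\tilde H$). Hence $\tilde C - e_1\, \nu$ \emph{is} the companion form of $\tilde H$, and its first row $-\nu$ coincides with the first row of that companion form, as claimed. No real obstacle is anticipated here; the only subtle point is noticing that the first row of the principal submatrix $\tilde C$ of $C^H$ is identically zero, as opposed to $(\gamma_2,\ldots,\gamma_n)$ which one might at first glance conflate with it.
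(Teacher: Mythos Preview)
Your proof is correct and follows essentially the same approach as the paper's: both perform the $(1,n{-}1)$ block decomposition of the equation $N^{-1}HN=C^H$ and read off the $(2,2)$-block to identify $\tilde N^{-1}\tilde H\tilde N$ as a companion matrix with first row $(-\nu_{1i})_{i=2}^n$. The paper packages this as the factorization $N=\mathrm{diag}(1,\tilde N)\cdot\bigl(\begin{smallmatrix}1&\nu\\0&\one\end{smallmatrix}\bigr)$ and computes one side directly, while you write out the block equation and rearrange; these are stylistic variants of the same argument, and your version is in fact more self-contained (you explicitly justify why $H$ has $1$'s on the subdiagonal and why $\tilde N^{-1}e_1=e_1$).
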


\begin{proof} Factor $N$ as $N=\left [ \begin{array}{cc} 1 & 0\\ 0 & \tilde N\end{array} \right ] \left [ \begin{array}{cc} 1 & \nu\\ 0 & \one_{n-1}\end{array} \right ] $. Then
\[
{\tilde N}^{-1}  \tilde H \tilde N = \left ( \left [ \begin{array}{cc}  \one_{n-1} & 0 \end{array} \right ] \left [ \begin{array}{cc} 1 & -\nu\\ 0 & \one_{n-1}\end{array} \right ]  \right )^{[2,n]},
\]
 and the claim follows.
\end{proof}

\section*{Acknowledgments}

M.~G.~was supported in part by NSF Grant DMS \#1362801. 
M.~S.~was supported in part by NSF Grants DMS \#1362352.  
A.~V.~was supported in part by ISF Grant \#162/12. 
The authors would like to thank the following institutions for support and excellent working conditions: 
 Max-Planck-Institut f\"ur Mathematik, Bonn (M.~G., Summer 2014), 
Institut des Hautes \'Etudes Scientifiques (A.~V., Fall 2015), Stockholm University and Higher School of Economics, Moscow (M.~S., Fall 2015), Universit\'e  Claude Bernard Lyon~1 and Universit\'e Paris Diderot (M.~S., Spring 2016).
This paper was completed during the joint visit of the authors to the University of Notre Dame London Global Gateway in May 2016. The authors are grateful to this 
institution for warm hospitality. Special thanks are due to A.~Berenstein, A.~Braverman, Y.~Greenstein, D.~Rupel, G.~Schrader and A.~Shapiro for valuable discussions and to an anonymous referee for helpful comments.

\end{document}